\def\th@plain{\slshape}\makeatother
\makeatletter\patchcmd{\th@remark}{\itshape}{\slshape}{}{}\makeatother
\newcounter{bidon}
\newcommand{\rdb}{\refstepcounter{bidon}}
\newcommand \eoe {\hbox{}\nobreak\hfill
\vrule width .5em height .5em depth 0mm \par \smallskip}
\renewcommand \leq{\leqslant}
\renewcommand \preceq{\preccurlyeq}
\renewcommand \geq{\geqslant}
\newcommand \pref[1]{$(\ref{#1})$}
\newcommand \junk[1]{}
\renewcommand \grave{\mathaccent18}    
\newcommand \noi {\noindent}
\newcommand \sni {\smallskip\noi}
\newcommand \ms {\medskip}
\newcommand \mni {\ms\noi}
\newcommand \Ex {{\exists }}
\newcommand \Tt {{\forall }}
\newcommand \mt {\mapsto }
\renewcommand \cir {{^\circ}}
\newcommand\equidef{\buildrel{{\rm 
def}}\over{\quad\Longleftrightarrow\quad}}
\newcommand\eqdefi{\buildrel{\rm def}\over {\;=\;}}
\newcommand\vers[1]{\buildrel{#1}\over \longrightarrow }
\newcommand\gen[1]{\left\langle{#1}\right\rangle}
\newcommand\so[1]{\{{#1}\}}
\newcommand \sotq[2]{\so{\,#1\,\vert\,#2\,}}
\newcommand\bidule[1]{\left\lceil{#1}\right\rceil}
\newcommand \aqo[2]{#1\!\left/\gen{#2}\right.}
\newcommand \ov[1] {\overline{#1}}
\newcommand \ovs[1] {\overline{\{#1\}}}
\newcommand \wi[1] {\widetilde{#1} }
\newcommand \NN{\mathbb {N}}
\newcommand \ZZ{\mathbb {Z}}
\newcommand \gk{{\bf k}}
\newcommand \gA{\mathbf{A}}
\newcommand \gB{\mathbf{B}}
\newcommand \gH{\mathbf{H}}
\newcommand \gK{\mathbf{K}}
\newcommand \gL{\mathbf{L}}
\newcommand \gT{\mathbf{T}}
\newcommand \cI {{\cal I}}
\newcommand \cF {{\cal F}}
\newcommand \cP {{\cal P}}
\newcommand \cM {{\cal M}}
\newcommand \rI {\mathrm{I}}
\newcommand \rD {\mathrm{D}}
\newcommand \rF {\mathrm{F}}
\newcommand \rH {\mathrm{H}}
\newcommand \rJ {\mathrm{J}}
\newcommand \rK {\mathrm{K}}
\newcommand \rS {\mathrm{S}}
\newcommand \rV {\mathrm{V}}
\newcommand \IZ {\mathrm{IZ}}
\newcommand \FZ {\mathrm{FZ}}
\newcommand \IZA {\IZ_\gA}
\newcommand \DA {\rD_\gA}
\newcommand \JA {\rJ_\gA}
\newcommand \JT {\rJ_\gT}
\newcommand \Fmin {\rF_{\mathrm{min}}}
\newcommand \Imax {\rI_{\mathrm{max}}}
\newcommand\fa{\mathfrak{a}}
\newcommand\fb{\mathfrak{b}}
\newcommand\fc{\mathfrak{c}}
\newcommand\fA{\mathfrak{A}}
\newcommand\fB{\mathfrak{B}}
\newcommand\fD{\mathfrak{D}}
\newcommand \DT {\fD_\gT}
\newcommand \VT {\fV_\gT}
\newcommand \DTo {\fD_{\gT\cir}}
\newcommand \VTo {\fV_{\gT\cir}}
\newcommand\fII{\mathfrak{I}}
\newcommand\fj{\mathfrak{j}}
\newcommand\fJ{\mathfrak{J}}
\newcommand\fF{\mathfrak{F}}
\newcommand\ff{\mathfrak{f}}
\newcommand\ffg{\mathfrak{g}}
\newcommand\fm{\mathfrak{m}}
\newcommand\fp{\mathfrak{p}}
\newcommand\fP{\mathfrak{P}}
\newcommand\fq{\mathfrak{q}}
\newcommand\fV{\mathfrak{V}}
\newcommand \vu {\vee} 
\newcommand \vi {\wedge} 
\newcommand \Vu {\bigvee}
\newcommand \Vi {\bigwedge}
\newcommand \im {\rightarrow} 
\newcommand \dar { \,\downarrow\! }
\newcommand \uar { \,\uparrow\! }
\newcommand \bal[1] {^\rK_{#1}}
\newcommand \ul[1] {_\rK^{#1}}
\newcommand \Un {{\bf 1}}
\newcommand \Deux {{\bf 2}}
\newcommand \Trois {{\bf 3}}
\newcommand \etl{^*}
\newcommand \etoz{$^*$}
\newcommand \Pf {{{\cal P}_{\mathrm{f}}}}
\newcommand \Bd {\mathrm{Bd}}
\newcommand \He {\mathrm{He}}
\newcommand \Heit {\mathrm{Heit}}
\newcommand \HeA {{\Heit\,\gA}}
\newcommand \Jspec {\mathsf{Jspec}}
\newcommand \jspec {\mathsf{jspec}}
\newcommand \Hdim {\mathsf{Hdim}}
\newcommand \Kdim {\mathsf{Kdim}}
\newcommand \Jdim {\mathsf{Jdim}}
\newcommand \jdim {\mathsf{jdim}}
\newcommand \Id {\mathrm{Id}}
\newcommand \Idl {\cI}
\newcommand \Max {\mathsf{Max}}
\newcommand \Min {\mathsf{Min}}
\newcommand \Patch {\mathsf{Patch}}
\newcommand \Spec {\mathsf{Spec}}
\newcommand \Zar {\,\mathsf{Zar}}
\newcommand \ZarA {{\Zar\,\gA}}
\newcommand \OQC {\mathsf{Oqc}}
\renewcommand \mod {\,\mathrm{mod}}
\newdimen\xyrowsp
\newcommand{\SCO}[6]{
\xymatrix @R = \xyrowsp {
                                  &1 \ar@{-}[dl] \ar@{-}[dr] \\
#3 \ar@{-}[ddr]                   &   & #6 \ar@{-}[ddl] \\
                                  &\bullet\ar@{-}[d] \\
                                  &\bullet   \\
#2 \ar@{-}[ddr] \ar@{-}[uur]      &   & #5 \ar@{-}[ddl] \ar@{-}[uul] \\
                                  &\bullet \ar@{-}[d] \\
                                  &\bullet  \\
#1 \ar@{-}[uur]                   &   & #4 \ar@{-}[uul] \\
                                  & 0 \ar@{-}[ul] \ar@{-}[ur] \\
}
}
\newcommand \sibrouillon[1]{}
\newcommand \hum[1] {\sibrouillon{\noindent {\sf hum: #1}}}
\begin{document}
\selectlanguage{english}

\pagenumbering{roman}
\thispagestyle{empty}
~ 
\vspace{1cm}

\noindent In this file you find the English version starting on the page  numbered \pageref{beginenglish}.

\medskip \noindent  {\Large \bf Heitmann dimension of distributive lattices and commutative rings}

\begin{abstract}
This paper is the English translation of the first 4 sections of the article
\citealt*{jCLQ2006}, after some corrections. 

Sections 5--7 of the original article are treated a bit more simply in \citealt*{jCACM}.   

We study the notion of dimension introduced by Heitmann in
his remarkable article \citealt*{jHei84}, as well as a related notion, only implicit in his proofs. We first develop this within the general framework of the theory of distributive lattices and spectral spaces. We then apply these ideas in the
framework of commutative algebra.
\end{abstract}

\smallskip \noindent   {\large \bf Authors}

\smallskip \noindent Thierry Coquand

\noindent 
Chalmers, University of G\"oteborg, Sweden,
email: coquand@cs.chalmers.se, \\
url: \url{https://www.cse.chalmers.se/~coquand/},

\smallskip 
\noindent Henri Lombardi

\noindent Laboratoire de Mathématiques de Besançon (UMR 6623), 
Université Marie et Louis Pasteur, 25030 Besançon cedex, France,
email: henri.lombardi@univ-fcomte.fr, \\
url: \url{http://hlombardi.free.fr},

\smallskip \noindent 
Claude Quitté

\noindent Laboratoire de Mathématiques,
SP2MI, Boulevard 3, Teleport 2, BP 179,
86960 FUTUROSCOPE Cedex, FRANCE,
email: claude.quitte@orange.fr

\selectlanguage{french}  
\bigskip \noindent
Then the French version begins on the page numbered  \pageref{beginfrench}. 

\medskip\noindent   {\Large \bf Dimension de Heitmann des treillis distributifs et des anneaux commutatifs}

\smallskip \noindent Le lecteur ou la lectrice sera sans doute surprise de l'alternance des sexes ainsi que de l'orthographe du mot corolaire, avec d'autres innovations auxquelles elle n'est pas habituée. En fait, nous avons essayé de suivre au plus près les préconisations de l'orthographe nouvelle recommandée, telle qu'elle est enseignée aujourd'hui dans les écoles en France.

\selectlanguage{english}

\bigskip  \noindent {\bf Historical note.}\\
The trigger of the invention of the Heitmann dimension, which we denote by $\Hdim$, is constituted by the remarkable article \citealt*[Generating non-Noetherian modules efficiently]{jHei84}. We review here this question and more generally the question of constructive definitions for various notions of dimension in commutative algebra.

\smallskip 
Constructive definitions of the Krull dimension of a commutative ring have been developed in several papers. 

The first is the  note \citealt*{jJoy76} developed by Luis Español in his thesis (\url{https://dialnet.unirioja.es/descarga/tesis/1402.pdf}) and in his articles \citealt*{jEsp82,jEsp83,jEsp86,jEsp88,jEsp2010}.
 
Joyal’s definition in \citealt*{jBJ1981} has been analysed in \citealt*{jCC00} using the notion of entailment relation (\url{https://www.cse.chalmers.se/~coquand/lattice.ps}).

The second is the paper \citealt*{jLom02} in which an explicit characterization of dimension in purely algebraic terms is proposed. The articles \citealt*{jCL2003,jCL2001-2018}  explain the equivalence of the two notions in constructive mathematics and develop for this a third equivalent constructive characterization (characterization \textsl{2c} in Definition \ref{defDK0}). 

Finally, a characterization in terms of boundaries is given in the article \citealt*{jCLR05} (\url{http://hlombardi.free.fr/publis/lebord.pdf}).
This last definition is recursive and it is this that has made it possible to give constructive versions of many classical results thereafter. In particular Serre's Splitting Off, Forster's theorem on the number of generators of a finitely generated module and Bass's cancellation theorem, in the case where the hypothesis is a bound on the Krull dimension.

The non-Noetherian version of these theorems is due to  \citet*[Generating non-Noetherian modules efficiently]{jHei84}. The new recursive characterization of the Krull dimension has made it possible to translate Heitmann's proofs into algorithms, described in the article \citealt*[Generating non-{N}oetherian modules constructively]{jCLQ2004} (\url{https://www.cse.chalmers.se/~coquand/fs.ps}).

\smallskip 
In the previously cited article, Heitmann examines the variants of these theorems in which the dimension hypothesis is improved (in particular by Swan) to the form of the dimension of the maximal spectrum. He notices that the maximal spectrum is no longer necessarily a spectral space when the ring is not assumed to be Noetherian. Starting from a consideration that he qualifies as philosophical, he then proposes to replace this maximal spectrum by a spectral space, the one the maximal spectrum generates inside the Zariski spectrum. This leads him to introduce a new dimension, which we have denoted by $\Jdim$. Unfortunately he does not obtain for $\Jdim$ all the theorems he wishes for and he leaves the question open.

It turns out that the recursive constructive definition of $\Jdim$ is too difficult to handle and this has led  \citealt*{jCLQ2004} to invent  $\Hdim$, whose recursive definition is easier to manipulate in their constructive proofs. They obtain all the results desired by Heitmann.

\smallskip Another unsuspected outcome of the dimension defined in \citealt*{jLom02} has been updated by  
 \citealt*[Valuative dimension and monomial orders]{jKY2020} (\url{https://arxiv.org/abs/1906.12067}). This article gives a constructive definition of the valuative dimension of a commutative ring based on a very slight variant of the definition of Krull dimension given in \citealt*{jLom02}.

Ihsen Yengui has also used the constructive definition of  Krull dimension in several papers, some of which establish results previously unknown in classical mathematics.

\selectlanguage{french}  

\bigskip  \noindent {\bf Note historique.}  \\ 
Le déclic qui a déclenché l’invention de la dimension de Heitmann, que nous notons $\Hdim$, est constitué par le remarquable article     
\citealt*[Generating non-Noetherian modules efficiently]{jHei84}.
Nous faisons ici le point sur cette question et plus généralement sur la question des définitions constructives pour diverses notions de dimension en algèbre commutative. 

\smallskip 
Les définitions constructives de la dimension de Krull d’un anneau commutatif ont été mises au point dans plusieurs articles. 

Le premier est la note \citealt*{jJoy76} développée par Espa\~nol dans sa thèse (\url{https://dialnet.unirioja.es/descarga/tesis/1402.pdf}) et ses articles \citealt*{jEsp82,jEsp83,jEsp86,jEsp88,jEsp2010}. 

La définition de Joyal évoquée dans \citealt*{jBJ1981}  a été analysée par \citealt*{jCC00} en utilisant la notion de relation implicative (\url{https://www.cse.chalmers.se/~coquand/lattice.ps}).

Le deuxième est l’article  \citealt*{jLom02} dans lequel une caractérisation explicite de la dimension en termes purement algébriques est démontrée. 

Les articles \citealt*{jCL2003,jCL2001-2018}  donnent 
l’explication de l’équivalence des deux notions en mathématiques constructives et développent pour cela une troisième caractérisation constructive équivalente (caractérisation \textsl{2c} dans la définition \ref{fdefDK0}). 

Enfin une caractérisation en termes de bords est donnée dans l’article \citealt*{jCLR05} (\url{http://hlombardi.free.fr/publis/lebord.pdf}). Cette dernière définition est récursive et c’est elle qui a permis de donner des versions constructives de nombreux résultats classiques par la suite. Notamment le Splitting Off de Serre, le théorème de Forster sur le nombre de générateurs d’un module de type fini et le théorème de simplification de Bass, dans le cas où l’hypothèse est une borne sur la dimension de Krull. 

La version non noethérienne de ces théorèmes est due à     
\citet*[Generating non-Noetherian modules efficiently]{jHei84}.
La nouvelle caractérisation récursive de la dimension de Krull~a permis de traduire les démonstrations de Heitmann en des algorithmes, décrits dans l’article \citealt*[Generating non-{N}oetherian modules constructively]{jCLQ2004} (\url{https://www.cse.chalmers.se/~coquand/fs.ps}).   

\smallskip Dans son article déjà cité, Heitmann examine les variantes de ces théorèmes dans lesquelles l’hypothèse de dimension est améliorée (notamment par Swan) sous la forme de la dimension du spectre maximal. Il remarque que le spectre maximal n’est plus nécessairement un espace spectral lorsque l’anneau n’est pas supposé Noethérien. Partant d’une considération qu’il qualifie de philosophique, il propose alors de remplacer ce spectre maximal par un espace spectral, celui que le spectre maximal engendre à l’intérieur du spectre de Zariski. Cela l’amène à introduire une nouvelle dimension, que nous avons notée $\Jdim$. Malheureusement il n’obtient pas pour la~$\Jdim$ tous les théorèmes qu’il souhaite et il laisse la question ouverte. 

Il s’avère que la définition constructive récursive de la $\Jdim$ est trop difficile à manipuler et cela a amené \citealt*{jCLQ2004}  à inventer la $\Hdim$, dont la définition récursive est plus facile à manipuler dans leurs démonstrations constructives. Ils obtiennent tous les résultats souhaités par Heitmann.

\smallskip Une autre issue insoupçonnée de la dimension définie dans \citealt*{jLom02} a été mise à jour  dans l’article \citealt*[Valuative dimension and monomial orders]{jKY2020} (\url{https://arxiv.org/abs/1906.12067}). Cet article donne une définition constructive de la dimension valuative d’un anneau commutatif basée sur une très légère variante de la définition de la dimension de Krull donnée dans \citealt*{jLom02}.

 Ihsen Yengui a par ailleurs utilisé la  définition constructive de la  
dimension de Krull dans plusieurs articles, dont certains établissent des résultats inconnus auparavant en mathématiques classiques. 

\selectlanguage{english}  

\bibliographystyle{plainnat}

\newpage
\pagenumbering{arabic}


\pagestyle{headings}
\patchcmd{\sectionmark}{\MakeUppercase}{}{}{}
\setcounter{page}{0}\renewcommand\thepage{E\arabic{page}}

\begingroup

\selectlanguage{english}



\theoremstyle{plain}
\newtheorem{theorem}{Theorem}[subsection]
\newtheorem{thdef}[theorem]{Theorem and definition}
\newtheorem{lemma}[theorem]{Lemma}
\newtheorem{corollary}[theorem]{Corollary}
\newtheorem{proposition}[theorem]{Proposition}
\newtheorem{propdef}[theorem]{Proposition and definition}
\newtheorem{plcc}[theorem]{Concrete local-global principle}
\newtheorem{fact}[theorem]{Fact}
\newtheorem{valsatz}[theorem]{\vst}

\newtheorem{theoremc}[theorem]{Theorem\etoz}
\newtheorem{lemmac}[theorem]{Lemma\etoz}
\newtheorem{corollaryc}[theorem]{Corollary\etoz}
\newtheorem{propositionc}[theorem]{Proposition\etoz}
\newtheorem{factc}[theorem]{Fact\etoz}
\newtheorem*{Principleofcoveringbyquotients}{Principle of covering by quotients}

\theoremstyle{definition}
\newtheorem{conjecture}[theorem]{Conjecture}
\newtheorem{definition}[theorem]{Definition}
\newtheorem{definitions}[theorem]{Definitions}
\newtheorem{notation}[theorem]{Notation}
\newtheorem{definota}[theorem]{Definition and notation} 
\newtheorem{convention}[theorem]{Convention}
\newtheorem{problem}[theorem]{Problem}
\newtheorem{question}[theorem]{Question}

\theoremstyle{remark}
\newtheorem{remark}[theorem]{Remark}
\newtheorem{remarks}[theorem]{Remarks}
\newtheorem{comment}[theorem]{Comment}
\newtheorem{comments}[theorem]{Comments}
\newtheorem{example}[theorem]{Example}
\newtheorem{examples}[theorem]{Examples}


\newcommand {\rem}{\noindent \textsl{Remark.} }
\newcommand {\rems}{\noindent \textsl{Remarks.}  }
\newcommand {\comm}{\noindent \textsl{Comment.}  }
\newcommand \exl{\noindent \textsl{Example.} }

\newcommand\gui[1]{``{#1}''}

\newcommand \thref[1] {Theorem~\ref{#1}}
\newcommand \paref[1] {page~\pageref{#1}}
\newcommand \pstfref[1] {Positivstellensatz formel~\ref{#1}}
\newcommand \pstref[1] {Positivstellensatz~\ref{#1}}

\newenvironment{Proof}[1]{
\trivlist \item[\hskip \labelsep{\sl #1.}]\hskip 0pt\\}{\hfill 
\mbox{$\Box$}
\endtrivlist}

\newcommand\subsubsec[1] {\subsubsection*{#1}}

\newcommand \recu {induction\xspace} 
\newcommand \hdr {induction hypothesis\xspace}
\newcommand \ssi {if and only if\xspace}
\newcommand \cnes {necessary and sufficient condition\xspace}
\newcommand \spdg {without loss of generality\xspace}
\newcommand \Propeq {The following properties are equivalent.\xspace}
\newcommand \propeq {the following properties are equivalent.\xspace}
\newcommand \disept {17$^{th}$ Hilbert's problem\xspace}

\newcommand \cad  {{i.e.}\xspace}
\newcommand \cade {{i.e.}\ also\xspace}

\newcommand \Vrai {\mathsf{True}}
\newcommand \Faux {\mathsf{False}}


\newcommand \Amo {$\gA$-module\xspace}
\newcommand \Amos {$\gA$-modules\xspace}

\newcommand \Bmo {$\gB$-module\xspace}
\newcommand \Bmos {$\gB$-modules\xspace}

\newcommand \Zmo {$\gZ$-module\xspace}
\newcommand \Zmos {$\gZ$-modules\xspace}

\newcommand \ZZmo {$\ZZ$-module\xspace}
\newcommand \ZZmos {$\ZZ$-modules\xspace}

\newcommand \Ali {$\gA$-\ali}
\newcommand \Alis {$\gA$-\alis}

\newcommand \Alg {$\gA$-\alg}
\newcommand \Algs {$\gA$-\algs}

\newcommand \kev {$\gk$-vector space\xspace}
\newcommand \kevs {$\gk$-vector spaces\xspace}

\newcommand \Kev {$\gK$-vector space\xspace}
\newcommand \Kevs {$\gK$-vector spaces\xspace}

\newcommand \klg {$\gk$-\alg}
\newcommand \klgs {$\gk$-\algs}

\newcommand \Klg {$\gK$-\alg}
\newcommand \Klgs {$\gK$-\algs}

\newcommand \ac {algebraically closed\xspace}
\newcommand \alc {\agq closure\xspace}

\newcommand \adv {valuation domain\xspace}
\newcommand \advs {valuation domains\xspace}

\newcommand \agB {Boolean algebra\xspace} 
\newcommand \agBs {Boolean algebras\xspace} 

\newcommand \agH {Heyting algebra\xspace} 
\newcommand \agHs {Heyting algebras\xspace}

\newcommand \arv {valuation ring\xspace}
\newcommand \arvs {valuation rings\xspace}

\newcommand \agq {algebraic\xspace}

\newcommand \alg {algebra\xspace}
\newcommand \algs {algebras\xspace}

\newcommand \algo{algorithm\xspace}
\newcommand \algos{algorithms\xspace}

\newcommand \algq{algorithmic\xspace}

\newcommand \ali {\lin map\xspace}
\newcommand \alis {\lin maps\xspace}

\newcommand \anar {\ari \ri}
\newcommand \anars {\ari \ris}
\newcommand \Anars {\Ari \ris}

\newcommand \ari{arith\-metic\xspace}

\newcommand \auto {automorphism\xspace}
\newcommand \autos {automorphisms\xspace}


\newcommand \cac {algebraically closed field\xspace}
\newcommand \cacs {algebraically closed fields\xspace}

\newcommand \cara{characteristic\xspace}  

\newcommand \carn{characterization\xspace}  
\newcommand \carns{characterizations\xspace}  

\newcommand \cdi{discrete field\xspace}  
\newcommand \cdis{discrete fields\xspace}  

\newcommand \cdf{fraction field\xspace}
\newcommand \cdfs{fraction fields\xspace}

\newcommand \cdpfv{constructive point-free dual version\xspace}
 
\newcommand \cli{integral closure\xspace}  
\newcommand \clis{integral closures\xspace}  

\newcommand \codi {discrete ordered field\xspace}
\newcommand \codis {discrete ordered fields\xspace}

\newcommand \coe {coefficient\xspace}
\newcommand \coes {coefficients\xspace}

\newcommand \cof {\cov}
\newcommand \cofs {\cov}
\newcommand \covs {\cov}

\newcommand \coh {coherent\xspace}

\newcommand \coli {linear combination\xspace}
\newcommand \colis {linear combinations\xspace}

\newcommand \com {comaximal\xspace}

\newcommand \coo {coordinate\xspace}
\newcommand \coos {coordinates\xspace}

\newcommand \cop {complementary\xspace}

\newcommand \cori {commutative \ri}
\newcommand \coris {commutative \ris}

\newcommand \cosv {conservative\xspace}

\newcommand \cvd {valued discrete field\xspace}
\newcommand \cvds {valued discrete fields\xspace}

\newcommand \cvdsc {separably closed valued discrete field\xspace}
\newcommand \cvdscs {separably closed valued discrete fields\xspace}

\newcommand \cvdac {algebraicalle closed valued discrete field\xspace}
\newcommand \cvdacs {algebraicalle closed valued discrete fields\xspace}


\newcommand \dcd {residually discrete\xspace}

\newcommand \ddp {Pr\"ufer domain\xspace}
\newcommand \ddps {Pr\"ufer domains\xspace}

\newcommand \ddk {Krull dimension\xspace}

\newcommand \demo {proof\xspace}
\newcommand \dems {proofs\xspace}
\newcommand \demos {\dems}

\newcommand \dfn{definition\xspace}  
\newcommand \Dfn{Definition\xspace}  
\newcommand \Dfns{Definitions\xspace}  
\newcommand \dfns{definitions\xspace}  

\newcommand \dij{disjunctive\xspace}
\newcommand \wdij{weakly \dij}

\newcommand \discri{discriminant\xspace}
\newcommand \discris{discriminants\xspace}

\newcommand \dok {Dedekind domain\xspace}
\newcommand \doks {Dedekind domains\xspace}

\newcommand \dve {divisibility\xspace}

\newcommand \dvz {zerodivisor\xspace}
\newcommand \dvzs {zerodivisors\xspace}

\newcommand \eco{\com \elts}  

\newcommand \egmt{also\xspace} 

\newcommand \egt{equality\xspace} 
\newcommand \egts{equalities\xspace} 

\newcommand \elr{elementary\xspace}  

\newcommand \elt{element\xspace}  
\newcommand \elts{elements\xspace}  

\def \endo {endomorphism\xspace}
\def \endos {endomorphisms\xspace}

\newcommand \entrel {entailment relation\xspace}
\newcommand \entrels {entailment relations\xspace}

\newcommand \eqn  {equation\xspace}
\newcommand \eqns  {equations\xspace}

\newcommand \eqv  {equivalent\xspace}

\newcommand \eqvc  {equivalence\xspace}
\newcommand \eqvcs  {equivalences\xspace}

\newcommand \eseq{essentially equivalent\xspace} 
\newcommand \Eseq{Essentially equivalent\xspace} 

\newcommand \esid{essentially identical\xspace} 
\newcommand \Esid{Essentially identical\xspace} 

\newcommand \evc{vector space\xspace} 
\newcommand \evcs{vector spaces\xspace} 


\newcommand \fab {bounded \fcn}
\newcommand \fabs {bounded \fcns}

\newcommand \fac {total \fcn}

\newcommand \facile{\begin{proof}
Left to the reader.
\end{proof}}

\newcommand \fap {partial \fcn}
\newcommand \faps {partial \fcns}

\newcommand \fcn {factorization\xspace}
\newcommand \fcns {factorizations\xspace}

\newcommand \fdi{strongly discrete\xspace} 

\newcommand \flw{following\xspace} 


\newcommand\gmq{geometric\xspace}

\newcommand\gne{generalised\xspace}

\newcommand\gnl{general\xspace}

\newcommand\gnlt{generally\xspace}

\newcommand\gnn{generalization\xspace}
\newcommand\gnns{generalizations\xspace}

\newcommand\gnq{generic\xspace}

\newcommand\grl{$\ell$-group\xspace}
\newcommand\grls{$\ell$-groups\xspace}

\newcommand \gtr{generator\xspace}  
\newcommand \gtrs{generators\xspace}  


\newcommand \homeo {homeomorphism\xspace}
\newcommand \homeos {homeomorphisms\xspace}
\newcommand \homeoc {homeomorphic\xspace}
\newcommand \homeocs {homeomorphic\xspace}

\newcommand \homo {homomorphism\xspace}
\newcommand \homos {homomorphisms\xspace}

\newcommand \id {ideal\xspace}
\newcommand \ids {ideals\xspace}

\newcommand \idd {de\-ter\-mi\-nantal \id}
\newcommand \idds {de\-ter\-mi\-nantal \ids}

\newcommand \idema {maximal \id}
\newcommand \idemas {maximal \ids}

\newcommand \idep {prime \id}
\newcommand \ideps {prime \ids}

\newcommand \idemi {minimal prime\xspace}
\newcommand \idemis {minimal primes\xspace}

\newcommand \idf {Fitting \id}
\newcommand \idfs {Fitting \ids}

\newcommand \idm {idempotent\xspace}
\newcommand \idms {idempotents\xspace}

\newcommand \idp {principal \id}
\newcommand \idps {principal \ids}

\newcommand \idtr {indeterminate\xspace}
\newcommand \idtrs {indeterminates\xspace}

\newcommand \ifr {fractional \id}
\newcommand \ifrs {fractional \ids}

\newcommand \imd {immediate\xspace}
\newcommand \imdt {immediately\xspace}

\newcommand \inteq {intuitively \eqv}

\newcommand \ird {irreducible\xspace}

\newcommand \itf {\tf \id}
\newcommand \itfs {\tf \ids}

\newcommand \iso {isomorphism\xspace}
\newcommand \isos {isomorphisms\xspace}

\newcommand \isoc {isomorphic\xspace}
\newcommand \isocs {isomorphic\xspace}

\newcommand \iv {invertible\xspace}

\newcommand \lec {reader\xspace}

\newcommand \lgb {local global\xspace}

\newcommand \lin {linear\xspace}

\newcommand \llec {the reader\xspace}

\newcommand \lon {localization\xspace}
\newcommand \lons {localizations\xspace}

\newcommand \lop {\lot principal\xspace}

\newcommand \losd {\lot \sdz\xspace}

\def \lot {locally\xspace}

\newcommand \mlp {principal \lon matrix\xspace}
\newcommand \mlps {principal \lon matrices\xspace}

\newcommand \mnp {manipulation\xspace}
\newcommand \mnps {manipulations\xspace}
\newcommand \mnr {\elr \mnp}
\newcommand \mnrs {\elr \mnps}

\newcommand \mo {monoid\xspace}
\newcommand \mos {monoids\xspace}
\newcommand \moco {\com \mos}

\newcommand \mpf {\pf module\xspace}
\newcommand \mpfs {\pf modules\xspace}

\newcommand \mpn {\pn matrix\xspace}
\newcommand \mpns {\pn matrices\xspace}

\newcommand \mpr {\pro module\xspace}
\newcommand \mprs {\pro modules\xspace}

\newcommand \mprn {\prn matrix\xspace}
\newcommand \mprns {\prn matrices\xspace}

\newcommand \mptf {\ptf module\xspace}
\newcommand \mptfs {\ptf modules\xspace}

\newcommand \mrc {projective module of constant rank\xspace}
\newcommand \mrcs {projective modules of constant rank\xspace}


\newcommand \ncr{necessary\xspace}

\newcommand \ncrt{necessarily\xspace}

\newcommand \ndz {regular\xspace}

\newcommand \noe {Noetherian\xspace}
\newcommand \noco {\noe\coh}

\newcommand \nst {Nullstellensatz\xspace}
\newcommand \nsts {Nullstellens\"atze\xspace}

\newcommand \odz {Zariski open set\xspace}

\newcommand \oqc {\qc open subset\xspace}
\newcommand \oqcs {\qc open subsets\xspace}

\newcommand \ort {orthogonal\xspace}


\newcommand \pa {saturated pair\xspace}
\newcommand \pas {saturated pairs\xspace}

\newcommand \pb{problem\xspace}  
\newcommand \pbs{problems\xspace}

\newcommand \peq {purely equational\xspace}

\newcommand \pf {finitely presented\xspace}

\newcommand \plg {\lgb principle\xspace}
\newcommand \plgs {\lgb principles\xspace}

\newcommand \plga {abstract \plg}
\newcommand \plgas {abstract \plgs}

\newcommand \Plgc {Concrete \plg}
\newcommand \plgc {concrete \plg}
\newcommand \plgcs {concrete \plgs}

\newcommand \pn {presentation\xspace}
\newcommand \pns {presentations\xspace}

\newcommand \pol {polynomial\xspace}
\newcommand \pols {polynomials\xspace}

\newcommand \polcar {characteristic \pol}

\newcommand \prc {rank constant \pro}

\newcommand \prmt {precisely\xspace}
\newcommand \Prmt {Precisely\xspace}

\newcommand \prn {projection\xspace}
\newcommand \prns {projections\xspace}

\newcommand \pro {projective\xspace}

\newcommand \proi {potential prime\xspace}
\newcommand \prois {potential primes\xspace}

\newcommand \proc {potential chain\xspace}
\newcommand \procs {potential chains\xspace}

\newcommand \proel {elementary \proc}
\newcommand \proels {elementary \procs}
\newcommand \proelo {\proel of length }
\newcommand \proelos {\proels of length }

\newcommand \prolo {\proc of length }
\newcommand \prolos {\procs of length }

\newcommand \prt {property\xspace}
\newcommand \prts {properties\xspace}

\newcommand \pst {Positivstellensatz\xspace}
\newcommand \psts {Positivstellens\"atze\xspace}

\newcommand \ptf {\tf \pro}


\newcommand \qc {quasi-compact\xspace}

\newcommand \qiri {pp-ring\xspace}
\newcommand \qiris {pp-rings\xspace}

\newcommand \ralg {Horn rule\xspace}
\newcommand \ralgs {Horn rules\xspace}

\newcommand \rcf {real closed field\xspace}
\newcommand \rcfs {real closed fields\xspace}

\newcommand \rdl {linear dependance relation\xspace}
\newcommand \rdls {linear dependance relations\xspace}

\newcommand \rdi {integral dependance relation\xspace}
\newcommand \rdis {integral dependance relations\xspace}

\newcommand \rdij {\dij rule\xspace}
\newcommand \rdijs {\dij rules\xspace}

\newcommand \rdv {valuative divisibility relation\xspace}
\newcommand \rdvs {valuative divisibility relations\xspace}

\newcommand \rdy {dynamical rule\xspace}
\newcommand \rdys {dynamical rules\xspace}

\newcommand \red {direct rule\xspace}
\newcommand \reds {direct rules\xspace}

\newcommand \rex {existential rule\xspace}
\newcommand \rexs {existential rules\xspace}

\newcommand \ri {ring\xspace}
\newcommand \ris {rings\xspace}


\newcommand \sad {dynamical algebraic structure\xspace}
\newcommand \sads {dynamical algebraic structures\xspace}
\newcommand \SAD {Dynamical algebraic structure\xspace}
\newcommand \SADs {Dynamical algebraic structures\xspace}

\newcommand \salg {algebraic structure\xspace}
\newcommand \salgs {algebraic structures\xspace}

\newcommand \sdz {without \dvz}

\newcommand \sfio {fundamental system of orthogonal idempotents\xspace}

\newcommand \sgr {\gtr set\xspace}
\newcommand \sgrs {\gtr sets\xspace}

\newcommand \sli {\lin \sys}
\newcommand \slis {\lin \syss}

\newcommand \spb {separable\xspace}
\newcommand \spl {separable\xspace}

\newcommand \sps {spectral space\xspace}
\newcommand \ssps {spectral subspace\xspace}
\newcommand \spss {spectral spaces\xspace}
\newcommand \sspss {spectral subspaces\xspace}

\newcommand \sys {system\xspace}
\newcommand \syss {systems\xspace}

\newcommand \talg {Horn theory\xspace}
\newcommand \talgs {Horn theories\xspace}

\newcommand \tco {coherent theory\xspace}
\newcommand \tcos {coherent theories\xspace}

\newcommand \twdij {\wdij theory\xspace}
\newcommand \twdijs {\wdij theories\xspace}

\newcommand \tdy {dynamical theory\xspace}
\newcommand \tdys {dynamical theories\xspace}

\newcommand \tel {regular theory\xspace}
\newcommand \tels {regular theories\xspace}

\newcommand \telri {cartesian theory\xspace}
\newcommand \telris {cartesian theories\xspace}

\newcommand \tf {finitely generated\xspace}

\newcommand \tfo {formal theory\xspace}
\newcommand \tfos {theory formelles\xspace}

\newcommand \tgm {\gmq theory\xspace}
\newcommand \tgms {\gmq theories\xspace}

\newcommand \Tho {Theorem\xspace}
\newcommand \tho {theorem\xspace}
\newcommand \thos {theorems\xspace}

\newcommand \tpe {purely equational theory\xspace}
\newcommand \tpes {purely equational theories\xspace}

\newcommand \trdi {distributive lattice\xspace} 
\newcommand \trdis {distributive lattices\xspace}

\newcommand \uvl {universal\xspace}

\newcommand \vfn {verification\xspace}
\newcommand \vfns {verifications\xspace}

\newcommand \vst {Valuativstellensatz\xspace}
\newcommand \vsts {Valuativstellensätze\xspace}

\newcommand \zed {zero-dimensional\xspace}
\newcommand \zedr {zero-dimensional reduced\xspace}


\newcommand \cov {constructive\xspace}

\newcommand \coma {\cov \maths}
\newcommand \clama {classical \maths}

\renewcommand \cot {constructively\xspace}

\newcommand \mathe {mathematical\xspace}
\newcommand \maths {mathematics\xspace}

\newcommand \matn {mathematician\xspace}

\newcommand \pte {excluded middle principle\xspace}

\newcommand \prco {\cov proof\xspace}
\newcommand \prcos {constructive proofs\xspace}

\newcommand \tcg {compactness theorem\xspace}
\newcommand \Tcgi {The \tcg implies the following result. }

\setcounter{page}{1}

\title{Heitmann dimension of distributive lattices\\ and \coris}

\def\proofname{\textsl{Proof}}

\author{Thierry Coquand, Henri Lombardi, Claude Quitté}

\date{\today}

\maketitle

\startcontents[english]

\rdb
\label{beginenglish}

\begin{abstract}
This paper is the English translation of the first 4 sections of the article
\citealt*{CLQ2006}, after some corrections. 

Sections 5--7 of the original article are treated a bit more simply in \citealt*{CACM}.   

We study the notion of dimension introduced by Heitmann in
his remarkable article \citealt*{Hei84}, as well as a related notion, only implicit in his proofs. We first develop this within the general framework of the theory of distributive lattices and spectral spaces.
\end{abstract}

\medskip \noindent {\bf Keywords:} 
Constructive mathematics, distributive lattice, \agH, spectral space, 
Zariski lattice, Zariski spectrum, Krull dimension, maximal spectrum, Heitmann lattice, Heitmann spectrum, Heitmann dimension. 

\smallskip \noindent {\bf MSC2020:} 13C15, 03F65, 13A15, 13E05

\setcounter{tocdepth}{4}
\markboth{Contents}{Contents}
\small

\printcontents[english]{}{1}{}
\normalsize


\section*{Introduction}
\addcontentsline{toc}{section}{Introduction}
\markboth{Introduction}{Introduction} 
We study the notion of dimension introduced in \citealt*{Hei84}, as well as a related notion,
only implicit in his proofs. We develop this first
in the general framework of the theory of distributive lattices and
spectral spaces. We then apply this issue in the
framework of commutative algebra. 

In the duality between distributive lattices and spectral spaces, the Zariski spectrum of a \cori corresponds (as indicated by André Joyal in \cite{Joy76}) to the lattice of ideals which are nilradicals of \itfs.
We show that the spectral space defined by Heitmann for his notion of dimension corresponds to the lattice formed by the ideals which are Jacobson radicals of finitely generated ideals. This allows us to obtain an elementary constructive definition of the dimension defined by Heitmann (which we denote by $\Jdim$).

We introduce another dimension, which we call Heitmann dimension (and which we denote by $\Hdim$), which is ``better'' in the sense that $\Hdim\leq \Jdim$ and that it allows for natural proofs by induction.

As consequences, one finds in \citealt*{CACM} constructive versions of certain important classical theorems, in their non-Noetherian version (often due to Heitmann).

Constructive versions of these theorems ultimately turn out to be simpler, and sometimes more general, than the corresponding classical abstract versions.

In particular this gives the non-Noetherian versions of Swan's and Serre's  (Splitting Off) theorems first obtained in \citealt*{CLQ2004} and in \citealt*{Duc2006}.

\smallskip
Naturally, the main advantage that we see in our treatment is its very elementary character. In particular, we do not use \gui{unnecessary} assumptions such as the axiom of choice and the law of excluded middle (LEM) which are unavoidable to make  classical proofs work.

Finally, the fact of having got rid of all Noetherian assumptions
 is also not negligible, and allows to see better
the essence of things.

\medskip In conclusion, this article can be seen essentially as a constructive development of the theory of spectral spaces via distributive lattices, with particular emphasis on the little-known Heitmann dimension, that leads to striking \cov results in non-\noe commutative algebra.

In the following text, the theorems, propositions and lemmas demonstrated in classical mathematics are marked with a star. This indicates that the proof uses non-constructive principles. In general, a constructive proof is in this case impossible because the result in the form indicated implies a non-constructive principle (almost always a use of LEM). For example in classical mathematics one can always recover the points of a spectral space from the distributive lattice formed by its quasi-compact open sets, but this is not always possible from a constructive point of view.

\medskip \rem We solved a terminology problem that has arised while writing this article in the following way. The word \gui{duality} appears a priori in the context of distributive lattices with two different meanings. On the one hand, there is the duality which corresponds to reversing the order relation in a lattice. On the other hand there is the duality between distributive lattices and spectral spaces, which corresponds to an antiequivalence of categories. We have decided to reserve \gui{duality} for this last use. The term \gui{dual lattice} has therefore been systematically replaced with  \gui{opposite lattice}. Similarly, \gui{the dual notion} has been replaced with  \gui{the reverse notion} or with  \gui{the opposite notion}, and \gui{by duality} with  \gui{by reversing the order}.
\eoe

\section{Distributive lattices}
\label{secTRDI}

The axioms of distributive lattices can be formulated with universal equalities concerning only the two laws $\vi$ and $\vu$ and the two constants $0_\gT$ (the minimum element of the distributive lattice~$\gT$) and $1 _\gT$ (the maximum). The order relation is then defined by $a\leq_\gT b\;\Leftrightarrow\;a\vi b=a$. We thus obtain a purely equational theory, with all the related facilities.
For example we can define a distributive lattice by generators and
relations, the category of distributive lattice has general colimits (which can be defined by generators and relations) and general (projective) limits (which have as underlying sets the corresponding set-based limits).

A totally ordered set is a distributive lattice if it has a
maximum and a minimum. We denote by ${\bf n}$ a totally ordered set with $n$ elements. It is a distributive lattice if $n\neq 0$. The lattice $\Deux$ is the free distributive lattice with 0 generator, and $\Trois$ the one with 1 generator.

For any distributive lattice $\gT$, if we replace the order relation $x\leq_\gT y$ by the reverse relation $y\leq_\gT x$, we obtain the
\textsl{opposite lattice} $\gT\cir$ by exchanging $\vi$ and $\vu$.

\subsection{Ideals, filters}

If $\varphi :\gT\rightarrow \gT'$ is a distributive lattice morphism,
$\varphi^{-1}(0)$ is called an \textsl{ideal of $\gT$}. An ideal $\fII $ of~$\gT$ is a subset of $\gT$ subject to the following constraints:
\begin{equation} \label{eqIdeal}
\left.
\begin{array}{rcl}
   & &  0 \in \fII    \\
x,y\in \fII & \Longrightarrow   &  x\vu y \in \fII    \\
x\in \fII ,\; z\in \gT& \Longrightarrow   &  x\vi z \in \fII    \\
\end{array}
\right\}
\end{equation}
(the last may be rewritten $(x\in \fII ,\;y\leq x)\Rightarrow y\in
\fII$). A \textsl{principal ideal} is an ideal generated by a single element $a$: it is equal to
\begin{equation} \label{eqda}
\,\dar a=\sotq{x\in \gT}{x\leq a}.
\end{equation}
The ideal $\dar a$, endowed with the laws $\vi$ and $\vu$ of $\gT$, is a distributive lattice in which the maximum element is~$a$. The canonical injection $\dar a\rightarrow \gT$ \textsl{is not} a distributive lattice morphism because the image of $a$ is not equal to $1_\gT$. However
the surjective map $\gT\rightarrow \,\dar a,\;x\mapsto x\vi a$ is a surjective morphism, which therefore endows $\dar a$ with a quotient structure.

\smallskip The opposite notion to that of ideal is the notion of {\sl
filter}. The principal filter generated by~$a$ is denoted by $\,\uar a$.

\smallskip The \textsl{ideal generated} by a subset $J$ of $\gT$ is
$\cI_\gT(J)=\sotq{x\in\gT}{\Ex J_0\in \Pf(J),\,x\leq \Vu J_0}$.
Consequently \textsl{every finitely generated ideal is principal}.

If $A$ and $B$ are two subsets of $\gT$ we denote
\begin{equation} \label{eqvuvi}
A\vu B=\sotq{a\vu b}{a\in A,\,b\in B} \quad \mathrm{and}\quad A\vi
B=\sotq{a\vi b}{a\in A,\,b\in B}.
\end{equation}

Then the ideal generated by two ideals $\fa$ and $\fb$ is equal to
\begin{equation} \label{eqSupId}
\cI_\gT(\fa\cup \fb) = \fa\vu\fb =\sotq{z}{\exists
x\in\fa,\,\exists
y\in\fb,\,z\leq x\vu y}\,.
\end{equation}

The set $\Idl(\gT)$ of ideals of $\gT$\footnote{In fact, you have to introduce a restriction to really get a set, in order to have a well-defined process for constructing the \ids concerned.
For example, we can consider the set of ideals obtained from the principal ideals by iterating certain predefined operations, such as countable unions and intersections.} itself forms a distributive lattice for inclusion, with for the inf of $\fa$ and $\fb$ the \id:
\begin{equation} \label{eqInfId}
\fa\cap \fb=\fa\vi\fb.
\end{equation}
We will denote by $\cF_\gT(S)$ the filter of $\gT$ generated by the subset $S$.
When we consider the lattice of the filters we must
pay attention to what reversing the order relation produces: $\ff\cap\ffg=\ff\vu\ffg$ is the inf of $\ff$ and $\ffg$,
while their sup is equal to $\cF_\gT(\ff\cup \ffg)=\ff\vi
\ffg$.

\medskip The \textsl{quotient lattice of $\gT$ by the ideal $\fJ$}, denoted
by $\gT/(\fJ=0)$, is defined as the distributive lattice generated by the elements of
$\gT$ with the following relations: the true relations in $\gT$ on the one hand,
and the relations $x=0$ for the $x\in \fJ$ on the other hand. It can
also be defined by the preorder relation
\[a\preceq b\quad\Longleftrightarrow\quad a\leq_{\gT/(\fJ=0)}b 
\equidef \quad
\exists x\in \fJ \;\;a\leq  x\vu b
\]
This gives
\[ a\equiv b\;\;\mod\;(\fJ=0)\quad \Longleftrightarrow \quad \exists
x\in \fJ
\;\;a\vu x=b\vu x
\]
and in the case of the quotient by a principal ideal $\,\dar a$ we obtain
$\gT/(a=0)\simeq\,\uar a$ with the morphism $y\mapsto y\vu a$ of $\gT$
onto
$\,\uar a$.

\subsubsection*{Conductor, difference}
\addcontentsline{toc}{subsubsection}{Conductor, difference}

By analogy with commutative algebra, if $\fb$ is an ideal and $A$
a subset of
$\gT$, we will denote
\begin{equation} \label{eqTrans}
\fb:A\eqdefi\sotq{x\in\gT}{\Tt a\in A\quad a\vi x\in \fb}
\end{equation}
If $\fa$ is the ideal generated by $A$ we have $\fb:A=\fb:\fa$;
it is called the \textsl{conductor of $\fa$ in $\fb$.}

We also denote by
$(b:a)$ the \id
$ (\dar b):(\dar a)=\sotq{x\in\gT}{x\in\gT\;|\;x\vi a\leq b}$.

The opposite notion is called the \textsl{difference filter of two
filters}
\begin{equation} \label{eqDiff}
\ff\setminus\ff'\eqdefi\sotq{x\in\gT}{\Tt a\in \ff'\quad a\vu
x\in\ff}
\end{equation}
We also denote by
$(b\setminus a)$ the filter
$ (\uar b)\setminus(\uar a)=\sotq{x\in\gT}{b\leq x\vu a}$.

\subsubsection*{Jacobson radical}
\addcontentsline{toc}{subsubsection}{Jacobson radical}

An ideal $\fm$ of a nontrivial distributive lattice $\gT$ (i.e.\ distinct from $\Un$) is
said to be \textsl{maximal} \hbox{if $\gT/(\fm=0)\,=\,\Deux$}, i.e.\ if $1\notin\fm$ and
$\Tt x\in\gT\;(x\in\fm$ or $\Ex y\in \fm \;x\vu y=1)$.

It amounts to the same thing to say that it is  an ideal \gui{maximal
among strict ideals}.

In \clama we have the following lemma.
\begin{lemmac}
\label{lemHspec1}
In a distributive lattice $\gT\neq\Un$ the intersection of the \idemas is equal to the \id
\[ \sotq{a\in\gT}{\forall x\in\gT \;(a\vu x = 1 \Rightarrow
x=1)}.
\]
It is called the \emph{Jacobson radical of $\gT$}. We denote it by
$\JT(0)$. \\
More generally the intersection of the \idemas containing an ideal
 $\fJ\neq \gT$
is equal to the \id
\begin{equation} \label{eqRJJ}
\JT(\fJ)\,=\, \sotq{a\in\gT}{\forall x\in\gT \;(a\vu x = 1
\Rightarrow \Ex
z\in \fJ \;\;z\vu x=1)}
\end{equation}
It is called the \emph{Jacobson radical of the ideal $\fJ$}. In
particular:
\begin{equation} \label{eqRJb}
\JT(\dar b)=\sotq{a\in\gT}{\forall x\in\gT \;(\,a\vu x = 1\;
\Rightarrow
\;\;b\vu x=1\,)}
\end{equation}
\end{lemmac}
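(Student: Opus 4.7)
The plan is to prove the general statement \pref{eqRJJ}, from which \pref{eqRJb} follows at once and the initial claim about $\JT(0)$ is the case $\fJ = \dar 0$. Denote by $M(\fJ)$ the intersection of all \idemas containing $\fJ$ and by $R(\fJ)$ the \id on the right-hand side of \pref{eqRJJ}. I first check that $R(\fJ)$ really is an ideal (closure under $\vu$ and under $\wedge$ with arbitrary elements is a routine verification from the defining implication), so that the equality $M(\fJ) = R(\fJ)$ makes sense.

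For the inclusion $R(\fJ) \subseteq M(\fJ)$, take $a \in R(\fJ)$ and a maximal ideal $\fm \supseteq \fJ$. Suppose, for a contradiction, that $a \notin \fm$. Maximality of $\fm$, in the form recalled just before the lemma, gives some $y \in \fm$ with $a \vu y = 1$. By the defining property of $R(\fJ)$, applied to $x = y$, there exists $z \in \fJ \subseteq \fm$ with $z \vu y = 1$. But then $1 = z \vu y \in \fm$, contradicting the fact that $\fm$ is a strict ideal. Hence $a \in \fm$.

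For the converse inclusion $M(\fJ) \subseteq R(\fJ)$, I argue contrapositively. Suppose $a \notin R(\fJ)$: there exists $x \in \gT$ such that $a \vu x = 1$ and yet $z \vu x \neq 1$ for every $z \in \fJ$. Consider the ideal
\[
\fI \;=\; \fJ \vu \,\dar x \;=\; \sotq{y \in \gT}{\exists z \in \fJ,\; y \leq z \vu x},
\]
which contains both $\fJ$ and $x$. The assumption on $x$ says exactly that $1 \notin \fI$, so $\fI$ is a strict ideal. Apply Zorn's lemma to the family of strict ideals containing $\fI$, ordered by inclusion: the hypotheses of Zorn's lemma are satisfied because the union of a chain of strict ideals is again a strict ideal (none of them contains $1$). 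This produces a maximal ideal $\fm \supseteq \fI$, which in particular contains $\fJ$ and $x$. If $a$ were in $\fm$, then $1 = a \vu x$ would also lie in $\fm$, contradicting strictness; so $a \notin \fm$ and therefore $a \notin M(\fJ)$.

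The only delicate point is the existence of a maximal ideal extending the strict ideal $\fI$, which is precisely the use of Zorn's lemma (equivalently the axiom of choice); this is exactly why the statement carries a star and is labelled \texttt{lemmac}. Everything else is formal manipulation with $\vu$, $\vi$ and the defining conditions \pref{eqIdeal}.
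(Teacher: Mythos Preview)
Your proof is correct and follows essentially the same argument as the paper. The only organizational difference is that the paper reduces the general case \pref{eqRJJ} to the special case $\fJ=\dar 0$ by passing to the quotient lattice $\gT/(\fJ=0)$ and then proves that special case, whereas you carry $\fJ$ along throughout; the two directions you prove (using maximality of $\fm$ to extract a $y\in\fm$ with $a\vu y=1$, and extending a strict ideal to a maximal one via Zorn) are exactly the steps in the paper's proof of the special case.
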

\begin{proof}
The second statement follows from the first by passing to the
quotient lattice $\gT/(\fJ=0)$. Let's see the first. We show that
$a$ is outside at least one \idema \ssi  $\Ex x\neq 1$ such that
$a\vu x=1$. If so, a \idema that contains $x$ (there are
such ideals since $x\neq 1$) cannot contain $a$ because it would contain $a\vu x$. Conversely, if $\fm$ is a \idema not containing $a$, the \id
generated by $\fm$ and~$a$ contains $1$. But this ideal is the set of
elements bounded by at least one $a\vu x$ where $x\in\fm$.
\end{proof}

In \clama a distributive lattice is called a \textsl{Jacobson lattice} if any
\idep is equal to its Jacobson radical. As every ideal is the
intersection of the \ideps that contain it, this implies that any \id
is equal to its Jacobson radical.

In \coma we adopt the following definitions.

\penalty-2500 
\begin{definitions}[Jacobson radical, weakly Jacobson lattice]
\label{defJac} ~
\begin{enumerate}
\item If $\fJ$ is an \id of the \trdi $\gT$  its \textsl{Jacobson radical} is defined by the \egt~$(\ref{eqRJJ})$ (we don't assume  $\gT\neq\Un$).
We will denote $\JT(\dar a)$ by $\JT(a)$.
\item A \trdi is called a \textsl{weakly Jacobson lattice} if each 
\idp is equal to its Jacobson radical, \cade if
\begin{equation} \label{eqTJac}
\Tt a,b\in\gT\;\;[\,(\forall x\in\gT\, (a\vu 
x=1 \;\Rightarrow\;b\vu 
x=1))\;\Rightarrow
\;a\leq b\,]
\end{equation}
\end{enumerate}
\end{definitions}

It is easily seen that $\JT(\fJ)$ is an \id and that
$1\in\JT(\fJ)\Leftrightarrow 1\in\fJ$.

\subsection{Quotient lattices}

A \textsl{quotient distributive lattice $\gT'$ of $\gT$} is obtained through a binary relation
$\cdot\preceq\cdot$ on $\gT$ satisfying the following \prts:
\begin{equation} \label{eqPreceq}
\left.
\begin{array}{rcl}
a\leq b&  \Longrightarrow  & a\preceq b   \\
a\preceq b,\,b\preceq c&  \Longrightarrow  & a\preceq c   \\
a\preceq b,\,a\preceq c&  \Longrightarrow  & a\preceq b\vi c   \\
b\preceq a,\,c\preceq a&  \Longrightarrow  & b\vu c\preceq a
\end{array}
\right\}
\end{equation}
The equality $a=_{\gT'}b$ (or $a\approx b$) is defined as $a\approx b \iff a\preceq b \hbox{ and } b\preceq a$.

\begin{proposition}[special kinds of quotients]
\label{propIdealFiltre} Let $\gT$ be a distributive lattice and
$(J,U)$ a pair of subsets of $\gT$.
We consider the quotient $\gT'$ of $\gT$ defined by the
relations $x=0$ for all $x\in J$ and $y=1$ for all $y\in U$. Then we have the following equivalence:\\ $a\leq_{\gT'}b$ \ssi 
there exist a finite subset $J_0$ of $J$ and  a finite subset $U_0$ of
$U$ such that
\begin{equation} \label{eqpropIdealFiltre}
a \vi \Vi U_0 \; \leq_\gT\; b \vu \Vu J_0
\end{equation}
We shall denote by $\gT/(J=0,U=1)$ this quotient lattice $\gT'.$
\end{proposition}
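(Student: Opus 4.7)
I would define the relation
\[a\preceq b\;\equidef\;\exists J_0\in\Pf(J),\,\exists U_0\in\Pf(U),\;\;a\vi\Vi U_0\;\leq_\gT\;b\vu\Vu J_0\]
directly on $\gT$ and then identify it with $\leq_{\gT'}$. The strategy has two parts: first, verify that $\preceq$ satisfies the four clauses of~(\ref{eqPreceq}) and collapses the elements of $J$ to $0$ and those of $U$ to $1$, so that it descends from $\gT$ to a distributive lattice quotient in which the desired relations hold; and second, prove minimality, namely that any relation $\preceq'$ satisfying (\ref{eqPreceq}) and having $x\preceq'0\preceq'x$ for $x\in J$ and $y\preceq'1\preceq'y$ for $y\in U$ already contains $\preceq$.

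The first clause of (\ref{eqPreceq}) holds trivially on taking $J_0=U_0=\emptyset$, with the conventions $\Vi\emptyset=1$ and $\Vu\emptyset=0$. Transitivity is the key distributivity calculation: given witnesses $(J_0,U_0)$ for $a\preceq b$ and $(J_1,U_1)$ for $b\preceq c$, meeting the first inequality with $\Vi U_1$ and distributing $\vi$ over $\vu$ gives
\[a\vi\Vi(U_0\cup U_1)\;\leq\;(b\vi\Vi U_1)\vu\Vu J_0\;\leq\;c\vu\Vu(J_0\cup J_1).\]
For the clause $a\preceq b,\,a\preceq c\Rightarrow a\preceq b\vi c$, enlarging both witness pairs to $(J,U')$ with $J=J_0\cup J_1$ and $U'=U_0\cup U_1$ yields by distributivity $a\vi\Vi U'\leq(b\vu\Vu J)\vi(c\vu\Vu J)=(b\vi c)\vu\Vu J$, and the fourth clause is handled symmetrically. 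That $x\in J$ gives $x\preceq 0$ and $y\in U$ gives $1\preceq y$ is immediate from singleton witnesses.

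The more delicate step is minimality. From (\ref{eqPreceq}) alone, one first derives the bilateral monotonicity of $\vi$ and $\vu$ with respect to $\preceq'$: if $u\preceq' v$ then $u\vi w\preceq' u\preceq' v$ and $u\vi w\leq w$ give, via the third clause, $u\vi w\preceq' v\vi w$, and dually for $\vu$. Iterating, one obtains $1\preceq'\Vi U_0$ for any finite $U_0\subseteq U$ and $\Vu J_0\preceq' 0$ for any finite $J_0\subseteq J$. Hence, given a witness $(J_0,U_0)$ for $a\preceq b$, the chain
\[a\;=\;a\vi 1\;\preceq'\;a\vi\Vi U_0\;\preceq'\;b\vu\Vu J_0\;\preceq'\;b\vu 0\;=\;b,\]
whose middle link applies the first clause of (\ref{eqPreceq}) to the hypothesis $a\vi\Vi U_0\leq_\gT b\vu\Vu J_0$, yields $a\preceq' b$. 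Specializing $\preceq'$ to $\leq_{\gT'}$ gives one implication of the stated equivalence, while the fact that $\preceq$ itself presents a quotient distributive lattice realizing the prescribed collapses gives the other. The expected main hurdle is precisely the derivation of the two-sided monotonicity of the lattice operations from the asymmetric clauses of~(\ref{eqPreceq}); everything else is distributivity bookkeeping.
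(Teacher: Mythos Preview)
The paper states Proposition~\ref{propIdealFiltre} without proof, so there is no argument to compare against. Your proof is correct and is the natural one: you verify that the explicitly defined relation $\preceq$ satisfies the clauses~(\ref{eqPreceq}) and collapses $J$ and $U$ as required (so the universal quotient preorder $\leq_{\gT'}$ is contained in $\preceq$), and then you show that any preorder satisfying these constraints already contains $\preceq$ (so $\preceq\subseteq\leq_{\gT'}$). The transitivity computation and the derivation of two-sided monotonicity of $\vi,\vu$ from the asymmetric clauses of~(\ref{eqPreceq}) are both handled correctly; the latter is indeed the only place where one must pause, and your argument via the third and fourth clauses is exactly right.
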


\subsubsection*{Ideals in a quotient lattice}
\addcontentsline{toc}{subsubsection}{Ideals in a quotient lattice}

The following fact follows from the equalities (\ref{eqIdeal}), (\ref{eqvuvi}),
(\ref{eqSupId}), and (\ref{eqInfId}).
\begin{fact}
\label{factIdDansQuo}
Let $\pi\colon \gT\to\gL$ be a quotient lattice. Then $\fa\mt\pi(\fa)$ gives a map from $\Idl(\gT)$ to~$\Idl(\gL)$ and $\fb\mt\pi^{-1}(\fb)$ gives a map from $\Idl(\gL)$ to~$\Idl(\gT)$.
\begin{itemize}\itemsep.2em
\item The map $\fb\mt\pi^{-1}(\fb)$ is
 a morphism for $\vu$ and~$\vi$ (but not necessarily
for $\so0$).
\item The map $\fa\mt\pi(\fa)$ gives a surjective lattice \homo from $\Idl(\gT)$ to $\Idl(\gL)$.
\item An ideal of $\gT$ has the form $\pi^{-1}(\fb)$ \ssi   it is
saturated for
the relation~$\approx$.
\item Similar results hold for filters.
\end{itemize}
\end{fact}

Note that in commutative algebra, the morphism of passing to the
quotient by one
ideal does not behave so well for ideals 
since we can very well have
$\fa+(\fb\cap\fc)\varsubsetneq
(\fa+\fb)\cap(\fa+\fc)$.

   The following lemma gives some additional information.
for the quotients by an ideal and by~a filter.

\begin{lemma}
\label{lemIQT}
Let $\fa$ be an ideal and $\ff$ a filter of $\gT$.
\begin{enumerate}
\item If $\gL=\gT/(\fa=0)$ then the canonical projection
$\pi\colon \gT\to\gL$
establishes a nondecreasing bijection between the ideals of $\gT$ containing
$\fa$ and
the ideals of $\gL$. The inverse bijection is provided by
$\fj\mapsto\pi^{-
1}(\fj)$. Moreover if $\fj$ is an ideal of $\gL$, we get~\hbox{$\pi^{-1}(\rJ_\gL(\fj)) = \JT(\pi^{-1 }(\fj))$}.
\item If $\gL=\gT/(\ff=1)$ then the canonical projection
$\pi\colon \gT\to\gL$
establishes a nondecreasing bijection between the ideals $\fJ$ of $\gT$
satisfying
\gui{$\Tt f \in \ff,\;\;\fJ:f=\fJ$} and the ideals of~$\gL$.
\end{enumerate}
\end{lemma}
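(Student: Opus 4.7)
The plan is to handle the two items separately, in each case exploiting the explicit description of the quotient given by Proposition~\ref{propIdealFiltre} together with Fact~\ref{factIdDansQuo}, which already tells us that saturated ideals correspond bijectively to ideals of the quotient. So what remains to do in each part is to rewrite the saturation condition in the ad hoc form stated, and for part~1 to identify the Jacobson radicals.

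For part~1, note that $\fa \subseteq \pi^{-1}(\fj)$ for every ideal $\fj$ of $\gL$, since $\pi(\fa) = 0 \in \fj$. The equivalence relation $=_\gL$ reads here $x =_\gL y \Leftrightarrow \exists u \in \fa,\ x \vu u = y \vu u$. I would check that an ideal $\fb$ of $\gT$ is saturated for $=_\gL$ \ssi it contains $\fa$: if $\fa \subseteq \fb$ and $x \vu u = y \vu u$ with $u \in \fa$, $y \in \fb$, then $x \le x \vu u = y \vu u \in \fb$, hence $x \in \fb$; the converse is immediate since $0 \in \fb$ forces $\fa \subseteq \fb$ by saturation applied to $u =_\gL 0$. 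Combined with Fact~\ref{factIdDansQuo}, this gives the claimed bijection, with inverse $\fj \mapsto \pi^{-1}(\fj)$.

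For the Jacobson radical identity, set $\fJ = \pi^{-1}(\fj)$. For the inclusion $\pi^{-1}(\rJ_\gL(\fj)) \subseteq \JT(\fJ)$, take $a$ with $\pi(a) \in \rJ_\gL(\fj)$ and $x \in \gT$ with $a \vu x = 1$; apply the definition \eqref{eqRJJ} in $\gL$ to $\pi(a) \vu \pi(x) = 1$, lift a witness $\bar z \in \fj$ to some $z_0 \in \fJ$, and use the quotient description to get $u \in \fa$ with $z_0 \vu x \vu u = 1$; then $z := z_0 \vu u$ lies in $\fJ$ (since $\fa \subseteq \fJ$) and satisfies $z \vu x = 1$. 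For the reverse inclusion, given $a \in \JT(\fJ)$ and $\bar x \in \gL$ with $\pi(a) \vu \bar x = 1$, lift $\bar x$ to $x$, extract $u \in \fa$ with $a \vu x \vu u = 1$, apply the hypothesis on $a$ to $x' = x \vu u$ to obtain $z \in \fJ$ with $z \vu x \vu u = 1$, and project to $\gL$ to get $\pi(z) \vu \bar x = 1$ with $\pi(z) \in \fj$.

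For part~2, the relation is now $x =_\gL y \Leftrightarrow \exists f \in \ff,\ x \vi f = y \vi f$. By Fact~\ref{factIdDansQuo} it suffices to show that an ideal $\fJ$ of $\gT$ is saturated for $=_\gL$ \ssi $\fJ : f = \fJ$ for every $f \in \ff$. If $\fJ$ is saturated and $x \vi f \in \fJ$ with $f \in \ff$, apply saturation to the equality $(x \vi f) \vi f = x \vi f$ (viewed as $x \vi f =_\gL x$) to conclude $x \in \fJ$. Conversely, if all conductors $\fJ:f$ equal $\fJ$ and $a \in \fJ$, $b \in \gT$ satisfy $a \vi f = b \vi f$ for some $f \in \ff$, then $b \vi f = a \vi f \le a$ lies in $\fJ$, whence $b \in \fJ:f = \fJ$. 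The mildly delicate point, which is the main place to be careful, is keeping straight the asymmetry between the ideal and filter quotient descriptions in Proposition~\ref{propIdealFiltre}; once this is set up, the rest is straightforward lattice bookkeeping.
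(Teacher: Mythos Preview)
Your proof is correct. The paper does not actually give a proof of this lemma; it only follows it with a remark noting that the Jacobson radical identity in Item~1 ``admits a direct constructive proof'' and explaining it in classical mathematics via the correspondence of maximal ideals under $\pi^{-1}$. Your argument supplies precisely such a direct constructive proof, and your treatment of both items via Fact~\ref{factIdDansQuo} and the explicit description of $=_\gL$ from Proposition~\ref{propIdealFiltre} is the natural route the paper implicitly has in mind.
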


\rem Note that $\gT\mapsto\JT(0)$ is not 
functorial. The second assertion of Item \textsl{1} of the previous lemma, which admits a direct constructive proof, is easily explained in \clama by the fact that, in the very particular  case of the quotient by an \id, the \idemas of $\gL$ containing $\fj$ correspond by $\pi^{-1}$ to the \idemas of~$\gT$ containing~$\pi^{-1}(\fj)$.
\eoe

\subsubsection*{Gluing quotient lattices}
\addcontentsline{toc}{subsubsection}{Gluing quotient lattices}

In commutative algebra, if $\fa$ and $\fb$ are two ideals of a ring
$\gA$
we have an \gui{exact sequence} of \Amos (with $j$ and $p$ being \homos
of rings)
\[0\to\gA/(\fa\cap\fb)\vers{j}(\gA/\fa) \times 
(\gA/\fb)\vers{p}\gA/(\fa+\fb)\to 0
\]
which can be read in everyday language as follows: the system of congruences
$x\equiv
a\;\mod\;\fa$, $x\equiv b\;\mod\;\fb$ admits a solution \ssi  $a\equiv
b\;\mod\;\fa+\fb$ and in this case the solution is unique modulo
$\fa\cap\fb$.
It is remarkable that this \gui{Chinese remainder theorem}
generalizes to  \textsl{any} system of congruences \ssi  the ring is
\textsl{arithmetic} \cite[Theorem \hbox{XII-1.6}]{ACMC}, i.e.\ if the lattice of ideals is distributive.
The \gui{contemporary} Chinese remainder theorem concerns the particular case of a family of two-by-two co\idemas, and it works without
hypothesis on the base ring.

Other epimorphisms in the category of \coris are the localizations. 
And there is a gluing principle analogous to the above Chinese  theorem
for localizations which is extremely fruitful (the local-global  principle).

\smallskip In the same way we can recover a distributive lattice
from a finite number of its quotients,
if the information they contain is \gui{sufficient}. We can
see this as a gluing procedure (going from the local to the
global), or as a version of the Chinese remainder theorem for distributive lattices. Let's see these
things more precisely.

\begin{definition}
\label{defRecolTD}
Let $\gT$ be a distributive lattice, $(\fa_i)_{i=1,\ldots, n}$ (resp.\
$(\ff_i)_{i=1,\ldots, n}$)
a finite family of ideals (resp.\ of filters) of $\gT$. We say that
the ideals
$\fa_i$ \textsl{cover $\gT$} if $\bigcap_i\fa_i=\so{0}$. Dually we say
that the filters $\ff_i$ \textsl{cover $\gT$} if
$\bigcap_i\ff_i=\so{1}$.
\end{definition}

For an ideal $\fb$ we write $x\equiv y\;\mod\;\fb$ as an
abbreviation
for $x\equiv y\;\mod$ \hbox{$(\fb=0)$}.
\begin{fact}
\label{factRecolTD}
Let $\gT$ be a distributive lattice, $(\fa_i)_{i=1,\ldots, n}$ a finite family of principal ideals ($\fa_i=\dar s_i$) of
$\gT$ and $\fa=\bigcap_i\fa_i$.
\begin{enumerate}\itemsep.2em
\item If $(x_i)$ is a family of elements of $\gT$ such that for
each $i,j$ we have $x_i\equiv x_j\;\mod\;\fa_i\vu\fa_j$, then there is a matching $x$, unique modulo
$\fa$: $x\equiv x_i\;\mod\;\fa_i\;(i=1,\ldots ,n)$.
\item Denote $\gT_i=\gT/(\fa_i=0)$,
$\gT_{ij}=\gT_{ji}=\gT/(\fa_i\vu\fa_j=0)$,
$\pi_i\colon \gT\to\gT_i$ and $\pi_{ij}:\gT_i\to\gT_{ij}$ the canonical projections.
If the $\fa_i$'s cover $\gT$, then
$(\gT,(\pi_i)_{i=1,\ldots, n})$ is
the projective limit of the diagram  
\[((\gT_i)_{1\leq i\leq n},(\gT_{ij})_{1\leq i<j\leq n};(\pi_{ij})_{1\leq i\neq j\leq n})
\]  
(see the figure below).
\item Now let $(\ff_i)_{i=1,\ldots, n}$ be a finite family of
principal filters,
denote $\gT_i=\gT/(\ff_i=1)$,
$\gT_{ij}=\gT_{ji}=\gT/(\ff_i\vi\ff_j=1)$,
$\pi_i\colon \gT\to\gT_i$ and $\pi_{ij}:\gT_i\to\gT_{ij}$ the canonical projections.
If the~$\ff_i$'s cover $\gT$, then $(\gT,(\pi_i)_{i=1,\ldots, n})$ is
the projective limit of the diagram 
\[((\gT_i)_{1\leq i\leq n},(\gT_{ij})_{1\leq i<j\leq n};(\pi_{ij})_{1\leq i\neq j\leq n}).\]
\end{enumerate}
\end{fact}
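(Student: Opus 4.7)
The hypothesis that each $\fa_i = \dar s_i$ is principal reduces the relevant congruences to plain equalities in $\gT$: by Proposition~\ref{propIdealFiltre}, $x\equiv y\;\mod\;\fa_i$ becomes $x\vu s_i = y\vu s_i$, and $\fa_i\vu\fa_j = \dar(s_i\vu s_j)$, while $\fa = \bigcap_i\fa_i = \dar(\Vi_i s_i)$. The compatibility hypothesis in Item~1 thus reads $x_i\vu s_i\vu s_j = x_j\vu s_i\vu s_j$ for all $i,j$.

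The whole argument turns on finding the right gluing formula. The plan is to take $x \eqdefi \Vi_i (x_i\vu s_i)$. To check $x\vu s_k = x_k\vu s_k$, one distributes to get $x\vu s_k = \Vi_j(x_j\vu s_j\vu s_k)$. The term at $j=k$ is exactly $x_k\vu s_k$. For $j\neq k$ the compatibility hypothesis rewrites the $j$-th term as $x_k\vu s_j\vu s_k$, which is $\geq x_k\vu s_k$. Hence the meet collapses to $x_k\vu s_k$, as required. For uniqueness modulo $\fa$: if $x\vu s_i = y\vu s_i$ for every $i$, then taking the meet over $i$ and distributing yields $x\vu\Vi_i s_i = y\vu\Vi_i s_i$, which is exactly $x\equiv y\;\mod\;\fa$.

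For Item~2 the covering hypothesis says $\Vi_i s_i = 0$, so $\fa=\so{0}$ and uniqueness modulo $\fa$ becomes literal equality. Item~1 then exhibits the map sending $x\in\gT$ to $(\pi_i(x))_i\in\prod_i\gT_i$ as a bijection onto the set of tuples $(y_i)$ satisfying $\pi_{ij}(y_i)=\pi_{ji}(y_j)$ for all $i,j$. Since each $\pi_i$ is a lattice morphism and the target inherits the componentwise lattice structure, this bijection is a lattice isomorphism, identifying $(\gT,(\pi_i))$ with the stated projective limit. Item~3 then follows by reading Item~2 in the opposite lattice $\gT\cir$: reversing the order interchanges $\vi$ and $\vu$, turns principal filters $\uar s_i$ into principal ideals in $\gT\cir$, and sends the filter-covering hypothesis $\bigcap_i\ff_i = \so{1}$ to the corresponding ideal-covering hypothesis.

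The only real obstacle is discovering the gluing formula $x = \Vi_i(x_i\vu s_i)$; once it is written down, the key computation --- the collapse of $\Vi_j(x_j\vu s_j\vu s_k)$ to $x_k\vu s_k$ --- relies essentially on the distributivity of $\gT$ and on the principality of the $\fa_i$, which is what allows both the passage from the abstract congruence modulo $\fa_i\vu\fa_j$ to a single equation and the identification of $\fa$ with a principal ideal.
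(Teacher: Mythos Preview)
Your proof is correct and uses essentially the same key idea as the paper: the gluing element $x=\Vi_i(x_i\vu s_i)$ and the verification $x\vu s_k=x_k\vu s_k$ via distributivity, the compatibility relation, and absorption. The only difference is organizational: the paper reduces Item~1 to Item~2 (passing to $\gT/(\fa=0)$) and then proves surjectivity of the comparison map $\gT\to\gH$ directly with the same formula (after first replacing each representative $y_i$ by $y_i\vu s_i$), whereas you prove Item~1 in full generality first and read Item~2 off as the special case $\fa=\so{0}$; both routes carry the same computation.
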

 
  {\hspace*{10em}{
\xymatrix @R=2em @C=7em{
           & \gT \ar[rd]^{\pi _{k}}\ar[d]^{\pi _{j}}\ar[ld]_{\pi _{i}}\\
  \gT _i\ar[d]_{\pi _{ij}}\ar@/-0.75cm/[dr]^{\pi _{ik}} &
      \gT _j\ar@/-.8cm/[dl]_{\pi _{ji}}\ar@/-.8cm/[dr]^{\pi _{jk}} &
         \gT _k\ar@/-0.75cm/[dl]_{\pi _{ki}}\ar[d]^{\pi _{kj}} &
\\
  \gT _{ij} &
     \gT _{ik} &
       \gT _{jk}
}
}}

\begin{proof}
\textsl{1}. Just prove it with $\fa=0$, which is Item \textsl{2}.

\smallskip \noindent \textsl{2}. Let $(\gH,(\psi_i)_{i\in I})$ be the limit of the diagram.
We have a unique morphism \[\varphi\colon \gT\to \gH\] such that $\varphi\circ \psi_i=\pi_i$ for each~$i$. 
But~$\varphi$ is injective by hypothesis: $\varphi(x)=\varphi(y)$ implies $\varphi(x)\equiv\varphi(y) \mod\, (\fa_i=0)$ for each $i$ and we have $\bigcap_i\fa_i=0$.
We have to show that it is surjective. \hbox{Let $x=(x_i)_{i\in I}$} be an element of $\gH$: \hbox{we have $x_i\in \gT_i$} for each~$i$ and~\hbox {$\pi_{ij}(x_i)=\pi_{ji}(x_j)$} \hbox{for $i\neq j$}. \hbox{If $x_i=\pi_i(y_i)$} we therefore have in $\gT$ the congruence
\[
y_i\equiv y_j \mod \dar (s_i\vu s_j).
\]
The injectivity of $\varphi$ means that $\Vi_{i=1}^ns_i=0$. We have $\pi_i(y_i)=\pi_i(y_i\vu s_i)$ so we can assume \hbox{that $y_i\geq s_i$}.
The equalities \hbox{$\pi_{ij}(x_i)=\pi_{ji}(x_j)$} are written
\[
y_i\equiv y_j \mod \dar (s_i\vu s_j).
\]
i.e.\ $y_i\vu s_j=y_j\vu s_i$. Let's set $y=\Vi_{i=1}^ny_i$.
\\
So, with for example $j=1$, we get
\[ 
y\vu s_1=y_1\vu\Vi\nolimits_{i=2}^n(y_i\vu s_1)=y_1\vi\Vi\nolimits_{i=2}^n(y_1\vu s_i)=y_1
\]
(because $a\vi(a\vu b)=a$). Thus $\pi_j(y)=x_j$ for each $j$.
And $\varphi$ is indeed surjective.
\end{proof}

There is also an actual gluing procedure.
To prove it we need the following lemma.

Recall that for $s\in\gT$ the quotient $\gT/(s=0)$ is isomorphic to the principal filter $\uar s$ which we see as a distributive lattice whose zero element  is $s$.
\begin{lemma}[in a distributive lattice, principal quotients are \gui{split}] \label{lemquoprinctrdi} ~
\\
Let $\pi\colon \gT \to \gT'$ be a distributive lattice morphism and $s\in \gT$.
\Propeq
\begin{enumerate}\itemsep.2em
\item $\pi$ is a morphism from the quotient of $\gT$ by the principal ideal $\fa=\dar s$.
\item There exists a morphism $\varphi\colon \gT'\to\,\uar s$ such that
$\pi\circ \varphi=\Id_{\gT'}$.
\end{enumerate}
In this case $\varphi$ is uniquely determined by $\pi$ and $s$.\\
Naturally, the analogous reversed statement is valid for a quotient by a principal filter.
\end{lemma}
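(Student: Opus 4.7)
My plan is to prove both implications by direct construction, using the canonical identification $\gT/(s=0)\simeq\uar s$ (class of $x$ $\leftrightarrow$ $x\vu s$) recalled just before the lemma.

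For $(2)\Rightarrow(1)$, the argument is essentially one line: since $\varphi$ is a lattice morphism into the lattice $\uar s$, whose zero element is $s$, we have $\varphi(0_{\gT'})=s$. Applying $\pi$ and using $\pi\circ\varphi=\Id_{\gT'}$ gives $\pi(s)=\pi(\varphi(0_{\gT'}))=0_{\gT'}$, so $\pi$ annihilates every element of $\dar s$ and hence passes to the quotient by $\fa$.

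For $(1)\Rightarrow(2)$, the hypothesis gives a factorization $\pi=\tilde\pi\circ q$, where $q\colon\gT\to\gT/(s=0)$ is the quotient projection. Composing with the canonical iso $\gT/(s=0)\simeq\uar s$ turns $\tilde\pi$ into a morphism $\uar s\to\gT'$ which coincides with the restriction $\pi|_{\uar s}$, because $q(y)=y\vu s=y$ for $y\in\uar s$. I would then take $\varphi\colon\gT'\to\uar s$ to be the inverse of this induced morphism, the inverse being supplied by the precise sense in which hypothesis $(1)$ identifies $\gT'$ with the principal quotient. The equality $\pi\circ\varphi=\Id_{\gT'}$ is then immediate: for $y\in\gT'$, $\varphi(y)\in\uar s$, so $\pi(\varphi(y))=\tilde\pi(\varphi(y)\vu s)=\tilde\pi(\varphi(y))=y$.

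The uniqueness clause follows from the same bijectivity: any two candidate sections $\varphi_1,\varphi_2$ agree pointwise because both $\varphi_i(y)$ lie in $\uar s$ with common image $y$ under $\pi$, and $\pi|_{\uar s}$ is injective. The main subtlety in the whole plan is to pin down, from hypothesis $(1)$, that the induced morphism $\uar s\to\gT'$ is genuinely an isomorphism (equivalently, that $\pi|_{\uar s}$ is bijective), so that the inverse $\varphi$ is available; once this is granted, every other step is routine verification. The analogous ``reversed'' statement for a principal filter follows by passing to the opposite lattice $\gT\leftrightarrow\gT\cir$, which exchanges $\vi$ with $\vu$ and ideals with filters.
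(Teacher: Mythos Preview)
Your $(1)\Rightarrow(2)$ and the uniqueness argument are essentially the paper's: the paper defines $\varphi(\pi(x))=x\vu s$ directly, you reach the same map via the factorisation $\pi=\tilde\pi\circ q$ and the identification $\gT/(s=0)\simeq\uar s$. Fine.

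The gap is in your $(2)\Rightarrow(1)$. From $\varphi(0_{\gT'})=s$ and $\pi\circ\varphi=\Id_{\gT'}$ you correctly deduce $\pi(s)=0_{\gT'}$, hence $\pi$ \emph{factors through} $\gT/(s=0)$. But condition~(1) says more: $\pi$ must \emph{be} the quotient morphism, i.e.\ the induced map $\gT/(s=0)\to\gT'$ is an isomorphism. Concretely you still owe (a) surjectivity of $\pi$ --- immediate from $\pi\circ\varphi=\Id_{\gT'}$, but you do not say it --- and (b) the implication $\pi(x_1)=\pi(x_2)\Rightarrow x_1\vu s=x_2\vu s$, which does not follow from $\pi(s)=0$ alone. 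The paper handles (b) by first arguing that $\varphi$ is an isomorphism of $\gT'$ onto $\uar s$ with inverse $\pi|_{\uar s}$; in particular $\pi|_{\uar s}$ is injective, and then
\[
\pi(x_1)=\pi(x_2)\ \Longrightarrow\ \pi(x_1\vu s)=\pi(x_2\vu s)\ \Longrightarrow\ x_1\vu s=x_2\vu s.
\]
You invoke exactly this injectivity later, in your uniqueness paragraph, but you never establish it in the $(2)\Rightarrow(1)$ direction, where it is the substantive step. ``Passes to the quotient'' is strictly weaker than ``is the quotient morphism'', and your one-line argument only delivers the former.
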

%
\begin{proof}
\textsl{1} $\Rightarrow$ \textsl{2.} Let $y\in \gT'$. We have $y=\pi(x)$ for an $x\in \gT $.
\\
We want to define $\varphi\colon\gT'\to\uar s$ by the equality $\varphi(y)=x\vu s$. First of all it is well defined: if $\pi(x)=\pi(x')$, then $x\vu s=x'\vu s$ according to the previous reminder. Then it is immediate that~$\varphi$ is a distributive lattice morphism and that $\pi\circ \varphi=\Id_{\gT'}$.

\smallskip \noindent
\textsl{2} $\Rightarrow$ \textsl{1.} The equality $\pi\circ \varphi=\Id_{\gT'}$ implies
that $\pi$ is surjective and that $\varphi$ is an \iso from~$\gT'$ onto $\uar s$
with the restriction of $\pi$ as inverse \iso.
This shows that $\varphi$ is uniquely determined by $\pi$ and $s$.
We must show the \eqvc
\[
{\pi(x_1)=\pi(x_2)\; \Leftrightarrow \;x_1\vu s=x_2\vu s.}
\]
As $\varphi(0)=s$, we have $\pi(s)=0$, and $x_1\vu s=x_2\vu s$ implies $\pi(x_1)=\pi(x_2).$
\\
If $\pi(x_1)=\pi(x_2)$ then $\pi(x_1\vu s)=\pi(x_2\vu s)$, and since the restriction of $\pi$ to $\uar s$ is injective, it implies
$x_1\vu s=x_2\vu s$.
\end{proof}
\rem We have used in the title of the lemma the expression \gui{the principal quotients are split} by analogy with the split surjections
between \Amos,
considering the equality $\pi\circ \varphi=\Id_{\gT'}$, but the analogy is only partial. Here the \gui{section} $\varphi$ of $\pi$ is unique
(an important difference),
and it's \gui{not truly} a morphism from~$\gT'$ into~$\gT$ (another important difference).
\eoe

\begin{proposition}[gluing \trdis]
\label{propRecolTD} 
Let $I$ be a linearly ordered finite set and, in the category of distributive lattices, a diagram
\[
\big((\gT_i)_{i\in I},(\gT_{ij})_{i<j\in I},(\gT_{ijk})_{i<j<k\in I} ;
(\pi_{ij})_{i\neq j},(\pi_{ijk})_{i< j, j\neq k\neq i}\big)
\]
as in the figure below, as well as a family of elements
\[
(s_{ij})_{i\neq j\in I}\in \prod\nolimits_{i\neq j\in I}\gT_{i}
\]
satisfying the following conditions:
\begin{itemize}\itemsep.2em
\item the diagram is commutative ($\pi_{ijk}\circ \pi_{ij}=\pi_{ikj}\circ \pi_{ik}$ for all pairwise distinct $i$, $j$, $k$),
\item for $i\neq j$, $\pi_{ij}$ is a quotient morphism by the ideal $\dar s_{ij}$,
\item for pairwise distinct $i$, $j$, $k$, $\pi_{ij}(s_{ik})=\pi_{ji}(s_{jk})$ and $\pi_{ijk}$ is a quotient morphism by \hbox{the ideal $\dar\pi_{ij}(s_{ik})$}.
\end{itemize}

\smallskip {\hspace*{10em}
\xymatrix @R=2em @C=7em{
  \gT_i\ar[d]_{\pi _{ij}}\ar@/-0.75cm/[dr]^{\pi _{ik}} &
      \gT_j\ar@/-.8cm/[dl]_{\pi _{ji}}\ar@/-.8cm/[dr]^{\pi _{jk}} &
         \gT_k\ar@/-0.75cm/[dl]_{\pi _{ki}}\ar[d]^{\pi _{kj}} &
\\
  ~\gT_{ij}~ \ar[rd]_{\pi _{ijk}} &
     ~\gT_{ik}~ \ar[d]^{\pi _{ikj}} &
       ~\gT_{jk}~ \ar[ld]^{\pi _{jki}}
\\
    & ~\gT_{ijk}~
\\
}
}

\smallskip \noindent Then if $\big(\gT\,;\,(\pi_i)_{i\in I}\big)$ is the projective limit of the diagram, the~$\pi_i$'s form a covering of~$\gT$ by principal quotients, and the diagram is isomorphic to the one obtained in Fact~\ref{factRecolTD}.
More precisely, there exist $s_i$'s $\in\gT$ such that each~$\pi_i$ is a quotient morphism by the ideal $\dar s_i$ and $\pi_i(s_j)=s_{ij}$ for all $i\neq j$.

\noindent The analogous result is valid for quotients by principal filters.
\end{proposition}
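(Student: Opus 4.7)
The plan is to construct the elements $s_j\in\gT$ directly inside the projective limit and then, for each $i$, exhibit a section of $\pi_i$ landing in $\uar s_i$, so as to apply Lemma~\ref{lemquoprinctrdi} twice: once (in reverse) to obtain sections $\varphi_{ki}:\gT_{ki}\to\uar s_{ki}\subseteq\gT_k$ of each $\pi_{ki}$, and once (forward) to identify $\pi_i:\gT\to\gT_i$ with the quotient morphism by $\dar s_i$.

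For each $j\in I$, I would set $s_j$ to be the element of $\prod_i\gT_i$ whose $i$-th component is $s_{ij}$ for $i\neq j$ and $0$ for $i=j$. To see that $s_j$ lies in $\gT$, I check the cocycle condition $\pi_{ik}((s_j)_i)=\pi_{ki}((s_j)_k)$ for all $i\neq k$. When one of the two indices equals $j$, both sides vanish since $\pi_{ij}(s_{ij})=0$; when $i,k\neq j$, the required equality $\pi_{ik}(s_{ij})=\pi_{ki}(s_{kj})$ is the hypothesis of the proposition applied with the appropriate relabeling. By construction $\pi_i(s_j)=s_{ij}$ for $i\neq j$ and $\pi_i(s_i)=0$.

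Next, for each $i\in I$, I would define $\varphi_i:\gT_i\to\prod_k\gT_k$ by $(\varphi_i(x))_i=x$ and $(\varphi_i(x))_k=\varphi_{ki}(\pi_{ik}(x))$ for $k\neq i$. The main technical step is to verify that $\varphi_i(x)$ actually belongs to $\gT$, i.e., satisfies the cocycle condition. The case where one index equals $i$ reduces at once to $\pi_{ki}\circ\varphi_{ki}=\Id_{\gT_{ki}}$. For distinct $k,\ell\neq i$, I would first observe that $\pi_{k\ell}((\varphi_i(x))_k)$ and $\pi_{\ell k}((\varphi_i(x))_\ell)$ both lie in $\uar\pi_{k\ell}(s_{ki})\subseteq\gT_{k\ell}$, using the already-established equality $\pi_{k\ell}(s_{ki})=\pi_{\ell k}(s_{\ell i})$. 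Pushing both elements further through $\pi_{k\ell i}:\gT_{k\ell}\to\gT_{\{i,k,\ell\}}$ and invoking the commutativity of the diagram together with $\pi_{ki}\circ\varphi_{ki}=\Id$, they become equal to the common element $\pi_{ik\ell}(\pi_{ik}(x))=\pi_{i\ell k}(\pi_{i\ell}(x))$ of $\gT_{\{i,k,\ell\}}$. Since $\pi_{k\ell i}$ is a quotient by $\dar\pi_{k\ell}(s_{ki})$, Lemma~\ref{lemquoprinctrdi} guarantees its restriction to $\uar\pi_{k\ell}(s_{ki})$ is injective, so equality already holds in $\gT_{k\ell}$.

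Finally, $\varphi_i$ is a lattice morphism componentwise, satisfies $\pi_i\circ\varphi_i=\Id_{\gT_i}$, and takes values in $\uar s_i\subseteq\gT$ because $(\varphi_i(x))_k\geq s_{ki}=(s_i)_k$ for every $k$. Lemma~\ref{lemquoprinctrdi} then asserts that $\pi_i$ is a quotient morphism by $\dar s_i$. Moreover $(\bigwedge_i s_i)_j\leq (s_j)_j=0$ for every $j$, so $\bigwedge_i s_i=0$ and the $s_i$'s cover $\gT$ in the sense of Definition~\ref{defRecolTD}; since $\pi_{ij}$ is the quotient by $\dar\pi_i(s_j)=\dar s_{ij}$, the given diagram is isomorphic to the one produced by Fact~\ref{factRecolTD}. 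The principal obstacle throughout is the injectivity argument that descends equality from the triple quotient back to the double quotient; everything else is routine componentwise bookkeeping.
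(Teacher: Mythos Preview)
Your proof is correct and follows essentially the same approach as the paper: you construct $s_j\in\gT$ componentwise, build the section $\varphi_i$ of $\pi_i$ via the sections $\varphi_{ki}$ supplied by Lemma~\ref{lemquoprinctrdi}, and verify the compatibility of $\varphi_i(x)$ by pushing down to the triple quotient and using the injectivity of $\pi_{k\ell i}$ on $\uar\pi_{k\ell}(s_{ki})$. The paper does exactly this, with the minor notational difference that it sets $s_{ii}=0$ and $\varphi_{ii}=\Id$ upfront so as to treat all indices uniformly; your explicit check of the covering $\bigwedge_i s_i=0$ is a small addition the paper leaves implicit.
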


\begin{proof}
We set $s_{ii}=0$, $\gT_{ii}=\gT_i$, $\varphi_{ii}=\pi_{ii}=\Id_{\gT_i}$. \\
Lemma \ref{lemquoprinctrdi} gives us \gui{sections} $\varphi_{ij}\colon \gT_{ij}\to \gT_i$ and~$\varphi_{ijk}\colon \gT_{ijk}\to \gT_ {ij}$.
\\
The required conditions imply that the
ideals~\hbox{$\dar\pi_{jk}(s_{ji})$} \hbox{and $\dar\pi_{kj}(s_{ki})$} are equal, \hbox{i.e.\ $\pi_{jk}(s_{ji})=\pi_{kj}(s_{ki})$}.
\\
For $i\in I$, we define $s_i\in \prod_k\gT_k$ by
${s_i=(s_{ji})_{j\in I}}$, so that $\pi_j(s_i)=s_{ji}$. 
The coordinates of $s_i$ are compatible (i.e.\ $s\in \gT$) because
$\pi_{jk}(s_{ji})=\pi_{kj}(s_{ki})$.
\\
We then define a morphism $\varphi_i\colon \gT_i\to \prod_k\gT_k$ by letting
\[
{\varphi_i(x)=y=(y_j)_{j\in I} \hbox{ where } y_j=\varphi_{ji}(x_j)=\varphi_{ji}\big(\pi_{ij}(x )\big).}
\]
Let us show that the coordinates of $y$ are compatible (i.e.\ $y\in \gT$). Indeed
\[{y_j=s_{ji}\vu y_j, \hbox{ so }
\pi_{jk}(y_j)= \pi_{jk}(s_{ji}\vu y_j)=\pi_{jk}(s_{ji})\vu\pi_{jk}(y_j),
}
\]
likewise $\pi_{kj}(y_k)=\pi_{kj}(s_{ki})\vu\pi_{kj}(y_k)$.
And since $\pi_{jki}$ is a quotient morphism through the ideal $\dar\pi_{jk}(s_{ji})=\dar\pi_{kj}(s_{ki})$, the \egt
\[
{\pi_{jk}(y_j)=\pi_{kj}(y_k)}
\]
can be tested by taking the images by $\pi_{jki}$. \\
However, since $\pi_{ji}(y_j)=\pi_{ji}\big(\varphi_{ji}(x_j\big)=x_j=\pi_{ij}(x)$, we obtain by using the commutativity of the diagram
\[
{\pi_{jki}\big(\pi_{jk}(y_j)\big)=\pi_{ijk}\big(\pi_{ji}(y_j)\big)=\pi_{ijk}\big(\ pi_{ij}(x)\big).}
\]
Similarly $\pi_{kji}\big(\pi_{kj}(y_k)\big)=\pi_{ikj}\big(\pi_{ik}(x)\big)$. And we conclude by using the commutativity of the diagram a second time.
\\
Once established that $\varphi_i$ is indeed a morphism $\gT_i\to \gT$, we easily see that $\pi_i\circ \varphi_i=\Id_{\gT_i}$, that the image of
$\varphi_i$ is the filter $\uar s_i$ of $\gT$ and that~$\varphi_i$ is a morphism of distributive lattices from $\gT_i$ onto the filter $\uar s_i$. So, by Lemma \ref{lemquoprinctrdi}, $\pi_i$ is a morphism of passing to the quotient by $\dar s_i$.
\end{proof}

\subsubsection*{Heitmann lattice}
\addcontentsline{toc}{subsubsection}{Heitmann lattice}

An interesting quotient of any \trdi, which is neither a quotient by an ideal nor a
quotient
by a filter, is the Heitmann lattice.

\begin{lemma}
\label{lemHeT}
On an arbitrary distributive lattice $\gT$ the relation $\JT(a)\subseteq\JT(b)$
is a  preorder relation $a\preceq b$ which defines a quotient
of $\gT$.
We also have:
\begin{equation} \label{eqJaJb}
a\preceq b \quad \Longleftrightarrow\quad a\in \JT(b)
\quad \Longleftrightarrow\quad \forall x\in\gT \; (a\vu x = 1 \Rightarrow
b\vu x=1)
\end{equation}
\end{lemma}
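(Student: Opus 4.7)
The plan is to first pin down the three equivalent descriptions of $\preceq$ in \eqref{eqJaJb}, then verify the four axioms in \eqref{eqPreceq} using the most convenient of those descriptions.

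I start with the right-hand equivalence. The formula $a\in\JT(b)\Leftrightarrow\forall x\in\gT\,(a\vu x=1\Rightarrow b\vu x=1)$ is just a restatement of the defining equation \eqref{eqRJb} for $\JT(\dar b)=\JT(b)$, so nothing is to prove there. Next I would show $\JT(a)\subseteq\JT(b)\Leftrightarrow a\in\JT(b)$. The direction $(\Rightarrow)$ follows from the observation $a\in\JT(a)$, which is trivial from the characterization (if $a\vu x=1$ then $a\vu x=1$). Conversely, assume $a\in\JT(b)$ and take $c\in\JT(a)$; if $c\vu x=1$ then by $c\in\JT(a)$ we obtain $a\vu x=1$, and then by $a\in\JT(b)$ we get $b\vu x=1$, so $c\in\JT(b)$.

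Having these equivalences, I work with the concrete description $a\preceq b\Leftrightarrow \forall x\,(a\vu x=1\Rightarrow b\vu x=1)$. I then check the four clauses of \eqref{eqPreceq}. The implication $a\leq b\Rightarrow a\preceq b$ is immediate since $a\leq b$ gives $a\vu x\leq b\vu x$. Transitivity is obvious by chaining the implications. For $(a\preceq b,\ a\preceq c)\Rightarrow a\preceq b\vi c$: if $a\vu x=1$, then $b\vu x=1$ and $c\vu x=1$, whence
\[
(b\vi c)\vu x \;=\; (b\vu x)\vi(c\vu x)\;=\;1,
\]
using distributivity. For $(b\preceq a,\ c\preceq a)\Rightarrow b\vu c\preceq a$: if $(b\vu c)\vu x=1$, rewrite as $b\vu(c\vu x)=1$; apply $b\preceq a$ with $c\vu x$ in place of $x$ to obtain $a\vu(c\vu x)=c\vu(a\vu x)=1$; then apply $c\preceq a$ with $a\vu x$ in place of $x$ to obtain $a\vu(a\vu x)=a\vu x=1$.

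The only genuine step is the second clause, where one must be careful to work in the right order of the rewritings; the other three are routine once the $\forall x$-description is available. In particular, the main potential pitfall is the implicit use of distributivity in the $\vi$-clause and the careful re-association in the $\vu$-clause, but both are standard in distributive lattices. This establishes that $\preceq$ satisfies all of \eqref{eqPreceq}, so it defines a quotient of $\gT$.
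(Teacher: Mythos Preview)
Your proof is correct and follows essentially the same approach as the paper: the paper simply refers back to \eqref{eqRJb} for the equivalences and declares the axioms \eqref{eqPreceq} ``easy to verify,'' whereas you write out those verifications explicitly. Your detailed checks (especially the distributivity step for the $\vi$-clause and the re-association trick for the $\vu$-clause) are exactly what the paper leaves implicit.
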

\begin{proof}
The equivalences  
\[a\in \JT(b) \;\Leftrightarrow\;
\JT(a)\subseteq\JT(b)\;\Leftrightarrow\;\forall x\in\gT \;(a\vu x = 1
\Rightarrow b\vu x=1)
\]  result from what what has been said on page
\pageref{eqRJb}
concerning the Jacobson radical of an ideal (see
equality~(\ref{eqRJb})).\\
Moreover, it is easy to verify the relations
(\ref{eqPreceq}) which are 
necessary  for defining a quotient lattice.
\end{proof}

\begin{definition}
\label{defHeT}
We call \textsl{Heitmann lattice of $\gT$} and we denote by $\He(\gT)$
the
quotient lattice of~$\gT$ obtained by replacing on $\gT$ the
order relation $\leq_\gT $ with the preorder relation
$\preceq_{\He(\gT)}$
defined as follows

\vspace{-1em}
\begin{equation} \label{eqdefHeT}
\begin{array}{rcl}\qquad
a\preceq_{\He(\gT)} b & \equidef & \JT(a)\subseteq\JT(b) \quad \hbox{(cf. definition \ref{defJac})}
   \end{array}
   \end{equation}
This quotient lattice can be identified with the set of
ideals $\JT(a)$,
with the canonical projection
\[ \gT\longrightarrow \He(\gT),\quad a\longmapsto \JT(a)\]

\end{definition}

Note that with the previous identification we have the equalities:
\begin{equation} \label{eqHeT2}
\JT(a\vi b)=\JT(a)\vi_{\He(\gT)}\JT(b),\quad
\JT(a\vu b)=\JT(a)\vu_{\He(\gT)}\JT(b)
\end{equation}

To say that the lattice $\gT$ is weakly Jacobson is equivalent to saying
that
$\gT=\He(\gT)$.

\smallskip The following lemma is a precision (and generalization) of the
first equality above. We will use this result later.
\begin{lemma}
\label{lemJacInter}
If $\fa$ and $\fb$ are two ideals of $\gT$, we have
$\JT(\fa\cap\fb)=\JT(\fa)\cap\JT(\fb)$.
\end{lemma}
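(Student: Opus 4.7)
The inclusion $\JT(\fa\cap\fb)\subseteq\JT(\fa)\cap\JT(\fb)$ is the easy half: it follows from the monotonicity of $\JT$ in its ideal argument, which is visible directly from the defining formula~(\ref{eqRJJ}). Indeed, if $\fJ\subseteq\fJ'$ and $a\in\JT(\fJ)$, then for every $x$ with $a\vu x=1$ the witness $z\in\fJ$ provided by the definition also lies in~$\fJ'$, hence $a\in\JT(\fJ')$. Applying this with $\fJ=\fa\cap\fb$ and with $\fJ'=\fa$, then $\fJ'=\fb$, gives the inclusion.

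For the reverse inclusion, I would take an arbitrary $a\in\JT(\fa)\cap\JT(\fb)$ and any $x\in\gT$ with $a\vu x=1$, and try to produce a witness $z\in\fa\cap\fb$ with $z\vu x=1$. The membership $a\in\JT(\fa)$ yields some $z_1\in\fa$ with $z_1\vu x=1$, and the membership $a\in\JT(\fb)$ yields some $z_2\in\fb$ with $z_2\vu x=1$. The natural candidate is then $z=z_1\vi z_2$: it lies in~$\fa$ because $z\leq z_1$ and ideals are downward closed, and similarly in~$\fb$, so $z\in\fa\cap\fb$. Distributivity gives
\[
z\vu x \;=\; (z_1\vi z_2)\vu x \;=\; (z_1\vu x)\vi(z_2\vu x) \;=\; 1\vi 1 \;=\; 1,
\]
which is precisely the defining condition~(\ref{eqRJJ}) for $a\in\JT(\fa\cap\fb)$.

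There is no real obstacle here; the only step that requires attention is the choice to combine the two witnesses via $\vi$ rather than $\vu$, which is what lets distributivity collapse the join with $x$ to~$1$ and simultaneously keeps the witness inside both ideals.
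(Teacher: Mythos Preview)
Your proof is correct and follows essentially the same route as the paper: the paper only writes out the nontrivial inclusion, picks witnesses $a\in\fa$ and $b\in\fb$ with $a\vu t=1$ and $b\vu t=1$, and takes $c=a\vi b$, leaving the distributivity computation and the easy inclusion implicit. Your write-up simply makes these steps explicit.
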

\begin{proof}
It suffices to show that if $z\in\JT(\fa)\cap\JT(\fb)$ then
$z\in\JT(\fa\cap\fb)$. Let $t\in\gT$ such that $z\vu t=1$, we
seek
$c\in\fa\cap\fb$ such that $c\vu t=1$.
We have an $a\in\fa$ such that $a\vu t=1$ and~a~$b\in\fb$ such that
that $b\vu t=1$. So just take $c=a\vi b$.
\end{proof}

Note that the proof would not work for an intersection
of infinitely many ideals.

\begin{fact}
\label{factHeHe} Let $\gT$ be a distributive lattice, $\gT'=\gT/(\JT(0)=0)$,
$x\in\gT$ and $\fa$
an ideal.
\begin{enumerate}\itemsep.2em
\item $x=_{\He(\gT)}1\;\Longleftrightarrow\; x=1$.
\item $x=_{\He(\gT)}0\;\Longleftrightarrow\; x\in\JT(0)$.
\item $\He(\He(\gT))\;=\;\He(\gT')\;=\;\He(\gT)$.
\item If $\gL=\gT/(\fa=0)$,
$\He(\gL)$ identifies with $\He(\gT)/(\JT(\fa)=0)$.
\end{enumerate}
\end{fact}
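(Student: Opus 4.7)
I begin by recording two key properties of the operator $\fb\mapsto\JT(\fb)$ that drive the whole proof. First, $\JT$ is \emph{idempotent}: $\JT(\JT(\fb))=\JT(\fb)$. The inclusion $\JT(\fb)\subseteq\JT(\JT(\fb))$ is monotonicity. For the reverse, given $z\in\JT(\JT(\fb))$ and $y$ with $z\vu y=1$, definition~(\ref{eqRJJ}) yields $w\in\JT(\fb)$ with $w\vu y=1$; applying~(\ref{eqRJJ}) again to $w$ with the same $y$ produces $u\in\fb$ with $u\vu y=1$, so $z\in\JT(\fb)$. Second, $\JT(0)\subseteq\JT(x)$ for every $x\in\gT$: if $a\in\JT(0)$ and $a\vu y=1$, then $y=1$, forcing $x\vu y=1$. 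Combining these gives the identity $\JT(\,\dar x\vu\JT(\fb))=\JT(\,\dar x\vu\fb)$ by squeezing $\,\dar x\vu\fb\subseteq\,\dar x\vu\JT(\fb)\subseteq\JT(\,\dar x\vu\fb)$ and applying idempotence.

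Items~1 and~2 are immediate from~(\ref{eqJaJb}) and~(\ref{eqRJb}). For Item~1, $x=_{\He(\gT)}1$ means $1\preceq x$, that is $1\in\JT(x)$; by~(\ref{eqRJb}) this says $\forall y\,(1\vu y=1\Rightarrow x\vu y=1)$, and specializing $y=0$ yields $x=1$; the converse is trivial. Item~2 is symmetric: $0\preceq x$ always holds, so $x=_{\He(\gT)}0$ iff $x\preceq 0$ iff $x\in\JT(0)$.

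For Item~3, I first show $\He(\gT)$ is weakly Jacobson, which yields $\He(\He(\gT))=\He(\gT)$. The key observation is that by Item~1 the equality ``$a\vu x=1$'' is the same whether read in $\He(\gT)$ or in $\gT$ on representatives, so the condition~(\ref{eqTJac}) defining ``weakly Jacobson'' for $\He(\gT)$ becomes literally the definition of the preorder $\preceq_{\He(\gT)}$. For $\He(\gT')=\He(\gT)$ with $\gT'=\gT/(\JT(0)=0)$, Lemma~\ref{lemIQT}(1) gives $\pi^{-1}\bigl(\rJ_{\gT'}(\,\dar\pi(x))\bigr)=\JT(\,\dar x\vu\JT(0))$, which by the preliminary identity equals $\JT(x)$. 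The two Heitmann preorders induced on $\gT$ therefore coincide.

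For Item~4 I build the canonical surjection $\phi:\He(\gT)\to\He(\gL)$ sending $\JT(x)$ to $\rJ_\gL(\pi_\gL(x))$; by Lemma~\ref{lemIQT}(1), $\pi_\gL^{-1}\bigl(\rJ_\gL(\pi_\gL(\,\dar x))\bigr)=\JT(\,\dar x\vu\fa)$, which depends only on $\JT(x)$ by the preliminary identity, so $\phi$ is well defined. Its kernel ideal is the image of $\JT(\fa)$ in $\He(\gT)$. The main obstacle is that, unlike in the ring-theoretic setting, a surjective morphism of distributive lattices is not generally determined by its kernel ideal alone: one must also control the kernel filter $\fF=\{\JT(x):\exists c\in\fa,\;x\vu c=1\}$. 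I circumvent this by showing $\fF$ is dominated by the kernel ideal, in the sense that each $\JT(x)\in\fF$ equals $1$ modulo that ideal: if $x\vu c=1$ with $c\in\fa$, then $\JT(x)\vu_{\He(\gT)}\JT(c)=\JT(x\vu c)=1$ with $\JT(c)$ in the kernel ideal (since $c\in\fa\subseteq\JT(\fa)$). Using the explicit description~(\ref{eqpropIdealFiltre}) of quotients by pairs together with the identity $u=(u\vi v)\vu(u\vi w)$ when $v\vu w=1$, every $\phi$-inequality $\phi(u_1)\leq\phi(u_2)$ can be rewritten modulo the kernel ideal alone, yielding the identification $\He(\gL)=\He(\gT)/(\JT(\fa)=0)$.
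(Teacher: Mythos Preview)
Your treatment of Items~1--3 is correct and essentially mirrors the paper's argument: Items~1 and~2 are immediate, and for $\He(\He(\gT))=\He(\gT)$ both you and the paper reduce to Item~1. You prove $\He(\gT')=\He(\gT)$ directly via Lemma~\ref{lemIQT}(1) and your preliminary identity, whereas the paper derives it as the special case $\fa=\JT(0)$ of Item~4; your route is independent and perfectly valid.

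Your argument for Item~4, however, has a genuine gap. You correctly identify the kernel ideal of $\phi:\He(\gT)\to\He(\gL)$ as the image of $\JT(\fa)$, and you correctly show that every element of the kernel filter becomes $1$ once the kernel ideal is forced to $0$. But the step where you conclude that ``every $\phi$-inequality $\phi(u_1)\leq\phi(u_2)$ can be rewritten modulo the kernel ideal alone'' is not justified: Proposition~\ref{propIdealFiltre} describes the quotient $\He(\gT)/(\fI=0,\fF=1)$, not the $\phi$-congruence, and a surjective morphism of distributive lattices is \emph{not} in general determined by its kernel ideal and kernel filter together (e.g.\ the quotient $\Deux\times\Deux\to\Trois$ identifying $(1,0)$ with $(0,1)$ has trivial kernel ideal and trivial kernel filter). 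Concretely, what you must show is that $x_1\in\JT(\dar x_2\vu\fa)$ implies the existence of a \emph{single} $z\in\JT(\fa)$ with $x_1\in\JT(x_2\vu z)$; the hypothesis gives only that for each $y$ with $x_1\vu y=1$ there is \emph{some} $a_y\in\fa$ with $x_2\vu a_y\vu y=1$, and your argument does not produce a uniform witness. (The paper itself leaves Item~4 to the reader, so there is no proof to compare against, but your sketch does not close this quantifier exchange.)
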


\rem Note however that $\He(\bullet)$ does not define a functor.
\eoe

\begin{proof}
Items \textsl{1} and \textsl{2} are immediate. \\
Item \textsl{4} is left to \llec. It implies
$\He(\gT')\;=\;\He(\gT)$.\\
In Item \textsl{3} we see the lattices $\He(\He(\gT))$ and $\He(\gT')$ as
quotients of $\gT$. Let's show the equality $\He(\He(\gT))=\He(\gT)$,
i.e.\ for all $a,b\in\gT$, $a\preceq_{\He(\He(\gT))}b\Rightarrow
a\preceq_{\He(\gT)}b$. By definition the hypothesis means
\[\Tt x \in
\gT\;\big(\,a\vu x=_{\He(\gT)}1\;\Rightarrow \;b\vu x=_{\He(\gT)}1\,\big). 
\]
But
from Item~\textsl{1} this means $\Tt x \in \gT\;(a\vu
x=1\,\Rightarrow \,b\vu x=1)$, i.e.\ $a\preceq_{\He(\gT)}b$.
\end{proof}
\subsection{\agHs,  Brouwer algebras, \agBs}\label{subsecAgHagB}

\subsubsection*{\agHs}
\addcontentsline{toc}{subsubsection}{\agHs}

A distributive lattice $\gT$ is called an {\sl implicative lattice} (\citealt*{Cur63}) or a {\sl \agH} (\citealt*{Joh1986}) when there is a binary operation $\im $ satisfying for all $a,\,b,\,c$:
\begin{equation} \label{eqAgHey}
a\vi b \leq c \;\;\Longleftrightarrow \;\; a \leq (b\im c).
\end{equation}
This means that for all $b, c\in\gT$, the conductor ideal $(c:b)$ is principal,
its generator being denoted by $b\im c$.
So if it exists, the $\im$ operation is uniquely determined by the lattice structure.
We then define  $\neg x := x\im 0$.
The \agH  structure can be defined as purely equational by giving appropriate axioms. Precisely a lattice $\gT$ (not assumed to be distributive) endowed with a law $\im$ is a \agH if, and only if, the following axioms hold  (\citealt*{Joh1986}):
\[\begin{array}{rcl}
a\im a&= &1 \\
a\vi(a\im b)&= &a\vi b \\
b\vi(a\im b)&= & b \\
a\im(b\vi c)&= &(a\im b)\vi(a\im c)
\end{array}\]
Note also the following important facts:

\[\begin{array}{rcl}
(a\vu b)\im c &=& (a\im c)\vi(b\im c) \\
\neg(a\vu b)&= & \neg a\vi \lnot b \\
  a&\leq &\neg\neg a \\
\neg a\vu b&\leq & a\im b \\
a\leq b&\Leftrightarrow& a\im b =1
\end{array}\]

Every finite distributive lattice is a \agH, because every \itf is principal.

\smallskip An important special case of \agH is a \textsl{\agB}:
it is a distributive lattice in which every element $x$ has \textsl{a complement}, i.e.\ an element $y$ satisfying $y\vi x=0$ and $y\vu x=1$ ($y$ is denoted by $\lnot x$ and we have $a\im b=\lnot a\vu b$).

\smallskip A \textsl{\homo of \agHs} is a \homo $\varphi\colon \gT\to\gT'$
of distributive lattices which satisfies $\varphi(a\im b)=\varphi(a)\im\varphi(b)$ for all $a,b\in\gT$.

\smallskip The following fact is immediate.

\begin{fact}
\label{factQuoAgH}
Let $\pi\colon \gT\to\gT'$ be a \homo of distributive lattices. Suppose that $\gT$ and~$\gT'$ are two \agHs and denote $\varphi(a)\leq_{\gT'}\varphi(b)$ by $a\preceq b$. Then $\pi$ is a \homo of \agHs if, and only if, we have for all $a,a',b,b'\in\gT$:
\[
a\preceq a'\Rightarrow (a'\im b)\preceq(a\im b) \qquad
\mathrm{and}\qquad
b\preceq b'\Rightarrow (a\im b)\preceq(a\im b')
\]
\end{fact}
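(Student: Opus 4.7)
The plan is to verify each direction of the equivalence separately, with most of the work going into the reverse direction.

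\emph{Necessity.} In any \agH, $\im$ is antimonotone in its first argument and monotone in its second. If $\pi$ is already a morphism of \agHs and $a \preceq a'$, then $\pi(a) \leq \pi(a')$, hence $\pi(a') \im \pi(b) \leq \pi(a) \im \pi(b)$ in $\gT'$; since $\pi$ commutes with $\im$, this is exactly $\pi(a' \im b) \leq \pi(a \im b)$, i.e.\ $(a' \im b) \preceq (a \im b)$. The second condition follows symmetrically, from monotonicity of $\im$ in the second argument.

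\emph{Sufficiency.} Assuming the two monotonicity conditions, the aim is the equality $\pi(a \im b) = \pi(a) \im \pi(b)$ in $\gT'$ for every $a,b \in \gT$. One inequality is immediate: from $(a \im b) \vi a \leq b$ in $\gT$ (a Heyting axiom), applying $\pi$ gives $\pi(a \im b) \vi \pi(a) \leq \pi(b)$, and the universal property of $\im$ in $\gT'$ yields $\pi(a \im b) \leq \pi(a) \im \pi(b)$.

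For the opposite inequality I would use the surjectivity of $\pi$, implicit here since the Fact is applied to the quotient morphisms studied just above. Pick $c \in \gT$ with $\pi(c) = \pi(a) \im \pi(b)$. Then $\pi(c \vi a) = \big(\pi(a) \im \pi(b)\big) \vi \pi(a) \leq \pi(b)$, i.e.\ $c \vi a \preceq b$. The second monotonicity hypothesis (with $a$ unchanged and $c \vi a$ compared to $b$) gives $\pi\big(a \im (c \vi a)\big) \leq \pi(a \im b)$. Using the Heyting identity $a \im (c \vi a) = (a \im c) \vi (a \im a) = a \im c$, this becomes $\pi(a \im c) \leq \pi(a \im b)$. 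Finally, $c \vi a \leq c$ forces $c \leq a \im c$, whence $\pi(c) \leq \pi(a \im c) \leq \pi(a \im b)$, which is the required inequality $\pi(a) \im \pi(b) \leq \pi(a \im b)$.

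The main delicate point is precisely the use of surjectivity in the sufficiency direction: without it the two monotonicity conditions do not in general force $\pi$ to commute with $\im$, because the element $\pi(a) \im \pi(b) \in \gT'$ may fail to lie in the image of $\pi$. In the quotient-morphism context of the paper surjectivity is automatic, so no extra hypothesis is needed.
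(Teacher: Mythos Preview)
Your proof is correct. The paper itself offers no argument here, declaring the fact ``immediate,'' so there is nothing to compare against at the level of method.

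Your caveat about surjectivity is not merely a nicety but essential: the statement is \emph{false} for non-surjective $\pi$. Take $\gT = \{0,p,1\}$ the three-element chain and $\gT' = \{0,q,r,1\}$ the four-element Boolean algebra (with $q \vi r = 0$, $q \vu r = 1$), and let $\pi(0)=0$, $\pi(p)=q$, $\pi(1)=1$. This $\pi$ is an order-embedding of distributive lattices, so $a \preceq b$ coincides with $a \leq_\gT b$; the two displayed conditions then reduce to the ordinary antitone/monotone behaviour of $\im$ in the \agH $\gT$, which always holds. Yet $\pi(p \im 0) = \pi(0) = 0$ while $\pi(p) \im \pi(0) = q \im 0 = r \neq 0$, so $\pi$ does not preserve $\im$. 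The fact should therefore be read with the implicit hypothesis that $\pi$ is onto (which is the only case actually used afterwards, e.g.\ in Fact~\ref{factQuoAgH2}), and you were right to flag this.
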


We also have:

\begin{fact}
\label{factQuoAgH2}
If $\gT$ is a \agH then any quotient $\gT/(y=0)$ (i.e.\ any quotient by a \idp) is also a \agH.
\end{fact}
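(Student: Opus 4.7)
The plan is to use Lemma \ref{lemquoprinctrdi} to identify the quotient $\gT/(y=0)$ with the principal filter $\uar y$, viewed as a distributive lattice whose zero element is $y$ and whose lattice operations are those inherited from $\gT$. Under this identification, I need to exhibit an implication operation $\im'$ on $\uar y$ satisfying $c\vi a \leq b \Leftrightarrow c \leq (a\im' b)$ for all $a,b,c\in \uar y$. The natural candidate is
\[
a\im' b \;:=\; (a\im b)\vu y,
\]
which manifestly lies in $\uar y$.

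To verify the adjunction, first assume $c\vi a \leq b$ in $\gT$ with $a,b,c\in\uar y$. The Heyting adjunction in $\gT$ gives $c \leq a\im b$, whence $c \leq (a\im b)\vu y = a\im' b$. Conversely, suppose $c \leq (a\im b)\vu y$; then
\[
c\vi a \;\leq\; \bigl((a\im b)\vu y\bigr)\vi a \;=\; \bigl(a\vi(a\im b)\bigr) \vu (y\vi a) \;=\; (a\vi b)\vu (y\vi a),
\]
where the middle equality uses distributivity and the third equality uses the Heyting axiom $a\vi(a\im b)=a\vi b$. Since $b\in \uar y$ gives $y\leq b$, both joinands are bounded by $b$, so $c\vi a \leq b$. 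This establishes (\ref{eqAgHey}) for the operation $\im'$ on $\uar y$, showing that the quotient is a \agH.

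The argument is essentially routine once the quotient has been concretely identified with $\uar y$ via Lemma \ref{lemquoprinctrdi}; no step presents a genuine obstacle. The only thing to be careful about is that one must work inside $\uar y$ throughout (so that joins, meets and the zero element all make sense there) and use the Heyting axiom $a\vi(a\im b)=a\vi b$ to collapse the product $(a\im b)\vi a$. The same computation, performed directly at the level of equivalence classes, shows that the canonical surjection $\gT \to \gT/(y=0)$ respects $\im$ in the sense of Fact \ref{factQuoAgH}.
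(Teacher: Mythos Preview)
Your main argument is correct and is essentially the paper's proof in a different dress. The paper works with the quotient preorder directly and shows $\pi(a)\im\pi(b)=\pi\bigl(a\im(b\vu y)\bigr)$; you work in the concrete model $\uar y$ and obtain $a\im' b=(a\im b)\vu y$, which for $a,b\geq y$ in fact equals $a\im b$ (since $y\leq b$ gives $y\vi a\leq b$, hence $y\leq a\im b$). Restricting the paper's formula to representatives in $\uar y$ gives the same element, so the two computations coincide.

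One caveat: your closing sentence is not correct. The canonical surjection $\pi:\gT\to\gT/(y=0)$ is in general \emph{not} a homomorphism of Heyting algebras, so the condition of Fact~\ref{factQuoAgH} fails. For instance, with $\gT=\Trois=\{0,m,1\}$ and $y=m$, one has $m\im 0=0$, so $\pi(m\im 0)=\pi(0)$ is the zero of the quotient, whereas $\pi(m)\im\pi(0)=\pi(m)\im\pi(m)=1$. Equivalently, the first monotonicity condition in Fact~\ref{factQuoAgH} fails: $m\preceq 0$ (both map to $\pi(0)$) but $(0\im 0)=1\not\preceq 0=(m\im 0)$. This does not affect your proof of the fact itself; simply drop the final remark.
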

\begin{proof}
Let $\pi\colon \gT\to\gT'=\gT/(y=0)$ be the canonical projection. We have
\[\pi(x)\vi\pi(a)\,\leq_{\gT'}\, \pi(b)\;\Leftrightarrow\; \pi(x \vi
a)\,\leq_{\gT'}\, \pi(b)\;\Leftrightarrow\; x\vi a\,\leq\, b\vu
y\;\Leftrightarrow\; x\,\leq\, a\im(b\vu y).
\] 
But $y\,\leq\, b\vu
y\,\leq\,
a\im(b\vu y)$, so 
\[\pi(x)\vi\pi(a)\,\leq_{\gT'}\,
\pi(b)\;\Leftrightarrow\;
x\,\leq\, (a\im(b\vu y))\vu y ,\hbox{ i.e.\  }\pi(x)\leq_{\gT'}\pi(a\im(b\vu
y)),
\] 
which shows that $\pi(a\im(b\vu y))$ holds for $\pi(a)\im\pi(b)$ in
$\gT'$.
\end{proof}

\medskip \rem
The notion of \agH is reminiscent of the notion of coherent ring in commutative algebra. Indeed a coherent ring can be characterized as follows: the intersection of two \itfs is a \itf and the conductor of a \itf in a \itf is a \itf. If we \gui{reread} this for a distributive lattice remembering that every \itf is principal we get a \agH.
\eoe

\medskip \rem 
Any distributive lattice $\gT$ generates a \agH in a natural way. In other words, we can formally add a generator for any ideal $(b:c)$. But if we start from a distributive lattice which happens to be a \agH, the \agH that it generates is strictly greater. Let us take for example the lattice $\Trois$ (finite hence a \agH), which is the free  distributive lattice with one generator.
Thus the \agH that it generates is the free \agH with one generator. But this \agH is infinite \cite[section 4.11]{Joh1986}. By contrast, the Boolean lattice generated by $\gT$ (\citealt*{CC00}), \cite[Theorem XI-1.8]{CACM} remains equal to $\gT$ when the latter is Boolean.\eoe

\subsubsection*{Lattices with negation}
\addcontentsline{toc}{subsubsection}{Lattices with negation}

A distributive lattice \textsl{has a negation} if for all $x$ the conductor ideal
$(0:x)$ is principal, generated by an element denoted by $\lnot x$.
The following rules are immediate.

\[\begin{array}{rclcrcl}
x\vi y =0&\Leftrightarrow& y\leq \lnot x&,&a\leq b& \Rightarrow & \lnot b \leq \lnot a \\
a& \leq & \lnot\lnot a &, & \lnot a &=& \lnot\lnot\lnot a \\
\lnot(a\vu b)& = & \lnot a\vi\lnot b & , & \lnot a\vu\lnot b
&\leq&
\lnot(a\vi b)\\
\lnot(x\vu\lnot x)&=&0&,&\lnot\lnot(x\vu\lnot x)&=&1
\end{array}\]

If for all $a$, $\lnot\lnot a=a$, the lattice is a \agB because then
$x\vu\lnot x=1$.

\begin{fact}
\label{factSpecMin}
If $\gT$ has a negation, let $\Fmin(\gT)=\ff$ be the filter
generated by all \elts $x\vu\lnot x$. Then
$\He(\gT\cir)=(\gT/(\Fmin(\gT)=1))\cir$, and this
lattice is a \agB.
\end{fact}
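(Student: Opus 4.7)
The plan is to identify $\He(\gT\cir)$ and $\big(\gT/(\Fmin(\gT)=1)\big)\cir$ by realising both as preorder quotients of the underlying set of $\gT$ (equipped with the operations of $\gT\cir$, namely $\vi$ and $\vu$ interchanged), then checking that the two preorders coincide, and finally inspecting that the common quotient is Boolean. First I would compute the Heitmann preorder: applying (\ref{eqJaJb}) inside $\gT\cir$ together with the characterisation $a\vi_\gT x=0\Leftrightarrow x\leq\lnot a$ available because $\gT$ has a negation, the condition $a\in\rJ_{\gT\cir}(b)$ rewrites as $\forall x\,(x\leq\lnot a\Rightarrow x\leq\lnot b)$, i.e.\ simply $\lnot a\leq_\gT\lnot b$. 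Next, by Proposition~\ref{propIdealFiltre}, $a\leq_{\gT/(\ff=1)} b$ iff there is $u\in\ff=\Fmin(\gT)$ with $a\vi u\leq_\gT b$; reversing the order, the preorder on $(\gT/(\ff=1))\cir$ is $\exists u\in\ff,\ b\vi u\leq_\gT a$. The remaining job is to establish
\[
\lnot a\leq_\gT\lnot b\quad\Longleftrightarrow\quad\exists u\in\ff,\ b\vi u\leq_\gT a.
\]

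For the easy direction I would write $b\vi\lnot a\leq b\vi\lnot b=0$, hence $b\vi(a\vu\lnot a)=(b\vi a)\vu 0\leq a$, and take $u:=a\vu\lnot a\in\ff$. Boolean-ness is equally immediate: for every $x\in\gT$, the class $\overline{\lnot x}$ in $\gT/\ff$ is a complement of $\overline{x}$, because $x\vi\lnot x=0$ while $x\vu\lnot x\in\ff$ is forced to be $1$ in the quotient. Since the opposite of a Boolean algebra is a Boolean algebra (the map $y\mapsto\lnot y$ providing an iso), $(\gT/(\Fmin(\gT)=1))\cir$ will then be Boolean.

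I expect the main obstacle to be the converse direction of the equivalence, and the key auxiliary lemma I would prove is that $\lnot u=0_\gT$ for every $u\in\ff$. For a generator $u=x\vu\lnot x$, the rule $\lnot(a\vu b)=\lnot a\vi\lnot b$ combined with $\lnot\lnot x\vi\lnot x=0$ (the very definition of $\lnot\lnot x$ as generator of $(0:\lnot x)$) yields $\lnot(x\vu\lnot x)=\lnot x\vi\lnot\lnot x=0$. For a binary meet, if $\lnot u_1=\lnot u_2=0$ and $y\vi u_1\vi u_2=0$, then $y\vi u_1\leq\lnot u_2=0$, hence $y\leq\lnot u_1=0$; so $\lnot(u_1\vi u_2)=0$. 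Iterating this and using that $\lnot$ is antitone propagates the vanishing to every element of $\ff$. Granted this lemma, from $b\vi u\leq a$ and any $x$ with $x\vi a=0$ I deduce $x\vi b\vi u\leq x\vi a=0$, whence $x\vi b\leq\lnot u=0$; this says $\forall x\,(x\vi a=0\Rightarrow x\vi b=0)$, which is precisely $\lnot a\leq\lnot b$, and the proof is complete.
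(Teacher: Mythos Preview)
Your proof is correct and follows the same route as the paper: identify the Heitmann preorder on $\gT\cir$ as $\lnot a\leq_\gT\lnot b$, identify the preorder defining $(\gT/(\ff=1))\cir$, and check they coincide. The paper's proof is extremely terse (it simply asserts that $\lnot a\leq\lnot b$ ``is easily shown to be equivalent to $b\leq a\ \mod\ (\ff=1)$''), so your write-up is a faithful unpacking of the same argument; in particular your auxiliary lemma $\lnot u=0$ for every $u\in\Fmin(\gT)$ is exactly the ingredient that makes the converse direction go through, and the paper leaves this implicit.
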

\begin{proof}
It is clear that $\lnot x$ is a complement of $x$ in $\gT/(\ff=1)$; this
lattice is therefore a \agB. In the presence of negation, the relation
$a\leq_{\He(\gT\cir)}b$ is equivalent to $\lnot a\leq \lnot b$ and this is easily shown to be equivalent to $b\leq a\;\mod\;(\ff= 1)$.
\end{proof}
\begin{fact}
\label{factWJavecneg}
If $\gT$ is a lattice with negation, the lattice $\gT\cir$ is
weakly
Jacobson if and only if $\gT$ is~a \agB.
\end{fact}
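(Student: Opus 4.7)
The plan is to reduce this directly to Fact \ref{factSpecMin} together with the characterization \gui{$\gT$ weakly Jacobson $\Leftrightarrow$ $\gT=\He(\gT)$} recalled just after Definition \ref{defHeT}. Applying that characterization to $\gT\cir$, the statement \gui{$\gT\cir$ is weakly Jacobson} means $\gT\cir=\He(\gT\cir)$, and Fact \ref{factSpecMin} rewrites the right-hand side as $(\gT/(\Fmin(\gT)=1))\cir$. Thus the condition boils down to the canonical projection $\pi:\gT\to\gT/(\Fmin(\gT)=1)$ being an isomorphism.

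I would next argue that $\pi$ is an isomorphism if, and only if, $\Fmin(\gT)=\{1\}$, i.e.\ $x\vu\lnot x=1$ for every $x\in\gT$. The nontrivial direction: if some generator $f=x_0\vu\lnot x_0$ is strictly less than $1$, then $f\vi f=f=1\vi f$ shows $f\equiv 1\mod(\Fmin(\gT)=1)$, so $\pi$ is not injective. Conversely, if every generator equals $1$ then the filter reduces to $\{1\}$ and $\pi$ is the identity. Combined with the identity $x\vi\lnot x=0$ (which follows from $\lnot x\leq\lnot x$ by the defining \prt of negation), the condition $x\vu\lnot x=1$ for all $x$ is exactly the definition of a \agB.

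An alternative, more direct route is to unpack \gui{$\gT\cir$ weakly Jacobson} using (\ref{eqTJac}) read in $\gT\cir$: it is equivalent to
\[
\forall a,b\in\gT\;\;[\,(\forall x\in\gT,\; a\vi x=0\Rightarrow b\vi x=0)\,\Rightarrow\,b\leq a\,].
\]
Since in a lattice with negation $a\vi x=0\Leftrightarrow x\leq\lnot a$, the inner implication is just $\lnot a\leq\lnot b$. So the condition becomes $\lnot a\leq\lnot b\Rightarrow b\leq a$. If $\gT$ is a \agB, double negation is the identity, so this is immediate. Conversely, specialising $b:=\lnot\lnot a$ (so $\lnot b=\lnot a$) gives $\lnot\lnot a\leq a$, which, combined with $a\leq\lnot\lnot a$, yields $a=\lnot\lnot a$ for every $a$, forcing $\gT$ to be Boolean.

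The only delicate step in either approach is the characterisation of when a quotient by a generated filter is trivial (in the first route) or the choice of the test element $b=\lnot\lnot a$ (in the second); both are routine, so no serious obstacle is expected.
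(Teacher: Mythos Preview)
Your second route is essentially the paper's own proof: unpack the weakly-Jacobson condition for $\gT\cir$ to obtain $(\lnot a\leq\lnot b\Rightarrow b\leq a)$, then specialise with $\lnot\lnot a$ to force $\lnot\lnot a=a$ for all $a$, hence $\gT$ Boolean (the converse being trivial). Your first route via Fact~\ref{factSpecMin} is a correct alternative that the paper does not take; it trades the direct elementwise argument for the identification $\He(\gT\cir)=(\gT/(\Fmin(\gT)=1))\cir$ and then reduces to the easy observation that a quotient by a filter is the identity exactly when the filter is $\{1\}$.
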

\begin{proof}
In the presence of negation, the \eqvcs (\ref{eqJaJb}) and
(\ref{eqdefHeT})
give that $\,a\leq_{\He(\gT\cir)}b\,$ is \eqv to
$\lnot b\leq
\lnot a$.
The lattice $\gT\cir$ is therefore weakly Jacobson if, and only if, $\lnot b\leq \lnot a$ implies $a\leq b$. 
In particular we get $b=\lnot\lnot b$
by taking $a=\lnot\lnot b$.\end{proof}

\subsubsection*{Brouwer \algs}
\addcontentsline{toc}{subsubsection}{Brouwer algebras}

A \trdi whose opposite lattice is a \agH is called a
   \textsl{Brouwer \alg}. It is a \trdi in which all the difference filters
   $(c\setminus b)$ are principal (see \pref{eqDiff}). We then denote by $c-b$ the generator of
   $(c\setminus b)$.

Passing to the opposite lattice the following fact says the same thing
as Fact~\ref{factSpecMin}.

\begin{fact}
\label{factSpecMax}
We say that \emph{the lattice $\gT$ has a Brouwer complement} when for all $x$ the filter $(1\setminus x)$ is principal. It is then generated by a single element denoted by $1-x$. 
In this case, let $\Imax(\gT)$ be the ideal generated by all \elts $x\vi(1-x)$. Then $\He(\gT)=\gT/(\Imax(\gT)=0)$ and this lattice is a \agB.
\end{fact}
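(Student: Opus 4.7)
The plan is to obtain this fact as a direct consequence of Fact~\ref{factSpecMin} applied to the opposite lattice $\gT\cir$, since the statement is literally the order-reversal of that one.

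First I would set up the dictionary between $\gT$ and $\gT\cir$. The filter $(1\setminus x)$ of $\gT$ is, viewed in $\gT\cir$, the conductor ideal $(0:x)$; so $\gT$ having a Brouwer complement is exactly saying that $\gT\cir$ has a negation in the sense of the preceding subsection, and the generator $1-x$ of $(1\setminus x)$ in $\gT$ is precisely $\lnot_{\gT\cir}(x)$.

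Next I would identify $\Imax(\gT)$ with $\Fmin(\gT\cir)$. By definition, $\Fmin(\gT\cir)$ is the filter of $\gT\cir$ generated by the elements $x\vu_{\gT\cir}\lnot_{\gT\cir}(x)$. Since $\vu_{\gT\cir}=\vi_{\gT}$ and a filter of $\gT\cir$ is an ideal of $\gT$, this is exactly the ideal of $\gT$ generated by the elements $x\vi(1-x)$, i.e.\ $\Imax(\gT)$.

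Applying Fact~\ref{factSpecMin} to $\gT\cir$ then gives
\[
\He\big((\gT\cir)\cir\big)\;=\;\big(\gT\cir\big/(\Fmin(\gT\cir)=1)\big)\cir,
\]
and this lattice is a \agB. Since $(\gT\cir)\cir=\gT$, and since taking the opposite of the quotient $\gT\cir/(\Fmin(\gT\cir)=1)$ turns the relations \gui{$y=1$ in $\gT\cir$ for $y\in\Fmin(\gT\cir)$} into the relations \gui{$y=0$ in $\gT$ for $y\in\Imax(\gT)$}, the right-hand side is $\gT/(\Imax(\gT)=0)$. Thus $\He(\gT)=\gT/(\Imax(\gT)=0)$, and this is Boolean (being Boolean is self-dual, so it is preserved by reversing the order).

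The only real verification is the bookkeeping in the dictionary filter/ideal and $\vu/\vi$ under reversal; there is no genuine obstacle, since the underlying content is exactly what Fact~\ref{factSpecMin} already establishes.
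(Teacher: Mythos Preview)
Your proposal is correct and is exactly the approach taken in the paper: the paper introduces this fact by the sentence ``Passing to the opposite lattice the following fact says the same thing as Fact~\ref{factSpecMin}'' and gives no further proof. Your write-up simply makes the order-reversal dictionary explicit, which is precisely what that sentence summarizes.
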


We leave it to \llec to translate Fact
\ref{factWJavecneg} when reversing the order relation.

\subsection{Noetherian  \trdis}

In classical mathematics, for a \trdi $\gT$, \propeq
\begin{enumerate}\itemsep.2em
\item  Every ideal of $\gT$ is principal.
\item  Any nondecreasing sequence of \elts of $\gT$ is stationary.
\item  Any nondecreasing sequence of ideals of $\gT$ is stationary.
\end{enumerate}

Such a lattice is called \textsl{Noetherian} (by analogy with commutative \alg, one could also call it a \textsl{principal lattice}). In classical mathematics it is clearly a \agH.

Every sublattice and every quotient lattice of a Noetherian lattice is Noetherian.

In constructive mathematics the notion is more delicate. No nontrivial lattice satisfies Item~\textsl{2} (which is a priori the weakest formulation). One could define a Noetherian \trdi as a lattice satisfying an \gui{ACC constructive condition}: any nondecreasing sequence admits two equal consecutive terms. This condition is equivalent to Item \textsl{2} in classical mathematics. But there are a priori several interesting variants.

In practice, one is generally interested in the fact that some well-defined ideals are principal, as in the case of \agHs. Now the fact that a lattice is a \agH does not result \cot from the ACC constructive condition (in the same way, in commutative \alg, coherence, which is often more important than Noetherianity, does not result from any known  constructive variant of Noetherianity). See on this subject Proposition~\ref{propZarHeyt}.

\medskip \rem
Let us show in classical mathematics that if $\gT$ and $\gT\cir$ are Noetherian then~$\gT$ is finite. The \idemas are $\dar x$ where $x$ is an immediate predecessor of $1$. And the maximal spectrum is finite, because if $(\fm_n)=(\dar x_n)$ is an infinite sequence of \idemas, the sequence $(\Vi_{i\leq n}x_i)$ is strictly decreasing. We can then apply the result to each quotient lattice by a \idema. We end with König's lemma. Making this proof constructive with a sufficiently strong constructive definition of Noetherianity is an interesting challenge.
\eoe

\section{Spectral spaces}
\label{secESSP}

\subsection{General facts}

\subsubsection*{In \clama}
\addcontentsline{toc}{subsubsection}{In \clama}

A {\sl \idep} $\fp$ of a lattice $\gT\neq \Un$ is an ideal whose complement $\ff$ is a filter (which is then a {\sl prime filter}).
We then have $\gT/(\fp=0,\ff=1)\simeq\Deux$.
It is the same to give a \idep of $\gT$ or a morphism of \trdis $\gT\rightarrow \Deux$.

In this section, we will denote by $\theta_\fp:\gT\to\Deux$ the \homo associated with the \idep~$\fp$.

It is easy to verify that if $S$ is a generating subset of the \trdi $\gT$, a \idep~$\fp$ of $\gT$ is completely characterized by its trace on $S$ (\citealt*{CC00}).

A \textsl{\idema} (resp.\ \textsl{minimal prime}) is a maximal ideal among strict ideals (resp. minimal among \ideps). It amounts to the same thing to say that $\fm$ is maximal or that \hbox{$\gT/(\fm=0)\simeq\Deux$}, the \idemas are therefore prime. It is the same to say that $\fp$ is a \idemi or that its complement is a maximal filter.

In classical mathematics every strict ideal is contained in a \idema and (by reversing the order) any strict filter is contained in a maximal filter.

\smallskip The \textsl{spectrum of a \trdi $\gT$} is the set $\Spec\,\gT$ of its \ideps, endowed with the following topology: a basis of open sets is given by the
\[
\DT(a)\eqdefi\sotq{\fp\in\Spec
\,\gT}{a\notin\fp},\quad a\in \gT.
\]
We check that
\begin{equation} \label{eqDa}
\left.\begin{array}{rclcrcl}
   \DT(a\vi b) & = & \DT(a)\cap \DT(b) ,&\quad & \DT(0) & = &
\emptyset,\\
   \DT(a\vu b) & = & \DT(a)\cup \DT(b) ,&& \DT(1) & = &
\Spec\,\gT.
   \end{array}
\right\}
\end{equation}

The complement of $\DT(a)$ is a closed subset denoted by $\VT(a)$.

We extend the notation $\VT(a)$ as follows: if $I\subseteq\gT$, we set $\VT(I)\eqdefi\bigcap_{x\in I}\VT(x)$. If $\cI_\gT(I)=\fII$, we have $\VT(I)=\VT(\fII)$. It is sometimes said that $\VT(I)$ is \textsl{the variety defined by~$I$}.

\medskip\noindent
{\bf Definition.} 
A topological space homeomorphic to a space $\Spec(\gT)$
is called a \textsl{spectral space}. The spectral spaces come from  \citealt*{Sto37}.

\medskip Johnstone calls them \textsl{coherent spaces} (\citealt*{Joh1986}). They were baptized \gui{spectral spaces} by \citealt*{Hoc1969}.

With classical logic and the axiom of choice, the space $\Spec \,\gT$
has \gui{enough points}: we can recover the lattice $\gT$
from its spectrum. Here's how.
 First of all we have the following.

\medskip\noindent
{\bf Krull's \tho$\!\!\etl$} (in \clama).\\ 
{\sl Suppose that $\fJ$ is an \id, $\fF$ a filter and
$\fJ\cap\fF=\emptyset$.
Then there exists a \idep $\fP$ such \hbox{that $\fJ\subseteq\fP$} and
$\fP\cap\fF=\emptyset$.
  }

\medskip
We deduce the following facts.
\begin{itemize}\itemsep.2em
\item The map $a\in\gT\,\mapsto\,\DT(a)\in\cP(\Spec\,\gT)$ is injective: it identifies $\gT$ with a lattice of sets
(\textsl{Birkhoff representation theorem}).
\item If $\varphi\colon  \gT\to\gT'$ is an injective \homo the dual map
$\varphi^\star:\Spec\,\gT'\to\Spec\,\gT$  is onto.
\item Any ideal of $\gT$ is the intersection of  \ideps 
containing it.
\item Mapping $\fII\mapsto \VT(\fII)$, from ideals of $\gT$ to
closed sets of $\Spec\,\gT$ is an \iso of ordered sets (for inclusion and
reverse inclusion).
\end{itemize}

One also shows that the \oqcs of $\Spec \,\gT$ are exactly the subsets $\DT(a)$. According to the equalities (\ref{eqDa}), the \oqcs of $\Spec \,\gT$ form a \trdi of subsets of~$\Spec \,\gT$, isomorphic to $\gT$.

In a spectral space $X$ we can consider the \trdi $\OQC(X)$ formed by its \oqcs. Since for any \trdi $\gT$, $\OQC(\Spec(\gT))$ is canonically isomorphic to $\gT$, for any spectral space $X$, $\Spec(\OQC(X))$ is canonically homeomorphic to~$X$.

\smallskip Any \trdi \homo $\varphi\colon \gT\rightarrow \gT'$  provides by duality a continuous map $\varphi^\star:\Spec\,\gT'\rightarrow \Spec \,\gT$, which is called a \textsl{spectral map}. For a continuous map between spectral spaces to be spectral, it is necessary and sufficient that the inverse image of every \oqc be a \oqc.

The seminal paper \citealt*{Sto37} essentially demonstrates that the spectral category thus defined is antiequivalent to the category of  \trdis \cite[{II-3.3}, coherent locales]{Joh1986}.
More precisely, this statement which here seems tautological becomes nontrivial when we give a definition of spectral spaces in purely topological terms, as in the following remark.
For more details on this antiequivalence, one can refer to Krull's theorem, to \citealt*[\hbox{section V-8}]{BW74},
to \citealt*{CL2001-2018} and to the survey paper \citealt*{Lom2020}.

\medskip 
\rem
A purely topological definition of spectral spaces
is the following~(\citealt*{Sto37}).
\begin{itemize}\itemsep.2em
\item The space is Kolmogorov (i.e.\ of type $\mathrm{T}_0$):
given two points there exists a neighbourhood of one of the two which does not contain the other.
\item The space is \qc.
\item The intersection of two \oqcs is a \oqc.
\item Any open subset is a union of \oqcs.
\item For all closed subsets $F$ and for all sets $S$ of \oqcs such
that
\[\textstyle F\cap
\bigcap_{U\in S'} U\neq \emptyset\,\hbox{ for any finite subset }\,S'
\,\hbox{ of }\,S
\]
we also have
$F\cap \bigcap_{U\in S} U\neq \emptyset$.
\end{itemize}
In the presence of the first four properties the last one
can be
rephrased as follows (\citealt*{Hoc1969}).
\begin{itemize}\itemsep.2em
\item Any irreducible closed set\footnote{A closed set which is not the union of two strictly smaller closed sets} admits a generic point.
\end{itemize}
\vspace{-2em}\eoe


\subsubsection*{Generic points, order relation}
\addcontentsline{toc}{subsubsection}{Generic points, order relation}

We say that a point $x \in X$ of a spectral space is the \textsl{generic point of the closed set $F$} \hbox{if $F=\ovs{x }$}. This point (when it exists) is necessarily unique because the spectral spaces are Kolmogorov spaces. The closed sets $\ovs{x }$ are exactly all the irreducible closed sets of $X$. The order relation $y\in\ovs{x}$ will be denoted by $x\leq_X y$.

When $X=\Spec\,\gT$ the relation $\fp\leq_X \fq$ is simply the usual inclusion relation between \ideps of the \trdi $\gT$.
The closed points of $\Spec\,\gT$ are the \idemas of $\gT$.

\medskip
We call \textsl{Stone space}\footnote{The terminology does not seem to be clearly established. \cite{BW74} call Stone space a topological space which is very near to a spectral space. Their goal is a category of topological spaces antiequivalent to that of \gui{unbounded} \trdis, i.e.\ without $0$ and $1$.} a spectral space whose lattice of \oqcs is a \agB. It is well known that Stone spaces can be characterized as totally discontinuous compact spaces.
\subsubsection*{In \coma}
\addcontentsline{toc}{subsubsection}{In \coma}

Constructively, $\gT$ is a \gui{point-free} version of $\Spec\,\gT$. In other words, failing to have access to the points of $\Spec\,\gT$, we can content ourselves with its \oqcs, which are directly visible (without resorting to the axiom of choice or the law of excluded middle). The version without points is easier to understand. On the contrary, the points of $\Spec\,\gT$ are not in general  accessible without resorting to non-constructive principles.

In constructive mathematics there are a priori several possibilities to define the spectrum of~a \trdi (all equivalent in classical mathematics).
The most reasonable seems to define $\Spec\,\gT$ as the set of prime filters of $\gT$, i.e.\ the filters for which we have 
\[
x\vi y\in\fF\quad \Longrightarrow \ \quad x\in\fF \;\;\mathrm{or}\;\;
y\in\fF 
\]
with an explicit \gui{or}. But such spaces $\Spec\,\gT$ do not always have enough points{\footnote{We can for example define an explicit countable infinite \trdi which does not have recursive \ideps. For such a \trdi, there cannot be a \prco that $\Spec\,\gT$ is non-empty.}} and one cannot
assert \cot that the two categories are antiequivalent, at least if one defines morphisms between spectral spaces as maps, since maps require points.

A satisfactory alternative solution (but a little confusing at first glance) is to consider $\Spec\,\gT$ as a \gui{topological space without points}, i.e.\ a topological space defined only through its basis of open sets $ \DT(a)$ (with all $a\in\gT$). The morphisms are then defined in a purely formal way as given by the morphisms of the corresponding lattices, reversing the direction of the arrows. From this point of view the antiequivalence of the spectral category and the category of \trdis becomes a pure definitional tautology.

In any case, although the spectral category remains useful for intuition, all the work is done in the category of \trdis. The advantage is naturally that one obtains constructive theorems.

In this article the spectra will be studied only from the point of view of classical mathematics, as an important source of inspiration for good notions concerning \trdis.

\subsubsection*{Noetherian  spectral spaces}
\addcontentsline{toc}{subsubsection}{Noetherian  spectral spaces}

A topological space $X$ is said to be \textsl{Noetherian} if any nondecreasing sequence of open set is stationary. It is the same to say
that every open set is \qc. For a spectral space, it is equivalent to say that the lattice $\OQC(X)$ is Noetherian. In a Noetherian spectral space, every open set is a $\DT(a)$ and every closed set is a $\VT(b)$.

\subsubsection*{Two alternative topologies on 
$\Spec\,\gT$}
\addcontentsline{toc}{subsubsection}{Two alternative topologies on 
$\Spec\,\gT$}

In classical mathematics we have a canonical bijection between the sets underlying the spaces $\Spec\,\gT$ and $\Spec\,\gT\cir$: to a \idep of $\gT$ we associate the complementary prime filter, which is a \idep of $\gT\cir$. This makes it possible to identify these two sets, even if sometimes the effect is not very happy.
Once the underlying sets are identified, the topology is not the same. The basic open sets of $\Spec\,\gT\cir$ are $\DTo(a)=\VT(a)$.
Modulo this identification, for $X=\Spec\,\gT$ and $X'=\Spec\,\gT\cir$, the order relation $\leq_{X'}$ is the relation opposite~to~$\leq_X$ (the order is reversed), but what happens for the topology is more complicated.

\smallskip
We must also consider the \textsl{constructible topology} (or patch topology) whose basic open sets are the $\DT(a) \cap \VT(b)$.
This gives a compact space naturally homeomorphic to $\Spec\,\gT^{\rm bool}$ where $\gT^{\rm bool}$ is the Boolean lattice generated by $\gT$.
In classical mathematics we obtain $\gT^{\rm bool}$ as the sub-\agB of the set of subsets of $\Spec\,\gT$ generated by the $\DT(a)$. This lattice can also be described \cot as follows (\citealt*{CC00}). We consider a disjoint copy of $\gT$, denoted by $\dot{\gT}$. Then $\gT^{\rm bool}$ is a \trdi defined by generators and relations.
The generators are the elements of the set $T_1=\gT\cup\dot{\gT}$ and the relations are obtained as follows: if $A,F,B,E$ are four finite subsets of $\gT $ we have
\[
   \Vi A \vi \Vi E\leq_\gT\Vu B\vu \Vu F
\quad \Longrightarrow \quad
\Vi A \vi \Vi \dot{F} \leq_{T_1} \Vu B \vu \Vu \dot{E}
\]
It is shown that $\gT$ and $\dot{\gT}$ inject naturally into $\gT^{\rm bool}$ and that the above implication is in fact an equivalence.
We obtain by duality two one-to-one  spectral maps $\Spec\,\gT^{\rm bool}\to\Spec\,\gT$ and $\Spec\,\gT^{\rm bool}\to\Spec\, \gT\cir$.

\subsubsection*{Finite spectral spaces}
\addcontentsline{toc}{subsubsection}{Finite spectral spaces}
 
In \clama, the dual spaces of \textsl{finite} \trdis  are the finite spectral spaces, which are nothing other than the finite ordered sets (because it suffices to know the closure of the points to know the topology), where the  $\dar a$'s give a basis of open subsets.
  The open subsets are all \qc, these are the initial subsets, and the closed subsets are the final subsets. Finally, a map between finite spectral spaces is spectral if, and only if, it is nondecreasing (for the associated order relations).

The notion of spectral space thus appears as a relevant generalization to the infinite case of the notion of finite ordered set. See \citet*[Theorem XI-5.6, duality between finite ordered sets and finite \trdis]{ACMC}.

In the finite case, if we identify the sets underlying $\Spec\,\gT$ and $\Spec\,\gT\cir$, the two spectra are almost the same: they are the same ordered set up to reversing the order relation. Furthermore open subsets and  closed subsets are simply interchanged.

\subsection{Quotient lattices versus spectral subspaces}\label{secSESP}

\subsubsection*{Characterization of spectral subspaces}
\addcontentsline{toc}{subsubsection}{Characterization of spectral subspaces}

By using the antiequivalence of categories, one could directly define the notion of \textsl{spectral subspace} as the notion dual to the notion of quotient lattice. Theorem \ref{propSESP} explains this topic in detail.

We start with an easy lemma, which characterizes the points of $\Spec\,\gT$ which \gui{are elements of $\Spec\,\gT'$} when $\gT'$ is a quotient of 
$\gT$.
\begin{lemmac}
\label{lemSESP}
Let $\gT'$ be a quotient lattice of $\gT$ and $\pi\colon \gT\to\gT'$ the
canonical projection. Denote $X=\Spec\,\gT'$, $Y=\Spec\,\gT$ and
$\pi^\star\colon X\to Y$ the dual injection of $\pi$. Remember that for a
\idep $\fp$ of $\gT$ we denote by $\theta_\fp:\gT\to\Deux$ the \homo
with kernel $\fp$.
\Propeq
\begin{itemize}\itemsep.1em
\item $\fp\in\pi^\star(\Spec\,\gT').$
\item $\theta_\fp$ is factorized by $\gT'.$
\item $\Tt a,b\in\gT\;((a\preceq b,\,b\in\fp)\Rightarrow a\in\fp).$
\end{itemize}
This can be rephrased as follows when the quotient lattice $\gT'$
is defined
by a system $R$ of relations $x_i=y_i$. \Propeq
\begin{itemize}\itemsep.1em
\item $\fp\in\pi^\star(\Spec\,\gT').$
\item $\theta_\fp$ \gui{gives a model of $R$}, i.e.\ $\Tt i\;\;
\theta_\fp(x_i)=\theta_\fp(y_i).$
\item $\Tt i\;\; (x_i\in\fp\;\Leftrightarrow\; y_i\in\fp).$
\end{itemize}
\end{lemmac}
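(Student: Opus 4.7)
The plan is to prove the equivalences via the cycle (1) $\Leftrightarrow$ (2) $\Leftrightarrow$ (3), using in each step the universal property of the quotient lattice $\pi : \gT \to \gT'$: a lattice morphism $\theta : \gT \to \gL$ factors (uniquely) through $\pi$ if and only if $\theta$ respects the preorder $\preceq$ that defines $\gT'$, i.e.\ $a \preceq b \Rightarrow \theta(a) \leq \theta(b)$ (cf.\ conditions~\eqref{eqPreceq}).

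For (1) $\Leftrightarrow$ (2), I would recall the standard bijection between prime ideals of $\gT$ and lattice morphisms $\gT \to \Deux$, via $\fp \mapsto \theta_\fp$. By definition $\pi^\star(\fq) = \pi^{-1}(\fq)$ for $\fq \in \Spec\,\gT'$, so $\fp \in \pi^\star(\Spec\,\gT')$ means that $\fp = \pi^{-1}(\fq)$ for some prime ideal $\fq$ of $\gT'$, and this is equivalent to the existence of a morphism $\theta' : \gT' \to \Deux$ with $\theta_\fp = \theta' \circ \pi$ (take $\theta' = \theta_\fq$ one way; and $\fq = \Ker(\theta')$ the other way). That is exactly~(2).

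For (2) $\Leftrightarrow$ (3), by the universal property above, $\theta_\fp$ factors through $\gT'$ iff $a \preceq b$ implies $\theta_\fp(a) \leq \theta_\fp(b)$ in $\Deux$. The relation $\theta_\fp(a) \leq \theta_\fp(b)$ in $\Deux = \{0,1\}$ means either $\theta_\fp(a) = 0$ or $\theta_\fp(b) = 1$, i.e.\ $a \in \fp$ or $b \notin \fp$, which contrapositively reads $b \in \fp \Rightarrow a \in \fp$. This is precisely condition~(3). The only mildly subtle point is making sure the factorization $\theta'$ is indeed a morphism of distributive lattices, but this is immediate since both $\vi$ and $\vu$ in $\gT'$ are induced from those in $\gT$ via $\pi$, and $\theta_\fp$ already respects them.

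For the reformulation when $\gT'$ is defined by a system $R$ of relations $x_i = y_i$: the preorder $\preceq$ on $\gT$ is by construction the smallest one extending $\leq_\gT$, compatible with $\vi,\vu$, and identifying $x_i$ with $y_i$. Hence a morphism $\theta : \gT \to \gL$ factors through $\gT'$ iff $\theta(x_i) = \theta(y_i)$ for all $i$. Applied to $\theta_\fp$ with values in $\Deux$, the condition $\theta_\fp(x_i) = \theta_\fp(y_i)$ translates to $(x_i \in \fp \Leftrightarrow y_i \in \fp)$, which is the desired reformulation. The main (minor) obstacle is just keeping straight the two presentations of the quotient (by a preorder versus by a set of relations) and verifying that the universal property is correctly invoked; no real computation is involved.
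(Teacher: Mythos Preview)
Your argument is correct and complete. The paper does not give a proof of this lemma: it introduces it as ``an easy lemma'' and states it without proof, so there is nothing to compare against beyond noting that your write-up makes explicit exactly the routine verifications the authors left implicit (the bijection $\fp \leftrightarrow \theta_\fp$, the universal property of the quotient by a preorder satisfying~\eqref{eqPreceq}, and the unpacking of $\theta_\fp(a)\leq\theta_\fp(b)$ in~$\Deux$).
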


In the following theorem we identify $\Spec\,\gT'$ with a subset of
$\Spec\,\gT$ using the injection  $\pi^\star$.
Similar results stated in a slightly different language can be found in \citealt*[Section~3]{Esc2001}.\footnote{Escard{\'o} writes his article in the language of locales.  If $Y=\Spec\,\gT$, he denotes by $\Patch\, Y$ the Stone space $\Spec\,\gT^{\rm bool}$.}

\begin{theoremc}[\Dfn and \carn of spectral subspaces] \label{propSESP}~
\begin{enumerate}\itemsep.1em
\item With the notation of lemma \ref{lemSESP}, $X$ is a
\textsl{topological subspace} of $Y$. \\
Also $\OQC(X)=\sotq{U\cap X}{U\in\OQC(Y)}$.
We say that
\emph{$X$ is a spectral subspace of~$Y$.}
\item For a subset $X$ of a spectral space $Y$ to be a spectral subspace
it is necessary and sufficient that the following conditions are verified: \\
-- The topology induced by $Y$ makes $X$ a spectral space, and
\\
-- $\OQC(X)=\sotq{U\cap X}{U\in\OQC(Y)}$.
\item A subset $X$ of a spectral space $Y$ is a spectral subspace
 if, and only if, it is closed for the patch topology.
\item If $Z$ is an arbitrary subset of a spectral space $Y=\Spec\,\gT$, its closure for the patch topology is equal to $X=\Spec\,\gT'$ where $\gT'$ is the quotient lattice of~$\gT$ defined by the preorder relation 
$\preceq$:
\begin{equation} \label{eqSSES}
a\preceq b\quad \Longleftrightarrow\quad (\DT(a)\cap Z)\subseteq
(\DT(b)\cap Z).
\end{equation}
Furthermore, $X$ is the smallest spectral subspace of $Y$ containing
$Z$.
\end{enumerate}
\end{theoremc}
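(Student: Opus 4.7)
The plan is to handle the four items in order, freely using the antiequivalence between distributive lattices and spectral spaces together with Krull's theorem; both are available since the statement is starred.

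For item~\textsl{1}, I would invoke Lemma~\ref{lemSESP} to identify $\pi^\star(X)$ with the set of $\fp\in Y$ whose associated morphism $\theta_\fp$ factors through $\pi$; writing such an $\fp$ as $\pi^{-1}(\fq)$ with $\fq\in\Spec\,\gT'$ gives $a\in\fp\;\Leftrightarrow\;\pi(a)\in\fq$, hence $\DT(a)\cap X=\fD_{\gT'}(\pi(a))$. Surjectivity of $\pi$ then shows that the basic open sets of $X$ as a spectral space coincide with the traces on $X$ of the basic open sets of $Y$, establishing simultaneously that the subspace topology is the spectral topology of $\gT'$ and that $\OQC(X)=\sotq{U\cap X}{U\in\OQC(Y)}$.

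For item~\textsl{2}, necessity is immediate from item~\textsl{1}. For sufficiency, the two conditions furnish a surjective lattice morphism $\OQC(Y)\to\OQC(X)$, $U\mapsto U\cap X$; by antiequivalence, $X=\Spec\,\OQC(X)$ embeds canonically as a spectral subspace of $Y=\Spec\,\OQC(Y)$, matching the original topological inclusion. For item~\textsl{3}, if $X=\Spec\,\gT'$ is a spectral subspace, its patch topology (that induced from the patch topology of $Y$) coincides with $\Spec\,\gT'^{\rm bool}$, which is a Stone space, hence compact; as a compact subspace of the Hausdorff space $\Spec\,\gT^{\rm bool}$, it is closed. The converse direction will follow once item~\textsl{4} is proved.

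For item~\textsl{4}, I would first check that the relation $\preceq$ defined by~(\ref{eqSSES}) satisfies the axioms~(\ref{eqPreceq}) of a quotient preorder; this is immediate since $a\mapsto\DT(a)\cap Z$ sends $\vi,\vu$ to $\cap,\cup$ and $0,1$ to $\emptyset,Z$. This gives a quotient $\pi:\gT\to\gT'$. By Lemma~\ref{lemSESP}, $\fp\in X$ iff for every pair $a,b\in\gT$ with $\DT(a)\cap Z\subseteq\DT(b)\cap Z$ one has $a\in\fp\;\Rightarrow\;b\in\fp$. Taking contrapositives, $\fp\in X$ iff every basic patch-neighborhood $\DT(a)\cap\VT(b)$ of $\fp$ meets $Z$, which is precisely the patch closure of $Z$. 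Minimality then follows: any spectral subspace $W\supseteq Z$ is patch-closed by item~\textsl{3}, hence contains the patch closure $X$. The delicate step is this final matching: one must apply Krull's theorem to $\gT'$ itself (not merely to $\gT$) to be sure that $\Spec\,\gT'$ has enough points for the set-theoretic identification with the patch closure to go through, and this is where the starred hypothesis is genuinely used.
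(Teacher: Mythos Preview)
Your overall architecture is sound and close to the paper's, but there is one genuine slip and one organizational difference worth noting.

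The slip is in item~\textsl{4}: from Lemma~\ref{lemSESP} the correct condition is $(a\preceq b,\;b\in\fp)\Rightarrow a\in\fp$, i.e.\ $b\in\fp\Rightarrow a\in\fp$, not $a\in\fp\Rightarrow b\in\fp$ as you wrote. With your reversed implication the contrapositive does \emph{not} yield the patch-closure characterization: you would get $\fp\in\DT(b)\cap\VT(a)$ paired with $\DT(a)\cap\VT(b)\cap Z=\emptyset$, and these are different basic sets. The paper spells out this passage carefully as a chain of equivalences $(3)$--$(6)$; once you correct the direction, your one-line contrapositive argument works and is equivalent to theirs.

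The organizational difference is in how minimality is obtained. You prove the forward direction of item~\textsl{3} independently via compactness of the Stone space $\Spec\,\gT'^{\rm bool}$ inside the Hausdorff patch space, then use it to deduce minimality in item~\textsl{4}. The paper instead proves minimality directly by a lattice argument: for any spectral subspace $X_1=\Spec\,\gT_1\supseteq Z$, the defining preorder of $\gT_1$ satisfies $a\leq_{\gT_1}b\Leftrightarrow(\DT(a)\cap X_1)\subseteq(\DT(b)\cap X_1)$, which refines the preorder for $\gT'$, hence $X\subseteq X_1$; item~\textsl{3} is then derived from items~\textsl{2} and~\textsl{4}. Your route is a bit more topological, theirs more lattice-theoretic; both are short. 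Finally, your closing remark about needing Krull specifically for $\gT'$ is off target: the non-constructive input is the ambient identification of points with primes (Birkhoff/Krull for $\gT$ and its quotients), used throughout rather than at one isolated step.
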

\begin{proof}
Item \textsl{1} is easy, and defines the notion of \ssps. Item \textsl{2} follows. Item~\textsl{3} results from Items \textsl{2} and \textsl{4}. 
\\
Let's show Item \textsl{4}. 
Note first of all that the relation (\ref{eqSSES}) defines a quotient lattice $\gT'$ because the relations~(\ref{eqPreceq}) are trivially verified if we take into account the relations (\ref{eqDa}).\\
Let's show that $X=\Spec\,\gT'$ is the smallest \ssps of $Y$ containing $Z$.\\
First $Z\subseteq X$: let $\fp\in Z$, we want to show that if
$b\in\fp$ and $a\preceq b$ then $a\in\fp$. If $\DT(a)\cap Z \subseteq
\DT(b)$ and $b\in\fp$ then $\fp\notin \DT(b)$ therefore $\fp\notin \DT(a)\cap Z$ therefore $\fp\notin\DT(a)$, i.e.\ $a\in\fp$.

\noindent Also $X$ is minimal. Indeed  if
$X_1=\Spec\,\gT_1$ is a \ssps of $Y$ containing~$Z$, we have $a\leq_{\gT_1}b\;\Leftrightarrow\;(\DT(a)\cap X_1 )\subseteq (\DT(b)\cap X_1)$ 
which implies $(\DT(a)\cap Z)\subseteq (\DT(b)\cap Z)$ and therefore $a\leq_{\gT'}b$, hence $X\subseteq X_1$. \\
   It remains to show that $X$ is the closure of $Z$ for the
patch topology. Let $\wi Z$ denote this closure.
We therefore want to prove for all $\fp\in\Spec\,\gT$ the equivalence of the
two following properties:\\
(1) $\fp\in\wi Z$, \cad: $\Tt a,b\in\gT ,\;(\fp\in \DT(a)\cap
\VT(b)\,\Rightarrow\, \DT(a)\cap \VT(b)\cap Z\neq \emptyset) $,\\
(2) $\fp\in\Spec\,\gT'$.\\
But (2) is successively equivalent to

\vspace{-1.3em}
\[\begin{array}{lcr}
\Tt a,b\in\gT\;((a\preceq b,\,b\in\fp)\Rightarrow a\in\fp) &\quad
& (3)
\\[1mm]
\Tt a,b\in\gT\;\;(a\preceq b,\,b\in\fp,\, a\notin\fp) \;\mathrm{are\;
incompatible} &\quad & (4) 
\\[1mm]
\Tt a,b\in\gT\;\;\DT(a)\cap Z\subseteq \DT(b)
\;\,\mathrm{and}\,\;\fp\in
\DT(a)\cap \VT(b) \,\;\mathrm{are\; incompatible} &\quad & (5)
\\[1mm]
\Tt a,b\in\gT\;\;\DT(a)\cap \VT(b)\cap Z=\emptyset
\;\,\mathrm{and}\,\;\fp\in
\DT(a)\cap \VT(b) \,\;\mathrm{are\; incompatible} &\quad & (6)
\end{array}\]

\vspace{-.4em}
\noindent and (6) is clearly equivalent to (1).
\end{proof}

\begin{corollaryc}
\label{corpropSESP}
Any finite union and all intersections of \sspss 
of $X=\Spec\,\gT$ are \sspss.
\begin{itemize}
\item If each $X_i=\Spec\,\gT_i$ with a surjective morphism $\pi_i\colon \gT\to\gT_i$ then $\bigcap_iX_i$ corresponds to the quotient  by all the relations $\pi_i(x)=\pi_i(y)$.
\item  Let $(\pi_i)_{i \in I}$ be a finite family, then $\bigcup_iX_i$ corresponds to the quotient by the relation $\&_i\big(\pi_i(x)=\pi_i(y)\big)$.
%
\end{itemize}
\end{corollaryc}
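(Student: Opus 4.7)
The strategy is to exploit the antiequivalence summarized in Theorem~\ref{propSESP} together with the characterization in item~\textsl{3} of that theorem: a subset of $Y=\Spec\,\gT$ is a spectral subspace iff it is closed for the patch topology. Since arbitrary intersections of closed sets are closed, and finite unions of closed sets are closed (in any topology), both $\bigcap_i X_i$ and any finite union $\bigcup_i X_i$ are patch-closed, hence spectral subspaces. This handles the qualitative part of the statement uniformly, without case-by-case arguments.

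For the intersection description, I would argue pointwise via Lemma~\ref{lemSESP}. A prime $\fp \in \Spec\,\gT$ lies in $\bigcap_i X_i$ iff it lies in each $X_i$, iff the morphism $\theta_\fp:\gT\to\Deux$ factors through each $\pi_i:\gT\to\gT_i$, iff $\theta_\fp$ respects every relation $\pi_i(x)=\pi_i(y)$ simultaneously. By the universal property of quotient lattices (Proposition~\ref{propIdealFiltre} is the building block, but more directly the definition of a quotient by a system of relations), this is exactly the condition that $\theta_\fp$ factors through the quotient $\gT'$ of $\gT$ by the union of all relations $\pi_i(x)=\pi_i(y)$. Applying Lemma~\ref{lemSESP} in the reverse direction identifies $\bigcap_i X_i$ with $\Spec\,\gT'$.

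For the finite union, I would invoke Theorem~\ref{propSESP}.\textsl{4} with $Z = \bigcup_i X_i$: the smallest spectral subspace containing $Z$ is $\Spec\,\gT''$, where $\gT''$ is defined by the preorder $a\preceq b \Leftrightarrow (\DT(a)\cap Z)\subseteq(\DT(b)\cap Z)$. Since $Z$ is already patch-closed, this smallest subspace equals $Z$ itself. It remains to unfold the preorder: $a\preceq b$ iff for every $i$ and every $\fp\in X_i$ we have $a\in\fp \Rightarrow b\in\fp$, iff for every $i$ and every prime $\fq_i$ of $\gT_i$ we have $\pi_i(a)\in\fq_i \Rightarrow \pi_i(b)\in\fq_i$, iff $\pi_i(a)\leq_{\gT_i}\pi_i(b)$ for every $i$ (here one uses Krull's theorem to recover the lattice order from the prime spectrum, which is legitimate in \clama). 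The last condition says $\pi_i(a\vi b)=\pi_i(a)$ for all $i$, which is precisely the preorder induced by the single relation $\&_i(\pi_i(x)=\pi_i(y))$. The main subtlety here is that the conjunction is to be read as a relation on pairs, and the step extracting $\pi_i(a)\leq\pi_i(b)$ from the primewise condition relies essentially on the assumption of enough points (Krull); this is also where finiteness of $I$ enters implicitly, since patch-compactness is needed to guarantee that a finite union of patch-closed sets is patch-closed on the nose.
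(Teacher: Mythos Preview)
Your approach is essentially the natural one the paper has in mind: the corollary is stated without proof, as an immediate consequence of Theorem~\ref{propSESP} (item~\textsl{3} for the qualitative statement, Lemma~\ref{lemSESP} and item~\textsl{4} for the explicit quotients). Two small points to clean up. First, in the union argument you reversed the implication: $\DT(a)\cap Z\subseteq\DT(b)\cap Z$ means $a\notin\fp\Rightarrow b\notin\fp$ for $\fp\in Z$, i.e.\ $b\in\fp\Rightarrow a\in\fp$, not the direction you wrote; your conclusion $\pi_i(a)\leq_{\gT_i}\pi_i(b)$ is nonetheless the correct one. Second, your closing remark about finiteness is off: finite unions of closed sets are closed in \emph{any} topology, no compactness is needed; finiteness of $I$ is what makes $\bigcup_i X_i$ patch-closed simply because finite unions of closed sets are closed, whereas an arbitrary union of patch-closed sets need not be.
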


\begin{propositionc}[basic open and basic closed subsets]
\label{propositionOFBSES} Let $\gT$ be a \trdi and $X=\Spec\,\gT$.
\begin{enumerate}
\item $\DT(a)$ is a \ssps of $X$ canonically 
\homeoc to $
\Spec(\gT/(a=1))$.
\item  $\VT(b)$is a \ssps of $X$ canonically 
\homeoc to  $
\Spec(\gT/(b=0))$.
\end{enumerate}
\end{propositionc}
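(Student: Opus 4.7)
The plan is to apply Lemma~\ref{lemSESP} and Theorem~\ref{propSESP} to the quotients $\gT/(a=1)$ and $\gT/(b=0)$, identifying in each case the image of the dual spectral map.

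For item~\textsl{1}, let $\pi:\gT\to\gT'=\gT/(a=1)$ be the canonical projection and $\pi^\star:\Spec\,\gT'\to\Spec\,\gT$ its dual injection. By Lemma~\ref{lemSESP}, a prime $\fp\in\Spec\,\gT$ lies in the image of $\pi^\star$ if and only if the associated morphism $\theta_\fp:\gT\to\Deux$ factors through $\gT'$, that is, if and only if $\theta_\fp(a)=1$, or equivalently $a\notin\fp$. This is precisely the condition $\fp\in\DT(a)$. Hence $\pi^\star$ identifies $\Spec\,\gT'$ with $\DT(a)$ as sets, and by Theorem~\ref{propSESP}~(item~\textsl{1}) this identification makes $\DT(a)$ a spectral subspace of~$X$. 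For the topology, the basic \oqcs of $\Spec\,\gT'$ are the $\fD_{\gT'}(\pi(c))$ for $c\in\gT$, and their pullbacks through $\pi^\star$ are precisely the $\DT(c)\cap\DT(a)=\DT(c\vi a)$, which is exactly the topology induced from $X$ on $\DT(a)$.

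For item~\textsl{2}, the argument is strictly analogous with $\pi:\gT\to\gT/(b=0)$. By Lemma~\ref{lemSESP}, $\fp$ belongs to the image of $\pi^\star$ iff $\theta_\fp(b)=0$, i.e.\ $b\in\fp$, i.e.\ $\fp\in\VT(b)$. The induced topology matches since the basic open sets of $\Spec(\gT/(b=0))$ correspond to $\DT(c)\cap\VT(b)$ for $c\in\gT$, which are exactly the intersections of basic open sets of $X$ with $\VT(b)$ (any open of $X$ meets $\VT(b)$ in a union of such).

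The only step that requires any care is checking that the subspace topology induced from $Y=\Spec\,\gT$ on $\DT(a)$ (resp.\ $\VT(b)$) really coincides with the spectral topology coming from the quotient lattice; but this follows directly from the computation of $\OQC$ in Theorem~\ref{propSESP}~(item~\textsl{2}), together with the observation that $\DT(a)$ is already \qc open in $Y$ (resp.\ $\VT(b)$ is patch-closed in $Y$, being the complement of $\DT(b)$), so both subsets are spectral subspaces by Theorem~\ref{propSESP}~(item~\textsl{3}).
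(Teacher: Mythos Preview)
Your proof is correct, but the route differs slightly from the paper's. The paper applies item~\textsl{4} of Theorem~\ref{propSESP}: starting from the subset $Z=\DT(a)$ (resp.\ $Z=\VT(b)$), it computes the associated preorder $x\preceq y \Leftrightarrow \DT(x)\cap Z\subseteq\DT(y)\cap Z$ and recognizes it as $x\vi a\leq y\vi a$ (resp.\ $x\vu b\leq y\vu b$), which is exactly the preorder defining $\gT/(a=1)$ (resp.\ $\gT/(b=0)$). You proceed in the opposite direction: starting from the quotient $\gT/(a=1)$ (resp.\ $\gT/(b=0)$), you invoke Lemma~\ref{lemSESP} to identify the primes that factor through it, and land on $\DT(a)$ (resp.\ $\VT(b)$). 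Both arguments are equally short; yours is perhaps more natural here since the quotient lattices are explicitly named in the statement, while the paper's version has the small advantage of displaying the preorder identity $x\vi a\leq y\vi a$ directly. Your final paragraph about checking the subspace topology is unnecessary: once the image of $\pi^\star$ is identified as a set, item~\textsl{1} of Theorem~\ref{propSESP} already gives both the topological-subspace property and the description of \oqcs.
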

\begin{proof}
Let $x\preceq y$ be the preorder  corresponding to the \ssps
$\DT(a)$. We have
\[x\preceq y\,\Leftrightarrow\,\DT(x) \cap \DT(a)\subseteq \DT(y) \cap
\DT(a)\,\Leftrightarrow\,\DT(x\vi a)\subseteq \DT(y\vi 
a)\,\Leftrightarrow\,x\vi
a\leq y\vi a.
\]
This is indeed the preorder relation corresponding to the quotient
$\Spec(\gT/(a=1)).$\\
Let $x\preceq' y$ be the preorder  corresponding to the \ssps 
 $\VT(b)$. We have
\[\begin{array}{rcccl}
x\preceq' y& \Longleftrightarrow  &\DT(x) \cap \VT(b)\subseteq \DT(y) 
\cap
\VT(b)   & \Longleftrightarrow   &  \DT(x) \cup \DT(b)\subseteq 
\DT(y) \cup
\DT(b) \\[1mm]
& \Longleftrightarrow  &  \DT(x\vu b)\subseteq \DT(y\vu b) & 
\Longleftrightarrow
&   x\vu b\leq y\vu b.
\end{array}
\]
This is indeed the preorder relation corresponding to the quotient
$\Spec(\gT/(b=0)).$
\end{proof}
\subsubsection*{Closed subsets of $\Spec\,\gT$}
\addcontentsline{toc}{subsubsection}{Closed subsets of $\Spec\,\gT$}

In this paragraph $\gT$ is a \trdi and $X=\Spec\,\gT$.
If $Z\subseteq X$ we denote by $\ov{Z}$ the closure of $Z$ for the usual topology of $X$.

\begin{propositionc}[closed subsets of $\Spec\,\gT$]
\label{propositionFSES} ~
\begin{enumerate}
\item An arbitrary closed subset of $\Spec\,\gT$ is
$\VT(\fJ)=\bigcap_{x\in \fJ}\VT(x)$ where $\fJ$ is an arbitrary ideal of $\gT$.
This \ssps corresponds to the quotient lattice 
$\gT/(\fJ=0)$.
\item
The intersection of a family of closed subsets corresponds to the upper bound of the corresponding family of \ids. The union of two closed subsets corresponds to 
the intersection of the two corresponding \ids.
\item\label{enumtra}
The lattice $\gT/((a:b)=0)$ is the quotient lattice corresponding to 
$\ov{\VT(a)\cap
\DT(b)}$.
\item
So $\gT$ is a \agH \ssi  $X$ satisfies the following \prt: for any
\oqcs $U_1$ and $U_2$, the closure of $\,U_1\setminus U_2$ is the complement of a \oqc.
\item  The quotient lattice  
$\gT/((0:x)=0)$  corresponds to the closure of $\DT(x)$.
\item The boundary of $\DT(x)$ corresponds to the quotient lattice
$\gT\ul x=\gT/(\rK_\gT^x=0)$, where
\begin{equation} \label{eqboundarysup}
\rK_\gT^x\,=\,\dar x \,\vu\, (0:x).
\end{equation}
We call the lattice $\gT\ul x$ the \emph{upper (Krull-)boundary of $x$
in $\gT$}. We say also that $\rK_\gT^x$ is the \emph{Krull boundary \id of $x$
in $\gT$.}\\
When $\gT$ is a \agH, $\rK_\gT^x\,=\,\dar (x \,\vu\, \lnot x)$ 
and
$\gT\ul x\simeq \uar (x \,\vu\, \lnot x)$ with the surjective \homo 
$\pi\ul x:
\left|
\begin{array}{rcl}
\gT& \to  & \uar (x \,\vu\, \lnot x)  \\
y&  \mapsto  & y \,\vu\, x \,\vu\, \lnot x
\end{array}
  \right.
$.
\end{enumerate}
\end{propositionc}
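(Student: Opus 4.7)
The plan is to prove items 1 and 2 directly as preparatory results, then derive items 3--6 as consequences, with item 3 being the technical heart. For item 1, I would note that every open of $X=\Spec\,\gT$ is a union of basic open subsets $\DT(x)$, so every closed subset has the form $\bigcap_{x\in\fJ}\VT(x)=\VT(\fJ)$ for the ideal $\fJ$ generated by the chosen family. Invoking Proposition \ref{propositionOFBSES}, each $\VT(x)$ is the spectral subspace corresponding to $\gT/(x=0)$, and by Corollary \ref{corpropSESP} their intersection corresponds to the simultaneous quotient by all the relations $x=0$, $x\in\fJ$, which is $\gT/(\fJ=0)$. For item 2, $\bigcap_i\VT(\fJ_i)=\VT\!\big(\bigcup_i\fJ_i\big)$ corresponds to the sup of the ideals, while for two closed subsets the distributive law $\DT(x)\cap\DT(y)=\DT(x\vi y)$ shows that $\VT(\fJ)\cup\VT(\fJ')=\VT(\fJ\cap\fJ')$.

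For item 3, I would argue pointwise: a prime $\fp$ belongs to $\ov{\VT(a)\cap\DT(b)}$ iff every basic open neighborhood $\DT(x)$ of $\fp$ meets $\VT(a)\cap\DT(b)$. Since $\DT(x)\cap\VT(a)\cap\DT(b)=\DT(x\vi b)\cap\VT(a)$, Krull's theorem tells us this is nonempty iff $x\vi b\not\leq a$, iff $x\notin(a:b)$. Consequently $\fp$ lies in the closure iff $(a:b)\subseteq\fp$, so $\ov{\VT(a)\cap\DT(b)}=\VT((a:b))$, which by item 1 corresponds to $\gT/((a:b)=0)$.

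The remaining items follow quickly. For item 4, setting $U_1=\DT(b)$ and $U_2=\DT(a)$, the adherence of $U_1\setminus U_2$ is $\VT((a:b))$ by item 3; this equals the complement of a quasi-compact open iff $\VT((a:b))=\VT(\dar c)$ for some $c\in\gT$. As ideals of a distributive lattice are determined by the primes containing them, this is equivalent to $(a:b)$ being principal, i.e., to $\gT$ being a Heyting algebra. Item 5 is item 3 with $a=0$ and $b=x$, noting that $\VT(0)=X$. For item 6, the boundary of the open $\DT(x)$ equals $\ov{\DT(x)}\cap\VT(x)$, and Corollary \ref{corpropSESP} shows this intersection corresponds to quotienting by both $(0:x)$ and $\dar x$, i.e., by $\dar x\vu(0:x)=\rK_\gT^x$. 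In the Heyting case $(0:x)=\dar\lnot x$, hence $\rK_\gT^x=\dar(x\vu\lnot x)$; the standard isomorphism $\gT/(\dar c=0)\simeq\,\uar c$ via $y\mapsto y\vu c$ then yields the final formula.

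The main obstacle I foresee is the reliance on Krull's theorem in items 3 and 4 — producing a prime containing $a$ but not $x\vi b$ when $x\vi b\not\leq a$, and recovering ideals from their varieties. This is where the classical, non-constructive character of the proposition enters. Once this is granted, the rest is bookkeeping through the correspondence between quotient lattices and spectral subspaces provided by Theorem \ref{propSESP} and its corollary.
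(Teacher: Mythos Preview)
Your proof is correct and follows essentially the same line as the paper, which singles out item~3 as the only delicate point and leaves the rest implicit. The one stylistic difference is in item~3: the paper argues from the closed-set side, observing that $\VT(a:b)=\bigcap\{\VT(x):x\vi b\leq a\}$ is exactly the intersection of all basic closed sets $\VT(x)$ containing $\VT(a)\cap\DT(b)$ and hence equals its closure, whereas you argue pointwise via neighborhoods and Krull's theorem; both routes ultimately rest on the same Birkhoff/Krull correspondence between lattice inequalities and inclusions of varieties.
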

\begin{proof}
The only delicate Item is \textsl{\ref{enumtra}}. Since
$(a:b)=\sotq{x}{x\vi b\leq a}$, the corresponding \sps 
$\VT(a:b)$ is the intersection of all $\VT(x)$ such that $\VT(a)\subseteq 
\VT(x)\cup
\VT(b)$, \cade such that  $\VT(a)\cap \DT(b)\subseteq \VT(x)$. But any closed subset of
$\Spec\, \gT$ is an intersection of basic closed subsets $\VT(x)$, so 
we have got the closure of $\VT(a)\cap \DT(b)$.
\end{proof}

\rems

\noindent 1) In general an arbitrary open subset of $X$ is not a \ssps.

\noindent 2) The \dfn we have given for the Krull boundary lattice 
$\gT\ul x$ is clearly constructive. Our translation of the boundary of a \oqc in terms of quotient lattices in Item~\textsl{6} agrees with the (non-\cov) classical \dfn of the boundary of a \ssps.
The proof that this translation is appropriate  needs \clama since it uses the fact the \sps $\Spec\,\gT$ have enough points.
\eoe

\medskip The \flw lemma allows us to best understand the   
Krull boundary \id of $x$ in~$\gT$.

\begin{lemma}
\label{lemBKReg}
For all $x\in\gT$ the Krull boundary \id of $x$ in $\gT$ is
\emph{\ndz}, i.e.\ 
$0:\fj=0$.
\end{lemma}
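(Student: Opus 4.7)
The plan is to unwind the definition of the conductor \hbox{$0:\fj$} and then exploit the fact that the Krull boundary \id $\fj=\rK_\gT^x=\,\dar x\vu(0:x)$ has the remarkable feature that whenever $y$ kills $x$, then $y$ itself belongs to $\fj$. This self-referential property will immediately give $y\vi y=0$, \cad $y=0$.

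More precisely, I would proceed as follows. Suppose $y\in 0:\fj$, i.e., $y\vi z=0$ for every $z\in\fj$. I first test this condition on the element $x$ itself: since $x\in\,\dar x\subseteq\fj$, I obtain $y\vi x=0$. By definition of the conductor $(0:x)$ this means $y\in(0:x)$, hence $y\in\,\dar x\vu(0:x)=\fj$. Now I test the condition on $z=y\in\fj$, which yields $y=y\vi y=0$. Therefore $0:\fj\subseteq\{0\}$, and the reverse inclusion is trivial.

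I do not expect a real obstacle here: the only delicate point is keeping straight that $\fj$ contains both the principal ideal $\dar x$ (giving access to $x$) and the conductor $(0:x)$ (so that anything annihilating $x$ lies back in $\fj$). Once these two observations are in place the argument is essentially a one-line tautology, and there is no need to manipulate the general element $u\vu v$ of $\fj$ with $u\leq x$ and $v\vi x=0$.
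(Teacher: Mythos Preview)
Your proof is correct and is essentially identical to the paper's: both observe that $y\vi x=0$ forces $y\in(0:x)\subseteq\fj$, and then apply the annihilation hypothesis to $z=y$ itself to conclude $y=y\vi y=0$. The paper phrases the penultimate step as ``for all $z\in(0:x)$, $u\vi z=0$; in particular $u\vi u=0$'', but this is the same argument with slightly different bookkeeping.
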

\begin{proof}
Let $u\in(0:\fj)$. Since $\fj=\dar x\vu (0:x)$ we have $u\vi x=0$ and, 
for all
$z\in(0:x)$, $u\vi z=0$. In particular $u\vi u=0$.
\end{proof}

For a \agH this is nothing but the law discovered by Brouwer: $\lnot(x \,\vu\, \lnot x)=0$.

\smallskip The \flw proposition  is a dual, constructive, 
\gui{point free} version of a well-known topological fact: if $A$ and $B$  are closed, the  union of the boundaries of $A\cup B$  and of  $A\cap B$ equals the union of the boundaries of $A$ and $B$.
\begin{proposition}
\label{propBordKUnion}
For all $x,y\in\gT$ we have
$\rK_\gT^{x}\cap \rK_\gT^{y}= \rK_\gT^{x\vu y}\cap 
\rK_\gT^{x\vi y}.$
\end{proposition}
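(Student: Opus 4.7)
The plan is to unwind the definition of the Krull boundary ideal and then argue by symmetric distributivity computations in $\gT$. Recall that $\rK_\gT^z=\,\dar z\,\vu\,(0:z)$, so
\[
u\in\rK_\gT^z\ \Longleftrightarrow\ \exists\,r\in\gT\;\;(r\vi z=0\ \text{and}\ u\leq z\vu r).
\]
I will prove the two inclusions separately; by the $x\leftrightarrow y$ symmetry, proving $u\in\rK_\gT^x$ will suffice on one side.

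\emph{Inclusion $\subseteq$.} Take $u\in\rK_\gT^x\cap\rK_\gT^y$ and write $u\leq x\vu s$ with $s\vi x=0$ and $u\leq y\vu t$ with $t\vi y=0$. Meeting the two upper bounds and using distributivity,
\[
u\leq (x\vu s)\vi(y\vu t)=(x\vi y)\vu(x\vi t)\vu(s\vi y)\vu(s\vi t).
\]
Setting $r_1=s\vi t$ gives $u\leq(x\vu y)\vu r_1$ with $r_1\vi x\leq s\vi x=0$ and $r_1\vi y\leq t\vi y=0$, hence $r_1\vi(x\vu y)=0$ and $u\in\rK_\gT^{x\vu y}$. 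Setting $r_2=(x\vi t)\vu(s\vi y)\vu(s\vi t)$ gives $u\leq(x\vi y)\vu r_2$, and each of the three summands of $r_2$ meets $x\vi y$ in $0$ (e.g.\ $x\vi t\vi(x\vi y)=t\vi(x\vi y)\leq t\vi y=0$), so $r_2\vi(x\vi y)=0$ and $u\in\rK_\gT^{x\vi y}$.

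\emph{Inclusion $\supseteq$.} Let $u\in\rK_\gT^{x\vu y}\cap\rK_\gT^{x\vi y}$; write $u\leq(x\vu y)\vu s$ with $s\vi(x\vu y)=0$ (so both $s\vi x=0$ and $s\vi y=0$), and $u\leq(x\vi y)\vu t$ with $t\vi x\vi y=0$. Meeting $u$ with the second inequality, $u=(u\vi x\vi y)\vu(u\vi t)$. Now meet $u\vi t$ with the first: using $u\leq(x\vu y)\vu s$ and distributivity,
\[
u\vi t=(u\vi t\vi x)\vu(u\vi t\vi y)\vu(u\vi t\vi s).
\]
Combining,
\[
u=\underbrace{(u\vi x\vi y)\vu(u\vi t\vi x)}_{\leq\,x}\ \vu\ \underbrace{(u\vi t\vi y)\vu(u\vi t\vi s)}_{=:\,r}.
\]
Then $r\vi x=(u\vi t\vi y\vi x)\vu(u\vi t\vi s\vi x)\leq (t\vi x\vi y)\vu(s\vi x)=0$, so $u\leq x\vu r$ with $r\vi x=0$, i.e.\ $u\in\rK_\gT^x$. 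By exchanging the roles of $x$ and $y$ one gets $u\in\rK_\gT^y$ as well.

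The main obstacle is the second inclusion: one must extract a witness $r\in(0:x)$ from the two hypotheses, whereas the obvious choice $u\leq(x\vi y)\vu t\leq x\vu t$ fails because $t$ is only known to kill $x\vi y$, not $x$. The trick is to first decompose $u$ with the $(x\vi y)$-inequality and then re-decompose the $t$-part with the $(x\vu y)$-inequality; distributivity of $\gT$ then assembles $r$ from cross-terms each of which is annihilated by $x$. Everything else is routine lattice bookkeeping.
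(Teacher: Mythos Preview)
Your proof is correct and follows essentially the same approach as the paper: both directions are handled by producing explicit witnesses via distributivity. The paper's presentation is a bit terser—for $\subseteq$ it takes the witnesses $s\vu t$ (for $x\vi y$) and $s\vi t$ (for $x\vu y$) directly, and for $\supseteq$ it names the witness $(y\vu s)\vi t$ outright rather than arriving at it by decomposing $u$—but your fully expanded version amounts to the same computation (indeed your $r$ equals $u\vi\bigl((y\vu s)\vi t\bigr)$).
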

\begin{proof}
Let $z\in\rK_\gT^{x}\cap \rK_\gT^{y}$, in other words there exist $u$ 
and $v$ such that $z\leq x\vu u$ and $u\vi x=0$, $z\leq y\vu v$ and $v\vi y=0$.
So $z\leq (x\vu (u\vu v))\vi (y\vu (u\vu v)) = (x\vi y)\vu(u\vu 
v)$ with
$(u\vu v)\vi(x\vi y)=(u\vi(x\vi y))\vu (v\vi(x\vi y))=0\vu 0=0$. Hence
$z\in\rK_\gT^{x\vi y}$. \\
Similarly $z\leq (x\vu y)\vu(u\vi v)$ with $(u\vi v)\vi(x\vu y)=0$, 
so
$z\in\rK_\gT^{x\vu y}$.\\
Finally assume  $z\in\rK_\gT^{x\vu y}\cap \rK_\gT^{x\vi y}$, 
in other words there exist $u$ and $v$ such that $z\leq x\vu y\vu u$ and $u\vi (x\vu y)=0$,
$z\leq (x\vi y)\vu v$ and $v\vi x\vi y=0$.
Let $u_1=(y\vu u)\vi v$. We have  $z\leq (x\vi y)\vu v\leq x\vu v$ and 
$z\leq x\vu
(y\vu u)$, hence $z\leq x\vu u_1$. Furthermore $x\vi u_1=x\vi (y\vu u) 
\vi v\leq
x\vi y\vi v =0$, hence $z\in\rK_\gT^{x}$.
\end{proof}

\subsubsection*{Closed subsets of $\Spec\,\gT\cir$}
\addcontentsline{toc}{subsubsection}{Closed subsets of $\Spec\,\gT\cir$}

It is natural to denote $\DT(a)$  by $\VTo(a)$.
So we give the \flw notation, for any subset $F\subseteq \gT$:
$\VTo(F)=\bigcap_{a\in\fF} \VTo(a)$. If $\fF$ is the filter generated by $F$,  we have $ \VTo(F)=\VTo(\fF)$.

The \flw proposition is a consequence of Proposition 
\ref{propositionFSES} by reversing the order (we only rewrite Items \textsl{1} and \textsl{6}) 
when identifying underlying subsets of $\Spec\,\gT$ and
$\Spec\,\gT\cir$.

Recall that the notion opposite to the \id $(a:b)$ is the filter 
$a\setminus
b\eqdefi \sotq{z}{z\vu b\geq a}$. 

\begin{proposition}[Closed subsets of $\Spec\,\gT\cir$]
\label{FdSES} ~
\begin{enumerate}
\item An arbitrary closed subset of $\Spec\,\gT\cir$ is equal to
$\bigcap_{x\in\fF} \VTo(x)$ where $\fF$ is an arbitrary filter of~$\gT$.
This is the \ssps corresponding to the quotient lattice 
$\gT/(\fF=1)$.

\item We define the quotient
$\gT\bal x=\gT/(\rK^\gT_x=1)$, where $\rK^\gT_x$ is the filter
\begin{equation} \label{eqboundaryinf}
\rK^\gT_x\,=\,\uar x \,\vi\, (1\setminus x)
\end{equation}
The lattice $\gT\bal x$ is called the \emph{lower (Krull) boundary of
$x$ in $\gT$}.
We say also that $\rK^\gT_x$ is \emph{the boundary (Krull) filter of 
$x$}.\\
When $\gT$ is a Brouwer \alg, $\rK^\gT_x\,=\,
\uar (x \,\vi\, (1- x))$ and
$\gT\bal x\simeq \dar (x \,\vi\, (1- x))$ with the surjective \homo 
$\pi\bal x:
\left|
\begin{array}{rcl}
\gT&\to&\dar (x \,\vi\, (1- x))\\
  y&\mapsto& y \,\vi\, x \,\vi\, (1- x)
\end{array}
  \right.
$.\\
The quotient $\gT\bal x$ corresponds to the 
boundary of $\VTo(x)$ for the topology of 
$\Spec\,\gT\cir$.\footnote{This is the opposite notion to the notion of boundary. The intersection of the closures of $\VTo(x)=\DT(x)$ and $\DTo(x)=\VT(x)$ is replaced with the union of their interiors. In $\Spec\,\gT$, this is the complement of the boundary of~$\DT(x)$.}
\end{enumerate}
\end{proposition}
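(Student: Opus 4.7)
My plan is to obtain Proposition~\ref{FdSES} as a direct consequence of Proposition~\ref{propositionFSES} applied to the opposite lattice $\gT\cir$, using the translation dictionary set up at the end of the \gui{In \clama} subsection: the underlying sets of $\Spec\,\gT$ and $\Spec\,\gT\cir$ are identified, $\leq$ is reversed, $\vi$ and $\vu$ are swapped, $0$ and $1$ are swapped, ideals of $\gT\cir$ are filters of $\gT$, and the conductor $(a:b)$ in $\gT\cir$ is the difference filter $a\setminus b$ in $\gT$. Under this dictionary, the basic closed subsets $\VT(a)\subseteq\Spec\,\gT$ are the basic closed subsets $\VTo(a)\subseteq\Spec\,\gT\cir$, so $\VT(\cdot)$ applied in $\gT\cir$ is exactly $\VTo(\cdot)$ in $\gT$.

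For Item~\textsl{1}, I would apply Item~\textsl{1} of Proposition~\ref{propositionFSES} to $\gT\cir$: an arbitrary closed subset of $\Spec\,\gT\cir$ is $\bigcap_{x\in\fJ}\VT_{\gT\cir}(x)=\bigcap_{x\in\fF}\VTo(x)$ where $\fJ$ is an ideal of $\gT\cir$, \cad a filter $\fF$ of $\gT$. The quotient lattice $\gT\cir\!/(\fJ=0)$ is, with the order reversed, exactly $\gT/(\fF=1)$, and Proposition~\ref{propositionOFBSES} (after passing to $\gT\cir$) gives the announced correspondence with the \ssps.

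For Item~\textsl{2}, I would apply Item~\textsl{6} of Proposition~\ref{propositionFSES} to $\gT\cir$. The Krull boundary \id of $x$ in $\gT\cir$ is $\dar_{\gT\cir}\!x\,\vu_{\gT\cir}\,(0_{\gT\cir}:_{\gT\cir}x)$; translating back to $\gT$, this becomes $\uar x\,\vi\,(1\setminus x)=\rK^\gT_x$, and the quotient $(\gT\cir)\ul x$ is precisely $\gT/(\rK^\gT_x=1)=\gT\bal x$. Still by Item~\textsl{6} of Proposition~\ref{propositionFSES}, this quotient corresponds to the boundary of $\DT_{\gT\cir}(x)=\VTo(x)$ in $\Spec\,\gT\cir$, which is exactly the statement at the end of Item~\textsl{2}. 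The Brouwer complement case is obtained by the same translation: $\gT$ being a Brouwer \alg is equivalent to $\gT\cir$ being a \agH, with $1-x$ corresponding to $\lnot x$; the equality $\rK_{\gT\cir}^{x}=\dar_{\gT\cir}(x\vu_{\gT\cir}\lnot x)$ becomes $\rK^\gT_x=\uar(x\vi(1-x))$, and the explicit \iso $(\gT\cir)\ul x\simeq\uar_{\gT\cir}(x\vu_{\gT\cir}\lnot x)$ with map $y\mapsto y\vu_{\gT\cir}x\vu_{\gT\cir}\lnot x$ becomes the \iso $\gT\bal x\simeq\dar(x\vi(1-x))$ with map $y\mapsto y\vi x\vi(1-x)$.

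The only thing requiring a bit of care is the footnote clarifying that the quotient corresponds to the \emph{boundary in $\Spec\,\gT\cir$}, not to the \gui{opposite} of the boundary in $\Spec\,\gT$; but this is a matter of unwinding the dictionary and not a genuine obstacle, since Item~\textsl{6} of Proposition~\ref{propositionFSES} is defined intrinsically at the level of \trdis and applies verbatim to $\gT\cir$. Hence no new argument is needed beyond the symbolic rewriting above.
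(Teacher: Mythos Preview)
Your approach is exactly the one the paper takes: the text introduces this proposition by stating that it \gui{is a consequence of Proposition~\ref{propositionFSES} by reversal of the order (we only rewrite Items \textsl{1} and \textsl{6}) when identifying underlying subsets of $\Spec\,\gT$ and $\Spec\,\gT\cir$}, and gives no further proof. Your dictionary argument is a faithful (and more detailed) unfolding of precisely that reversal, so nothing needs to be added or corrected.
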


\subsubsection*{Gluing spectral spaces}
\addcontentsline{toc}{subsubsection}{Gluing spectral spaces}

Since \trdis have a purely equational \dfn, the category has arbitrary colimits and limits. Limits and filtered colimits commute with the forgetful functor to the category of sets. Dual \prts are valid in the antiequivalent category of \spss and spectral morphisms. In some cases these limits or colimits correspond to the ones in the category  of topological spaces and continuous maps.

Here is the dual of Proposition~\ref{propRecolTD}
(we leave to \llec the translation of Fact~\ref{factRecolTD}).

\begin{propositionc}[gluing  finitely many \sspss along \oqcs]
\label{propRecolSpec}\hspace{0pt}
\vspace{-1.5em}
\begin{enumerate}
\item Let $(X_i)_{1\leq i\leq n}$ be a finite family of \spss, and for each  $i\neq  j$ a \oqc $X_{ij}$ of $X_i$ with  an \iso
$\varphi_{ij}\colon  X_{ij}\to X_{ji}$. Assume that $\varphi_{ij}= \varphi_{ji}^{-1}$ for all $i,j$ and that the natural relations of compatibility \gui{three by three}
are satisfied: if $x=\varphi_{ji}(y)=\varphi_{ki}(z)$ then $\varphi_{jk}(y)=z$. In this case the inductive limit of the diagram in the 
category of \spss is a space $X$ where each $X_i$ is identified with a 
 \oqc via the morphism $X_i\to X$. 
\item The analogous result also holds when replacing \gui{\oqc} with \gui{basic closed subset} .
\end{enumerate}
\end{propositionc}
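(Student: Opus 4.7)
The plan is to reduce the statement to its lattice-theoretic dual, Proposition~\ref{propRecolTD}, via the anti-equivalence between the category of \trdis and the category of \spss. The two items will be proved in parallel, using respectively the \gui{principal filter} and \gui{principal ideal} versions of Proposition~\ref{propRecolTD}.

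First I would set up the dualization for Item~\textsl{1}. Put $\gT_i=\OQC(X_i)$, so that $X_i=\Spec\,\gT_i$. By Proposition~\ref{propositionOFBSES}, a \oqc $X_{ij}\subseteq X_i$ is of the form $\DT(s_{ij})$ for some $s_{ij}\in \gT_i$, and is canonically \homeoc to $\Spec(\gT_i/(s_{ij}=1))$. Denote by $\pi_{ij}:\gT_i\to\gT_{ij}:=\gT_i/(s_{ij}=1)$ the quotient morphism by the principal filter $\uar s_{ij}$. Under the anti-equivalence, the \iso $\varphi_{ij}:X_{ij}\to X_{ji}$ corresponds to an \iso $\gT_{ji}\stackrel{\sim}{\to}\gT_{ij}$ of distributive lattices. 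The condition $\varphi_{ij}=\varphi_{ji}^{-1}$ says the two \isos are mutually inverse, so after identification we obtain a single quotient $\gT_{\{i,j\}}=\gT_{ij}=\gT_{ji}$ which is simultaneously a target of $\pi_{ij}$ and $\pi_{ji}$.

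Next I would translate the three-way compatibility into the hypotheses of Proposition~\ref{propRecolTD}. The subset $X_{ij}\cap X_{ik}\subseteq X_i$ is the \oqc $\DT(s_{ij}\vi s_{ik})$. The statement \gui{$x=\varphi_{ji}(y)=\varphi_{ki}(z)\Rightarrow \varphi_{jk}(y)=z$} says precisely that, after transporting by the $\varphi$'s, the three subspaces $X_{ij}\cap X_{ik}$, $X_{ji}\cap X_{jk}$, $X_{ki}\cap X_{kj}$ can be identified consistently. Dually, this says first that $\pi_{ij}(s_{ik})\in\gT_{ij}$ and $\pi_{ji}(s_{jk})\in\gT_{ji}$ become the same element of $\gT_{\{i,j\}}$, and second that the common triple quotient $\gT_{ijk}:=\gT_{ij}/(\pi_{ij}(s_{ik})=1)$ is canonically identified with the analogous quotients obtained from the pairs $(j,k)$ and $(i,k)$; call this common lattice $\gT_{\{i,j,k\}}$. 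This provides all the data and commutations required by the \gui{filter} version of Proposition~\ref{propRecolTD}.

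I would then apply that filter version of Proposition~\ref{propRecolTD}: there is a \trdi $\gT$ with morphisms $\pi_i:\gT\to\gT_i$ such that $\big(\gT,(\pi_i)\big)$ is the projective limit of the diagram and each $\pi_i$ is a quotient morphism by a principal filter $\uar s_i$. Set $X=\Spec\,\gT$. By anti-equivalence, projective limits of \trdis correspond to inductive limits in the category of \spss and spectral morphisms, hence $X$ is the required inductive limit. Finally, by Proposition~\ref{propositionOFBSES}, the morphism $\pi_i$ dualizes to a spectral map $X_i=\Spec\,\gT_i\to X$ which is a homeomorphism of $X_i$ onto $\DT(s_i)\subseteq X$, i.e.\ onto a \oqc of $X$. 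For Item~\textsl{2}, the argument is entirely parallel: a basic closed subset $\VT(b)\cong \Spec(\gT/(b=0))$ dualizes to a quotient by a principal ideal, so one invokes the original statement of Proposition~\ref{propRecolTD} instead of its filter variant.

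The main obstacle I expect is the faithful bookkeeping of the compatibility data: checking that the spectral \gui{three-way} condition really translates to $\pi_{ij}(s_{ik})=\pi_{ji}(s_{jk})$ in $\gT_{\{i,j\}}$ and to the identification of the triple quotients $\gT_{\{i,j,k\}}$, with all squares in the diagram of Proposition~\ref{propRecolTD} commuting. Once this translation is pinned down, the rest is a formal application of duality.
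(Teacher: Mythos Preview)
Your proposal is correct and is exactly the approach the paper takes: the proposition is presented there without a separate proof, simply as \gui{the dual of Proposition~\ref{propRecolTD}}, to be read through the anti-equivalence between \trdis and \spss. Your careful matching of \oqcs with principal-filter quotients (Item~\textsl{1}) and basic closed subsets with principal-ideal quotients (Item~\textsl{2}), together with the observation that projective limits of lattices dualize to inductive limits of spectral spaces, is precisely the intended unpacking of that one-line remark.
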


\comm Let us note that $X$ is also the filtered colimit,
in the category  of sets and  and in the category of topological spaces, of the  diagram constituted by the spaces $X_i$, the inclusions $f_{ij}\colon X_{ij}\to X_i$ and the isomorphisms $\varphi_{ij}$. In other words the topological space~$X$ is the gluing of the spaces $X_i$ through the $X_{ij}$, when we identify $x\in X_{ij}$ with  $\varphi_{ij}(x)\in X_{ji}$.

In \clama  Proposition~\ref{propRecolTD}  is an easy consequence of Proposition~\ref{propRecolSpec}. Nevertheless this smart shortcut does not give a \prco of Proposition~\ref{propRecolTD}. 

In Item \textsl{2} of Proposition~\ref{propRecolSpec}, if we take for the $X_{ij}$'s arbitrary closed sets (which are \spss) instead of basic closed sets, the gluing will work as topological spaces but would not necessarily provide a spectral space. In Item \textsl{1} if we take  infinitely many basic open sets, the gluing will work as topological spaces but would not necessarily provide a spectral space.
\eoe

\subsection{Maximal spectrum versus Heitmann spectrum}

In the remarkable article \cite[\textsl{Generating non-Noetherian modules efficiently}]{Hei84} Raymond Heitmann explains that the usual notion of j-spectrum for a commutative ring is not the right one in the non-Noetherian case because it does not correspond to a spectral space in Stone's sense. He introduces the following modification of the usual definition: instead of considering the set of prime ideals which are intersections of maximal ideals, he proposes to consider the closure of the maximal spectrum in the prime spectrum, closure to be taken in the sense of the constructible topology (the patch topology).

\begin{definitions}
\label{defHspec1}
Let $\gT$ be a \trdi.
\begin{enumerate}
\item  We denote by $\Max\,\gT$ the topological subspace of $\Spec\,\gT$ formed by the maximal ideals of~$\gT$.
It is called the \textsl{maximal spectrum of $\gT$}.
\item  We denote by $\jspec\,\gT$ the topological subspace of $\Spec\,\gT$ formed by the $\fp$'s which verify the equality \hbox{$\JT(\fp)=\fp$}, i.e.\ the prime ideals $\fp$ which are intersections of maximal ideals (it is the \gui{usual} j-spectrum.
\item We call \textsl{$\rJ$-Heitmann spectrum of $\gT$} denoted by
$\Jspec\,\gT$ the closure of the maximal spectrum in $\Spec\,\gT$, 
 closure to be taken in the sense of the patch topology. This set is equipped with the topology induced by $\Spec\,\gT$.
\item  We denote by $\Min\,\gT$ the topological subspace of $\Spec\,\gT$ formed by the minimal prime ideals of $\gT$. We call it \textsl{the minimal spectrum of $\gT$}.
\end{enumerate}
\end{definitions}

Note that despite their names, the topological spaces $\Max\,\gT$,  $\jspec\,\gT$ and $\Min\,\gT$ are not in general spectral spaces.

\begin{theoremc}
\label{thDK3}
Let $\gT$ be a \trdi. 
The space $\Jspec\,\gT$ is a spectral subspace of $\Spec\,\gT$ canonically homeomorphic to $\Spec(\He(\gT))$. More precisely, if $M=\Max\,\gT$, we have for $a,b\in\gT$:
\begin{equation} \label{eqthDK3}
\DT(a)\cap M\,\subseteq\, \DT(b)\cap M \quad \Longleftrightarrow\quad
a\preceq_{\He(\gT)}b.
\end{equation}
\end{theoremc}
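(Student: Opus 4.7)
The plan is to apply Item \textsl{4} of Theorem~\ref{propSESP} to the set $Z=M=\Max\,\gT$, and then to identify the resulting quotient lattice with $\He(\gT)$. The equivalence~\eqref{eqthDK3} is exactly the content of this identification, so proving it yields both conclusions simultaneously.

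First, by Theorem~\ref{propSESP}.\textsl{4}, the patch closure of $M$ in $\Spec\,\gT$ is the smallest spectral subspace of $\Spec\,\gT$ containing $M$, and it is canonically \homeoc to $\Spec\,\gT'$, where $\gT'$ is the quotient of $\gT$ by the preorder
\[
a\preceq b \ \Longleftrightarrow\ (\DT(a)\cap M)\subseteq(\DT(b)\cap M).
\]
By the very definition of $\Jspec\,\gT$ this patch closure is $\Jspec\,\gT$, so it only remains to check that $\gT'=\He(\gT)$ as a quotient of $\gT$, i.e.\ that the preorder above coincides with $\preceq_{\He(\gT)}$.

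For the key equivalence~\eqref{eqthDK3}, I unwind both sides. On the right, by Definition~\ref{defHeT} and Lemma~\ref{lemHeT},
\[
a\preceq_{\He(\gT)}b \ \Longleftrightarrow\ a\in\JT(b).
\]
On the left, $\DT(a)\cap M\subseteq \DT(b)\cap M$ rewrites as: for every $\fm\in M$, $b\in\fm\Rightarrow a\in\fm$, i.e.\ $a$ lies in every \idema containing $b$. The crucial input is Lemma~\ref{lemHspec1}, which says (in \clama) that the intersection of the \idemas containing $b$ is precisely the Jacobson radical $\JT(b)$. Hence both sides of~\eqref{eqthDK3} are equivalent to $a\in\JT(b)$, and they agree.

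Given this, the preorder defining $\gT'$ is identical to the preorder $\preceq_{\He(\gT)}$ defining $\He(\gT)$, so $\gT'=\He(\gT)$ canonically. Combined with the first paragraph this gives the canonical homeomorphism $\Jspec\,\gT\simeq\Spec\,\He(\gT)$ as spectral subspaces of $\Spec\,\gT$. The main obstacle, and the reason for the star, is the appeal to Lemma~\ref{lemHspec1}: the classical statement that $\JT(b)$ equals the intersection of the \idemas above $b$ relies on the existence of enough maximal ideals (a form of Krull's theorem), which is non-constructive. Every other step is a straightforward manipulation of the definitions of $\DT$, $\VT$, of the patch topology, and of the quotient preorder given by Theorem~\ref{propSESP}.
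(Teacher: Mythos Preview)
Your proof is correct and follows essentially the same route as the paper: apply Theorem~\ref{propSESP}.\textsl{4} to $Z=\Max\,\gT$, then identify the resulting preorder with $\preceq_{\He(\gT)}$ by rewriting $\DT(a)\cap M\subseteq\DT(b)\cap M$ as ``$a$ lies in every \idema containing $b$'' and invoking the classical description of $\JT(b)$ as that intersection. The only cosmetic difference is that the paper phrases the last step via the quotient $\gT/(b=0)$ (the image of $a$ lies in its Jacobson radical), whereas you cite Lemma~\ref{lemHspec1} directly; these are the same observation.
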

\begin{proof}
The lattice $\He(\gT)$ is defined on page \pageref{defHeT}.
\\
The second statement implies the first. Indeed $\Jspec\,\gT$, according to \ref{propSESP}\,(4), is the spectrum of the quotient lattice $\gT'$ corresponding to the preorder relation $a\leq _{\gT'}b$ defined by $\DT(a)\cap M\,\subseteq\, \DT(b)\cap M$.
\\
For the second statement we notice that the following properties are equivalent:
\[\begin{array}{lcl}
\DT(a)\cap M\subseteq \DT(b)\cap M& \;  &  (1)  \\ [1mm]
\DT(a)\cap \VT(b)\cap M =\emptyset  &&  (2)  \\[1mm]
\Tt \fm\in M\; (b\notin\fm \;\mathrm{or}\; a\in\fm)  &&  (3)  \\[1mm]
  \Tt \fm\in M\; (b\in\fm\Rightarrow  a\in\fm) &&  (4)
\end{array}\]
And assertion (4) amounts to saying that, seen in the quotient lattice $\gT/(b=0)$, $a$ belongs to the Jacobson radical. This means $a\in \JT(b)$, i.e. $a\preceq_{\He(\gT)}b$.
\end{proof}

A few points of comparison.

\begin{factc}
\label{factSpec=Jspec} ~
\begin{enumerate}
\item $\Spec\,\gT=\Jspec\,\gT$  \ssi $\gT=\He(\gT)$, that is, if $\gT$ is weakly Jacobson.
\item  $\Max\,\gT=\Jspec\,\gT$ \ssi $\He(\gT)$ is a Boolean algebra.
\item If $\gT$ has a Brouwer complement, $\Max\,\gT$ is a closed subset of $\Jspec\,\gT$, corresponding to the ideal $\Imax(\gT)$. It is a Stone space; it is equal to $\Jspec\,\gT$.
\item $\Min\,\gT=\Jspec\,\gT\cir$ \ssi  $\He(\gT\cir)$ is a \agB.
\item If $\gT$ has a negation, $\Min\,\gT$ is a closed subset of $\Spec\,\gT\cir$, corresponding to the filter $\Fmin(\gT)$. It is a Stone space; it is equal to $\Jspec\,\gT\cir$.
\end{enumerate}
\end{factc}
\begin{proof}
Item \textsl{1} follows from Theorem \ref{thDK3}. For Item \textsl{2}, (we are in classical mathematics) we notice that a distributive lattice is a Boolean algebra if, and only if, its prime ideals are all maximal. Item \textsl{3} follows from Item \textsl{2} and Fact \ref{factSpecMax}. Items \textsl{4} and \textsl{5} are obtained from Items \textsl{2} and \textsl{3} passing to the opposite lattice.\end{proof}

The following proposition is pointed out in \cite{Hei84}. The assumption in Item \textsl{2} is that the space $M=\Max\,\gT$ is Noetherian, that is to say that every open subset is quasi-compact. As the topology of $M$ is induced by that of $\Spec\,\gT$, the open subsets $\fD_\gT(a)\cap M$ form a basis of the topology. Moreover we have $\fD_\gT(a_1 \vu\cdots\vu a_n)=\fD_\gT(a_1)\cup\cdots\cup\fD_\gT(a_n)$. So, when $M$ is Noetherian, every open subset of $M$ is of the form $\fD_\gT(a)\cap M$ and every closed subset is a basic closed subset $\fV_\gT(a)\cap M$.
\begin{propositionc}[comparison of $\Jspec$ and  $\jspec$]
\label{propJspecjspec} ~
\begin{enumerate}
\item $\jspec\,\gT\subseteq \Jspec\,\gT$.
\item If $M=\Max\,\gT$ is \noe, a fortiori if $\,\Spec\,\gT$ is \noe, we have 
$\jspec\,\gT=\Jspec\,\gT$.
\end{enumerate}
\end{propositionc}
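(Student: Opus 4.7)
The plan is to prove each item by unpacking definitions, with the Noetherian hypothesis of Item~\textsl{2} used only to reduce an infinite intersection of basic closed subsets of $M$ to a single one.

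For Item~\textsl{1}, let $\fp\in\jspec\,\gT$, so $\JT(\fp)=\fp$. By Lemma~\ref{lemHspec1} applied to the proper ideal $\fp$, this reads $\fp=\bigcap\sotq{\fm\in M}{\fp\subseteq\fm}$. A basic patch-open neighborhood of $\fp$ has the form $\DT(a)\cap\VT(b)$ with $a\notin\fp$ and $b\in\fp$. Since $a\notin\fp$, the formula above produces some $\fm\in M$ with $\fp\subseteq\fm$ and $a\notin\fm$; then $b\in\fp\subseteq\fm$ automatically, so $\fm\in M\cap\DT(a)\cap\VT(b)$. Thus every patch-open neighborhood of $\fp$ meets $M$, so $\fp\in\Jspec\,\gT$.

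For Item~\textsl{2}, assume $M$ is Noetherian (this covers the case where $\Spec\,\gT$ is Noetherian, since the subspace $M$ then inherits the property). By Item~\textsl{1} we only need $\Jspec\,\gT\subseteq\jspec\,\gT$. Fix $\fp\in\Jspec\,\gT$; the goal is to prove $\JT(\fp)\subseteq\fp$, that is, for each $a\notin\fp$ to exhibit some $\fm\in M$ with $\fp\subseteq\fm$ and $a\notin\fm$. The key step is to rewrite the closed subset $M\cap\VT(\fp)=\bigcap_{x\in\fp}(M\cap\VT(x))$: since $\VT(x)\cap\VT(y)=\VT(x\vu y)$ and $\fp$ is stable under $\vu$, the finite subintersections form a descending family of closed sets in $M$, and Noetherianity of $M$ provides some $x_0\in\fp$ with $M\cap\VT(\fp)=M\cap\VT(x_0)$. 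Now suppose for contradiction that every $\fm\in M$ with $\fp\subseteq\fm$ contains $a$. Then $M\cap\VT(x_0)\subseteq\VT(a)$, so the patch-open set $\DT(a)\cap\VT(x_0)$ is disjoint from $M$; but it is a patch-open neighborhood of $\fp$ (as $a\notin\fp$ and $x_0\in\fp$), contradicting the membership of $\fp$ in the patch closure of $M$.

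The main obstacle is precisely the passage from the ideal $\fp$ to a single $x_0\in\fp$ with $M\cap\VT(x_0)=M\cap\VT(\fp)$; this is where Noetherianity of $M$ is indispensable. Without such a reduction, being in the patch closure of $M$ does not force $\fp$ to be an intersection of maximal ideals, which is the very reason why $\Jspec$ and $\jspec$ must be distinguished in the non-Noetherian setting.
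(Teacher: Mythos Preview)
Your proof is correct and follows essentially the same route as the paper: both arguments hinge on the fact that membership in $\jspec\,\gT$ amounts to ``for each $a\notin\fp$ there is a single $\fm\supseteq\fp$ working for all $b\in\fp$'', whereas membership in $\Jspec\,\gT$ only guarantees an $\fm$ depending on the pair $(a,b)$, and both use Noetherianity of $M$ to collapse $\bigcap_{b\in\fp}(M\cap\VT(b))$ to a single $M\cap\VT(b_0)$ with $b_0\in\fp$. The paper packages this as a comparison of two quantifier strings $(*)$ and $(**)$, while you argue directly with patch-open neighborhoods and a short contradiction; the content is the same.
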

\begin{proof}
Let  $\fp\in\Spec\,\gT$.
\Propeq
\[\begin{array}{rcl}
\fp\in\jspec\,\gT&   &     \\[1mm]
\Tt a\in \gT\quad [\;\fp\in \DT(a)&  \Rightarrow  &
\Ex \fm\in (M\,\cap \DT(a)),\, \fp\subseteq \fm \;]  \\[1mm]
\Tt a\in \gT\quad [\;\fp\in \DT(a)&  \Rightarrow  &
\Ex \fm\in (M\,\cap  \DT(a)),\, \Tt b\in\gT\,(\fm\in \DT(b)
\Rightarrow \fp\in \DT(b))\;] \\[1mm]
\Tt a\in \gT\quad [\;\fp\in \DT(a)&  \Rightarrow  &
\Ex \fm\in (M\,\cap \DT(a)),\, \Tt b\in\gT\,(\fp\in \VT(b)
\Rightarrow \fm\in \VT(b))\;]\\[1mm]
\Tt a\in \gT\quad [\;\fp\in \DT(a)&  \Rightarrow  &
\Ex \fm\in M,\, \Tt b\in\gT\,(\fp\in \VT(b)
\Rightarrow \fm\in  M\cap \DT(a) \cap \VT(b) )\;]\qquad (*)
\end{array}\]
We have also \eqvcs
\[\begin{array}{rcl}
\fp\in\Jspec\,\gT&   &     \\[1mm]
\Tt a,b\in \gT\quad [\;\fp\in (\DT(a)\cap \VT(b))&  \Rightarrow  &
\Ex \fm\in M\cap \DT(a) \cap \VT(b)\;]
\qquad\qquad  (**)
\end{array}\]
\textsl{1}. So we see that $(*)$ is  \gnlt stronger than $(**)$, since in $(**)$,
$\fm$ can depend on  $a$ and $b$ while in $(*)$ it depends only on $a$.\\
\textsl{2}. If $M=\Max\,\gT$ is \noe the closed set 
$\bigcap\limits_{\VT(b)\ni\fp}
(M\cap \VT(b))$ is equal to a basic closed set $M\cap \VT(b_0)$ and  $(**)$ with this $b_0$ gives $(*)$.
\end{proof}

\comm
As Heitmann points out, known theorems using 
$\jspec$ always need the \gui{$\Max$ \noe} assumption. It is therefore likely that  $\Jspec$ is the only really interesting notion.
 Note that $\jspec\,\gT$ is  \ssps of $\Spec\,\gT$ only 
when it is equal to  $\Jspec\,\gT$.\eoe

\section{Krull and Heitmann dimensions of \trdis}
\label{secHtrdi}

We arrive in this section at the heart of the article.
We return to the point of view of constructive mathematics.
The only nonconstructive proofs are those which make the link between a classical notion and its constructive reformulation.
In classical mathematics the  \textsl{Krull  dimension of a \trdi} is defined as in commutative algebra: it is the upper bound of the lengths of strictly increasing chains of prime ideals.

\subsection{Krull boundaries and Krull dimension}

We recall here an \elr \cov version of the Krull dimension (\citealt*{CL2003,CLR05}). 
We rely on the following intuition: a variety has dimension $\leq k$ \ssi  the boundary of any subvariety has dimension $\leq k-1$. Similar \cov \eqv ideas are in \cite{Esp78,Esp82,Esp83}.

The \flw \tho in \clama gives an intuitive meaning for the
Krull dimension of a \trdi.

\begin{theoremc}
\label{thDK1} Let  $\gT$ be a \trdi generated by a subset $S$ and let 
$\ell$ be a nonnegative integer. \Propeq
\begin{enumerate}
\item  The lattice $\gT$ has dimension $\leq \ell.$
\item  For any $x\in S$ the boundary
 $\gT\ul{x}$ has dimension $\leq \ell-1$.
\item  For any $x\in S$ the boundary  $\gT\bal{x}$ has dimension $\leq \ell-1$.
\item 
For any  $x_0,\ldots,x_\ell\in S$
there exist $a_0,\ldots,  a_\ell\in \gT$ such that 
\begin{equation}
     a_0 \vi x_0  \leq  0\,,\;\;\;
     a_1 \vi x_1 \leq   a_0 \vu x_0\,,\;\;\; \dots\;\;,\;\;\;
     a_{\ell} \vi x_{\ell} \leq     a_{\ell-1} \vu x_{\ell-1}\,,\;\;\;
     1  \leq  a_{\ell} \vu x_{\ell}.
\end{equation}
\end{enumerate}
When $\gT$ is a \agH the previous conditions are also \eqv to
\begin{enumerate}\setcounter{enumi}{4}
\item   For any  $x_0,\dots,x_{\ell}$  in  $S$  we have the equality
\begin{equation}
     1 = x_{\ell}\vu (x_{\ell}\im( \cdots (x_1 \vu (x_1 \im (x_0\vu
\neg x_0)))\cdots))
\end{equation}
\end{enumerate}
When $\gT$ is a Brouwer \alg the previous conditions are also \eqv to
\begin{enumerate}\setcounter{enumi}{5}
\item   For any  $x_0,\dots,x_{\ell}$  in  $S$  we have the equality
\begin{equation}
     0 = x_{0}\vi (x_{0}-(x_1 \vi (x_1 - ( \cdots (x_\ell\vi
(1- x_\ell))))\cdots))
\end{equation}
\end{enumerate}
\end{theoremc}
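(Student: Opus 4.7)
The overall strategy is to establish $(1)\Leftrightarrow(4)\Leftrightarrow(2)$, deduce $(3)$ by order-reversal, and derive $(5)$ and $(6)$ as canonical specializations of $(4)$. The starting point is the inductive equivalence $(2)\Leftrightarrow(4)$. At $\ell=0$, $(2)$ reads $\gT\ul{x}=\so{0}$ for every $x\in S$, i.e.\ $1\in\rK_\gT^{x}=\dar x\vu(0:x)$, which unfolds to the existence of $a_0$ with $a_0\vi x=0$ and $a_0\vu x=1$, exactly $(4)$ at $\ell=0$. For the inductive step, the key claim is that $a_0,\ldots,a_\ell$ witness $(4)$ for $(x_0,\ldots,x_\ell)$ in $\gT$ \ssi the images $\bar a_1,\ldots,\bar a_\ell$ witness $(4)$ for $(x_1,\ldots,x_\ell)$ in $\gT\ul{x_0}$. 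Forward is immediate since $a_0\vi x_0=0$ and $x_0$ both lie in $\rK_\gT^{x_0}$, and the images of $a_1,\ldots,a_\ell$ inherit the chain. Backward, one lifts $\bar a_1,\ldots,\bar a_\ell$ to $\gT$; each of the $\ell+1$ relations then holds up to a slack element $z_i\in\rK_\gT^{x_0}$, which can be written $z_i\leq u_i\vu x_0$ with $u_i\vi x_0=0$; setting $a_0:=\Vu_i u_i$ gives $a_0\vi x_0=0$ and $z_i\leq a_0\vu x_0$ for all $i$, and a final rewriting $a'_i:=a_i\vu a_{i-1}\vu\cdots\vu a_0\vu x_{i-1}\vu\cdots\vu x_0$ absorbs the residual slack, yielding $(4)$ in $\gT$.

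\emph{Classical step $(1)\Leftrightarrow(4)$.} If $\gT$ admits a chain $\fp_0\subsetneq\cdots\subsetneq\fp_{\ell+1}$, choose $x_i\in\fp_{i+1}\setminus\fp_i$; primality and the fact that $S$ generates allow us to take $x_i\in S$. A backward cascade yields a contradiction from $(4)$: $1\leq a_\ell\vu x_\ell$ and $x_\ell\in\fp_{\ell+1}\not\ni 1$ force $a_\ell\notin\fp_{\ell+1}$, hence $a_\ell\notin\fp_\ell$; then from $a_\ell\vi x_\ell\leq a_{\ell-1}\vu x_{\ell-1}$ with $x_\ell\notin\fp_\ell$, primality gives $a_\ell\vi x_\ell\notin\fp_\ell$, and $x_{\ell-1}\in\fp_\ell$ then forces $a_{\ell-1}\notin\fp_\ell$; descending one reaches $a_0\vi x_0\notin\fp_0$, contradicting $a_0\vi x_0\leq 0$. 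Conversely, if for some $(x_0,\ldots,x_\ell)\in S^{\ell+1}$ no $a_i$'s exist, a step-by-step application of Krull's theorem, building up compatible prime ideals and filters, produces a witnessing chain and gives $\Kdim(\gT)\geq\ell+1$.

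\emph{Reversal and specializations.} The Krull dimension is invariant under order-reversal, since prime ideals and prime filters correspond by complementation, so chains in $\Spec\,\gT$ and $\Spec\,\gT\cir$ have the same length. Since $\gT\bal{x}$ is by definition the upper boundary $\gT\cir\ul{x}$, condition $(3)$ is $(2)$ applied to $\gT\cir$, hence equivalent to $\Kdim(\gT\cir)\leq\ell$ and therefore to $(1)$. For $(5)$ in the Heyting case, the equivalence $a\vi x\leq c\Leftrightarrow a\leq x\im c$ shows that the maximum $a_i$'s compatible with $(4)$ are canonically given by $a_0=\neg x_0$ and $a_i=x_i\im(a_{i-1}\vu x_{i-1})$ for $i\geq 1$; substituting into $1\leq a_\ell\vu x_\ell$ yields the equality $(5)$. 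Dually, $(6)$ follows in a Brouwer algebra from $a\vi x\geq c\Leftrightarrow a\geq x-c$.

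\emph{Main obstacle.} The technical crux is the slack-absorbing lift in the backward direction of the inductive step: one must verify $a'_i\vi x_i\leq a'_{i-1}\vu x_{i-1}$. Distributing the meet over the join and using, for $j<i$, the bounds $a_j,x_j\leq a'_{i-1}\vu x_{i-1}$, together with the relation $a_i\vi x_i\leq a_{i-1}\vu x_{i-1}\vu a_0\vu x_0$ (itself bounded by $a'_{i-1}\vu x_{i-1}$), makes this a bookkeeping exercise — but one that has to be done carefully, since it is the only place where the recursion between the chain form $(4)$ and the quotient form $(2)$ is made explicit.
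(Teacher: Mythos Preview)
Your proof is correct, and in its overall architecture it matches the paper: the inductive equivalence $(2)\Leftrightarrow(4)$ via the boundary quotient, the reversal for $(3)$, and the Heyting/Brouwer specializations $(5)$, $(6)$ are all handled the same way. Your slack-absorbing lift is exactly the computation the paper hides behind the phrase ``by induction on $\ell$, (definition of the boundary)'', and you have carried it out correctly.

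The one genuine difference is how you connect everything to $(1)$. The paper proves $(1)\Leftrightarrow(2)$ directly with a filter argument: every maximal filter of $\gT$ becomes trivial in $\gT\ul{x}$ (so the dimension drops by at least one), while if $\ff'\subsetneq\ff$ are prime filters with $\ff$ maximal and $x\in\ff\setminus\ff'$, then $\ff'$ survives in $\gT\ul{x}$ (so the drop is exactly one for a well-chosen $x\in S$). You instead go through $(1)\Leftrightarrow(4)$: your forward cascade through the chain of prime ideals is clean and arguably more transparent than the paper's filter argument, but your converse (``if $(4)$ fails, build a chain by iterated Krull'') is only sketched and is the harder of the two directions to make precise. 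The paper's route avoids this asymmetry, since once you know every maximal filter dies in the boundary, $\Kdim(\gT)\leq\ell\Rightarrow\Kdim(\gT\ul{x})\leq\ell-1$ is immediate, and the induction $(2)\Leftrightarrow(4)$ that you already have closes the loop. In short: your cascade is a nice alternative for $(4)\Rightarrow(1)$, but for $(1)\Rightarrow(4)$ it is simpler to pass through $(2)$ as the paper does rather than to invoke a Krull-type construction from scratch.
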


In particular a \trdi has dimension $\leq 0$ \ssi it is a \agB.

Equivalence of Items \textsl{1}, \textsl{4}, and \textsl{5} is stated, without using boundaries, in  \citealt*{CL2003}, by \flw  \citealt*{Joy71,Joy76} and  \citealt*{Esp78,Esp82,Esp83,Esp86,Esp88}.
See also the more recent paper \citealt*{Esp2010}.

Note that the theory of \agHs of dimension $\leq k$ is purely equational.

\begin{proof}[Proof of \thref{thDK1}]
First we notice that the quotient $\gT\ul{x}=\gT/\rK_\gT^x$  
can also be seen as the ordered set obtained from the preorder relation $\leq ^x$ on $\gT$ defined as
\begin{equation} \label{eqBordSup}
a\leq ^x b \qquad \Longleftrightarrow\qquad \exists y\in 
\gT\;\;(\,x\vi
y=0\;\;\& \;\;a\leq  x\vu y \vu b\,)\,.
\end{equation}
\textsl{1} $\Leftrightarrow$ \textsl{2}. First we show that any maximal filter  $\ff $ of $\gT$ becomes trivial in~$\gT\ul{x}$, i.e.\ it contains $0$.
In other words we have to find an $a$ in $\ff $ such \hbox{that $a\leq ^x0$}.
If $x\in \ff $ we take $a=x$ and $y=0$ in \pref{eqBordSup}.
If $x\notin \ff $ there exists $z\in \ff $ such \hbox{that $x\vi z= 0$} (since the filter generated by $\ff $ and $x$ is trivial in $\gT$) and we take~\hbox{$a=y=z$} in (\ref{eqBordSup}).
So the dimension of  $\gT\ul{x}$ decreases by at least one
w.r.t.\ the dimension of~$\gT$ (assumed to be finite).\\
Now we show that if we have two prime filters $\ff '\subset
\ff $, with $\ff $ maximal and $x\in \ff \setminus \ff '$ then~$\ff '$ does not become trivial in  $\gT\ul{x}$ (this shows that the dimension of  
$\gT\ul{x}$ decreases by only one if $x$ is accurately chosen). 
Indeed, in the opposite case, we should have \hbox{a $z\in \ff'$} such \hbox{that $z\vi x=0$},
but since $z$ and $x\in \ff$ we should get $0\in \ff$, which is
falsum.\\
Finally we note that if $\ff'\subset \ff $ are distinct prime filters and if $S$ generates $\gT$ we can find an $x\in S$ such that
$x\in \ff \setminus \ff'$.

\noindent \textsl{1} $\Leftrightarrow$ \textsl{3}: consequence of \textsl{1} $\Leftrightarrow$ \textsl{2} by reversing the order.

\noindent \textsl{2} $\Leftrightarrow$ \textsl{4}:  by induction on $\ell$ 
(\dfn of the boundary).

\noindent \textsl{2} $\Leftrightarrow$ \textsl{5}:  by induction on $\ell$ (\dfn of the boundary in a \agH).

\noindent \textsl{3} $\Leftrightarrow$ \textsl{6}:  by induction on $\ell$ (\dfn of the boundary in a Brouwer \alg).
\end{proof}

The \flw \tho \ref{thDK2} gives a \carn, in \clama and in \cov terms, of the \ddk of $\Spec\, \gT$, i.e.\ the maximum length of closed \ird subsets.
Moreover we have shown (Item \textsl{6} of \thref{propositionFSES}) that if $X$ is the spectrum of a \trdi~$\gT$ and $x\in\gT$, then the  boundary of the \oqc $\DT(x)$ of~$X$ is 
canonically \isoc to $\Spec(\gT\ul{x})$.
So we get as corollary of Theorems \ref{fpropositionFSES} and \ref{fthDK1} a \carn (in \clama and in \cov terms) of the \ddk of \spss. Let us recall that the empty space is the unique \sps of dimension~$-1$. It corresponds to the  trivial \trdi $\Un$.

\begin{theoremc}
\label{thDK2}
Let $k$ be a nonnegative integer. A spectral space $X$ has dimension 
$\leq  k$
\ssi  any \oqc of $X$ has a boundary of dimension~\hbox{$\leq  
k-1$}.
\end{theoremc}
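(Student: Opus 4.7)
The plan is to derive Theorem \ref{fthDK2} from Theorem \ref{fthDK1} by using the antiequivalence between spectral spaces and distributive lattices developed in Section 2. Namely, set $\gT=\OQC(X)$, so that $X$ is canonically homeomorphic to $\Spec\,\gT$. Under this duality the Krull dimension of the spectral space $X$ (length of chains of irreducible closed subsets, equivalently of chains of points for the specialization order) coincides with the Krull dimension of $\gT$ (length of chains of prime ideals), because irreducible closed subsets of $\Spec\,\gT$ are in order-reversing bijection with prime ideals of $\gT$ via $\fp\mapsto \VT(\fp)$.

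First, I would apply Theorem \ref{fthDK1}, condition~\textsl{2}, taking as generating set $S$ the whole lattice $\gT$ itself: this yields $\Kdim\,\gT\le k$ iff for every $x\in\gT$, $\Kdim\,\gT\ul{x}\le k-1$. Next, by Proposition \ref{fpropositionFSES}, Item~\textsl{6}, the quotient $\gT\ul{x}=\gT/(\rK_\gT^x=0)$ is precisely the lattice of quasi-compact opens of the topological boundary of $\DT(x)$ in $X$; equivalently, $\Spec(\gT\ul{x})$ is canonically homeomorphic to this boundary. Since the quasi-compact opens of $X$ are exactly the $\DT(x)$ for $x\in\gT$, the condition \emph{``for all $x\in\gT$, the boundary of $\DT(x)$ has dimension $\le k-1$''} translates verbatim into \emph{``every quasi-compact open of $X$ has boundary of dimension $\le k-1$''}.

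Combining the two translations gives both implications simultaneously. The base case $k=-1$ is handled by observing that the empty spectral space corresponds to the trivial lattice $\Un$, as noted just before the theorem, and the induction is hidden inside Theorem~\ref{fthDK1} itself.

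The main obstacle — or rather the only point that truly requires \clama — is the identification $\Kdim\,X=\Kdim\,\gT$ and the fact that the boundary quotient on the lattice side does compute the topological boundary on the spectral side (Item~\textsl{6} of Proposition~\ref{fpropositionFSES}); both rely on $\Spec\,\gT$ having enough points, hence on Krull's theorem stated on page~\pageref{ThKrull}. Once these dictionary entries are in hand, the theorem reduces to a direct application of Theorem~\ref{fthDK1}.
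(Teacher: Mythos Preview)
Your proposal is correct and follows essentially the same approach as the paper: the text explicitly presents Theorem~\ref{thDK2} as a corollary of Theorem~\ref{thDK1} together with Item~\textsl{6} of Proposition~\ref{propositionFSES}, which is precisely the dictionary translation you carry out. Your additional remark that the identification relies on \clama (enough points) is also exactly what the paper notes in Remark~2 following Proposition~\ref{propositionFSES}.
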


Concerning \ddk, we choose in \coma the \flw \dfn.
\begin{definition}[constructive \dfn of the Krull dimension]
\label{defDK0}~\\ 
The Krull dimension (denoted by $\Kdim$) of \trdis is defined in the following way.
\begin{enumerate}
\item $\Kdim(\gT)=-1$ \ssi  $1=_\gT0$ (i.e. the lattice is reduced  
to a point).
\item For $\ell\geq 0$ we define  $\Kdim(\gT)\leq \ell$ by the following \eqv conditions:
\begin{enumerate}
\item \label{l1}$\forall x\in \gT,\; \Kdim(\gT\ul x)\leq \ell-1$
\item \label{l2}$\forall x\in \gT,\;\Kdim(\gT\bal{x})\leq \ell-1$
\item \label{l3}$\forall x_0,\ldots,x_\ell\in \gT$
$\Ex a_0,\ldots,  a_\ell\in \gT$ satisfying:
\[    a_0 \vi x_0  \leq  0\,,\;\;\;
     a_1 \vi x_1 \leq   a_0 \vu x_0\,,\dots\,,\;\;\;
     a_{\ell} \vi x_{\ell} \leq     a_{\ell-1} \vu x_{\ell-1}\,,\;\;\;
     1  \leq  a_{\ell} \vu x_{\ell}.
\]
\end{enumerate}
\end{enumerate}
\end{definition}

Note that there are actually three possible definitions above for $\Kdim(\gT) \leq  l$. The definitions based on \textsl{2a} and \textsl{2b} are inductive, while the definition based on \textsl{2c} is global. The equivalence of the definitions based on \textsl{2a} and \textsl{2c} is immediate by induction (same thing for \textsl{2b} and \textsl{2c}).

For example, for $\ell = 2$ the inequalities in Item \textsl{2c} correspond to the following drawing in $\gT$.
\[\SCO{x_0}{x_1}{x_2}{a_0}{a_1}{a_2}\]

The fact that the definition works equally well with the upper boundary as with the lower boundary gives the following observation.

\begin{fact}
\label{corTTO}
A distributive lattice and its opposite lattice have the same dimension.\end{fact}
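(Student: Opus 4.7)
The plan is to exploit the symmetry already built into Definition~\ref{defDK0}. The dimension of $\gT$ is characterized by condition \textsl{2c}, and the whole content of this condition is a finite chain of inequalities involving $\vi$, $\vu$, $0$, $1$, and $\leq$. The operation of passing to the opposite lattice swaps $\vi\leftrightarrow\vu$, $0\leftrightarrow1$, and reverses $\leq$; so the plan is to observe that condition \textsl{2c} is formally invariant under this operation, provided one simultaneously reverses the order of the indices.

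First I would recall that, as noted just after Definition~\ref{defDK0}, the three formulations \textsl{2a}, \textsl{2b}, \textsl{2c} are equivalent by a straightforward induction on $\ell$, so it is enough to work with the global formulation \textsl{2c}. Then, given sequences $(x_0,\ldots,x_\ell)$ and $(a_0,\ldots,a_\ell)$ in $\gT$ satisfying
\[
a_0\vi x_0=0,\quad a_{i}\vi x_{i}\leq a_{i-1}\vu x_{i-1}\;(1\leq i\leq\ell),\quad a_\ell\vu x_\ell=1,
\]
I would set $y_i=x_{\ell-i}$ and $b_i=a_{\ell-i}$ and rewrite the same \egts in $\gT\cir$, using $\vi_{\gT\cir}=\vu_\gT$, $\vu_{\gT\cir}=\vi_\gT$, $0_{\gT\cir}=1_\gT$, $1_{\gT\cir}=0_\gT$ and $\leq_{\gT\cir}=\geq_\gT$. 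A direct substitution shows that the chain above translates term by term into
\[
b_0\vi_{\gT\cir} y_0=0_{\gT\cir},\quad b_{i}\vi_{\gT\cir} y_{i}\leq_{\gT\cir}b_{i-1}\vu_{\gT\cir}y_{i-1}\;(1\leq i\leq\ell),\quad b_\ell\vu_{\gT\cir}y_\ell=1_{\gT\cir},
\]
which is exactly condition \textsl{2c} for $\gT\cir$ applied to $(y_0,\ldots,y_\ell;b_0,\ldots,b_\ell)$.

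Since this transformation is involutive, one deduces that $\gT$ satisfies \textsl{2c} at level $\ell$ \ssi $\gT\cir$ does, hence $\Kdim(\gT)\leq\ell\Leftrightarrow\Kdim(\gT\cir)\leq\ell$ for every $\ell\geq0$; the case $\ell=-1$ is the tautology $1=_\gT0\Leftrightarrow 0=_{\gT\cir}1$. There is no real obstacle: the only thing to be careful about is the bookkeeping of the index reversal, which is precisely what converts the boundary condition \textsl{2a} (based on $\gT\ul x$) into the boundary condition \textsl{2b} (based on $\gT\bal x$) and which is ultimately reflected in the \egt $(\gT\bal x)\cir=(\gT\cir)\ul x$ that one can read off from \pref{eqboundarysup} and \pref{eqboundaryinf}.
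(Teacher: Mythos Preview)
Your argument is correct and is essentially the same as the paper's: the paper simply remarks that the Fact follows because Definition~\ref{defDK0} works equally well with the upper boundary~\textsl{2a} as with the lower boundary~\textsl{2b}, i.e.\ the definition is self-dual under passage to $\gT\cir$. You have made this explicit by writing out the index-reversal symmetry of condition~\textsl{2c}, which is exactly the computation underlying the equivalence \textsl{2a}\,$\Leftrightarrow$\,\textsl{2b}; the closing remark about $(\gT\bal x)\cir=(\gT\cir)\ul x$ is precisely the paper's implicit reasoning.
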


In classical mathematics, we can  directly understand this fact by considering the chains of prime ideals which have chains of prime filters as complements (and vice versa): if we identify the sets underlying $X=\Spec\,\gT$ and $X'=\Spec\,\gT\cir$ the order relations $\leq_X$ and $\leq_{X'}$ are opposite.

\medskip \rem 
We can illustrate Item \textsl{2c} in Definition \ref{defDK0}. We introduce the \gui{iterated Krull boundary ideal}. For $x_1,\ldots ,x_n\in\gT$ we denote by
\[
\gT_\rK[x_0]=\gT\ul{x_0},\,\gT_\rK[x_0,x_1]=(\gT\ul{x_0})\ul{x_1}
,\,\gT_\rK[x_0,x_1,x_2]=((\gT\ul{x_0})\ul{x_1})\ul{x_2}, 
\hbox{ etc}. \,\,
\]  
the successive boundary quotient lattices, and $\rK_\gT[x_0,\ldots ,x_k]$ denotes the kernel of the canonical projection $\gT\to \gT_\rK[x_0,\ldots ,x_k]$.
Then we have $y\in\rK_\gT[x_0,\ldots ,x_\ell]$
\ssi there exist $ a_0,\ldots,a_\ell\in \gT$ such that
\[    a_0 \vi x_0  \leq  0\,,\;\;\;
     a_1 \vi x_1 \leq   a_0 \vu x_0\,,\dots\,,\;\;\;
     a_{\ell} \vi x_{\ell} \leq     a_{\ell-1} \vu x_{\ell-1}\,,\;\;\;
     y  \leq  a_{\ell} \vu x_{\ell}.
\]
If $\gT$ is a \agH we have:
\[
\rK_\gT[x_0,\ldots ,x_\ell]=\dar(x_{\ell}\vu (x_{\ell}\im( \cdots (x_1 
\vu (x_1 \im (x_0\vu
\neg x_0)))\cdots)))
\]
The \ddk of the lattice is $\leq \ell$ \ssi  
$1\in\rK_\gT[x_0,\ldots ,x_\ell]$
for all $x_0,\ldots ,x_\ell$.
\eoe

\medskip In \coma the Krull dimension of $\gT$ is not a priori a well-defined \elt of $\NN\cup\so{-1}\cup\so{\infty}$. In \clama this \elt is defined as the lower bound of integers $\ell$ such that  $\Kdim(\gT)\leq \ell$.
In \coma we use the \flw \textsl{notation}{\footnote{~In fact 
if we see  $\Kdim(\gT)$ as the set of \elts  $\ell$ such that 
$\Kdim(\gT)\leq
\ell$, we can use final subsets of
$\NN\cup\so{-1}$ with reverse inclusion as order relation, the upper bound is then the intersection and the lower bound the 
union.}} in order to mimic the language of \clama:

\begin{notation}
\label{notaKdiminf}
Let $\gT$, $\gL$, $\gT_i$ be \trdis.
\begin{itemize}
\item  $\Kdim\,\gL\leq \Kdim\,\gT$ means $\Tt\ell\geq -1\; 
(\Kdim\,\gT\leq
\ell\;\Rightarrow \Kdim\,\gL\leq \ell)$.
\item  $\Kdim\,\gL= \Kdim\,\gT$ means  $\Kdim\,\gL\leq  
\Kdim\,\gT$ and
$\Kdim\,\gL\geq  \Kdim\,\gT$.
\item  $\Kdim\,\gT\leq  \sup_i\Kdim\,\gT_i$ means $\Tt\ell\geq 
-1\; (\&_i
(\Kdim\,\gT_i\leq \ell)\;\Rightarrow\Kdim\,\gT\leq \ell) $.
\item  $\Kdim\,\gT=  \sup_i\Kdim\,\gT_i$ means $\Tt\ell\geq -1\; 
(\&_i
(\Kdim\,\gT_i\leq \ell)\;\Leftrightarrow\Kdim\,\gT\leq \ell) $.
\end{itemize}
\end{notation}

\smallskip Let us denote by $\Bd(V,X)$ the boundary of $V$ in $X$ ($X$ is a topological space and  $V$  is a subset of $X$). 
Then if $Y$ is a subspace of
$X$ we have $\Bd(V\cap Y,Y)\subseteq \Bd(V,X)\cap Y$, with  equality 
if $Y$ is open. In the case of a \sps the \flw proposition gives a \cdpfv of this statement.
\begin{proposition}[Krull boundary and quotient lattice]
\label{propTquoBord} ~\\
Let $\gL$ be a quotient lattice of a \trdi $\gT$. By abuse, we 
denote by $x$ the image of $x\in\gT$ in $\gL$. Then  $\gL\ul x$ is a quotient  of  $\gT\ul x$ and  $\gL\bal x$ is a quotient of  $\gT\bal x$. Moreover if $\gL$ is the quotient of $\gT$ by a filter $\ff$,   $\gL\ul x$ is the quotient of  $\gT\ul x$  by the filter image of $\ff$ in  $\gT\ul x$.
\end{proposition}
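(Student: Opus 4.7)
The plan is to reduce everything to the definitions $\gT\ul x = \gT/(\rK_\gT^x = 0)$ and $\gT\bal x = \gT/(\rK^\gT_x = 1)$, and to exploit the universal property of quotient lattices. Let $\pi: \gT \to \gL$ denote the canonical surjection and write $\pi_1: \gT \to \gT\ul x$ and $\pi_2: \gL \to \gL\ul x$.

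The first assertion will follow at once from the observation that $\pi(\rK_\gT^x) \subseteq \rK_\gL^x$. Indeed, if $y \in \rK_\gT^x$, there exists $z \in \gT$ with $z \vi x = 0$ and $y \leq x \vu z$; applying $\pi$ yields $\pi(z) \vi \pi(x) = 0$ in $\gL$ and $\pi(y) \leq \pi(x) \vu \pi(z)$, so $\pi(y) \in \rK_\gL^x$. Hence $\pi_2 \circ \pi$ kills $\rK_\gT^x$, and by the universal property of $\gT\ul x$ it factors as $\pi_2 \circ \pi = \varphi \circ \pi_1$ for a unique morphism $\varphi: \gT\ul x \to \gL\ul x$, which is surjective since $\pi_2 \circ \pi$ is. The argument for $\gT\bal x \to \gL\bal x$ is strictly analogous (or follows by reversal of the order), since $\pi(\rK^\gT_x) \subseteq \rK^\gL_x$ by the same kind of verification.

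For the last assertion, assume $\gL = \gT/(\ff = 1)$. Writing $\pi_1(\ff)$ for the image filter in $\gT\ul x$, the quotient $\gT\ul x / (\pi_1(\ff) = 1)$ is, by its universal property, the same as $\gT/(\rK_\gT^x = 0,\; \ff = 1)$. So it suffices to show that a morphism $\psi: \gT \to \gT'$ factors through $\gL\ul x$ if and only if it kills $\rK_\gT^x$ and sends $\ff$ to $1$. The $(\Leftarrow)$ direction is where the real work lies: suppose $\psi$ kills $\rK_\gT^x$ and satisfies $\psi(\ff) = \{1\}$; given $w \in \gT$ whose image in $\gL$ lies in $\rK_\gL^x$, Proposition~\ref{propIdealFiltre} (applied in $\gL$) furnishes $u \in \gT$ and $f \in \ff$ with $u \vi x \vi f = 0$ and $w \vi f \leq x \vu u$ in $\gT$. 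Set $u' = u \vi f$; then $u' \vi x = 0$ and $w \vi f \leq (x \vu u) \vi f \leq x \vu u'$, so $w \vi f \in \rK_\gT^x$. Hence $\psi(w \vi f) = 0$, and since $\psi(f) = 1$, $\psi(w) = \psi(w \vi f) = 0$, as required.

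The $(\Rightarrow)$ direction is immediate: any morphism factoring through $\gL\ul x$ clearly sends $\ff$ to $1$, and it sends $\rK_\gT^x$ to $0$ because $\pi(\rK_\gT^x) \subseteq \rK_\gL^x$ (taking $f = 1 \in \ff$ in the argument above also shows this directly). Combining both directions, the universal properties of $\gL\ul x$ and of $\gT\ul x / (\pi_1(\ff) = 1)$ coincide, so these two lattices are canonically isomorphic. The main technical point is the $(\Leftarrow)$ step, where one must absorb the filter element $f$ into the boundary witness $u$ to pass from an $\rK_\gL^x$-witness in $\gL$ back to an $\rK_\gT^x$-witness in $\gT$.
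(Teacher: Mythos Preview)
Your proof is correct and follows essentially the same route as the paper. The paper works directly with the preorder relations on $\gT$ (showing $a\leq_{\gT\ul x}b \Rightarrow a\leq_{\gL\ul x}b$, and for the filter case that the two preorders on $\gT$ coincide), while you phrase everything via universal properties and kernels; but the key technical step is identical. Your absorption trick $u' = u\vi f$ is exactly the paper's $z' = z\vi f_2$: one passes from a witness in $\gL$ back to a witness in $\gT$ by intersecting with the filter element, so that the condition $u'\vi x = 0$ holds in $\gT$ rather than merely modulo $\ff$.
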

\begin{proof}
Let $a,b,x\in\gT$; if $a\leq_{\gT\ul x}b$ there exists $z\in\gT$ such that
$x\vi z\leq_{\gT} 0$ and $a\leq _{\gT}x\vu z\vu b$. Since  $\gL$ is 
a quotient of $\gT$, we have a fortiori $x\vi z\leq_{\gL}0$ and 
$a\leq_{\gL}x\vu z\vu b$, hence $a\leq_{\gL\ul x}b$.
Let us see the second point. Let us denote by  $\pi\colon \gT\to\gL$, $\pi\ul 
x:\gT\to\gT\ul x$ and $\theta\colon \gT\ul x\to\gL\ul x$ the natural projections. It is clear that 
$\theta(\pi\ul
x(\ff))=\so{1}$,  so we have a factorization of $\theta$ via 
$\gT\ul x/(\pi\ul x(\ff)=1)$. Reciprocally let $a,b\in\gT$ be
such that $a\leq _{\gL\ul x} b$. We have to show that 
$a\leq _{\gT\ul x/(\pi\ul x(\ff)=1)} b$. By hypothesis there exists $z\in\gT$ such that
$a \leq_{\gL}x\vu z\vu b$ and $x\vi z\leq_{\gL}0$. This means that there exist $f_1$ and $f_2\in\ff$  such that $a\vi f_1\leq_{\gT} b\vu  x\vu z$ and 
$x\vi z\vi f_2\leq_{\gT} 0$.
Let us take $f=f_1\vi f_2$ and $z'=z\vi f_2$, we get 
$a\vi f\leq_{\gT} b\vu x\vu z'$ and $x\vi z'\leq_{\gT}0$, 
i.e.\  $a\vi f\leq_{\gT\ul x}  b$.
\end{proof}

The \flw corollary gives a \cdpfv of the statement that the dimension of a \ssps is always  
less  than or equal to the one of the total space.
\begin{corollary}
\label{corpropTquoBord}
If $\gL$ is a quotient lattice  of $\gT$ then $\Kdim\,\gL\leq 
\Kdim\,\gT$.
\end{corollary}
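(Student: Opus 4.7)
The plan is to prove the inequality by induction on the integer $\ell \geq -1$ such that $\Kdim\,\gT \leq \ell$, using the recursive characterization \textsl{2a} in Definition~\ref{defDK0} together with Proposition~\ref{propTquoBord}.

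For the base case $\ell = -1$: if $\Kdim\,\gT = -1$, then $1 =_\gT 0$. Since the quotient morphism $\pi : \gT \to \gL$ sends $1$ to $1$ and $0$ to $0$, we also have $1 =_\gL 0$, hence $\Kdim\,\gL \leq -1$.

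For the inductive step, suppose the claim holds for dimension $\ell-1$ and assume $\Kdim\,\gT \leq \ell$. I want to show $\Kdim\,\gL \leq \ell$, which by \textsl{2a} means that $\Kdim(\gL\ul y) \leq \ell - 1$ for every $y \in \gL$. Since $\pi$ is surjective, any such $y$ is of the form $\pi(x)$ for some $x \in \gT$. By Proposition~\ref{propTquoBord}, $\gL\ul y$ (which is $\gL\ul{\pi(x)}$) is a quotient lattice of $\gT\ul x$. The hypothesis $\Kdim\,\gT \leq \ell$ gives $\Kdim(\gT\ul x) \leq \ell - 1$, so the induction hypothesis applied to the quotient $\gT\ul x \twoheadrightarrow \gL\ul y$ yields $\Kdim(\gL\ul y) \leq \ell - 1$. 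This holds for every $y \in \gL$, so $\Kdim\,\gL \leq \ell$.

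I do not foresee a real obstacle here: Proposition~\ref{propTquoBord} does all the structural work by showing that the operation $\gT \mapsto \gT\ul x$ is compatible with passage to quotients, and Definition~\ref{defDK0} is tailored precisely so that such an inductive argument goes through. The only point to be careful about is the use of Notation~\ref{notaKdiminf}: the conclusion $\Kdim\,\gL \leq \Kdim\,\gT$ is interpreted as the implication $(\Kdim\,\gT \leq \ell) \Rightarrow (\Kdim\,\gL \leq \ell)$ for every $\ell \geq -1$, which is exactly what the induction above establishes.
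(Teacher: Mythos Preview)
Your proof is correct and is precisely the argument the paper has in mind: the corollary is stated without an explicit proof, immediately after Proposition~\ref{propTquoBord}, and the inductive argument you give via characterization~\textsl{2a} of Definition~\ref{defDK0} is the natural way to derive it. One could equally well use characterization~\textsl{2c} directly (the inequalities pass to the quotient and surjectivity of $\pi$ handles the quantifiers), but your inductive approach through the boundary is exactly in the spirit of how the paper organizes the material.
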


In the \flw proposition, Item \textsl{2} is a \cdpfv of the statement that the notion of boundary is a local notion. 
In the sequel we mainly use Item \textsl{1}. 

Furthermore, by reversing the order relation we get the analog statement for the other boundary.

\begin{proposition}[local character of Krull boundary]
\label{propLocBord} ~
\begin{enumerate}
\item Let $(\fa_i)_{1\leq i\leq m}$ be a finite family of \ids of 
$\gT$, with
$\bigcap_{i=1}^m\fa_i=\so{0}$.
If $x\in\gT$ let us denote by~$x$ its image in $\gT_i=\gT/(\fa_i=0)$.
The boundary ${\gT\!_i}\ul x$ can be seen as the quotient of  $\gT \ul x$
by an \id $\fb_i$ and we have: $\bigcap_{i=1}^m\fb_i=\so{0}$.
\item Let $(\ff_i)_{1\leq i\leq m}$ be a finite family of filters of 
$\gT$, with
$\bigcap_{i=1}^m\ff_i=\so{1}$.
If $x\in\gT$ let us denote by~$x$ its image in $\gT_i=\gT/(\ff_i=1)$.
The boundary ${\gT\!_i}\ul x$ can be seen as the quotient of  $\gT  \ul x$
by a filter $\ffg_i$ and we have: $\bigcap_{i=1}^m\ffg_i=\so{1}$.
\end{enumerate}
\end{proposition}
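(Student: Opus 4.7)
The plan is to derive the two statements from Proposition~\ref{propTquoBord} (which already supplies the quotient structure) together with a short distributivity computation that shows the covering property is preserved when passing to the boundary. I expect the main obstacle to be unfolding the ``quotient of a quotient'' identifications cleanly enough that the combinatorial step becomes transparent: once the $\fb_i$'s and $\ffg_i$'s are described explicitly in terms of elements of $\gT$, the core of each argument is a one-line use of distributivity.

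For Item~\textsl{1}, Proposition~\ref{propTquoBord} applied to $\gL = \gT_i$ gives a surjection $\gT\ul x \to \gT_i\ul x$; I would take $\fb_i$ to be its kernel, which is the image in $\gT\ul x$ of the ideal $\pi_i^{-1}(\rK^x_{\gT_i})$ of $\gT$. Unfolding the definitions, an element $y \in \gT$ represents a class of $\fb_i$ iff there exist $z_i \in \gT$ and $a_i, b_i \in \fa_i$ with $x \vi z_i \leq a_i$ and $y \leq x \vu z_i \vu b_i$; setting $t_i = z_i \vu a_i \vu b_i$, this simplifies to $x \vi t_i \in \fa_i$ and $y \leq x \vu t_i$. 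Assuming now that $y$ lies in every $\fb_i$, I form $t = \Vi_{i=1}^m t_i$ and use distributivity: $x \vi t = \Vi_i (x \vi t_i)$ satisfies $x \vi t \leq x \vi t_j \in \fa_j$ for each $j$, so $x \vi t \in \bigcap_j \fa_j = \so{0}$, i.e.\ $x \vi t = 0$; and $y \leq \Vi_i (x \vu t_i) = x \vu t$. Therefore $y \in \rK_\gT^x$, which means $y = 0$ in $\gT\ul x$, establishing $\bigcap_i \fb_i = \so{0}$.

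For Item~\textsl{2}, Proposition~\ref{propTquoBord} says directly that $\ffg_i$ is the image of $\ff_i$ in $\gT\ul x$. Repeating the unfolding, a representative $v \in \gT$ maps into $\ffg_i$ iff there exist $y_i \in \ff_i$ and $z_i \in \gT$ with $x \vi z_i = 0$ and $y_i \leq v \vu x \vu z_i$. If $v$ lies in every $\ffg_i$, I take the suprema $y = \Vu_i y_i$ and $z = \Vu_i z_i$ (the dual move to Item~\textsl{1}): then $x \vi z = \Vu_i (x \vi z_i) = 0$ by distributivity, the inequalities combine to $y \leq v \vu x \vu z$, and each $y_i \leq y$ together with $y_i \in \ff_i$ forces $y \in \ff_i$ for every $i$, so $y \in \bigcap_i \ff_i = \so{1}$. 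Hence $v \vu x \vu z = 1$ with $x \vi z = 0$, which means exactly that $v = 1$ in $\gT\ul x$, giving $\bigcap_i \ffg_i = \so{1}$ as required.
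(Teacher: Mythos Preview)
Your proof is correct and, for Item~\textsl{1}, follows the paper's argument almost verbatim: introduce witnesses $t_i$ (the paper's $c_i$) with $y\leq x\vu t_i$ and $x\vi t_i\in\fa_i$, take the meet $t=\Vi_i t_i$, and use distributivity together with downward closure of the $\fa_j$ to conclude $x\vi t=0$ and $y\leq x\vu t$.

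For Item~\textsl{2} there is a mild difference in packaging. You dualise the Item~\textsl{1} computation explicitly, taking suprema $y=\Vu_i y_i$ and $z=\Vu_i z_i$ and invoking distributivity plus upward closure of the $\ff_i$. The paper instead observes that once Proposition~\ref{propTquoBord} identifies $\ffg_i$ as the image of $\ff_i$, the conclusion is an instance of Fact~\ref{factIdDansQuo}: the image map on filters is a lattice homomorphism, so a finite intersection equal to $\so{1}$ is preserved under any quotient. Your hands-on argument is exactly what underlies that fact in this special case, so the two routes are the same in substance; the paper's citation is just a shorter way to say it.
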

\begin{proof}
Let us see Item \textsl{1}. Consider the projection $\pi_i\colon \gT\to\gT_i$  
and the projection \hbox{$\pi'_i\colon \gT_i\to{\gT\!_i} \ul x$}.
The composite map $\gT\to{\gT\!_i} \ul x$ shows that ${\gT\!_i}\ul x\simeq
\gT/(\pi_i^{-1}(\rK_{\gT_i}^x)=0)$, and the \id $\pi_i^{-1}(\rK_{\gT_i}^x)$ contains $\rK_\gT^x$. This shows the first statement.
Let now $y$ be an \elt of $\gT$ such that for each~$i$, 
$\pi_i(y)\in\rK_{\gT_i}^x$.
It is the same thing to say there exist $b_i$'s such that
$\pi_i(y)\leq \pi_i(x)\vu\pi_i(b_i)$ and 
$\pi_i(b_i)\vi\pi_i(x)=\pi_i(0)$,
i.e.\ for some $a_i\in\fa_i$: $y\leq x\vu a_i\vu b_i$ and $x\vi 
b_i\in \fa_i$. Taking $c_i=a_i\vu b_i$ this gives $y\leq x\vu c_i$ and 
$x\vi c_i\in\fa_i$. Finally with  $c=c_1\vi\cdots \vi c_m$ we get $y\leq 
x\vu c$ with 
$c\vi x=0$. So $y\in\rK_\gT^x$, and this shows that a $z\in\gT \ul x$
belonging to all $\fb_i$'s is zero (it comes from a~$y$).\\
Item \textsl{2} is an immediate consequence of Proposition~\ref{propTquoBord} and from Fact \ref{factIdDansQuo} (from which we see in particular that if a finite  intersection of filters equals $\so 1$ this remains true in quotient).
\end{proof}

\begin{corollary}[local character of Krull dimension]
\label{corpropLocBord}~
\begin{enumerate}
\item Let $(\fa_i)_{1\leq i\leq m}$ be a finite family of \ids of 
$\gT$ and
$\fa=\bigcap_{i=1}^m\fa_i$. \\ Then
$\Kdim(\gT/(\fa=0))=\sup_i\Kdim(\gT/(\fa_i=0))$.
\item Let $(\ff_i)_{1\leq i\leq m}$ be a finite family of filters of 
$\gT$ and
$\ff=\bigcap_{i=1}^m\ff_i$. \\ Then
$\Kdim(\gT/(\ff=1))=\sup_i\Kdim(\gT/(\ff_i=1))$.
\end{enumerate}
\end{corollary}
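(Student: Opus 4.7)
The plan is to prove both inequalities separately and then derive the filter case from the ideal case. For the inequality $\sup_i\Kdim(\gT/(\fa_i=0))\leq\Kdim(\gT/(\fa=0))$, observe that $\fa\subseteq\fa_i$ for each $i$, so $\gT/(\fa_i=0)$ is a quotient of $\gT/(\fa=0)$, and Corollary~\ref{corpropTquoBord} gives the inequality immediately.

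For the reverse inequality, I would first reduce to the situation where $\fa=\so{0}$. Set $\gT'=\gT/(\fa=0)$, let $\pi:\gT\to\gT'$ be the canonical projection, and put $\fa'_i=\pi(\fa_i)$. By Lemma~\ref{lemIQT}~(1), the bijection between ideals of $\gT$ containing $\fa$ and ideals of $\gT'$ yields the natural isomorphism $\gT/(\fa_i=0)\simeq\gT'/(\fa'_i=0)$, and also $\pi^{-1}(\fa'_i)=\fa_i$ since $\fa\subseteq\fa_i$. Hence an element $\bar a\in\bigcap_i\fa'_i$ lifts to some $a\in\bigcap_i\pi^{-1}(\fa'_i)=\bigcap_i\fa_i=\fa$, forcing $\bar a=0$. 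So $\bigcap_i\fa'_i=\so{0}$, and we are in the hypotheses of Proposition~\ref{propLocBord}.

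I would then proceed by induction on $\ell=\sup_i\Kdim(\gT'/(\fa'_i=0))$. For $\ell=-1$, each $\fa'_i=\gT'$, so $\gT'=\bigcap_i\fa'_i=\so{0}$ is trivial. For $\ell\geq 0$, clause~\textsl{2a} of Definition~\ref{defDK0} reduces the target $\Kdim(\gT')\leq\ell$ to the statement $\Kdim(\gT'\ul x)\leq\ell-1$ for every $x\in\gT'$. By clause~\textsl{2a} again, $\Kdim((\gT'/(\fa'_i=0))\ul x)\leq\ell-1$ for every $i$. Proposition~\ref{propLocBord} realises each such boundary as a quotient of $\gT'\ul x$ by an ideal $\fb_i$, with $\bigcap_i\fb_i=\so{0}$. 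The induction hypothesis, applied to $\gT'\ul x$ with the covering family $(\fb_i)$, then gives $\Kdim(\gT'\ul x)\leq\ell-1$, closing the induction.

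The filter version follows by the same argument, using Item~\textsl{2} of Proposition~\ref{propLocBord} in place of Item~\textsl{1}; alternatively it is deduced from the ideal case by reversal of order, invoking Fact~\ref{corTTO}. The only genuinely subtle step I foresee is the reduction to the covering hypothesis $\fa=\so{0}$, since Proposition~\ref{propLocBord} is formulated only in that case; once this reduction is secured, the induction on $\ell$ proceeds cleanly and in parallel with the inductive characterisation of Krull dimension used in Theorem~\ref{thDK1}.
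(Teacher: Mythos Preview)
Your proof is correct and follows exactly the approach of the paper: reduce to the covering case $\fa=\so{0}$, then induct on $\ell$ using Proposition~\ref{propLocBord} for the step, with Item~\textsl{2} (or reversal of order via Fact~\ref{corTTO}) handling the filter version. The paper's proof is simply a terse summary of what you have written out in full, including the same base case and the same appeal to Corollary~\ref{corpropTquoBord} for the easy inequality (which the paper leaves implicit).
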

\begin{proof}
It is sufficient to prove Item \textsl{1}. By replacing $\gT$ with 
$\gT/(\fa=0)$ we have to deal with the case \hbox{$\fa=0$}. The result is clear for $\Kdim=-1$. 
The induction works by Proposition~\ref{propLocBord}.\\
It is also possible to give a direct proof based on \carn 2~(c)
in \Dfn~\ref{defDK0}.
\end{proof}

In \clama the local character of the Krull dimension is stated as
\[
\Kdim(\gT)=\sup\sotq{\Kdim(\gT/(\ff=1))}{\ff \hbox{ minimal prime filter}}.
\]  
And it is a direct consequence of the  \dfn of 
dimension in \clama.  Then one deduces Corollary~\ref{corpropLocBord} but the proof of the corollary is not \cov.

\smallskip Now we get with a  \prco the same results than in \thref{thDK1}, but with the \cov \dfn of the dimension.

\begin{theorem}
\label{propDK1}
Let  $\gT$ be a \trdi generated by a subset $S$ and let 
$\ell$ be a nonnegative integer. \Propeq
\begin{enumerate}
\item  The lattice $\gT$ has dimension $\leq \ell.$
\item  For any $x\in S$ the boundary
 $\gT\ul{x}$ has dimension $\leq \ell-1$.
\item  For any $x\in S$ the boundary  $\gT\bal{x}$ has dimension $\leq \ell-1$.
\item 
For any  $x_0,\ldots,x_\ell\in S$
there exist $a_0,\ldots,a_\ell\in \gT$ such that 
\begin{equation}
     a_0 \vi x_0  \leq  0\,,\;\;\;
     a_1 \vi x_1 \leq   a_0 \vu x_0\,,\;\;\; \dots\;\;,\;\;\;
     a_{\ell} \vi x_{\ell} \leq     a_{\ell-1} \vu x_{\ell-1}\,,\;\;\;
     1  \leq  a_{\ell} \vu x_{\ell}.
\end{equation}
\end{enumerate}
When $\gT$ is a \agH the previous conditions are also \eqv to
\begin{enumerate}\setcounter{enumi}{4}
\item   For any  $x_0,\dots,x_{\ell}$  in  $S$  we have the equality
\begin{equation}
     1 = x_{\ell}\vu (x_{\ell}\im( \cdots (x_1 \vu (x_1 \im (x_0\vu
\neg x_0)))\cdots))
\end{equation}
\end{enumerate}
When $\gT$ is a Brouwer \alg the previous conditions are also \eqv to
\begin{enumerate}\setcounter{enumi}{5}
\item   For any  $x_0,\dots,x_{\ell}$  in  $S$  we have the equality
\begin{equation}
     0 = x_{0}\vi (x_{0}-(x_1 \vi (x_1 - ( \cdots (x_\ell\vi
(1- x_\ell))))\cdots))
\end{equation}
\end{enumerate}

\end{theorem}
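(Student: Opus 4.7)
The plan is to show that, on top of the equivalences already noted in Definition~\ref{defDK0} between conditions \textsl{1, 2, 3, 4} when quantification ranges over all of $\gT$, one can shrink the quantification to the generating subset $S$ without loss. So the heart of the work is really the implication
\[
\bigl(\forall x \in S,\; \Kdim(\gT\ul{x}) \leq \ell - 1\bigr) \;\Longrightarrow\; \bigl(\forall x \in \gT,\; \Kdim(\gT\ul{x}) \leq \ell - 1\bigr),
\]
all other equivalences in the statement being either trivial restrictions or direct appeals to the cited definitions.

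The key technical lemma I would isolate is a stability property: if $\Kdim(\gT\ul{y}) \leq \ell - 1$ and $\Kdim(\gT\ul{z}) \leq \ell - 1$, then $\Kdim(\gT\ul{y \vu z}) \leq \ell - 1$ and $\Kdim(\gT\ul{y \vi z}) \leq \ell - 1$. The proof would use Proposition~\ref{propBordKUnion} to rewrite $\rK_\gT^{y \vu z} \cap \rK_\gT^{y \vi z} = \rK_\gT^{y} \cap \rK_\gT^{z}$, call this common ideal $I$, and then invoke Corollary~\ref{corpropLocBord}\,(1) applied to the pair $\rK_\gT^{y}, \rK_\gT^{z}$ to get
\[
\Kdim\bigl(\gT/(I = 0)\bigr) \;=\; \sup\bigl(\Kdim(\gT\ul{y}),\, \Kdim(\gT\ul{z})\bigr) \;\leq\; \ell - 1.
\]
Since $I$ is contained in both $\rK_\gT^{y \vu z}$ and $\rK_\gT^{y \vi z}$, both $\gT\ul{y \vu z}$ and $\gT\ul{y \vi z}$ are quotients of $\gT/(I = 0)$, and the bound on their dimensions follows from Corollary~\ref{corpropTquoBord}.

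Armed with this lemma, I would prove the main implication by structural induction on the construction of $x \in \gT$ from $S$ via $\vi$, $\vu$, $0$, $1$. The base case $x \in S$ is the hypothesis. For $x = 0$ or $x = 1$, a direct calculation gives $\rK_\gT^{0} = \rK_\gT^{1} = \gT$, hence $\gT\ul{0} = \gT\ul{1} = \Un$, which trivially has dimension $\leq \ell - 1$ as soon as $\ell \geq 0$. The inductive step for $\vi$ and $\vu$ is exactly the stability lemma. Combined with Definition~\ref{defDK0}\,(2a), this yields \textsl{2} (on $S$) $\Rightarrow$ \textsl{2} (on $\gT$) $\Rightarrow$ \textsl{1}, and the converse direction is immediate.

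The equivalence with \textsl{3} follows by the same argument after reversal of the order (Fact~\ref{corTTO}), while the equivalence between \textsl{4} and \textsl{2} on the same quantification domain is a routine induction on $\ell$ unfolding the definition of the boundary. Conditions \textsl{5} and \textsl{6} are then just \textsl{4} rewritten using the explicit descriptions $\rK_\gT^{x} = \dar(x \vu \neg x)$ in an \agH and $\rK_\gT^{x} = \uar(x \vi (1 - x))$ in a Brouwer \alg recalled in Propositions~\ref{propositionFSES}\,(6) and~\ref{FdSES}\,(2), iterated along $(x_0, \ldots, x_\ell)$. I expect the only genuinely delicate step to be the stability lemma: everything else is mechanical, whereas there Proposition~\ref{propBordKUnion} is precisely what makes the passage from quantification over $S$ to quantification over $\gT$ sound.
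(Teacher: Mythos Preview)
Your proposal is correct and follows essentially the same route as the paper: the paper's proof also reduces everything to the implication ``\textsl{2} on $S$ $\Rightarrow$ \textsl{2} on $\gT$'' and establishes it via exactly the three ingredients you single out, namely Proposition~\ref{propBordKUnion}, Corollary~\ref{corpropTquoBord}, and Corollary~\ref{corpropLocBord}, giving as its key example that $\gT\ul{x\vu y}$ is a quotient of $\gT/(\rK_\gT^x\cap\rK_\gT^y)$ and hence $\Kdim(\gT\ul{x\vu y})\leq \sup(\Kdim\,\gT\ul x,\Kdim\,\gT\ul y)$. Your stability lemma and structural induction make the paper's terse argument fully explicit; the only cosmetic difference is that for \textsl{3} you invoke reversal of the order while the paper appeals directly to the definition of the lower boundary.
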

\begin{proof}
\noindent \textsl{2} $\Leftrightarrow$ \textsl{4}  by induction on $\ell$, (\dfn of the boundary).

\noindent \textsl{3} $\Leftrightarrow$ \textsl{4}:  \dfn of the boundary.

\noindent \textsl{2} $\Leftrightarrow$ \textsl{5}:  by induction on $\ell$ (\dfn of the boundary in a \agH).

\noindent \textsl{3} $\Leftrightarrow$ \textsl{6}:  by induction on $\ell$ (\dfn of the boundary in a Brouwer \alg).

\noindent It remains to see that if \textsl{2} is true for all  $x\in S$, 
then  \textsl{2}
is true for any $x\in\gT$.
This follows from Proposition~\ref{propBordKUnion}  and Corollaries~\ref{corpropTquoBord} and~\ref{corpropLocBord}: for example for all  $x,y\in\gT$, $\gT\ul{x\vu y}$ is a quotient
of $\gT/(\rK_\gT^x\cap\rK_\gT^y)$, hence $\Kdim(\gT\ul{x\vu y})\leq \sup
(\Kdim\,\gT\ul x, \Kdim\,\gT\ul y)$.
\end{proof}

\subsection{Heitmann boundaries and Heitmann dimension of a \trdi}

\subsubsection*{Heitmann J-dimension of a \trdi}
\addcontentsline{toc}{subsubsection}{Heitmann J-dimension of a \trdi}

We now give a \cof \dfn of the dimension $\Jdim$ of a \sps
defined by Heitmann. We call it the \textsl{Heitmann $\rJ$-dimension} of the 
lattice $\gT$ (or of the \sps~$\Spec\,\gT$).

\begin{definition}
\label{defHdimTr}
Let $\gT$ be a \trdi.
The \textsl{Heitmann $\rJ$-dimension of $\gT$}, denoted by $\Jdim\,\gT$, is the 
Krull dimension of the Heitmann lattice  $\He(\gT)$ (cf. \Dfn~\ref{defHeT}).
\end{definition}

From a \cov point of view we only define, for any integer $\ell\geq -1$, the sentence   \gui{$\Jdim\,\gT\leq \ell$} by
\gui{$\Kdim\,\He(\gT)\leq \ell$}.

In \clama one defines the Heitmann $\rJ$-dimension of a spectral space $X$ in the \flw way: let $M_X$ be the set of closed points and $J_X$ the closure of $M_X$ for the patch topology; then
$\Jdim\,X=\Kdim\,J_X$.

\begin{fact}
\label{factKJdim} Let $\gT$ be a \trdi, $\gT'=\gT/(\JT(0)=0)$ and 
$\fa$ an \id of $\gT$.
\begin{enumerate}
\item $\Jdim(\He(\gT))=\Jdim\,\gT'=\Jdim\,\gT\leq 
\Kdim(\gT')
\leq \Kdim\,\gT$.
\item If $\gL=\gT/(\fa=0)$  then $\Jdim\,\gL\leq\Jdim\,\gT$.
\end{enumerate}
\end{fact}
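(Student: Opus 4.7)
The plan is to reduce everything to two ingredients that have already been established: the identifications in Fact~\ref{factHeHe} (which tell us that Heitmannization is idempotent and commutes with quotients by ideals), and Corollary~\ref{corpropTquoBord} (which says that taking a quotient can only make the Krull dimension drop). Once these are in hand, the proof is almost a diagram chase, with no real computation.

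For Item~\textsl{1}, I would proceed as follows. By Definition~\ref{defHdimTr}, $\Jdim\,\gT = \Kdim(\He(\gT))$. Applying this definition to $\He(\gT)$ gives $\Jdim(\He(\gT)) = \Kdim(\He(\He(\gT)))$, and Fact~\ref{factHeHe}\,\textsl{3} collapses $\He(\He(\gT))$ to $\He(\gT)$; similarly it collapses $\He(\gT')$ to $\He(\gT)$. This already yields the three equalities $\Jdim(\He(\gT)) = \Jdim\,\gT' = \Jdim\,\gT$. For the first inequality $\Jdim\,\gT \leq \Kdim\,\gT'$, I would observe that $\He(\gT) \simeq \He(\gT')$ is obtained from $\gT'$ by the preorder of Definition~\ref{defHeT}, so it is a quotient of $\gT'$ in the sense of Proposition~\ref{propIdealFiltre}; Corollary~\ref{corpropTquoBord} then gives $\Kdim(\He(\gT')) \leq \Kdim\,\gT'$. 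The final inequality $\Kdim\,\gT' \leq \Kdim\,\gT$ is again Corollary~\ref{corpropTquoBord}, since $\gT' = \gT/(\JT(0)=0)$ is the quotient of $\gT$ by the ideal $\JT(0)$.

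For Item~\textsl{2}, I would apply Fact~\ref{factHeHe}\,\textsl{4} to the ideal $\fa$: it identifies $\He(\gL)$ with $\He(\gT)/(\JT(\fa)=0)$, exhibiting $\He(\gL)$ as a quotient of $\He(\gT)$. Then Corollary~\ref{corpropTquoBord} yields $\Kdim(\He(\gL)) \leq \Kdim(\He(\gT))$, which is precisely $\Jdim\,\gL \leq \Jdim\,\gT$.

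There is no genuine obstacle in this argument; the only subtlety worth being careful about is that the statements involve the constructive notation of Notation~\ref{notaKdiminf}, so each inequality $\Kdim\,\gA \leq \Kdim\,\gB$ should be understood as the implication \guillemotleft{}if $\Kdim\,\gB \leq \ell$ then $\Kdim\,\gA \leq \ell$\guillemotright{} for every $\ell \geq -1$. Corollary~\ref{corpropTquoBord} is formulated in exactly this sense, so the chaining of inequalities is legitimate constructively and requires no classical input.
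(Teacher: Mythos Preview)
Your proof is correct and follows the same route as the paper, which simply cites Fact~\ref{factHeHe}\,\textsl{3} for Item~\textsl{1} and Fact~\ref{factHeHe}\,\textsl{4} for Item~\textsl{2}; you have merely made explicit the r\^ole of Corollary~\ref{corpropTquoBord} in the two inequalities, which the paper leaves implicit. One small quibble: the reference to Proposition~\ref{propIdealFiltre} is misplaced, since the Heitmann quotient is \emph{not} of the form $\gT'/(J=0,U=1)$ (the paper stresses this just before Lemma~\ref{lemHeT}); the fact that $\He(\gT')$ is a quotient of $\gT'$ is already built into Definition~\ref{defHeT} via the preorder relations~(\ref{eqPreceq}).
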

\begin{proof}
Item \textsl{1} is a consequence of Item \textsl{3} in Fact~\ref{factHeHe} 
and Item \textsl{2} is a consequence of Item~\textsl{4}.
\end{proof}

\rem The map $\gT\mapsto \JT(0)$ does not give a functor. Same thing for
$\gT\mapsto \He (\gT)$. In particular Item~\textsl{2} in Fact~\ref{factKJdim} does not work for more general quotients, for example for quotients by 
filters.
Contrarily to $\Kdim$,  $\Jdim$ may increase when passing to a 
 quotient (we have simple examples in commutative \alg).\eoe

\medskip
\exl
Here is an example given by Heitmann of a spectral space such that
$\Jdim(\gT)<\Kdim(\gT/(\JT(0)=0))$. 
\begin{figure}[ht]   
\begin{center}
\includegraphics{figSpec}
\end{center}
\caption[]
{\label{figSpec} Heitmann's example}  
\end{figure}  
We consider $X=\Spec\,\ZZ$ and $Y=\bf n$
($n\geq 3$). We glue these \spss by identifying minimal \elts
 (the corresponding singleton  is indeed 
a \ssps for $X$ and $Y$). We get a space $Z=\Spec\,\gT$ with   $M_Z=\Max\,\gT$ as a closed subset. So $M_Z=J_Z=\Jspec\,\gT$ and it has dimension $0$. But the unique minimal
\elt  is the unique lower bound of $M_Z$. Hence $\JT(0)=0$ and
$\Kdim(\gT/(\JT(0)=0))=\Kdim(\gT)=n-2$.
\eoe

\medskip
\rem
Let us describe more explicitly the \dfn of $\Jdim$.

\begin{itemize}
\item $\Jdim\,\gT\leq \ell$ means:
$\forall x_0,\ldots,x_\ell\in \gT\;$
$\Ex a_0,
\ldots,  a_\ell\in \gT$ such that
\[    a_0\vi x_0  \leq_{\He(\gT)} 0,\;
     a_1\vi x_1 \leq_{\He(\gT)}  a_0\vu  x_0,\dots,\;
     a_{\ell}\vi x_{\ell} \leq_{\He(\gT)}    a_{\ell-1}\vu   
x_{\ell-1},\;
     1  \leq_{\He(\gT)} a_{\ell}\vu   x_{\ell}.
\]
\cade
$\forall x_0,\ldots,x_\ell\in \gT\;$
$\Ex a_0,\ldots,  a_\ell\in \gT\;$ $\Tt y\in\gT$
\[\begin{array}{rcl}
(a_0 \vi x_0) \vu y = 1 &  \Rightarrow  &  y=1    \\
(a_1 \vi x_1)   \vu y=1&  \Rightarrow &  a_0 \vu x_0 \vu y=1   \\
\vdots \qquad  &  \vdots &   \qquad \vdots   \\
(a_{\ell}\vi x_{\ell})\vu y=1&  \Rightarrow &  a_{\ell-1}\vu 
x_{\ell-1} \vu y=1
\\
   &   &  a_{\ell}\vu x_{\ell}=1
  \end{array}\]
\item In particular $\Jdim\,\gT\leq 0$ means:\\
  $\forall x_0\in \gT$ $\Ex a_0\in \gT$ $\Tt y\in\gT, 
  (\,(a_0 \vi x_0) \vu y = 1
\Rightarrow    y=1)\; $ and
$\; a_0 \vu x_0=1\,)$.
\item And $\Jdim\,\gT\leq 1$ means:
  $\forall x_0,x_1\in \gT\;$ $\Ex a_0,a_1\in \gT\;$ $\Tt y\in\gT$\,,\\
~~~~~~  $((a_0 \vi x_0) \vu y = 1   \Rightarrow    y=1)\,$ and
$((a_1 \vi x_1) \vu y = 1   \Rightarrow a_0 \vu x_0 \vu   y=1)\,$ and
$ \,a_1 \vu x_1=1$. \eoe
\end{itemize}

\subsubsection*{Heitmann dimension of a \trdi}
\addcontentsline{toc}{subsubsection}{Heitmann dimension of a \trdi}

Although  \citet*{Hei84} defines and uses  the dimension $\Jdim\,X$ where $X$ is the spectrum of a \cori, his proofs are in fact implicitly based on a related, but not equivalent, notion that we will call the \textsl{Heitmann dimension} and that we will denote by $\Hdim$.

We present this notion directly at the level of distributive lattices, where things are explained more simply.

The dimension $\Hdim\,\gT$ is always less than or equal to $\Jdim\,\gT$, which means that the theorems established for  $\Hdim$ will be a fortiori true with $\jdim$ and with $\Kdim$.

\begin{definition}
\label{defBHeit}
Let $\gT$ be a \trdi and $x\in\gT$.
We call \textsl{Heitmann boundary of $x$ in~$\gT$} the quotient lattice 
$\gT_\rH^x\eqdefi\gT/(\rH_\gT^x=0)$, where
\begin{equation} \label{eqboundaryHeit}
\rH_\gT^x\,=\,\dar x \,\vu\, (\JT(0):x)
\end{equation}
We also say that $\rH_\gT^x$ is \textsl{the Heitmann boundary \id of $x$ 
in $\gT$.}
\end{definition}

\begin{lemma}[Krull boundary and Heitmann boundary]
\label{lemTBKBH}~\\
Let $\gT$ be a \trdi, $\gT'=\gT/(\JT(0)=0)$, $\pi\colon \gT\to\gT'$ the 
canonical projection,
$x\in\gT$ and $\ov{x}=\pi(x)$. 
\begin{enumerate}
\item $\rH^{\ov{x}}_{\gT'} =\rK^{\ov{x}}_{\gT'}.$
\item  $\pi^{-1}(\rK^{\ov{x}}_{\gT'}) =\rH^x_\gT$
and ${{\gT'}_\rH^{\ov{x}}}\simeq {{\gT'}\ul{\ov{x}}}\simeq\gT_\rH^x$.
\end{enumerate}
\end{lemma}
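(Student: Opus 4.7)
\medskip
\noindent \emph{Proof plan.} The plan is to first establish that the Jacobson radical of $0$ vanishes in $\gT'$, then derive Item~1 immediately from the definitions, and finally handle Item~2 by a direct computation of $\pi^{-1}(\rK^{\ov x}_{\gT'})$.

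First I would observe that $\rJ_{\gT'}(0)=0$ in $\gT'$. Indeed, Item~1 of Lemma~\ref{lemIQT} (applied with $\fa=\JT(0)$, $\fj=0$) gives $\pi^{-1}(\rJ_{\gT'}(0))=\JT(\pi^{-1}(0))=\JT(\JT(0))=\JT(0)$, so $\rJ_{\gT'}(0)$ is the zero ideal of $\gT'$. Item~1 then follows at once: by the definitions $\rH^{\ov x}_{\gT'}=\dar\ov x\vu(\rJ_{\gT'}(0):\ov x)=\dar\ov x\vu(0:\ov x)=\rK^{\ov x}_{\gT'}$.

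For the first equality of Item~2 I would prove both inclusions of $\pi^{-1}(\rK^{\ov x}_{\gT'})=\rH^x_\gT$ element by element. If $y\in\rH^x_\gT$, write $y\leq x\vu b$ with $b\vi x\in\JT(0)$; then $\pi(y)\leq\ov x\vu\pi(b)$ with $\pi(b)\vi\ov x=\pi(b\vi x)=0$, so $\pi(y)\in\rK^{\ov x}_{\gT'}$. Conversely, if $\pi(y)\in\rK^{\ov x}_{\gT'}$ then $\pi(y)\leq\ov x\vu\pi(b)$ for some $b\in\gT$ with $\pi(b\vi x)=0$, i.e.\ $b\vi x\in\JT(0)$; unwinding the definition of $\leq$ in the quotient $\gT'$ (Proposition~\ref{propIdealFiltre}) gives $z\in\JT(0)$ with $y\leq x\vu b\vu z$, and replacing $b$ by $b\vu z$ (which still satisfies $(b\vu z)\vi x\in\JT(0)$ since $\JT(0)$ is an ideal) yields $y\in\rH^x_\gT$.

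Finally, the two isomorphisms are formal consequences. By definition ${\gT'}_\rH^{\ov x}={\gT'}/(\rH^{\ov x}_{\gT'}=0)$, which equals ${\gT'}\ul{\ov x}={\gT'}/(\rK^{\ov x}_{\gT'}=0)$ by Item~1. On the other hand, the ideal $\rH^x_\gT=\pi^{-1}(\rK^{\ov x}_{\gT'})$ of $\gT$ contains $\JT(0)=\pi^{-1}(0)$, so the correspondence of Lemma~\ref{lemIQT}~(1) identifies $\gT/(\rH^x_\gT=0)$ with $\gT'/(\rK^{\ov x}_{\gT'}=0)={\gT'}\ul{\ov x}$, giving $\gT_\rH^x\simeq{\gT'}\ul{\ov x}$. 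The only mild subtlety is the careful unwinding of $\leq_{\gT'}$ in the converse inclusion of Item~2; everything else is immediate from the correspondence of quotients by ideals.
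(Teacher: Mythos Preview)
Your proof is correct and is precisely the unwinding that the paper leaves implicit: the paper's own proof is the single line ``Clear from \dfns.'' Your argument spells out exactly what that means, including the key observation $\rJ_{\gT'}(0)=0$ (via Lemma~\ref{lemIQT} and the easy idempotence $\JT(\JT(0))=\JT(0)$) and the careful verification of both inclusions in Item~2; there is no meaningful difference in approach.
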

\begin{proof}
Clear from the \dfns.
\end{proof}

\begin{lemma}
\label{lemTquoBordH}
Let $\gL=\gT/(\fa=0)$ be a quotient lattice  of a \trdi $\gT$ by an 
\id. By abuse, we denote by $x$ the image of $x\in\gT$ in $\gL$. Then 
$\gL_\rH^x$ is a quotient of  $\gT_\rH^x.$
\end{lemma}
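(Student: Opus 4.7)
The plan is to reduce the statement to the analogous fact already established for the Krull boundary (Proposition \ref{propTquoBord}), using Lemma \ref{lemTBKBH} which identifies the Heitmann boundary of $\gT$ at $x$ with the Krull boundary of $\gT'=\gT/(\JT(0)=0)$ at the image $\ov{x}$ of $x$. So the task splits into two independent steps: first, pushing the reduction $\gT\rightsquigarrow\gT'$ through the quotient by $\fa$; second, applying the already-proved local character of the Krull boundary.

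For the first step, let $\pi:\gT\to\gL$ denote the canonical projection. The key observation (and the only non-formal point of the argument) is that
\[
\pi(\JT(0))\;\subseteq\;\rJ_\gL(0).
\]
To see this, take $a\in\JT(0)$ and any $y\in\gL$ with $\pi(a)\vu y=1_\gL$; write $y=\pi(y')$, so that $a\vu y'\equiv 1 \mod (\fa=0)$, i.e.\ there exists $c\in\fa$ with $a\vu y'\vu c=1$ in $\gT$. The defining property $a\in\JT(0)$ then yields $y'\vu c=1$, hence $y=\pi(y')=1_\gL$. This exactly says $\pi(a)\in\rJ_\gL(0)$. Consequently the composite $\gT\xrightarrow{\pi}\gL\to\gL'=\gL/(\rJ_\gL(0)=0)$ kills $\JT(0)$, and thus factors through a surjective morphism $\gT'\to\gL'$. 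In other words $\gL'$ is a quotient of $\gT'$.

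For the second step, apply Proposition \ref{propTquoBord} to the quotient $\gT'\to\gL'$ at the element $\ov{x}\in\gT'$: we get that $\gL'\ul{\ov{x}}$ is a quotient of $\gT'\ul{\ov{x}}$. Now Lemma \ref{lemTBKBH}(2) translates Krull boundaries of reduced-by-$\JT(0)$ lattices back into Heitmann boundaries of the original lattices, giving canonical isomorphisms $\gT'\ul{\ov{x}}\simeq \gT_\rH^x$ and $\gL'\ul{\ov{x}}\simeq \gL_\rH^x$. Composing, we obtain the desired surjective lattice morphism $\gT_\rH^x\twoheadrightarrow\gL_\rH^x$.

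The only step requiring actual thought is the inclusion $\pi(\JT(0))\subseteq \rJ_\gL(0)$; everything else is either a formal factoring argument or a direct invocation of the two lemmas. I do not expect any obstruction to arise, and in particular the proof is entirely constructive (no use of points of the spectrum), as required. Note that the hypothesis \emph{quotient by an ideal} is genuinely used when lifting a witness $y=\pi(y')$ and absorbing the correction term $c\in\fa$ into $y'$; the analogous statement for quotients by a filter would not go through with this method, in agreement with the warning after Fact \ref{factKJdim}.
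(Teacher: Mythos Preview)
Your proof is correct, but it takes a more roundabout route than the paper. Both arguments hinge on the same key inclusion $\pi(\JT(0))\subseteq\rJ_\gL(0)$, which you prove explicitly (and correctly). From there, however, the paper proceeds directly: given $z\in\rH_\gT^x$, say $z\leq_\gT x\vu u$ with $x\vi u\in\JT(0)$, one applies $\pi$ and uses the inclusion to get $\pi(z)\leq_\gL \pi(x)\vu\pi(u)$ with $\pi(x)\vi\pi(u)\in\rJ_\gL(0)$, hence $\pi(z)\in\rH_\gL^{\pi(x)}$. This shows $\pi(\rH_\gT^x)\subseteq\rH_\gL^x$, and the factorisation $\gT_\rH^x\twoheadrightarrow\gL_\rH^x$ follows at once. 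Your approach instead passes to the Jacobson-reduced lattices $\gT'$ and $\gL'$, invokes Lemma~\ref{lemTBKBH} twice to identify Heitmann boundaries with Krull boundaries there, and then applies Proposition~\ref{propTquoBord}. This is structurally clean and reuses existing machinery, but it is longer than necessary: once you have the inclusion of Jacobson radicals, the direct two-line computation on elements of $\rH_\gT^x$ suffices, without any detour through $\gT'$ or the Krull boundary.
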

\begin{proof}
Let $\pi\colon \gT\to\gL$ be the canonical projection. We have to show that if
$z\in\rH_\gT^x$ then $\pi(z)\in\rH_\gL^{\pi(x)}$. So let 
$z\leq_\gT x\vu u$ with  $x \vi u  \in\JT(0)$. As 
$\pi(\JT(0))\subseteq\rJ_\gL(0)$, we obtain $\pi(z)\leq_\gL \pi(x)\vu \pi(u)$ with  $\pi(x) \vi \pi(u) 
\in\rJ_\gL(0)$, which  implies  $\pi(z)\in\rH_\gL^{\pi(x)}$.
\end{proof}

\rem The previous lemma would be false for a more general quotient, for example for a quotient by a filter. It remains true whenever 
$\pi(\JT(0))\subseteq \rJ_\gL(0)$. \eoe

\begin{proposition}[comparison of two boundaries à la 
Heitmann]
\label{propBHeitHeyt}~\\
Let $x$ be an \elt of $\gT$ and $\hat{x}$ its image in $\He(\gT)$.
\begin{enumerate}
\item 
$\He(\gT_\rH^x)$ is a quotient of $\He(\gT_\rH^x)$.
\item If $\He(\gT)$ is a \agH, they are equal.
\end{enumerate}
\end{proposition}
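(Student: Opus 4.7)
The plan is to realize both lattices as quotients of $\He(\gT)$ via Fact~\ref{factHeHe}.4 and then compare the two kernel ideals directly inside $\He(\gT)$. Write $\rho:\gT\to\He(\gT)$ for the canonical projection. Applying Fact~\ref{factHeHe}.4 with $\fa=\rH_\gT^x$ gives
\[
\He(\gT_\rH^x)\;\simeq\;\He(\gT)\big/\bigl(\rho(\JT(\rH_\gT^x))=0\bigr).
\]
For the other side, $\He(\gT)$ is weakly Jacobson (Fact~\ref{factHeHe}.1 combined with the definition of the Heitmann preorder), so $\rJ_{\He(\gT)}(0)=0$ and consequently $\rH_{\He(\gT)}^{\hat x}=\rK_{\He(\gT)}^{\hat x}$; then Facts~\ref{factHeHe}.3 and~\ref{factHeHe}.4 give
\[
\He(\He(\gT)_\rH^{\hat x})\;\simeq\;\He(\gT)\big/\bigl(\rJ_{\He(\gT)}(\rK_{\He(\gT)}^{\hat x})=0\bigr).
\]

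For Item~1, I would establish the ideal inclusion
\[
\rho(\JT(\rH_\gT^x))\;\subseteq\;\rJ_{\He(\gT)}(\rK_{\He(\gT)}^{\hat x}),
\]
which immediately yields the desired quotient relationship (the quotient by the larger ideal is a further quotient of the quotient by the smaller). This inclusion reduces to two elementary substeps: first, $\rho(\rH_\gT^x)\subseteq\rK_{\He(\gT)}^{\hat x}$, which follows from the definitions once one notes that $a\in\JT(0)\Leftrightarrow\hat a=0$ (a consequence of Fact~\ref{factHeHe}.1); second, for every ideal $\fa$ of $\gT$, $\rho(\JT(\fa))\subseteq\rJ_{\He(\gT)}(\rho(\fa))$, which is a direct application of Fact~\ref{factHeHe}.1 to the definition of the Jacobson radical. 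Combining these with monotonicity of $\rJ_{\He(\gT)}$ gives the inclusion.

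For Item~2, assume $\He(\gT)$ is an \agH, so that $\rK_{\He(\gT)}^{\hat x}$ is the principal ideal $\dar(\hat x\vee\neg\hat x)$ by Item~6 of Proposition~\ref{propositionFSES}. I would establish the reverse inclusion by picking $\hat z\in\rJ_{\He(\gT)}(\hat x\vee\neg\hat x)$ and exhibiting a witness to $z\in\JT(\rH_\gT^x)$: any lift $s\in\gT$ of $\neg\hat x$ satisfies $x\vi s\in\JT(0)$, so $w:=x\vu s\in\rH_\gT^x$; then for every $t$ with $z\vu t=1$ (equivalently $\hat z\vu\hat t=1$ by Fact~\ref{factHeHe}.1), the hypothesis forces $\hat x\vu\neg\hat x\vu\hat t=1$, which by Fact~\ref{factHeHe}.1 is exactly $w\vu t=1$ in $\gT$.

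The main obstacle is the constant bookkeeping of moving between $\gT$ and $\He(\gT)$: one must carefully distinguish the Heitmann preorder on $\gT$ from the order in $\He(\gT)$, and keep track of which ideal lives in which lattice. The \agH hypothesis in Item~2 is precisely what makes the lift $s$ of $\neg\hat x$ truly canonical, so that the same $w$ works for every test element $t$; in the general case only local lifts depending on $t$ are available, which is the phenomenon reflected by the one-sided inclusion of Item~1.
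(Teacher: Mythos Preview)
You have misidentified the second lattice in the comparison, almost certainly because the English statement you were given contains a typo (it literally reads ``$\He(\gT_\rH^x)$ is a quotient of $\He(\gT_\rH^x)$''). Looking at the paper's proof and at the French version, the two lattices being compared are $\He(\gT_\rH^x)$ and $(\He(\gT))\ul{\hat x}$---the \emph{Krull} boundary of $\hat x$ inside $\He(\gT)$---and Item~1 asserts that the former is a quotient of the latter. In your ideal framework, that amounts to the inclusion
\[
\rK_{\He(\gT)}^{\hat x}\;\subseteq\;\rho(\JT(\rH_\gT^x)),
\]
which is precisely what is needed in Proposition~\ref{propJdimHdim} to deduce $\Hdim\leq\Jdim$.

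Your Item~1 instead establishes $\rho(\JT(\rH_\gT^x))\subseteq\rJ_{\He(\gT)}(\rK_{\He(\gT)}^{\hat x})$. This inclusion is true (in fact one can check it is an equality), but it points the wrong way and involves an extra Jacobson radical on the right; combined with the trivial $\rK\subseteq\rJ(\rK)$ it yields nothing about the required direction. Concretely, your first substep $\rho(\rH_\gT^x)\subseteq\rK_{\He(\gT)}^{\hat x}$ is the reverse of what is needed. The paper's argument compares the defining preorders $(*)$ and $(**)$ on $\gT$ directly and shows $(*)\Rightarrow(**)$; translated into your language, the key observation is that an element $\hat a\in\rK_{\He(\gT)}^{\hat x}$ lifts to some $a$ with $a\in\JT(x\vu c)$ for a $c$ satisfying $c\vi x\in\JT(0)$, whence $a\in\JT(\rH_\gT^x)$.

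Your Item~2 argument, on the other hand, is essentially correct and matches the paper's: the Heyting hypothesis makes $\rK_{\He(\gT)}^{\hat x}$ principal (generated by $\hat x\vu\neg\hat x$), so it equals its own Jacobson radical in the weakly Jacobson lattice $\He(\gT)$, and your uniform lift $s$ of $\neg\hat x$ gives exactly the reverse inclusion. Combined with the \emph{correct} Item~1 inclusion above, this yields equality.
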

\begin{proof}
\textsl{1.} The lattice $(\He(\gT))\ul{\hat{x}}$  is a quotient of $\gT$  whose preorder relation $a \preceq  b$ has the \flw description:
\[
\Ex y\;\;\;\big(x \vi  y \in \JT (0) \;\;\mathrm{and} \;\;\Tt z\;[\;a 
\vu  z = 1
\;\Rightarrow\; x \vu  y \vu  b  \vu  z = 1]\big).\eqno (*)
\]

\noindent
Let us consider the preorder defining $\He(\gT _\rH^x)$:
\[\Tt u\;\;\;(1 \leq_{\gT _\rH^x}  a \vu  u \;\;\Rightarrow\;\;
  1 \leq_{\gT _\rH^x}  b \vu  u).\eqno (**) \]

\noindent
We show that preorder $(*)$ implies preorder $(**)$. 
We have a $y$ with $(*)$.
We consider a $u$ such that $1 \leq  a \vu  u$ in $\gT _\rH^x$ and 
we have to show that
$1 \leq  b \vu  u$  in $\gT _\rH^x$.\\
The relation $1 \leq  a \vu  u$ in $\gT _\rH^x$ is written as $1 \leq  a 
\vu  u \vu x \vu  y'$ for a $y'$ such that $y' \vi  x \in \JT (0)$.\\
We let $z = u \vu  x \vu  y'$, we use $(*) $ and we get
   $x \vu  b \vu  u \vu  (y \vu  y') = 1.$ \\
But $(y \vu  y') \vi  x  \in \JT (0)$,  hence with $y'' = y \vu  
y'$ we get
$1 \leq  b \vu  u \vu  x \vu  y''$    with  $x \vi  y'' \in \JT (0)$
i.e.\  $1 \leq  b \vu  u$  in $\gT _\rH^x$.\\
\textsl{2}. Let $\pi\colon \gT\to\He(\gT)$ be the canonical projection.\\
From Fact~\ref{factHeHe} we have $\pi^{-1}(0)=\JT(0)$ and
$\pi^{-1}(1)=\so{1}$. Assume that $\He(\gT)$ is a \agH. 
Let $\wi{x}$ be an \elt of $\gT$ such that $\pi(\wi{x})\,=\,\pi(x)\im 0\,$ in $\He(\gT)$.
Then we rewrite $(*)$ as
\[
\Tt z\;[\;a \vu  z = 1  \;\Rightarrow\; x \vu  \wi x \vu  b  \vu  z =
1\;].\eqno(*').
\]

\noindent
Similarly  $1 \leq_{\gT _\rH^x}  a \vu  u$, which means
\[\Ex y'\;(x\vi y'\in \JT (0) \;\mathrm{and}\; 1 \leq  a \vu  u \vu  x 
\vu
y'),\]
is rewritten as $1 \leq  a \vu  u \vu  x \vu  \wi x$. So $(**)$ is rewritten
\[
\Tt u\;[\;a \vu  u \vu  x \vu  \wi x= 1  \;\Rightarrow\; b \vu  u 
\vu  x \vu
\wi x\;]\eqno(**')
\]

\noindent
and it is clear that $(*')$ and $(**')$ are \eqv.
\end{proof}

\begin{definition}
\label{defHdim}
The \textsl{Heitmann dimension of a \trdi $\gT$}, denoted by 
$\Hdim\,\gT$, has the \flw inductive \dfn.
\begin{itemize}
\item $\Hdim\,\gT=-1$ \ssi  $\gT\simeq\Un$.
\item If $\ell\geq 0$, $\Hdim\,\gT\leq \ell$ \ssi  for all 
$x\in\gT$,
$\Hdim(\gT_\rH^x)\leq \ell-1$.
\end{itemize}
\end{definition}

From Lemma \ref{lemTquoBordH} we deduce by induction
with the same notation as in \ref{notaKdiminf} the \flw lemma.

\begin{lemma}
\label{lemDimHquo}
If $\gL$ is the quotient of $\gT$ by an \id, then $\Hdim\,\gL\leq
\Hdim\,\gT$.
\end{lemma}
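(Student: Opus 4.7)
The plan is to prove the inequality by induction on the Heitmann dimension $\ell$ of $\gT$, using Lemma \ref{lemTquoBordH} at each inductive step. The reason this works smoothly is that Heitmann dimension is itself defined inductively by passing to Heitmann boundaries, and Lemma \ref{lemTquoBordH} shows precisely that forming a Heitmann boundary commutes nicely with taking a quotient by an ideal (producing a quotient relation in the appropriate direction).

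First I would handle the base case $\ell = -1$: if $\Hdim\,\gT = -1$ then $\gT \simeq \Un$, and any quotient of the trivial lattice is trivial, so $\Hdim\,\gL = -1$.

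For the inductive step, assume the statement holds for all quotients (by an ideal) of any \trdi of Heitmann dimension at most $\ell-1$, and suppose $\Hdim\,\gT \leq \ell$. Let $\pi : \gT \to \gL = \gT/(\fa = 0)$ be the canonical projection. To show $\Hdim\,\gL \leq \ell$, I must verify that for every $y \in \gL$ we have $\Hdim(\gL_\rH^y) \leq \ell - 1$. Pick a preimage $x \in \gT$ with $\pi(x) = y$. By Lemma~\ref{lemTquoBordH}, $\gL_\rH^y$ is a quotient of $\gT_\rH^x$ by an ideal (namely the image of $\fa$ under the projection $\gT \to \gT_\rH^x$, which is still an ideal since ideals pass to surjective lattice morphisms by Fact~\ref{factIdDansQuo}). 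Since $\Hdim\,\gT \leq \ell$, the definition gives $\Hdim(\gT_\rH^x) \leq \ell - 1$, and then the induction hypothesis applied to the quotient $\gL_\rH^y$ of $\gT_\rH^x$ yields $\Hdim(\gL_\rH^y) \leq \ell - 1$, as required.

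The only delicate point is the parenthetical check that the quotient of $\gT_\rH^x$ producing $\gL_\rH^y$ is indeed of the form \gui{quotient by an ideal}, so that the induction hypothesis applies in the form stated. This is immediate from the proof of Lemma~\ref{lemTquoBordH}: the quotient is obtained by forcing $\pi(a) = 0$ for every $a \in \fa$, i.e.\ by quotienting by the ideal generated by the image of $\fa$. There is no serious obstacle; the argument is a textbook induction matched to the recursive definition of $\Hdim$ in Definition~\ref{defHdim}. The analogous statement for $\Kdim$ (Corollary~\ref{corpropTquoBord}) is proved by exactly the same template, once one has the counterpart of Lemma~\ref{lemTquoBordH} for Krull boundaries (Proposition~\ref{propTquoBord}).
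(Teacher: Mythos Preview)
Your proof is correct and is exactly the argument the paper intends: the paper simply says the lemma follows from Lemma~\ref{lemTquoBordH} by induction on $\ell$, and you have spelled out that induction carefully, including the check that the quotient $\gL_\rH^y$ of $\gT_\rH^x$ is again a quotient by an ideal so that the hypothesis carries through.
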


\begin{proposition}[comparison of $\Jdim$ and $\Hdim$]
\label{propJdimHdim}~
%
\begin{enumerate}
\item $\Hdim\,\gT\leq  \Jdim\,\gT$.
\item If $\He(\gT)$ is a \agH,  $\Hdim\,\gT=  
\Jdim\,\gT$.
\end{enumerate}

\noindent In \clama the equality holds when $\He(\gT)$ is \noe.
\end{proposition}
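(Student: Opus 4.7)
I would prove the two parts by parallel inductions on the relevant integer, and deduce the Noetherian corollary from Item 2. The key lever is Proposition~\ref{propBHeitHeyt}: $\He(\gT_\rH^x)$ is always a quotient of $(\He(\gT))\ul{\hat{x}}$, and the two lattices coincide when $\He(\gT)$ is an \agH.

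\textbf{Item 1.} I would argue by induction on an integer $\ell\geq -1$ that $\Jdim\,\gT\leq \ell$ implies $\Hdim\,\gT\leq \ell$. For $\ell=-1$, $\Kdim\,\He(\gT)\leq -1$ means $\He(\gT)\simeq\Un$, that is $0=_{\He(\gT)}1$, so by Fact~\ref{factHeHe}\,(1) we get $0=1$ in $\gT$, hence $\gT\simeq\Un$ and $\Hdim\,\gT\leq -1$. For the inductive step, given $x\in\gT$ with image $\hat x$ in $\He(\gT)$, the hypothesis $\Kdim\,\He(\gT)\leq \ell$ gives $\Kdim\big((\He(\gT))\ul{\hat x}\big)\leq \ell-1$ by Definition~\ref{defDK0}\,(2a). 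By Proposition~\ref{propBHeitHeyt}\,(1), $\He(\gT_\rH^x)$ is a quotient of $(\He(\gT))\ul{\hat x}$, so Corollary~\ref{corpropTquoBord} yields $\Kdim\,\He(\gT_\rH^x)\leq \ell-1$, \cad $\Jdim(\gT_\rH^x)\leq \ell-1$. The induction hypothesis (applied to $\gT_\rH^x$) then gives $\Hdim(\gT_\rH^x)\leq \ell-1$ for every $x\in\gT$, whence $\Hdim\,\gT\leq \ell$.

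\textbf{Item 2.} Assuming now that $\He(\gT)$ is an \agH, I would prove by induction on $\ell$ the converse implication $\Hdim\,\gT\leq \ell\Rightarrow \Jdim\,\gT\leq \ell$. The base $\ell=-1$ is again trivial. For the inductive step, the key observation is that $\He(\gT_\rH^x)$ is still an \agH so the induction can be carried out: indeed, by Proposition~\ref{propBHeitHeyt}\,(2), $\He(\gT_\rH^x)=(\He(\gT))\ul{\hat x}$, and since $\He(\gT)$ is Heyting, $\rK_{\He(\gT)}^{\hat x}=\dar(\hat x\vu\lnot\hat x)$ is principal, so Fact~\ref{factQuoAgH2} makes this quotient an \agH. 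Applying the induction hypothesis to $\gT_\rH^x$ (whose Heitmann lattice is Heyting) gives $\Kdim\,\He(\gT_\rH^x)\leq \ell-1$, \cad $\Kdim\big((\He(\gT))\ul{\hat x}\big)\leq \ell-1$. Because the canonical map $\gT\to\He(\gT)$ is onto, every element of $\He(\gT)$ has the form $\hat x$ for some $x\in\gT$, so Definition~\ref{defDK0}\,(2a) applied to $\He(\gT)$ gives $\Kdim\,\He(\gT)\leq \ell$, \cad $\Jdim\,\gT\leq \ell$.

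\textbf{Noetherian case.} In classical mathematics, a Noetherian \trdi has all its ideals principal, in particular every conductor $(c:b)$ is principal, hence it is an \agH. Therefore when $\He(\gT)$ is Noetherian, it is in particular an \agH, and the equality follows from Item~2. The main delicate point is the symmetric use of Proposition~\ref{propBHeitHeyt}: one direction uses only that $\He(\gT_\rH^x)$ is a quotient of $(\He(\gT))\ul{\hat x}$ (so dimension bounds transfer), while the reverse direction requires the actual equality of these two lattices plus the stability of the \agH property under principal-ideal quotients to propagate the induction hypothesis.
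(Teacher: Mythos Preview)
Your proof is correct and follows essentially the same approach as the paper: induction using Proposition~\ref{propBHeitHeyt}\,(1) for Item~1, and Proposition~\ref{propBHeitHeyt}\,(2) together with Fact~\ref{factQuoAgH2} for Item~2 to ensure the Heyting property propagates. Your write-up is simply more explicit than the paper's, which merely sketches these same steps.
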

\begin{proof}
Item \textsl{1} is shown by induction on $\Jdim\,\gT$ by using Item \textsl{1} of Proposition~\ref{propBHeitHeyt}.

\noindent \textsl{2}. We assume that  $\He(\gT)$ is a \agH and 
we make an  induction by using Item \textsl{2} of Proposition 
\ref{propBHeitHeyt}.
We have to show that~\hbox{$(\He(\gT))\ul{\hat{x}}=\He(\rH_\gT^x)$} is also a \agH.
This follows from the fact that~\hbox{$(\He(\gT))\ul{\hat{x}}$} is a quotient  of~$\He(\gT)$ by a \idp (indeed $\He(\gT)$ is a \agH) and 
from Fact~\ref{factQuoAgH2}.
\end{proof}

\begin{proposition}
\label{propHdim0} Let $\gT'=\gT/(\JT(0)=0)$.
\begin{enumerate}
\item  $\Hdim\,\gT=\Hdim(\gT')$.
\item  $\Jdim\,\gT\leq 0\;\Longleftrightarrow\; \Hdim\,\gT\leq 
0\;\Longleftrightarrow\;
\Kdim(\gT')\leq 0$, (i.e.\ $\gT'$ is a \agB). This is the case when  $\He(\gT)$
is finite.
\end{enumerate}
\end{proposition}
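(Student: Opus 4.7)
For Item 1, the plan is to induct on the Heitmann dimension, using as the engine the isomorphism
\[
\gT_\rH^x \;\simeq\; (\gT')_\rH^{\bar{x}}
\]
provided by Lemma \ref{lemTBKBH}(2), where $\bar x$ denotes the image of $x$ in $\gT'$. Since the canonical projection $\pi:\gT\to\gT'$ is surjective, the condition \gui{for every $x\in\gT$, $\Hdim(\gT_\rH^x)\leq\ell-1$} is the same as \gui{for every $\bar x\in\gT'$, $\Hdim((\gT')_\rH^{\bar x})\leq \ell-1$}, which propagates the equality $\Hdim\,\gT=\Hdim(\gT')$ one level at a time. The base case $\ell=-1$ is the observation that $\gT\simeq\Un$ iff $1\in\JT(0)$ iff $\gT'\simeq\Un$, both coming straight from the defining property of $\JT(0)$.

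The key preparatory step for Item 2 is to check that $\rJ_{\gT'}(0)=0$ in $\gT'$. One first notes $[a]=[1]$ in $\gT'$ iff $a=1$ (immediate from $a\vu c=1$, $c\in\JT(0)\Rightarrow a=1$); hence $[a]\in\rJ_{\gT'}(0)$ unfolds to $\forall x\in\gT,\;a\vu x=1\Rightarrow x=1$, which is exactly $a\in\JT(0)$, i.e.\ $[a]=0$ in $\gT'$. With this in hand, inside $\gT'$ the Heitmann boundary ideal $\dar\bar x\vu(\rJ_{\gT'}(0):\bar x)$ collapses to the Krull boundary ideal $\dar \bar x\vu(0:\bar x)$, so $(\gT')_\rH^{\bar x}=(\gT')\ul{\bar x}$ for every $\bar x$.

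Combining this with Item 1 yields
\[
\Hdim\,\gT\leq 0 \;\Longleftrightarrow\; \Hdim(\gT')\leq 0 \;\Longleftrightarrow\; \forall\bar x\in\gT',\ (\gT')\ul{\bar x}\simeq\Un \;\Longleftrightarrow\; \Kdim(\gT')\leq 0,
\]
and the last condition is equivalent to $\gT'$ being a \agB by the remark following Theorem \ref{thDK1}. It remains to connect with $\Jdim$: the implication $\Kdim(\gT')\leq 0\Rightarrow\Jdim\,\gT\leq 0$ is immediate from Fact \ref{factKJdim}(1) (which gives $\Jdim\,\gT\leq\Kdim(\gT')$), while the reverse direction uses that $\Jdim\,\gT\leq 0$ forces $\He(\gT)$ to be Boolean, hence a Heyting algebra, so Proposition \ref{propJdimHdim}(2) delivers $\Hdim\,\gT=\Jdim\,\gT\leq 0$.

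For the closing assertion, suppose $\He(\gT)$ is finite. Fact \ref{factHeHe}(3) tells us $\He(\gT)$ is weakly Jacobson, so by Fact \ref{factSpec=Jspec}(1), $\Spec(\He(\gT))=\Jspec(\He(\gT))$; in other words the whole spectrum is the patch-closure of $\Max(\He(\gT))$. But on a finite spectral space the patch topology is discrete, and thus every subset is patch-closed. Therefore $\Spec(\He(\gT))=\Max(\He(\gT))$: every prime of $\He(\gT)$ is maximal, making $\He(\gT)$ Boolean and $\Jdim\,\gT\leq 0$. The only non-routine move in the whole proof is the observation $\rJ_{\gT'}(0)=0$; once that is available, Item 2 is a direct application of the zero-dimensional characterization of \agBs combined with Item 1 and Proposition \ref{propJdimHdim}(2).
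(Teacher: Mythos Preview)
Your proof is correct and follows essentially the same route as the paper: both Item~1 and the equivalence $\Hdim\,\gT\leq 0\Leftrightarrow\Kdim(\gT')\leq 0$ rest on Lemma~\ref{lemTBKBH} (your derivation of $\rJ_{\gT'}(0)=0$ is just unpacking Item~1 of that lemma). The one unnecessary detour is your handling of $\Jdim\,\gT\leq 0\Rightarrow\Hdim\,\gT\leq 0$: you argue that $\He(\gT)$ is Boolean, hence Heyting, and then invoke Proposition~\ref{propJdimHdim}(2), but the paper simply uses the unconditional inequality $\Hdim\,\gT\leq\Jdim\,\gT$ from Proposition~\ref{propJdimHdim}(1), which together with $\Jdim\,\gT\leq\Kdim(\gT')$ from Fact~\ref{factKJdim}(1) closes the cycle in one line.
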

\begin{proof}
Item \textsl{1} follows from Item \textsl{2} of Lemma \ref{lemTBKBH}.\\
\textsl{2.} We already know that $\Hdim\,\gT\leq 
\Jdim\,\gT\leq
\Kdim\,\gT'$. Item \textsl{1} of Lemma \ref{lemTBKBH} shows that 
$\Hdim\,\gT\leq 0$
implies  $\Kdim\,\gT'\leq 0$.\\
When $\gT$ is finite the topological space $\Jspec\,\gT$ is the set of 
\idemas where  all subsets are open (since points are closed).
Furthermore Item \textsl{1} of Fact \ref{factHeHe} gives also the result when
$\He(\gT)$ is finite.
\end{proof}

\rem
In \clama  $\He(\gT)$ is finite \ssi  the set $M$ of
\idemas is finite (case of semi-local rings in commutative \alg). \eoe

\medskip The \flw proposition is similar to Proposition
\ref{propBordKUnion} when replacing the Krull boundary with the Heitmann boundary.

\begin{proposition}
\label{propBordHUnion}
For all $x,y\in\gT$ we have 
$\rH_\gT^{x}\cap \rH_\gT^{y}= \rH_\gT^{x\vu y}\cap 
\rH_\gT^{x\vi y}$.
\end{proposition}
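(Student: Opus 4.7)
The plan is to mimic the proof of Proposition \ref{propBordKUnion} for the Krull boundary, replacing everywhere the hypothesis \gui{$u\vi x=0$} (witnessing membership in $\rK_\gT^x$) with the hypothesis \gui{$u\vi x\in\JT(0)$} (witnessing membership in $\rH_\gT^x$). The whole argument reduces to manipulating $\vi/\vu$ and distributivity, together with the fact that $\JT(0)$ is an ideal (so stable under finite $\vu$ and under $\vi$ with any element of $\gT$).

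First I would prove the inclusion $\rH_\gT^{x}\cap \rH_\gT^{y}\subseteq \rH_\gT^{x\vu y}\cap \rH_\gT^{x\vi y}$. Take $z\in \rH_\gT^x\cap \rH_\gT^y$, so there are $u,v\in\gT$ with $z\leq x\vu u$, $u\vi x\in\JT(0)$, $z\leq y\vu v$, $v\vi y\in\JT(0)$. From $z\leq x\vu u\leq x\vu u\vu v$ and the analogous inequality for $y$, one gets by distributivity $z\leq (x\vi y)\vu(u\vu v)$. Moreover $(u\vu v)\vi(x\vi y)\leq (u\vi x)\vu(v\vi y)\in\JT(0)$, whence $z\in\rH_\gT^{x\vi y}$. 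Symmetrically, from $z\leq x\vu u$ and $z\leq y\vu v$ one derives $z\leq (x\vu y\vu u)\vi(x\vu y\vu v)=(x\vu y)\vu(u\vi v)$, and $(u\vi v)\vi(x\vu y)\leq (u\vi x)\vu(v\vi y)\in\JT(0)$, so $z\in\rH_\gT^{x\vu y}$.

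For the reverse inclusion, suppose $z\in\rH_\gT^{x\vu y}\cap \rH_\gT^{x\vi y}$: pick $u,v$ with $z\leq (x\vu y)\vu u$, $u\vi(x\vu y)\in\JT(0)$, $z\leq (x\vi y)\vu v$, $v\vi(x\vi y)\in\JT(0)$. Set $u_1=(y\vu u)\vi v$. Then $z\leq (x\vi y)\vu v\leq x\vu v$ and $z\leq x\vu(y\vu u)$, so $z\leq x\vu u_1$. Also $x\vi u_1=(x\vi y\vi v)\vu(x\vi u\vi v)\leq (x\vi y)\vi v\,\vu\,(x\vu y)\vi u\in\JT(0)$, hence $z\in\rH_\gT^x$. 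A perfectly symmetric argument (swapping the roles of $x$ and $y$, setting $v_1=(x\vu u)\vi v$) gives $z\in\rH_\gT^y$.

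I expect no serious obstacle: the only new ingredient compared with the Krull case is to observe that whenever $a\in\JT(0)$ and $b\in\gT$ one still has $a\vi b\in\JT(0)$, and that $\JT(0)$ is closed under binary $\vu$. Both facts follow from $\JT(0)$ being an ideal of $\gT$, as noted just after Definition~\ref{defJac}, and this is exactly what lets every occurrence of \gui{$\cdot\vi\cdot=0$} in the proof of Proposition~\ref{propBordKUnion} be relaxed to \gui{$\cdot\vi\cdot\in\JT(0)$} without changing the combinatorics of the argument.
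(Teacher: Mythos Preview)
Your proof is correct. The paper, however, takes a shorter route: it simply invokes Proposition~\ref{propBordKUnion} together with Lemma~\ref{lemTBKBH}. The lemma says that $\rH_\gT^x=\pi^{-1}(\rK_{\gT'}^{\ov x})$ where $\gT'=\gT/(\JT(0)=0)$, so one applies the Krull identity $\rK_{\gT'}^{\ov x}\cap\rK_{\gT'}^{\ov y}=\rK_{\gT'}^{\ov x\vu\ov y}\cap\rK_{\gT'}^{\ov x\vi\ov y}$ in $\gT'$ and pulls it back along $\pi$ (inverse images commute with intersections and with $\vi,\vu$). Your direct computation is essentially the unfolding of this reduction: you redo the proof of Proposition~\ref{propBordKUnion} verbatim, replacing each occurrence of ``$=0$'' by ``$\in\JT(0)$'', which works precisely because $\JT(0)$ is an ideal---exactly the content of the quotient passage in Lemma~\ref{lemTBKBH}. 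The paper's route is more economical and highlights the structural reason behind the result; yours is more self-contained and makes the ideal property of $\JT(0)$ visibly sufficient.
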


\begin{proof}
Follows from Proposition \ref{propBordKUnion} and Lemma 
\ref{lemTBKBH}.
\end{proof}

The \flw proposition is similar to Item \textsl{1} of Proposition~\ref{propLocBord}.
\begin{proposition}
\label{propLocBordH}
Let $(\fa_i)_{1\leq i\leq m}$ be a finite family of \ids of $\gT$, with
$\bigcap_{i=1}^m\JT(\fa_i)\subseteq \JT(0)$ (this happens if
$\bigcap_{i=1}^m\fa_i=\{0\}$).
For $x\in\gT$ denote by $x$ its image in~\hbox{$\gT_{\!i}=\gT/(\fa_i=0)$}.
Then the boundary ${\gT_{\!i}}_\rH^x$ can be seen as the quotient 
of 
$\gT_\rH^x$
by an \id~$\fb_i$ and we have $\bigcap_{i=1}^m\fb_i=\so{0}$.
\end{proposition}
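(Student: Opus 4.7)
The strategy closely mirrors the proof of Proposition~\ref{propLocBord}, only replacing Krull boundaries by Heitmann boundaries and using Lemma~\ref{lemIQT} to control Jacobson radicals under the projections $\pi_i:\gT\to\gT_i=\gT/(\fa_i=0)$.

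First I would dispose of the preliminary parenthetical remark: by Lemma~\ref{lemJacInter} (iterated) we have $\bigcap_i \JT(\fa_i)=\JT(\bigcap_i\fa_i)$, so $\bigcap_i\fa_i=\{0\}$ indeed implies the standing hypothesis $\bigcap_i\JT(\fa_i)\subseteq\JT(0)$. Next, the first assertion (that ${\gT_{\!i}}_\rH^x$ is a quotient of $\gT_\rH^x$) is exactly Lemma~\ref{lemTquoBordH}: it provides a surjective morphism $\gT_\rH^x \to {\gT_{\!i}}_\rH^x$, hence an ideal $\fb_i$ of $\gT_\rH^x$ whose quotient is ${\gT_{\!i}}_\rH^x$.

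The work is in proving $\bigcap_i\fb_i=\{0\}$. So take $y\in\gT$ such that $\pi_i(y)\in\rH_{\gT_i}^x$ for every $i$; the goal is $y\in\rH_\gT^x$. Unwinding $\rH_{\gT_i}^x=\dar\pi_i(x)\,\vu\,(\rJ_{\gT_i}(0):\pi_i(x))$, for each $i$ there exist $b_i\in\gT$ and $a_i\in\fa_i$ with
\[
y \;\leq\; x\vu b_i\vu a_i \qquad\text{and}\qquad \pi_i(x\vi b_i)\in \rJ_{\gT_i}(0).
\]
By Item~\textsl{1} of Lemma~\ref{lemIQT} we have $\pi_i^{-1}(\rJ_{\gT_i}(0))=\JT(\fa_i)$, hence $x\vi b_i\in\JT(\fa_i)$. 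Set $c_i=b_i\vu a_i$; then $y\leq x\vu c_i$, while $x\vi c_i=(x\vi b_i)\vu(x\vi a_i)$ belongs to $\JT(\fa_i)$ since $\fa_i\subseteq\JT(\fa_i)$.

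Now I would take $c:=c_1\vi\cdots\vi c_m$. Distributivity gives
\[
x\vu c \;=\; \Vi\nolimits_{i=1}^m (x\vu c_i) \;\geq\; y,
\]
and $x\vi c\leq x\vi c_i$ shows $x\vi c\in\JT(\fa_i)$ for every $i$, hence $x\vi c\in\bigcap_i\JT(\fa_i)\subseteq\JT(0)$ by the hypothesis. This gives $c\in(\JT(0):x)$, and thus $y\leq x\vu c\in \dar x\vu(\JT(0):x)=\rH_\gT^x$, so $y\in\rH_\gT^x$, as required.

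The only mildly delicate point is the translation step: recognising that the condition $\pi_i(x\vi b_i)\in\rJ_{\gT_i}(0)$ pulls back via $\pi_i^{-1}$ to the Jacobson radical of $\fa_i$ in $\gT$ (not merely to $\fa_i$ itself), which is exactly where Lemma~\ref{lemIQT}~(1) is essential and where the $\rH$-version diverges from the simpler $\rK$-version of Proposition~\ref{propLocBord}.
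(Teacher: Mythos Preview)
Your argument is correct. The unwinding of $\rH_{\gT_i}^x$, the pullback step via Lemma~\ref{lemIQT}~(1), and the passage to $c=\Vi_i c_i$ all go through, and the hypothesis $\bigcap_i\JT(\fa_i)\subseteq\JT(0)$ is used exactly where it should be.

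Your route, however, is different from the paper's. The paper does \emph{not} repeat the element-level computation of Proposition~\ref{propLocBord}; instead it reduces everything to that proposition by passing to $\gT'=\gT/(\JT(0)=0)$. Lemma~\ref{lemTBKBH} identifies $\gT_\rH^x$ with the Krull boundary ${\gT'}\ul{\ov x}$, and likewise identifies each ${\gT_i}_\rH^x$ with the Krull boundary of $\ov x$ in $\gT'/(\fa_i'=0)$, where $\fa_i'$ is the image of $\JT(\fa_i)$ in $\gT'$. The hypothesis $\bigcap_i\JT(\fa_i)\subseteq\JT(0)$ becomes $\bigcap_i\fa_i'=\{0\}$ in $\gT'$, and Proposition~\ref{propLocBord} applies verbatim. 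So the paper's proof is a two-line reduction, while yours is a self-contained reworking of the Krull argument with the Jacobson radical inserted at the right place. Your approach has the merit of making the mechanism transparent (and of isolating precisely where Lemma~\ref{lemIQT} enters); the paper's approach has the merit of avoiding any repetition and showing that the Heitmann statement is formally the Krull statement transported along the identification of Lemma~\ref{lemTBKBH}.
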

\begin{proof}
Follows from Lemma \ref{lemTBKBH} and Proposition 
\ref{propLocBord}
when applied to 
$\gT'=\gT/(\JT(0)=0)$ and to the \ids that are images of $\JT(\fa_i)$ in 
$\gT'$.
\end{proof}

The \flw corollary is similar to Item \textsl{1} of Corollary~\ref{corpropLocBord}.

\begin{corollary}
\label{corpropLocBordH}
Let $(\fa_i)_{1\leq i\leq m}$ be a finite family of \ids of $\gT$ and
$\fa=\bigcap_{i=1}^m\fa_i$. \\
Then
$\Hdim(\gT/(\fa=0))=\sup_i\Hdim(\gT/(\fa_i=0))$.
\end{corollary}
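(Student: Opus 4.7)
The plan is to prove the two inequalities separately. The easy direction $\sup_i\Hdim(\gT/(\fa_i=0))\leq \Hdim(\gT/(\fa=0))$ follows at once from Lemma \ref{lemDimHquo}: since $\fa\subseteq \fa_i$, each $\gT/(\fa_i=0)$ is a quotient of $\gT/(\fa=0)$ by an ideal, so its Heitmann dimension cannot exceed that of $\gT/(\fa=0)$.

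For the reverse inequality, I would first replace $\gT$ by $\gT/(\fa=0)$, which reduces the problem to showing that when $\bigcap_{i=1}^m\fa_i=\{0\}$ one has $\Hdim\,\gT\leq \sup_i\Hdim(\gT/(\fa_i=0))$. I would then proceed by induction on an integer $\ell\geq -1$: assuming $\Hdim(\gT/(\fa_i=0))\leq \ell$ for every $i$, I want to conclude $\Hdim\,\gT\leq\ell$. The base case $\ell=-1$ is immediate, since each hypothesis $\gT/(\fa_i=0)\simeq \Un$ means $1\in\fa_i$, hence $1\in\bigcap_i\fa_i=\{0\}$ and $\gT\simeq\Un$.

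For the inductive step, fix an arbitrary $x\in\gT$; by Definition \ref{defHdim} it suffices to show $\Hdim(\gT_\rH^x)\leq \ell-1$. Writing $\gT_i=\gT/(\fa_i=0)$, Proposition \ref{propLocBordH} applies, since the hypothesis $\bigcap_i\JT(\fa_i)\subseteq\JT(0)$ is guaranteed by $\bigcap_i\fa_i=\{0\}$. It exhibits each boundary $(\gT_i)_\rH^x$ as a quotient $\gT_\rH^x/(\fb_i=0)$ of $\gT_\rH^x$ by an ideal $\fb_i$, with $\bigcap_i\fb_i=\{0\}$ inside $\gT_\rH^x$. From $\Hdim\,\gT_i\leq \ell$ we get, by definition, $\Hdim((\gT_i)_\rH^x)\leq \ell-1$. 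Applying the inductive hypothesis to the lattice $\gT_\rH^x$ and its family of ideals $(\fb_i)_{1\leq i\leq m}$ (whose intersection is $\{0\}$) then yields $\Hdim(\gT_\rH^x)=\sup_i\Hdim(\gT_\rH^x/(\fb_i=0))=\sup_i\Hdim((\gT_i)_\rH^x)\leq \ell-1$, which is the desired bound.

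The main obstacle is orchestrating the induction on the right lattice: one is not inducting on $\gT$ itself but on the Heitmann boundary $\gT_\rH^x$ with a new family of ideals $(\fb_i)$ whose intersection is $\{0\}$. All the delicate work is precisely concentrated in Proposition \ref{propLocBordH}, which simultaneously identifies $(\gT_i)_\rH^x$ with the correct quotient of $\gT_\rH^x$ and preserves the crucial covering condition, making the induction go through.
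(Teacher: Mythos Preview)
Your proof is correct and follows exactly the approach sketched in the paper: reduce to $\fa=0$, handle the base case $\ell=-1$ directly, and run the induction on $\ell$ using Proposition~\ref{propLocBordH} to pass from $\gT$ to $\gT_\rH^x$ while preserving the covering condition $\bigcap_i\fb_i=\{0\}$. Your write-up simply fills in the details that the paper leaves implicit (including the easy inequality via Lemma~\ref{lemDimHquo}, which the paper does not even mention).
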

\begin{proof}
We replace $\gT$ with $\gT/(\fa=0)$, so we may assume $\fa=0$.
Things are clear if $\gT=\Un$. Induction works thanks to
Proposition~\ref{propLocBordH}.
\end{proof}

\begin{proposition}
\label{propHdimgen}
Let $S$ be a  system of generators of a \trdi $\gT$ and $\ell\geq 0$.
\Propeq
\begin{enumerate}
\item For all $x\in\gT$,
$\Hdim(\gT_\rH^x)\leq \ell-1$.
\item For all $x\in S$,
$\Hdim(\gT_\rH^x)\leq \ell-1$.
\end{enumerate}
\end{proposition}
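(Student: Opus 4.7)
The implication $1\Rightarrow 2$ is immediate since $S\subseteq \gT$. For the converse, the plan is to show that the set
\[
T\,\eqdefi\,\sotq{x\in\gT}{\Hdim(\gT_\rH^x)\leq \ell-1}
\]
is a sub-\trdi of $\gT$; since $S\subseteq T$ by hypothesis, we will then have $T=\gT$.

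First I would dispatch the constants: for $x=0$ one has $(\JT(0):0)=\gT$, so $\rH_\gT^0=\gT$ and $\gT_\rH^0\simeq\Un$; for $x=1$ one has $\dar 1=\gT$, so $\rH_\gT^1=\gT$ and $\gT_\rH^1\simeq\Un$. In both cases $\Hdim=-1\leq \ell-1$, hence $0,1\in T$.

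The core of the argument is closure of $T$ under $\vu$ and $\vi$. Combining Proposition~\ref{propBordHUnion} (which gives $\rH_\gT^{x\vu y}\cap\rH_\gT^{x\vi y}=\rH_\gT^{x}\cap\rH_\gT^{y}$) with Corollary~\ref{corpropLocBordH} applied to the two pairs $(\rH_\gT^{x\vu y},\rH_\gT^{x\vi y})$ and $(\rH_\gT^{x},\rH_\gT^{y})$, one obtains the key equality
\[
\sup\bigl(\Hdim(\gT_\rH^{x\vu y}),\,\Hdim(\gT_\rH^{x\vi y})\bigr)\;=\;\sup\bigl(\Hdim(\gT_\rH^{x}),\,\Hdim(\gT_\rH^{y})\bigr).
\]
If $x,y\in T$, the right-hand side is $\leq \ell-1$, hence both $\Hdim(\gT_\rH^{x\vu y})$ and $\Hdim(\gT_\rH^{x\vi y})$ are $\leq \ell-1$, so $x\vu y$ and $x\vi y$ lie in $T$.

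Thus $T$ is a sublattice of $\gT$ containing $S\cup\{0,1\}$. Since $S$ generates $\gT$, we conclude $T=\gT$, which gives $1$. The only genuinely non-routine ingredient is the identity of Proposition~\ref{propBordHUnion}, but that result is already available, so no obstacle is expected here; the rest is just organizing the two applications of Corollary~\ref{corpropLocBordH}.
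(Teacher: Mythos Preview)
Your proof is correct and follows essentially the same line as the paper's: both rely on Proposition~\ref{propBordHUnion} together with Corollary~\ref{corpropLocBordH} to reduce from arbitrary $x\in\gT$ to the generators. The paper phrases it as ``$\gT_\rH^{x\vu y}$ is a quotient of $\gT/(\rH_\gT^x\cap\rH_\gT^y=0)$ by an ideal, hence $\Hdim(\gT_\rH^{x\vu y})\leq \sup(\Hdim\,\gT_\rH^x,\Hdim\,\gT_\rH^y)$'', invoking Lemma~\ref{lemTquoBordH} (via Lemma~\ref{lemDimHquo}) for the inequality step. Your variant applies Corollary~\ref{corpropLocBordH} to \emph{both} sides of the identity $\rH_\gT^{x\vu y}\cap\rH_\gT^{x\vi y}=\rH_\gT^{x}\cap\rH_\gT^{y}$, obtaining directly the equality of the two suprema; this is a small streamlining that handles $\vu$ and $\vi$ simultaneously and bypasses the explicit quotient-by-an-ideal lemma, but the substance is the same.
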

\begin{proof}
Follows from Proposition \ref{propBordHUnion}, Lemma
\ref{lemTquoBordH}, and Corollary \ref{corpropLocBordH}. For example with 
$x,y\in \gT$, since $\rH_\gT^{x\vu 
y}\subseteq\rH_\gT^x\cap\rH_\gT^y$, the
lattice $\gT_\rH^{x\vu y}$ is a quotient
of $\gT/(\rH_\gT^x\cap\rH_\gT^y)$ by an \id, hence 
$\Hdim(\gT_\rH^{x\vu
y})\leq \sup (\Hdim\,\gT_\rH^x, \Hdim\,\gT_\rH^y)$.
\end{proof}

  \rem
Let us describe more explicitly the Heitmann dimension.
We intoduce the \gui{iterated Heitmann boundary \ids}.
For $x_0,\dots,x_n\in\gT$ let us note 
\[
\gT_\rH[x_0]=\gT_\rH^{x_0},\,
\gT_\rH[x_0,x_1]=(\gT_\rH^{x_0})_\rH^{x_1},\,
\gT_\rH[x_0,x_1,x_2]=((\gT_\rH^{x_0})_\rH^{x_1})_\rH^{x_2},\,\ldots
\] 
which are successive quotient boundary lattices. Let us denote by 
\[\rH[\gT;x_0,\ldots,x_k]=\rH_\gT[x_0,\ldots ,x_k]\] 
the kernel of the canonical projection
$\gT\to \gT_\rH[x_0,\ldots ,x_k]$.

\noindent Saying $\Hdim\,\gT\leq \ell$ means that for all
$x_0,\ldots ,x_\ell\in\gT$ we have $1\in\rH_\gT[x_0,\ldots ,x_\ell]$.
So we need an explicit description of \ids $\rH_\gT[x_0,\ldots 
,x_\ell]$.
We have to make explicit $\pi^{-1}(\rH[\gT/(\fa=0);\pi(x)])$
(denoted by $\rH[\gT,\fa;x]$) for a canonical projection
$\pi\colon \gT\to\gT/(\fa=0)$.

\noindent By \dfn we have $y\in\rH[\gT,\fa;x]$ \ssi  $y\leq x\vu 
z\;\mod\;\fa$ for a  $z$ such that $\pi(z\vi x)\in \rJ_{\gT/(\fa=0)}(0)$.
This last condition means  
\[\Tt u\in\gT,\; (\pi((z\vi x)\vu
u=\pi(1)\;\Rightarrow \;\pi(u)=\pi(1),
\]
which is also
\[\Tt u\in\gT,\; ((\Ex a\in\fa\;(z\vi x)\vu u\vu a=1)\;\Rightarrow 
\;(\Ex
b\in\fa\; u\vu b=1)).
\]
Furthermore, $y\leq x\vu z\;\mod\;\fa$ means $\Ex a'\in\fa\;y\leq 
x\vu z\vu
a'$ and the condition   $\pi(z\vi x)\in \rJ_{\gT/(\fa=0)}(0)$ remains the same when
replacing $z$ with $z\vu a'$. \\
So we get the \flw condition 
for  $y\in\rH[\gT,\fa;x]$ 
\[ \Ex z\in\gT\; [\,y\leq x\vu z\;\&\;
\Tt u\in\gT,\; ((\Ex a\in\fa\;(z\vi x)\vu u\vu a=1)\;\Rightarrow 
\;(\Ex
b\in\fa\; u\vu b=1))\,].
\]
This formula has a fairly high logical complexity.
Indeed $\Ex a\in\fa$ and $\Ex b\in\fa$ have to be made explicit  with  $\fa=\rH_\gT[x_1,\ldots ,x_k]$ in order to obtain a description of
$y\in\rH_\gT[x_1,\ldots ,x_k,x]$.
Contrarily to the description of $\Jdim\,\gT\leq \ell$ containig only two quantifier alternances in all cases, we see that for $\Hdim\leq \ell$,  expressions become more and more complicate when $\ell$ increases. \\
In fact, for \coris, $\Hdim$ allows us to give proofs by induction
for some \gui{great} classical \thos in  commutative \alg. This is the real reason why we had to introduce this dimension. 
As $\Hdim\,\gT\leq \Jdim\,\gT\leq \Kdim(\gT/(\JT(0)=0))$,
proofs work under the hypothesis of a bound on $\Jdim$.
We lack examples with a better bound than
$\Kdim(\gT/(\JT(0)=0))$. \eoe

\section{Krull and Heitmann dimensions of \coris}
\label{secBPA}
In this section, $\gA$ is always a \cori.
We say that an \id $\fa$ of $\gA$ is \textsl{radical} if $\fa=\sqrt[\gA]{\fa}$.

\subsection{Zariski lattice}

We now recall the main idea of the \cof approach of  \cite{Joy76} for the spectrum of a \cori.

If $J\subseteq \gA$, let us denote by $\cI_\gA(J)$ or $\gen{J}_\gA$ (or
$\gen{J}$ if the context is clear) the \id generated by~$J$; we denote by $\DA(J)$ (or $\rD(J)$ if the context is clear) the nilradical of the \id $\gen{J}$:
\begin{equation} \label{eqZar}
\begin{array}{rclcl}
\DA(J)&  = & \sqrt[\gA]{\gen{J}} &=&\sotq{x\in\gA}{\Ex m\in\NN\;\; 
x^m\in\gen{J}}\,.
\end{array}
\end{equation}
When $J=\so{x_1,\ldots ,x_n}$ we denote $\DA(J)$ by
$\DA(x_1,\ldots ,x_n)$.
If the context is clear, we also use $\wi{x}$ as $\DA(x)$.

By \dfn the {\sl Zariski lattice} of $\gA$, denoted by $\ZarA$, is the set of the  $\DA(x_1,\ldots ,x_n)$'s.  The order relation is inclusion, lower bound and upper bound are given by 
\[
\DA(\fa_1)\vi\DA(\fa_2)=\DA(\fa_1\fa_2)\quad \mathrm{and} \quad
\DA(\fa_1)\vu\DA(\fa_2)=\DA(\fa_1+\fa_2).
\]
The Zariski lattice of $\gA$
is a \trdi, and $\DA(x_1,\ldots ,x_n)=
\wi{x_1}\vu\cdots \vu\wi{x_n}.$
Elements  $\wi{x}$  are a system of generators (stable under $\vi$) of $\ZarA$.

If $J\subseteq \gA$  we define $\wi{J}=\sotq{\wi{x}}{x\in J} \subseteq\ZarA$.

Let $U$ and  $J$  be two finite families in $\gA$; we have the equivalences
\[\Vi\wi{U}\leq_{\ZarA} \Vu\wi{J}
\quad\Longleftrightarrow \quad
\prod\nolimits_{u\in U} u  \in \sqrt{\gen{J}}
\quad\Longleftrightarrow \quad
\cM(U)\cap \gen{J}\neq \emptyset
\]
where $\cM(U)$ is the  multiplicative \mo generated by $U$.

This describes completely the \trdi $\ZarA$. More precisely (\citealt*{CC00,CL2003}) we get the \flw.
\begin{proposition}[\dfn à la Joyal of the spectrum of a \cori]
\label{propZar}~\\
 The lattice $\ZarA$ is
(up to unique \iso) the lattice generated by the symbols $\DA(a)$ for $a\in\gA$
subject to the \flw relations.
\[\begin{array}{cccc}
\DA(0_\gA) =0   ,\; \DA(1_\gA)= 1 ,\;
   \DA(x+y) \leq \DA(x)\vu\DA(y) ,\; \DA(xy) = \DA(x)\vi \DA(y).
\end{array}\]
\end{proposition}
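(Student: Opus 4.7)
The plan is to exhibit a canonical surjective lattice morphism $\varphi : \gL \to \ZarA$, where $\gL$ denotes the lattice presented by symbols $\delta(a)$ ($a\in\gA$) modulo the four listed relations, and then to prove $\varphi$ injective. That $\ZarA$ satisfies the four relations is immediate from the definitions: $\DA(0)=0$, $\DA(1)=1$, $\DA(x+y)\subseteq \sqrt{\gen{x,y}}=\DA(x)\vu\DA(y)$, and $\DA(xy)=\DA(x)\vi\DA(y)$ since $\sqrt{\gen{xy}}=\sqrt{\gen{x}}\cap\sqrt{\gen{y}}$. The universal property of $\gL$ then supplies $\varphi$, sending $\delta(a)$ to $\DA(a)$, and surjectivity is clear because every element of $\ZarA$ has the form $\DA(a_1,\ldots,a_n)=\wi{a_1}\vu\cdots\vu\wi{a_n}$.

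Injectivity is the heart of the matter. Following the entailment-relation method of \cite{CC00}, I would introduce on pairs of finite sequences of elements of $\gA$ the relation
\[
(a_1,\ldots,a_m)\;\vdash\;(b_1,\ldots,b_n) \quad\Longleftrightarrow\quad \textstyle\prod_{i=1}^m a_i \,\in\, \sqrt{\gen{b_1,\ldots,b_n}},
\]
and verify that $\vdash$ is an entailment relation. Reflexivity is clear: if $a\in A\cap B$, then $a$ divides $\prod A$ and lies in $\gen{B}$. Monotonicity is immediate, since $\sqrt{\gen{\cdot}}$ preserves inclusion and $\prod A$ divides $\prod(A\cup A')$. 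The key point is the cut rule: if $\prod A\in\sqrt{\gen{B,c}}$ and $c\prod A\in\sqrt{\gen{B}}$, then $\prod A\in\sqrt{\gen{B}}$. Setting $P=\prod A$, the first hypothesis gives $P^N=\sum_j \beta_j b_j+\gamma c$ for some $N$ and $\beta_j,\gamma\in\gA$; multiplying by $P$ we get $P^{N+1}\equiv \gamma\,(cP)\pmod{\gen{B}}$, and if $(cP)^M\in\gen{B}$ then raising to the $M$-th power yields $P^{(N+1)M}\equiv\gamma^M(cP)^M\equiv 0\pmod{\gen{B}}$, so $P\in\sqrt{\gen{B}}$.

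By the theorem of \cite{CC00}, an entailment relation presents a distributive lattice $\gL'$ equipped with a map $a\mapsto\delta'(a)$ characterised by $\delta'(a_1)\vi\cdots\vi\delta'(a_m)\leq\delta'(b_1)\vu\cdots\vu\delta'(b_n)$ iff $\prod a_i\in\sqrt{\gen{b_j}}$. One checks directly in $\gL'$ the four defining relations of $\gL$: for example $\delta'(xy)=\delta'(x)\vi\delta'(y)$ follows from $xy\in\sqrt{\gen{xy}}$ and $x,y\in\sqrt{\gen{xy}}$ in both directions, and $\delta'(x+y)\leq\delta'(x)\vu\delta'(y)$ follows from $x+y\in\gen{x,y}$. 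Conversely, every entailment $\prod a_i\in\sqrt{\gen{b_j}}$ is already derivable from the four relations of $\gL$: writing $(\prod a_i)^N=\sum_k \alpha_k b_k$, one uses $\delta(a^k)=\delta(a)$ together with the $+$-subadditivity and multiplicativity axioms to deduce $\delta(a_1)\vi\cdots\vi\delta(a_m)=\delta(\prod a_i)\leq \Vu_k \delta(b_k)$. Hence $\gL\simeq\gL'$, and since $\gL'\to\ZarA$ is injective by construction, so is $\varphi$. The main obstacle is the cut rule; everything else is bookkeeping.
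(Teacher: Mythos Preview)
Your approach is correct and is precisely the one the paper has in mind: it does not give its own proof but refers to \cite{CC00,CL2003}, and your argument via the entailment relation $(a_1,\ldots,a_m)\vdash(b_1,\ldots,b_n)\Leftrightarrow\prod a_i\in\sqrt{\gen{b_j}}$ is exactly the Cederquist--Coquand construction invoked there; the displayed equivalence $\Vi\wi U\leq_{\ZarA}\Vu\wi J\Leftrightarrow\prod_{u\in U}u\in\sqrt{\gen J}$ just before the proposition is the key fact making $\gL'\to\ZarA$ an isomorphism.

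One small slip: in checking $\delta'(xy)=\delta'(x)\vi\delta'(y)$ you write ``$x,y\in\sqrt{\gen{xy}}$'', which is false in general (e.g.\ $x=2$, $y=3$ in $\ZZ$). What you need for $\delta'(xy)\leq\delta'(x)\vi\delta'(y)$ is rather $xy\in\sqrt{\gen{x}}$ and $xy\in\sqrt{\gen{y}}$, both trivial; the reverse inequality comes from the entailment $(x,y)\vdash(xy)$, i.e.\ $xy\in\sqrt{\gen{xy}}$. With that correction your argument goes through.
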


The construction $\gA\mapsto\ZarA$ gives a functor from the category of \coris
to  the category of \trdis.
Via this functor the projection $\gA\to\gA/\DA(0)$ gives an 
\iso
$\ZarA\to \Zar(\gA/\DA(0))$. We have $\ZarA=\Un$ \ssi  $1_\gA=0_\gA$.

\smallskip An important \tho of  \citealt*{Hoc1969} says that any \sps is  homeomorphic to the spectrum of a \cori.
Here is a point-free version of this \tho. 

\smallskip \noindent {\bf Theorem.} \textsl{Any \trdi is 
\isoc to the
Zariski lattice of a \cori}. 

\smallskip \noindent 
For a nonconstructive proof see \citealt*{Ban96}.

\subsection{Ideals, filters and quotients of $\ZarA$}

Recall that in \clama the \textsl{Zariski spectrum} $\Spec\,\gA$ of a \ri is a topological space whose points are the 
\ideps of the ring with the topology defined by the basis of open subsets 
made of $\fD_\gA(a)=\sotq{\fp\in\Spec\,\gA}{a\notin\fp}$.
We denote $\fD_\gA(x_1)\cup\cdots\cup \fD_\gA(x_n)$ by $\fD_\gA(x_1,\ldots ,x_n)$.

\subsubsection*{Ideals of  $\gA$ and of $\ZarA$}

In \clama, any radical \id is the intersection of the \ideps above it.

We use the \flw notation (when $J\subseteq\gA$)
\[
\IZA(J):=\cI_{\ZarA}(\wi{J}).
\]
In particular
\[
\IZA(\so{x_1,\ldots ,x_n})=\dar\DA(x_1,\ldots ,x_n)=
\dar(\wi{x_1}\vu\cdots \vu\wi{x_n})\,.
\]
We have $\IZA(J)=\IZA(\sqrt{\gen{J}})$ and one gets easily the \flw fundamental result.

\begin{fact}
\label{factSpecAzarA} ~  
\begin{itemize}
\item The map $\fa\mapsto \IZA(\fa)$ defines an \iso from the
lattice of radical \ids of $\gA$ to the lattice of \ids of
$\ZarA$.  
\item By restriction to \ideps (resp. \idemas) of the ring $\gA$ and of the \trdi $\ZarA$ we get also a natural bijection.  
\item For any \cori $\gA$, $\Spec\,\gA$ (\coris) is identified to
$\Spec(\ZarA)$ (\trdis).
\end{itemize}
\end{fact}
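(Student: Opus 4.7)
The plan is to exhibit an explicit inverse map at the level of ideals and then deduce the rest by restriction. Given an ideal $\fJ$ of $\ZarA$, define $\pi(\fJ) = \sotq{a \in \gA}{\wi{a} \in \fJ}$. Using the defining relations of $\ZarA$ from Proposition~\ref{propZar}, I would check that $\pi(\fJ)$ is a radical ideal of~$\gA$: stability under sum follows from $\DA(a+b)\leq \DA(a)\vu\DA(b)$, absorption from $\DA(ab)=\DA(a)\vi\DA(b)\leq\DA(a)$, and radicality from $\DA(a^n)=\DA(a)$.

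Next I would verify that $\IZA$ and $\pi$ are mutually inverse. For $\fa$ a radical ideal: $a\in\pi(\IZA(\fa))$ means $\wi{a}\leq \wi{a_1}\vu\cdots\vu\wi{a_n}$ for some $a_i\in\fa$, which unpacks to $a^m\in\gen{a_1,\ldots,a_n}\subseteq\fa$, so $a\in\sqrt\fa=\fa$. Conversely, any element of an ideal $\fJ$ of $\ZarA$ has the form $\DA(x_1,\ldots,x_n)=\wi{x_1}\vu\cdots\vu\wi{x_n}$ and each $\wi{x_i}\leq$ this element lies in $\fJ$, so $x_i\in\pi(\fJ)$ and the element belongs to $\IZA(\pi(\fJ))$. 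Both maps are order-preserving for inclusion, hence they form an isomorphism of ordered sets, which is automatically an isomorphism of the lattices of ideals.

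For the second bullet, I would use the general fact that any isomorphism of the lattice of ideals sends prime ideals to prime ideals and maximal ideals to maximal ideals. For primes, it suffices to rewrite the prime condition through the bijection: $\fp$ radical is prime in $\gA$ iff $xy\in\fp\Rightarrow x\in\fp$ or $y\in\fp$, which by the relation $\wi{xy}=\wi x\vi\wi y$ translates exactly into the condition \gui{$\wi x\vi\wi y\in\IZA(\fp)\Rightarrow \wi x\in\IZA(\fp)$ or $\wi y\in\IZA(\fp)$}, and this condition on generators suffices because every element of $\ZarA$ is a finite join of $\wi a$'s (so one reduces to the generating case using distributivity). Maximality is immediate from the order-isomorphism of ideal lattices.

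For the third bullet, once the bijection of primes is established, I would check it is a homeomorphism by comparing the bases of topology: the basic open $\fD_\gA(a)=\sotq{\fp}{a\notin\fp}$ corresponds under $\fp\mapsto\IZA(\fp)$ exactly to $\DT(\wi a)=\sotq{\fP}{\wi a\notin\fP}$, since $a\in\fp\Leftrightarrow \wi a\in\IZA(\fp)$. As these bases match and the correspondence is bijective, one obtains a homeomorphism. The main obstacle is the careful verification that $\pi$ really lands in radical ideals and that the prime-translation argument reduces correctly to the generating elements $\wi a$; the rest is then bookkeeping. Since everything can be checked on generators using the relations of Proposition~\ref{propZar}, no nonconstructive principle enters here (unlike Krull's theorem, which is what is needed to get enough points).
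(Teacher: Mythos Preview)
Your proposal is correct and is the natural way to fill in the details. The paper does not actually give a proof of this fact: it simply states that ``one gets easily the \flw fundamental result'' after noting that $\IZA(J)=\IZA(\sqrt{\gen J})$, so your explicit construction of the inverse $\pi$ and the verification on generators is exactly the kind of routine argument the authors are leaving to the reader.
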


\rem 
In \clama we have an \iso between the lattice $\ZarA$ and the lattice of
\oqcs of $\Spec\,\gA$.  So one identifies 
\begin{itemize}
\item $\DA(x_1,\ldots ,x_n)$, an \elt of $\ZarA$,
\item  $\fD_{\!\ZarA}(\DA(x_1,\ldots ,x_n))$, a \oqc of $\Spec(\ZarA)$,
\item    and  $\fD_\gA(x_1,\ldots ,x_n)$, a \oqc of $\Spec\,\gA$.
\end{itemize}

\noindent In \coma, we see  $\Spec\,\gA$ as a \gui{point-free topological space}, i.e.\ a space which is defined through a lattice of formal open subsets.
Hence the only identification is  given by the natural \iso between $\ZarA$
 and the \trdi defined formally à la Joyal in Proposition \ref{propZar}. \eoe

\smallskip The following statements are easy.
\begin{fact}[quotients]
\label{factQuoAT}~\\
If $J\subseteq\gA$, then $\Zar(\aqo{\gA}{J}) \simeq \Zar(\gA/\DA(J))
\simeq \Zar(\gA)/(\IZA(J)=0$.
\end{fact}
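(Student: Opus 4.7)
The statement asserts two isomorphisms. The first one, $\Zar(\aqo{\gA}{J}) \simeq \Zar(\gA/\DA(J))$, is essentially immediate from the remark stated right after Proposition~\ref{propZar}: for any \cori $\gB$, the canonical projection $\gB \to \gB/\rD_\gB(0)$ induces an \iso $\Zar\,\gB \to \Zar(\gB/\rD_\gB(0))$. Applying this with $\gB = \aqo{\gA}{J}$, and observing that $\rD_\gB(0)$ corresponds to $\DA(J)/\gen{J}$ so that $\gB/\rD_\gB(0) \simeq \gA/\DA(J)$, yields the first isomorphism.

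For the second isomorphism $\Zar(\aqo{\gA}{J}) \simeq \Zar(\gA)/(\IZA(J)=0)$, I would use the universal property of the Zariski lattice given by Proposition~\ref{propZar}. The canonical projection $\pi : \gA \to \aqo{\gA}{J}$ yields a morphism of \trdis $\Zar(\pi) : \ZarA \to \Zar(\aqo{\gA}{J})$, $\DA(a) \mapsto \DA(\pi(a))$. Since $\DA(\pi(a)) = 0$ for $a \in J$, the kernel of this morphism (as a lattice quotient) contains $\IZA(J)$; hence there is a factorization
\[
\varphi : \ZarA/(\IZA(J)=0) \longrightarrow \Zar(\aqo{\gA}{J}).
\]

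For the inverse, I would define $\psi : \Zar(\aqo{\gA}{J}) \to \ZarA/(\IZA(J)=0)$ by invoking the universal property, setting $\DA(\ov{a}) \mapsto [\DA(a)]$, where $\ov{a}$ denotes the class of $a\in\gA$ modulo $\gen{J}$ and $[\,\cdot\,]$ denotes the class modulo $\IZA(J)$. Well-definedness is the point that needs a small verification: if $\ov{a}=\ov{b}$, then $a-b \in \gen{J}$, hence $\DA(a-b) \in \IZA(J)$; from $\DA(a) \leq \DA(a-b) \vu \DA(b)$ and symmetrically for $b$, we get $[\DA(a)] = [\DA(b)]$. The four defining relations of Proposition~\ref{propZar} (values at $\ov{0}$ and $\ov{1}$, the sum and product relations) are then checked routinely, using that $[\DA(\cdot)]$ satisfies exactly the same relations in the quotient as $\DA(\cdot)$ does in $\ZarA$.

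It then remains only to verify $\varphi\circ\psi = \Id$ and $\psi\circ\varphi = \Id$ on generators, which is immediate: both compositions fix each $\DA(a)$ (resp. $[\DA(a)]$). There is no real obstacle; the only point requiring a moment of attention is the well-definedness of $\psi$, which boils down to the subadditivity $\DA(a+b) \leq \DA(a) \vu \DA(b)$ applied to $a = (a-b) + b$.
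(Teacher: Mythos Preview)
Your proof is correct. The paper does not actually supply a proof of this fact: it is listed among several statements introduced by ``Following statements are easy'' and left without further argument. Your argument via the universal property of Proposition~\ref{propZar} is exactly the natural way to fill in these details, and the only nontrivial point---well-definedness of $\psi$ via $\DA(a)\leq\DA(a-b)\vu\DA(b)$---is handled correctly.
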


\begin{fact}[conductors]
\label{factTransporteurs} ~\\ 
Let $\fA$ and $\fB$ be \ids of $\gA$, $\fa=\DA(\fA)$ and
$\fb=\DA(\fB)$.  
Then $(\fa:\fb)=(\fa:\fB)$ is a radical \id of $\gA$
and inside $\ZarA$ we have $\,(\IZA(\fa):\IZA(\fb))=\IZA(\fa:\fb)$.
\end{fact}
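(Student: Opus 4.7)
The plan is to break the statement into three parts and tackle them in order: (a) the equality $\fa:\fb = \fa:\fB$, (b) the fact that $\fa:\fb$ is radical, and (c) the identification $\IZA(\fa):\IZA(\fb) = \IZA(\fa:\fb)$ inside $\ZarA$. Each part is a short calculation once one exploits that $\fa$ is already radical.

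First I would observe $\fa:\fb\subseteq \fa:\fB$ from $\fB\subseteq\fb$. For the reverse inclusion, take $x\in\fa:\fB$ and $b'\in\fb=\sqrt{\fB}$; pick $n$ with ${b'}^n\in\fB$, so $x\,{b'}^n\in\fa$, and then $(xb')^n=x^{n-1}\cdot x\,{b'}^n\in\fa$. Radicality of $\fa$ gives $xb'\in\fa$, hence $x\in\fa:\fb$. The same trick proves that $\fa:\fb$ is radical: if $x^n\in\fa:\fb$ and $b\in\fb$, then $b^n\in\fb$, so $x^n b^n = (xb)^n$ lies in $\fa$, and radicality again gives $xb\in\fa$, whence $x\in\fa:\fb$.

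For part (c), I would use that ideals of $\ZarA$ are generated by the $\wi{x}$'s, together with the dictionary provided by Fact \ref{factSpecAzarA}: for a radical ideal $\fc$ of $\gA$, $\DA(y)\in\IZA(\fc)\Leftrightarrow y\in\fc$. For the inclusion $\IZA(\fa:\fb)\subseteq\IZA(\fa):\IZA(\fb)$, it suffices to test $\DA(x)\wedge E\in\IZA(\fa)$ for $x\in\fa:\fb$ and $E=\DA(b_1,\ldots,b_m)$ with $b_j\in\fb$; but $\DA(x)\wedge\DA(b_j)=\DA(xb_j)$ and $xb_j\in\fa$, so this is immediate. For the reverse inclusion, note that the conductor ideal of $\ZarA$ is downward-closed, so it is enough to show $\DA(y)\in\IZA(\fa:\fb)$ whenever $\DA(y)\in\IZA(\fa):\IZA(\fb)$. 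Taking $E=\DA(b)$ for each $b\in\fb$ gives $\DA(yb)\in\IZA(\fa)$, i.e.\ $yb\in\fa$, so $y\in\fa:\fb$ and $\DA(y)\in\IZA(\fa:\fb)$. A general element $\DA(y_1,\ldots,y_p)=\bigvee_k\DA(y_k)$ then belongs to $\IZA(\fa:\fb)$ because each $\DA(y_k)$ does (downward closure again).

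The only mildly delicate step is the reverse inclusion in (c): one must remember that $\IZA(\fb)$ is generated by the principals $\DA(b)$ with $b\in\fb$, and that $\IZA(\fa)$ meets the set of principals $\DA(z)$ exactly in the radical ideal $\fa$. Once that dictionary is in hand, the rest is a routine unfolding, and the fact reduces to the equational properties $\DA(x)\wedge\DA(b)=\DA(xb)$ and $\DA(x)^n=\DA(x^n)$ that were recorded in Proposition \ref{propZar}.
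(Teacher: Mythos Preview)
Your argument is correct in all three parts, and the paper does not actually supply a proof of this fact: it is listed among the ``easy'' statements following Fact~\ref{factSpecAzarA} and left to the reader, so there is no approach to compare against. One small terminological slip: in the last sentence of part (c), the reason $\DA(y_1,\ldots,y_p)=\bigvee_k\DA(y_k)$ lies in $\IZA(\fa:\fb)$ once each $\DA(y_k)$ does is closure of an ideal under finite joins, not ``downward closure''; the mathematics is fine, only the label is off.
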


\begin{fact}[covering with \ids]
\label{factRecouvI} ~\\
Let $\fa_i$ be a finite family of \ids of $\gA$.  Ideals $\IZA(\fa_i)$
cover $\ZarA$ (i.e.\ their intersection is $0$) \ssi 
$\,\bigcap_i\fa_i\subseteq \DA(0)$.
\end{fact}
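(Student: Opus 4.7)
The strategy is to unfold the definitions and reduce everything to the isomorphism of Fact~\ref{factSpecAzarA} between the lattice of radical ideals of $\gA$ and the lattice of ideals of $\ZarA$, together with the fact that radicals commute with \emph{finite} intersections.

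First I would describe $\IZA(\fa)$ explicitly: since $\IZA(\fa)$ is the ideal of $\ZarA$ generated by $\{\DA(a)\mid a\in\fa\}$, an element $\DA(x_1,\ldots,x_n)$ of $\ZarA$ belongs to $\IZA(\fa)$ iff there exist $a_1,\ldots,a_m\in\fa$ with $\DA(x_1,\ldots,x_n)\leq \DA(a_1,\ldots,a_m)$, iff each $x_j$ lies in $\sqrt{\langle a_1,\ldots,a_m\rangle}\subseteq\sqrt{\fa}$. So $\IZA(\fa)=\sotq{\DA(J)}{J\subseteq\sqrt{\fa}\text{ finite}}$.

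Next I would combine these descriptions: $\DA(J)\in\bigcap_i\IZA(\fa_i)$ iff $J\subseteq\bigcap_i\sqrt{\fa_i}$ for every finite $J\subseteq\gA$. Hence $\bigcap_i\IZA(\fa_i)=\so0$ in $\ZarA$ iff for every $x\in\gA$, $x\in\bigcap_i\sqrt{\fa_i}$ implies $\DA(x)=0$, i.e.\ $x\in\DA(0)$. Equivalently: $\bigcap_i\sqrt{\fa_i}\subseteq\DA(0)$.

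It then remains to show that $\bigcap_i\sqrt{\fa_i}\subseteq\DA(0)$ is equivalent to $\bigcap_i\fa_i\subseteq\DA(0)$. The inclusion $\bigcap_i\fa_i\subseteq\bigcap_i\sqrt{\fa_i}$ is trivial, giving one direction. For the other, one uses that for a \emph{finite} family, $\bigcap_i\sqrt{\fa_i}=\sqrt{\bigcap_i\fa_i}$: if $x\in\bigcap_i\sqrt{\fa_i}$, pick $n_i$ with $x^{n_i}\in\fa_i$ and set $N=\max_i n_i$ (this is the only place finiteness matters); then $x^N\in\bigcap_i\fa_i\subseteq\DA(0)$, hence $x\in\DA(0)$. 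There is no genuine obstacle here: the \emph{only} subtle point is the use of finiteness of the family to extract a common exponent $N$, which is exactly why the statement is formulated for finite families and why it is constructively valid without any appeal to points of $\Spec\,\gA$.
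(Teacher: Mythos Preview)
Your proof is correct. The paper does not actually give a proof of this fact: it is listed among the ``easy'' consequences of Fact~\ref{factSpecAzarA} and left without argument. Your approach is exactly the intended one---you unfold the isomorphism between radical ideals of $\gA$ and ideals of $\ZarA$, and then use that $\bigcap_i\sqrt{\fa_i}=\sqrt{\bigcap_i\fa_i}$ for finite families, which is the only place finiteness is needed.
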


In \clama the lattice $\Zar\,\gA$ is \noe (i.e.\ $\Spec\,\gA$ is \noe) \ssi  any radical \id is \gui{radically \tf}, i.e.\  an \elt of~$\Zar\,\gA$.

Furthermore,  $\Zar\,\gA$ is a \agH \ssi 
\hbox{$\Tt \fa,\fb\in\Zar\gA,\;(\fa:\fb)\in\Zar\gA$}.

The \flw result is important in \coma.

\begin{proposition}[\citealt*{CL2003}]
\label{propZarHeyt}
If $\gA$ is a \noe \coh \ri, $\Zar\,\gA$ is a \agH.
If moreover $\gA$ is \fdi, the order relation in  
$\Zar\,\gA$
is decidable. In this case we say that the lattice is \emph{discrete}.
\end{proposition}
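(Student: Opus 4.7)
The plan is to establish the Heyting structure by exhibiting, for every $\fa,\fb\in\ZarA$, a principal conductor $\fa\im\fb$, and then to derive decidability of the order from effective membership tests. By Fact~\ref{factTransporteurs}, if $\fa=\DA(\fA)$ and $\fb=\DA(\fB)$ with $\fA,\fB$ finitely generated ideals of $\gA$, then the conductor ideal in $\ZarA$ is $\IZA(\sqrt{\fA}:\fB)$, so it suffices to prove that $\sqrt{\fA}:\fB$ is \emph{radically} finitely generated, i.e.\ of the form $\DA(\fC)$ for some finitely generated $\fC\subseteq\gA$. Writing $\fB=\gen{b_1,\ldots,b_m}$, we have $\sqrt{\fA}:\fB=\bigcap_{i}\bigl(\sqrt{\fA}:b_i\bigr)$, and by coherence a finite intersection of finitely generated ideals is finitely generated, so the problem reduces to the principal case $\fB=(x)$.

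For the principal case, I would use the interplay between coherence and noetherianity exactly as in the remark preceding the proposition: by coherence, each conductor $\fA:x^n$ is a finitely generated ideal, and by noetherianity, the ascending chain
\[
\fA:x\;\subseteq\;\fA:x^{2}\;\subseteq\;\fA:x^{3}\;\subseteq\;\cdots
\]
is eventually stationary, at some index $N$. The claim is then that $\sqrt{\fA}:x=\sqrt{\fA:x^{N}}$. For $\subseteq$: if $(yx)^{k}\in\fA$ then $y^{k}\in\fA:x^{k}\subseteq\fA:x^{N}$ (after raising $k$ to $\max(k,N)$ and using stabilization). For $\supseteq$: if $y^{m}\in\fA:x^{N}$, i.e.\ $y^{m}x^{N}\in\fA$, then $(yx)^{m+N}=x^{m}\,y^{N}\cdot (y^{m}x^{N})\in\fA$. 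This produces a finitely generated $\fC=\fA:x^{N}$ with $\DA(\fA)\im\DA(x)=\DA(\fC)$ in $\ZarA$; verifying the universal property of $\im$ amounts to combining this identity with the obvious $\DA(\fC)\vi\DA(x)\leq\DA(\fA)$ and the definition~\eqref{eqAgHey}.

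For the second assertion, assuming $\gA$ strongly discrete, membership in a finitely generated ideal is decidable, so equality of finitely generated ideals of $\gA$ is decidable. To decide $\DA(x_1,\ldots,x_n)\leq\DA(y_1,\ldots,y_p)$ in $\ZarA$ one must decide, for each $x_i$, whether $x_i\in\sqrt{\gen{\gy}}$. Running the stabilization argument of the previous paragraph with $\fA=\gen{\gy}$ and $x=x_i$: compute successively the finitely generated conductors $\gen{\gy}:x_i^{k}$ (possible by coherence and strong discreteness), detect stabilization by testing equality, and then test whether $1$ lies in the stabilized ideal (possible by strong discreteness). This yields a decision procedure for the generating inequalities, and hence for the order relation on $\ZarA$.

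The real obstacle is the constructive content of \emph{noetherian}: the paper itself underlines that the classical ACC does not survive unchanged in \coma. The proof goes through provided one adopts a constructive noetherianity strong enough to furnish an algorithm that detects when the chain $\fA:x^{n}$ has stabilized (so that $N$ above is actually produced, not merely asserted to exist); combined with coherence (effective computation of the conductors and intersections) and strong discreteness (effective ideal-membership), all the ingredients become algorithms and both the existence of $\im$ and the decidability of $\leq$ are witnessed constructively.
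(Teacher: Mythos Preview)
The paper does not actually prove this proposition: it simply cites \cite{CL2003}. So there is no in-paper proof to compare against; your argument is a reconstruction of the intended one, and it is essentially correct.

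Two small points. First, a labeling slip: the element you compute, $\DA(\fC)$ with $\fC=\fA:x^{N}$, satisfies $\DA(\fC)\vi\DA(x)\leq\DA(\fA)$, which by \eqref{eqAgHey} means $\DA(\fC)=\DA(x)\im\DA(\fA)$, not $\DA(\fA)\im\DA(x)$ as you wrote. This is harmless for the conclusion (you still produce all Heyting implications), but the formula should be corrected. Second, the reduction to the principal case is right but the justification is slightly off: the ideals $\sqrt{\fA}:b_i$ are radical, not finitely generated, so ``coherence $\Rightarrow$ intersection is finitely generated'' does not apply to them directly. What works is: once each $\sqrt{\fA}:b_i=\sqrt{\fC_i}$ with $\fC_i$ finitely generated, then $\bigcap_i\sqrt{\fC_i}=\sqrt{\,\bigcap_i\fC_i\,}=\sqrt{\prod_i\fC_i}$, and coherence (or simply the product) makes this radically finitely generated. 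Your closing remark on the constructive content of noetherianity is exactly the point the paper itself stresses.
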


\subsubsection*{Filters of $\gA$ and of $\ZarA$}

A \textsl{filter} in a \cori is a \mo $\fF$ such that
$xy\in\fF\Rightarrow x\in\fF$. A \textsl{prime filter} is a filter 
such that $x+y\in\fF\Rightarrow x\in\fF\;\mathrm{or}\;y\in\fF$ (it is the
complement of a \idep).

For $x\in \gA$ the filter $\uar \wi x$ of $\ZarA$ is denoted by 
$\FZ_\gA(x)$.
More \gnlt for  $S\subseteq\gA$ we denote by
$\FZ_\gA(S)$ the following filter of $\ZarA$
\[
\FZ_\gA(S)=
\bigcup\nolimits_{x\in\cM(S)}\uar \wi x.
\]
We have also $\FZ_\gA(S) =\FZ_\gA(\fF)=\bigcup\nolimits_{x\in\fF}\!\uar \wi x$, where $\fF$ is the filter of  $\gA$ generated by $S$.

The following facts are easy.

\begin{fact}
\label{factFZ}
The map $\ff\mapsto \FZ_\gA(\ff)$ gives an injective nondecreasing correspondence
from filters of $\gA$ to filters of 
$\ZarA$, and preserves finite upper bounds (the upper bound  of $\ff_1$ and $\ff_2$  is generated by  
the $f_1f_2$'s where $f_i\in\ff_i$).
This correspondence $\FZ_\gA$  gives a bijection between 
prime filters of $\gA$ and prime filters of $\ZarA$.
\end{fact}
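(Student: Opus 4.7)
The plan is to first establish the \emph{basic identity}
\[
x \in \ff \;\Longleftrightarrow\; \wi{x} \in \FZ_\gA(\ff)
\]
for every filter $\ff$ of $\gA$ and every $x \in \gA$. The direction $\Rightarrow$ is immediate from the definition $\FZ_\gA(\ff)=\bigcup_{y\in\ff}\uar\wi y$. For $\Leftarrow$, the hypothesis gives some $y \in \ff$ with $\wi y \leq \wi x$ in $\ZarA$, hence $y \in \sqrt{\gen{x}_\gA}$, so $y^n = ax$ for some $n \in \NN$, $a \in \gA$. Since $\ff$ is a saturated multiplicative monoid, $y \in \ff$ yields $y^n = ax \in \ff$, and saturation then gives $x \in \ff$. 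This identity exhibits $\ff$ as $\sotq{x \in \gA}{\wi x \in \FZ_\gA(\ff)}$, which yields injectivity of $\FZ_\gA$ at once. That $\FZ_\gA$ is nondecreasing is clear from the definition.

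For preservation of binary upper bounds, the upper bound $\ff_1 \vu \ff_2$ in the lattice of filters of $\gA$ is, by saturation, $\sotq{x \in \gA}{\exists f_i \in \ff_i,\ x \mid f_1 f_2}$. From $\wi{f_1 f_2} = \wi{f_1} \vi \wi{f_2}$ (Proposition~\ref{propZar}), an element $u \in \FZ_\gA(\ff_1 \vu \ff_2)$ satisfies $u \geq \wi x$ for some $x$ dividing some $f_1 f_2$, hence $u \geq \wi{f_1} \vi \wi{f_2}$ with $\wi{f_i} \in \FZ_\gA(\ff_i)$, so $u \in \FZ_\gA(\ff_1) \vu \FZ_\gA(\ff_2)$. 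The reverse inclusion follows symmetrically by unwinding the description of the join of two filters in the lattice $\ZarA$ as the upward closure of pairwise meets.

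For the prime-filter bijection, assume first that $\ff$ is a prime filter of $\gA$ and let $u,v \in \ZarA$ with $u \vu v \in \FZ_\gA(\ff)$. Writing $u = \wi{a_1} \vu \cdots \vu \wi{a_m}$ and $v = \wi{b_1} \vu \cdots \vu \wi{b_n}$, there is $x \in \ff$ with $\wi x \leq u \vu v$, giving $x^N = \sum_i \alpha_i a_i + \sum_j \beta_j b_j$ for some $N$. Since $x^N \in \ff$, iterating the primeness of $\ff$ on this sum forces some summand $\alpha_i a_i$ or $\beta_j b_j$ into $\ff$; saturation then places $a_i$ or $b_j$ in $\ff$, whence $u$ or $v$ lies in $\FZ_\gA(\ff)$, so $\FZ_\gA(\ff)$ is prime. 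Conversely, given a prime filter $\cF$ of $\ZarA$, set $\ff = \sotq{x \in \gA}{\wi x \in \cF}$: filterhood of $\ff$ uses $\wi{xy} = \wi x \vi \wi y$ together with $\vi$-stability and upward-closure of $\cF$, while primeness of $\ff$ uses $\wi{x+y} \leq \wi x \vu \wi y$ with primeness of $\cF$. The basic identity then yields $\FZ_\gA(\ff) = \cF$, so the correspondence is surjective on prime filters. The only subtle point is the iteration of the binary primeness of $\ff$ across the multi-term sum $x^N = \sum_i \alpha_i a_i + \sum_j \beta_j b_j$, which is handled by routine induction on the number of summands.
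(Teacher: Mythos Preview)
The paper does not supply a proof of this fact; it is introduced with ``Following facts are easy'' and left to the reader. Your argument is correct and fills in the details appropriately. The one place where you are slightly compressed is the final step of surjectivity: the basic identity only tells you that $\FZ_\gA(\ff)$ and $\cF$ have the same trace on the generating set $\{\wi x : x\in\gA\}$, and you still need the observation that a \emph{prime} filter of $\ZarA$ is determined by this trace---equivalently, that if $u=\wi{c_1}\vu\cdots\vu\wi{c_k}\in\cF$ then primeness forces some $\wi{c_i}\in\cF$, whence $u\in\FZ_\gA(\ff)$. This is exactly the general fact the paper records at the start of Section~\ref{secESSP} (a prime ideal, hence dually a prime filter, is completely characterized by its trace on a generating set), so the gap is purely cosmetic.
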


Note, however, that the principal filter of $\ZarA$ generated by
$\wi{a_1}\vu\cdots \vu \wi{a_n}$ (i.e.\ the intersection of filters 
$\uar\wi{a_i}$), does not correspond in general to a filter of $\gA$.

\begin{fact}[localizations]
\label{factLocalises} ~\\
Let $S$ be a \mo of $\gA$, $\fF$ the filter generated by $S$, and
$\ff=\FZ_\gA(S)=\FZ_\gA(\fF)$. \\
Then $S^{-1}\gA=\gA_S=\gA_\fF$ and $\Zar(\gA_S)\simeq\Zar(\gA)/(\ff=1)$.
\end{fact}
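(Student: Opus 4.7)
\medskip
\noindent\textbf{Plan of proof.}
The first assertion $S^{-1}\gA = \gA_\fF$ is essentially a matter of unwinding definitions: the filter $\fF$ generated by a monoid $S$ in $\gA$ consists of divisors of elements of $\cM(S)=S$, and inverting a divisor of $s\in S$ is equivalent to inverting $s$ itself (once $s$ is a unit, all its divisors become units). So the universal properties of $S^{-1}\gA$ and $\fF^{-1}\gA$ coincide and yield the same ring up to canonical isomorphism.

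\smallskip
For the second assertion, the plan is to use the universal property of the quotient $\Zar(\gA)/(\ff=1)$ and to check that the induced map into $\Zar(\gA_S)$ is bijective. First I would consider the canonical map $j:\gA\to\gA_S$; by functoriality it induces a lattice morphism $\Zar(j):\Zar(\gA)\to\Zar(\gA_S)$ sending $\DA(x)$ to $\rD_{\gA_S}(j(x))$. For every $s\in S$ the element $j(s)$ is invertible, hence $\rD_{\gA_S}(j(s))=1$, and since the $\wi{s}$ (for $s\in S$) generate the filter $\ff=\FZ_\gA(S)$, the morphism $\Zar(j)$ sends every element of $\ff$ to $1$. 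By the universal property recalled in Proposition \ref{propIdealFiltre} it therefore factors through a unique lattice morphism
\[
\Phi:\ZarA/(\ff=1)\longrightarrow \Zar(\gA_S).
\]

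\smallskip
Next I would establish surjectivity of $\Phi$: any generator of $\Zar(\gA_S)$ has the form $\rD_{\gA_S}(x/s)$ with $x\in\gA$ and $s\in S$, and since $s$ is invertible in $\gA_S$ one has $\rD_{\gA_S}(x/s)=\rD_{\gA_S}(j(x))=\Phi(\DA(x))$. For injectivity, I would use Proposition \ref{propIdealFiltre} to translate an inequality $\DA(\gx)\leq \DA(\gy)$ in the quotient into the existence of $u\in\ff$ with $\DA(\gx)\vi u\leq_{\ZarA}\DA(\gy)$, and hence (replacing $u$ by a smaller generator of $\ff$) of some $s\in\cM(S)$ with $\DA(s\,x_1,\dots,s\,x_n)\leq \DA(\gy)$ in $\ZarA$. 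The task is then to verify the converse, namely that
\[
\rD_{\gA_S}(j(x_1),\dots,j(x_n))\leq \rD_{\gA_S}(j(y_1),\dots,j(y_m))
\quad\text{in }\Zar(\gA_S)
\]
implies, back in $\gA$, the existence of $s\in S$ with $\DA(s\,x_1,\dots,s\,x_n)\leq \DA(y_1,\dots,y_m)$.

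\smallskip
The main obstacle is precisely this last implication, a standard but slightly delicate clearing-of-denominators computation in the localization: from relations $x_i^{k_i}=\sum_j a_{ij}y_j$ in $\gA_S$ one brings all the $a_{ij}$ to a common denominator in $S$, multiplies by a further element of $S$ to pass from equalities in $\gA_S$ back to equalities in $\gA$, absorbs the denominators into powers, and finally takes the product of the resulting elements of $S$ to obtain one uniform $s$ that works for all $i$. Once this is carried out, the two characterizations of the order relation coincide, $\Phi$ is both injective and surjective, and since it is a morphism of distributive lattices it is the desired isomorphism $\Zar(\gA_S)\simeq \Zar(\gA)/(\ff=1)$.
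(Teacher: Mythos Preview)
Your argument is correct. The paper does not give a proof of this fact: it is listed among several ``easy'' statements (together with Facts~\ref{factFZ}, \ref{factComplement}, \ref{factRecouvF}) and left to the reader. Your plan is exactly the natural unpacking one would expect: use functoriality of $\Zar$ to get a morphism $\ZarA\to\Zar(\gA_S)$ sending $\ff$ to $1$, factor through the quotient, and check bijectivity. Surjectivity is immediate since units may be stripped from generators, and your clearing-of-denominators computation for injectivity is the standard one; the only point to keep tidy is that an equality $a/1=b/1$ in $\gA_S$ means $t(a-b)=0$ for some $t\in S$, which you absorb correctly into the witness $s\in S$ before passing to the characterization of $\leq$ in $\ZarA/(\ff=1)$ via Proposition~\ref{propIdealFiltre}. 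There is nothing to add.
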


\begin{fact}[complement filter]
\label{factComplement} ~\\
Let $x\in\gA$, then the filter $1_\ZarA\setminus \FZ_\gA(x)$
is equal to $\FZ_\gA(1+x\gA)$.
\end{fact}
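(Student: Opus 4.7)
The plan is to unfold both sides to explicit descriptions inside $\gA$ and verify they coincide. First I would rewrite the left-hand side using definition~\pref{eqDiff}: since $\FZ_\gA(x)=\uar\wi{x}$, the condition $\Tt y\in\FZ_\gA(x),\;y\vu z=1$ reduces (by taking $y=\wi x$) to $\wi x\vu z=1$, so
\[
1_{\ZarA}\setminus\FZ_\gA(x)\;=\;\sotq{z\in\ZarA}{\wi x\vu z=1}.
\]
On the other side, $z\in\FZ_\gA(1+x\gA)$ means, by the definition of $\FZ_\gA(S)$ recalled just before the statement, that $\wi u\leq z$ for some $u$ in the multiplicative monoid $\cM(1+x\gA)$ generated by $\sotq{1+xa}{a\in\gA}$.

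The easy inclusion $\FZ_\gA(1+x\gA)\subseteq 1_{\ZarA}\setminus\FZ_\gA(x)$ follows from the observation that each product $u=(1+xa_1)\cdots(1+xa_n)$ satisfies $u\equiv 1\ (\mathrm{mod}\ x\gA)$, whence $1\in\gen{u,x}$ and therefore $\wi u\vu\wi x=\DA(u,x)=1_{\ZarA}$; any $z\geq\wi u$ a fortiori satisfies $\wi x\vu z=1$. For the reverse inclusion I would write a general $z\in\ZarA$ as $\DA(y_1,\ldots,y_n)$; by the presentation of $\ZarA$ in Proposition~\ref{propZar}, the equation $\wi x\vu z=1_{\ZarA}$ amounts to $1\in\gen{y_1,\ldots,y_n,x}$. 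Extracting a Bezout relation
\[
1\;=\;c_1y_1+\cdots+c_ny_n+c_0x
\]
and setting $u:=c_1y_1+\cdots+c_ny_n=1-c_0 x$ produces simultaneously $u\in 1+x\gA\subseteq\cM(1+x\gA)$ and $u\in\gen{y_1,\ldots,y_n}$, so $\wi u\leq z$ and $z\in\FZ_\gA(1+x\gA)$.

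There is no serious obstacle. The only non-trivial step is the Bezout extraction of $u=1-c_0x$ from the coprimality relation in $\ZarA$; this is fully \cov, since the presentation of $\ZarA$ in Proposition~\ref{propZar} turns the equality $\DA(y_1,\ldots,y_n,x)=1_{\ZarA}$ directly into the membership $1\in\gen{y_1,\ldots,y_n,x}$, from which a concrete witness in $1+x\gA$ is produced by splitting the Bezout sum.
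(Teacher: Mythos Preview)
Your proof is correct and complete. The paper does not give a proof of this fact at all: it is listed among several facts declared ``easy'' and left to the reader, so there is no argument to compare against. Your unfolding of both sides and the extraction of the witness $u=1-c_0x$ from a B\'ezout relation is exactly the natural verification one would expect.
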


\begin{fact}[covering with filters]
\label{factRecouvF} ~\\
Let $(S_i)_{1\leq i\leq n}$ be a finite family of \mos of $\gA$. The 
filters
$\FZ_\gA(S_i)$ cover $\ZarA$ (i.e.\ their intersection is 
$\so{1}$) \ssi   the \mos $S_i$ are \com, i.e.\  for all $x_i\in S_i$
we have $\gen{x_1,\ldots ,x_n}=\gen{1}$. \\
More \gnlt   we have $\,\FZ_\gA(S_1)\,\cap \cdots
\cap\,\FZ_\gA(S_n)\subseteq \FZ_\gA(S)$ \ssi  for all $x_i\in S_i$
there exists $x\in S$ such that $x\in\gen{x_1,\ldots ,x_n}$.
\end{fact}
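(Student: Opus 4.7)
I would prove the more general second assertion first, then recover the first as the special case $S=\{1\}$, since $\FZ_\gA(\{1\})=\uar\wi{1}=\{1_{\ZarA}\}$ and $1\in\FZ_\gA(S_i)$ always, so "$\bigcap_i\FZ_\gA(S_i)=\{1\}$'' is equivalent to "$\bigcap_i\FZ_\gA(S_i)\subseteq\FZ_\gA(\{1\})$''; and the condition from the general statement with $S=\{1\}$ is exactly "$1\in\gen{x_1,\ldots,x_n}$'', i.e., comaximality.

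For the $(\Rightarrow)$ direction of the general statement: given $x_i\in S_i$, I would plug in the candidate element
\[w=\wi{x_1}\vu\cdots\vu\wi{x_n}=\DA(x_1,\ldots,x_n)\in\ZarA.\]
Since $\wi{x_i}\leq w$ and $x_i\in S_i$, we have $w\in\FZ_\gA(S_i)$ for every $i$, hence $w\in\bigcap_i\FZ_\gA(S_i)\subseteq\FZ_\gA(S)$ by hypothesis. So there exists $y\in S$ with $\wi{y}\leq w$, that is $y\in\sqrt{\gen{x_1,\ldots,x_n}}$, giving $y^k\in\gen{x_1,\ldots,x_n}$ for some $k$. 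Since $S$ is a \mo, $x:=y^k\in S$ provides the desired witness.

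For the $(\Leftarrow)$ direction: take $w\in\bigcap_i\FZ_\gA(S_i)$ and write $w=\DA(y_1,\ldots,y_m)$. For each $i$ there is $x_i\in S_i$ with $\wi{x_i}\leq w$, which by \pref{eqZar} means $x_i^{k_i}\in\gen{y_1,\ldots,y_m}$ for some $k_i\in\NN$. By hypothesis applied to this $n$-tuple $(x_1,\ldots,x_n)$, there is $x\in S$ with $x\in\gen{x_1,\ldots,x_n}$, say $x=\sum_i a_ix_i$. The main (and only non-formal) step is to show $x\in\sqrt{\gen{y_1,\ldots,y_m}}$: expand $x^N$ with $N=k_1+\cdots+k_n-n+1$; by pigeonhole each multinomial $x_1^{\alpha_1}\cdots x_n^{\alpha_n}$ with $\sum\alpha_i=N$ has some $\alpha_i\geq k_i$, so $x^N\in\gen{x_1^{k_1},\ldots,x_n^{k_n}}\subseteq\gen{y_1,\ldots,y_m}$. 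Hence $\wi{x}\leq w$ and $w\in\FZ_\gA(S)$.

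The only technical point is the pigeonhole/power computation in the last step; everything else is routine unfolding of the definitions $\FZ_\gA(S)=\bigcup_{x\in S}\uar\wi{x}$ (valid because each $S_i$ and $S$ is already a \mo, so $\cM(S)=S$) and of the ordering on $\ZarA$ via Proposition~\ref{propZar}. I would close by explicitly deducing the first assertion from the general one via $S=\{1\}$ as above.
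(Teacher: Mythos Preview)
The paper does not give a proof of this fact; it is listed among several facts introduced with ``Following facts are easy'' and left without argument. Your proof is correct, and your reduction of the first assertion to the general one via $S=\{1\}$ is clean.

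One minor simplification in the $(\Leftarrow)$ direction: once you have $x_i^{k_i}\in\gen{y_1,\ldots,y_m}$ for each $i$, you already know $x_i\in\DA(y_1,\ldots,y_m)$; since $\DA(y_1,\ldots,y_m)$ is an \emph{ideal} (a radical ideal, by its very definition~\pref{eqZar}), the combination $x=\sum_i a_ix_i$ lies in $\DA(y_1,\ldots,y_m)$ immediately, giving $\wi{x}\leq w$. Your pigeonhole/multinomial computation is of course exactly the elementary verification that radicals are closed under sums, so it is not wrong, just more explicit than needed in this context.
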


\subsection{Heitmann lattice of a \cori}

In a \cori, the \textsl{Jacobson radical of an \id 
$\fJ$} is (in \clama) the intersection of the \idemas containing $\fJ$. It is denoted by $\JA(\fJ)=\rJ(\gA,\fJ)$, or also $\rJ(\fJ)$.
In \coma we use the following  \dfn (\eqv in \clama):
\begin{equation} \label{eqRadJac}
\JA(\fJ)\eqdefi\sotq{x\in\gA}{\Tt y\in\gA,\;\; 1+xy
\hbox{ is invertible modulo } \fJ}\,.
\end{equation}
We denote by $\JA(x_1,\ldots ,x_n)=\rJ(\gA,x_1,\ldots ,x_n)$ the \id
$\JA(\gen{x_1,\ldots ,x_n})$.  The \id  $\JA(0)$ is called the
\textsl{Jacobson radical  of  $\gA$}.

\begin{definition}
\label{defHeitA}
The \textsl{Heitmann lattice} of a \cori $\gA$ is
the lattice
$\He(\ZarA)$, denoted by $\HeA$.
\end{definition}

In \clama, the \flw lemma is evident.
From a \cov point of view we need  a direct \prco.

\begin{fact}[Jacobson radical]
\label{factRadJac} ~\\
The one-to-one  correspondence  $\IZA$ preserves passing to the  
Jacobson radical, i.e.\
 if $\fJ$ is an \id of $\gA$ and $\fj=\IZA(\fJ)$, then
$\rJ_\ZarA(\fj)=\IZA(\JA(\fJ))$.
\end{fact}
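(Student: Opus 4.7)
\medskip\noindent\textbf{Proof proposal.}

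The plan is to unfold both sides as conditions on representatives in $\gA$ and then reduce to a single key calculation inside $\gA$. First I would translate membership in the two ideals. Since $\IZA(\fb)$, for an ideal $\fb$ of $\gA$, is the ideal of $\ZarA$ generated by the $\DA(b)$'s for $b \in \fb$, a standard unwinding (using that $\DA(b_1)\vu\dots\vu\DA(b_k)=\DA(b_1,\dots,b_k)$ and that $\fb$ is closed under linear combinations) gives
\[
a = \DA(a_1,\dots,a_m) \in \IZA(\fb) \;\Longleftrightarrow\; \forall i\;(a_i \in \sqrt{\fb}).
\]
Applied to $\fb=\JA(\fJ)$, and after checking that $\JA(\fJ)$ is itself a radical ideal (which follows from $(1+xy)\cdot\sum_{k<n}(-xy)^k = 1-(-xy)^n$, so that invertibility of $1+x^n y^n$ mod $\fJ$ implies invertibility of $1+xy$ mod $\fJ$), this becomes: $a\in \IZA(\JA(\fJ))$ iff each $a_i$ lies in $\JA(\fJ)$. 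On the other side, $a\in \rJ_\ZarA(\fj)$ means: for every $x=\DA(x_1,\dots,x_n)$ with $1\in\langle a_1,\dots,a_m,x_1,\dots,x_n\rangle$ one has $z\in\fj$ with $z\vu x=1$; and since $\fj = \IZA(\fJ)$, \gui{$\exists z\in\fj,\ z\vu x=1$} is equivalent to the existence of a single $z\in\fJ$ with $1\in\langle z,x_1,\dots,x_n\rangle$.

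Next I would prove the inclusion $\IZA(\JA(\fJ))\subseteq \rJ_\ZarA(\fj)$. Assume each $a_i\in\JA(\fJ)$ and $1\in\langle a_1,\dots,a_m,x_1,\dots,x_n\rangle$, so we can write $1 = u + t$ with $u=\sum c_i a_i \in \JA(\fJ)$ and $t=\sum d_j x_j$. Then $1-t\in\JA(\fJ)$, and the key trick is to take $y=-1$ in the defining condition of $\JA(\fJ)$: the element $1+(1-t)(-1)=t$ is invertible modulo $\fJ$. Picking $\alpha\in\gA$ and $z\in\fJ$ with $\alpha t = 1 - z$, we get $1 = z + \sum \alpha d_j x_j$, so $1\in\langle z,x_1,\dots,x_n\rangle$, as required.

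For the reverse inclusion I would use that $\rJ_\ZarA(\fj)$ is a lower set, so $a\in \rJ_\ZarA(\fj)$ forces $\DA(a_i)\in \rJ_\ZarA(\fj)$ for each $i$; it thus suffices to treat $m=1$. Fix $y\in\gA$ and test with $x=\DA(1+a_i y)$. The identity $(-y)\cdot a_i + 1\cdot (1+a_iy) = 1$ gives $\DA(a_i)\vu x = 1$, so there is $z\in\fJ$ and $\beta,\gamma\in\gA$ with $1 = \beta z + \gamma(1+a_iy)$; modulo $\fJ$ this says $\gamma(1+a_iy)\equiv 1$, i.e., $1+a_iy$ is invertible mod $\fJ$. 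This being true for every $y$ yields $a_i\in\JA(\fJ)$, completing the proof.

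The routine parts are the two unfoldings of membership; the only real obstacle is finding the witness in the forward direction, and the clever choice $y=-1$ makes it immediate. Note that the whole argument is constructive, which is the reason a direct proof is needed in place of the classical \gui{obvious} passage through maximal ideals.
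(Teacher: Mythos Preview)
Your proof is correct and follows essentially the same approach as the paper: both unfold the definitions, use that $\JA(\fJ)$ is radical, and reduce the comparison of the two ideals of $\ZarA$ to a statement about single generators $\DA(a)$. The paper is slightly more economical in that it reduces to $m=1$ from the start and runs both inclusions as a single chain of equivalences (collapsing the tuple $(y_1,\dots,y_k)$ to one $y=\sum d_jy_j$), whereas you treat the two inclusions separately and invoke the $y=-1$ trick explicitly; but these are cosmetic differences, not substantive ones.
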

\begin{proof} Let us take an arbitrary $x\in\gA$.
We have to show that $\wi 
x\in\rJ_\ZarA(\fj)$ \ssi  $\wi
x\in\IZA(\JA(\fJ))$. Since $\JA(\fJ)$ is a radical \id, we have to show the \flw \eqvc:
\[  \wi x\in\rJ_\ZarA(\fj)\;\Leftrightarrow\; 
x\in\JA(\fJ). 
\]
By \dfn $\wi x\in\rJ_\ZarA(\fj)$ means
\[\Tt y\in\ZarA\quad (\wi x\vu y=1_\ZarA\;\Rightarrow\;\Ex 
z\in\fj\;\; z\vu y=1_\ZarA)\,,
\]
\cade, since any $y\in\ZarA$ may be written as $\DA(y_1,\ldots ,y_k)$,
\[\Tt y_1,\ldots ,y_k\in\gA\quad (\gen{x,y_1,\ldots ,y_k}=1_\gA\;
\Rightarrow\;\Ex z\in\fj\;\; z\vu \DA(y_1,\ldots ,y_k)=1_\ZarA).
\]
This is \imdt \eqv to
\[\Tt y_1,\ldots ,y_k\in\gA\quad (\gen{x,y_1,\ldots ,y_k}=1_\gA\;
\Rightarrow\;\Ex u\in\fJ \;\;\gen{u,y_1,\ldots ,y_k}=1_\gA),
\]
then to
\[\Tt y\in\gA\;(\gen{x,y}=1_\gA\;
\Rightarrow\;\Ex u\in\fJ \;\;\gen{u,y}=1_\gA),
\]
or also: any  $y\in\gA$ equal to some $1+xa$ is invertible 
modulo $\fJ$.
In other words $x\in\JA(\fJ)$.
\end{proof}

\begin{corollary}
\label{propHeitA}
Let $\fj_1$ and $\fj_2$ be two \itfs of $\gA$. Elements $\DA(\fj_1)$ 
and
$\DA(\fj_2)$ of $\ZarA$ are equal in the quotient $\HeA$ \ssi 
$\JA(\fj_1)=\JA(\fj_2)$. Hence $\HeA$ may be identified to the set of  $\JA(x_1,\ldots ,x_n)$'s, with
$\JA(\fj_1)\vi\JA(\fj_2)=\JA(\fj_1\fj_2)$ and
$\JA(\fj_1)\vu\JA(\fj_2)=\JA(\fj_1+\fj_2)$.
\end{corollary}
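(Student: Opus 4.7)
The plan is to read off the result from Definition \ref{defHeT} together with Fact \ref{factRadJac}, which has already transported the Jacobson radical across the bijection $\IZA$ between radical \ids of $\gA$ and \ids of $\ZarA$ (Fact \ref{factSpecAzarA}). The statement is essentially a corollary: all the real work has been done upstream.

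First, for a \itf $\fj$ of $\gA$, the principal \id $\dar \DA(\fj)$ of $\ZarA$ coincides with $\IZA(\fj)$. By Definition \ref{defHeT}, the equality $\DA(\fj_1) =_{\HeA} \DA(\fj_2)$ means precisely $\rJ_{\ZarA}(\dar \DA(\fj_1)) = \rJ_{\ZarA}(\dar \DA(\fj_2))$. Fact \ref{factRadJac} rewrites each side as $\IZA(\JA(\fj_i))$, and since $\IZA$ is injective on radical \ids (Fact \ref{factSpecAzarA}) and each $\JA(\fj_i)$ is a radical \id of $\gA$, this is \eqv to $\JA(\fj_1) = \JA(\fj_2)$. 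This proves the first assertion and identifies $\HeA$ with the set of $\JA(x_1,\ldots,x_n)$ via $\DA(\fj) \mapsto \JA(\fj)$.

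For the lattice operations, I would apply the identities $(\ref{eqHeT2})$ with $\gT = \ZarA$ to $a = \DA(\fj_1)$ and $b = \DA(\fj_2)$, combined with the rules $\DA(\fj_1)\vi\DA(\fj_2) = \DA(\fj_1\fj_2)$ and $\DA(\fj_1)\vu\DA(\fj_2) = \DA(\fj_1+\fj_2)$ valid in $\ZarA$. Transporting the resulting equalities through the bijection established above immediately yields
\[
\JA(\fj_1)\vi_{\HeA}\JA(\fj_2) = \JA(\fj_1\fj_2) \qquad \text{and} \qquad \JA(\fj_1)\vu_{\HeA}\JA(\fj_2) = \JA(\fj_1+\fj_2).
\]

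There is no genuine obstacle here: the only subtlety is bookkeeping between the two levels (\ids of $\gA$ versus \ids of $\ZarA$) and remembering to invoke the injectivity of $\IZA$ on radical \ids when passing from an equality of \ids of $\ZarA$ back to an equality of Jacobson radicals in $\gA$.
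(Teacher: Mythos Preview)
Your argument is correct and matches the paper's intent: the paper states this as an unproved corollary of Fact~\ref{factRadJac}, and you have spelled out precisely the deduction it expects, via Definition~\ref{defHeT}, the injectivity of $\IZA$ on radical ideals (Fact~\ref{factSpecAzarA}), and the identities~(\ref{eqHeT2}). There is nothing to add.
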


\rems ~\\
\textsl{1.} Given the good properties of the $\IZA$ correspondence, we have, with $\gT=\ZarA$, $\Zar(\gA/\JA(0))\simeq\gT/(\JT(0)=0)$. On the other hand, there does not seem to be an \Alg~$\gB$ naturally attached to~$\gA$ for which we have $\Zar\,\gB\simeq\He(\ZarA)$.\\
\textsl{2.} Note that in general $\JA(x_1,\ldots ,x_n)$ is a radical ideal but not the radical of a finitely generated ideal.\\
\textsl{3.} It is also easy to see that $\JA(\fj_1)\vi\JA(\fj_2)=\JA(\fj_1)\cap\JA(\fj_2)=\JA(\fj_1\cap\fj_2)$ (this follows from Lemma \ref{lemJacInter}). It may seem surprising that $\JA(\fj_1)\cap\JA(\fj_2)=\JA(\fj_1\fj_2)$ (it is a priori less clear than for the $\DA$'s). Here is an elementary calculation that (re)proves this fact. We have $x\in\JA(\fj_1)$ \ssi  $\Tt y\;(1+xy)$ is invertible modulo $\fj_1,$ and $x\in\JA(\fj_2)$ if and only if $\Tt y\;(1+xy)$ is invertible modulo $\fj_2$. But if $a=1+xy$ is invertible modulo $\fj_1$ and $\fj_2$, it is invertible modulo their product: indeed $1+aa_1\in\fj_1$ and  $1+aa_2\in\fj_2$ imply that $(1+aa_1)(1+aa_2)$, which is rewritten $1+aa'$, is in 
$\fj_1\fj_2$. \eoe

\subsection{Krull boundaries and Krull dimension of a \cori}

In \coma we give the following \dfn.

\begin{definition}
\label{defKdimA}
The Krull dimension of a \cori
is the Krull dimension of its Zariski lattice.
\end{definition}

As a consequence of Fact \ref{factSpecAzarA} and  \thref{thDK1}, this \dfn is equivalent in \clama to the usual one.

\begin{definition}
\label{defZar2}  Let  $\gA$ be a commutative \ri, $x\in\gA$, and let $\fa$ be a \itf.
\begin{enumerate}
\item [$(1)$] The \textsl{Krull upper boundary} of $\fa$ in $\gA$ is the quotient \ri
\begin{equation}\label{eqBKAC}
\gA_\rK^{\fa}:=\gA/\rK_\gA(\fa)  \quad \hbox{where} \quad
 \rK_\gA(\fa):=\fa+(\sqrt{0}:\fa).
\end{equation}
Write $\rK_\gA(x)$ for $\rK_\gA(x\gA)$ and $\gA_\rK^{x}$ for $\gA_\rK^{x\gA}$. This \ri is called the \emph{upper boundary of $x$ in~$\gA$}.  
We will say that $\rK_\gA(\fa)$ is \emph{the Krull boundary \id of $\fa$ in $\gA$.}%
\item [$(2)$] The \textsl{Krull lower boundary} of $x$ in $\gA$ is the localized commutative \ri
$\gA\bal{x}:=\gA_{\rS\bal{x}}$  where $\rS\bal{x}=x^\NN(1+x\gA)$.
We will say that $x^\NN(1+x\gA)$ is the \textsl{Krull boundary monoid of $x$} in~$\gA$.%
\end{enumerate}
\end{definition}

So an arbitrary \elt  of  $\rK_\gA(y_1,\ldots ,y_n)$ has the form 
$\sum_i
a_iy_i+b$ where all the $by_i$'s are nilpotent.

\begin{proposition}
\label{propZar2} Let   $x\in \gA$ and  $\fj=\gen{j_1,\ldots ,j_n}$ be a
\itf.
Let $\wi x\in\ZarA$ and $\fa=\DA(\fj)=\wi{j_1}\vu\cdots
\vu\wi{j_n}\in\ZarA$. 
\begin{enumerate}
\item The Krull boundary \id of $\fa=\DA(\fj)$ in $\ZarA$,
$\rK_\ZarA^\fa$, is equal to
$\IZA(\rK_\gA(\fj))$. So we can identify  $(\ZarA)\ul{\fa}$ 
with $\Zar(\gA\ul{\fj})$.
\item The boundary Krull filter of $\wi x$ in $\ZarA$, 
$\rK^\ZarA_{\tilde x}$, is equal to $\FZ(\rS\bal{x})$. So we can identify   $(\ZarA)\bal{\tilde x}$ with
$\Zar(\gA\bal{x})$.
\end{enumerate}
\end{proposition}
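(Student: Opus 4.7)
The idea in both parts is to unfold the definitions of the Krull boundary ideal, respectively filter, of the lattice $\ZarA$, and then translate them back to $\gA$ via the dictionary already established in Facts~\ref{factSpecAzarA}, \ref{factQuoAT}, \ref{factTransporteurs}, \ref{factLocalises} and \ref{factRecouvF}.

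For Item~\textsl{1}, I would start from the lattice-theoretic expression
\[
\rK_\ZarA^{\fa} \;=\; \dar\fa \,\vu\, (0:\fa)
\]
and compute each summand on the $\IZA$ side. Since $\fa=\DA(\fj)=\wi{j_1}\vu\cdots\vu\wi{j_n}$ is already an element of $\ZarA$, its principal ideal satisfies $\dar\fa=\IZA(\fj)$. For the conductor $(0:\fa)$ in $\ZarA$, I apply Fact~\ref{factTransporteurs} with $\fA=0$ and $\fB=\fj$, which yields
\[
(0:\fa) \;=\; \IZA\!\big(\sqrt{0}:\fj\big).
\]
Putting the two pieces together, and using that $\IZA(\fA)\vu\IZA(\fB)=\IZA(\fA+\fB)$ inside $\ZarA$ (this follows from the description of sups of ideals in \pref{eqSupId}, since $\IZA$ preserves finite joins), gives
\[
\rK_\ZarA^{\fa}\;=\;\IZA(\fj)\,\vu\,\IZA(\sqrt{0}:\fj)\;=\;\IZA\!\big(\fj+(\sqrt{0}:\fj)\big)\;=\;\IZA(\rK_\gA(\fj)).
\]
The identification of the quotient lattices is then immediate from Fact~\ref{factQuoAT}:
\[
(\ZarA)\ul{\fa}=\ZarA/(\rK_\ZarA^{\fa}=0)=\ZarA/(\IZA(\rK_\gA(\fj))=0)\simeq \Zar(\gA/\rK_\gA(\fj))=\Zar(\gA\ul{\fj}).
\]

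For Item~\textsl{2}, I would unwrap
\[
\rK^\ZarA_{\wi{x}} \;=\; \uar\wi{x}\,\vi\,(1_\ZarA\setminus\wi{x})
\]
as the filter of those $z\in\ZarA$ such that $z\geq \wi{x}\vi y$ for some $y$ with $y\vu\wi{x}=1_\ZarA$. The key observation is the following translation: if $y=\DA(a_1,\dots,a_k)$ satisfies $y\vu\wi{x}=1$, i.e.\ $\langle a_1,\dots,a_k,x\rangle=\gA$, then there is a relation $\sum a_ib_i+xc=1$, so $v:=1-xc\in 1+x\gA$ lies in $\langle a_1,\dots,a_k\rangle$; hence $\wi{v}\leq y$ and therefore $\wi{xv}=\wi{x}\vi\wi{v}\leq \wi{x}\vi y$. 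Since $xv\in x(1+x\gA)\subseteq \rS\bal{x}$, this proves the inclusion $\rK^\ZarA_{\wi{x}}\subseteq \FZ(\rS\bal{x})$. The reverse inclusion is direct: if $s=x^k(1+xa)\in\rS\bal{x}$ and $z\geq\wi{s}$, take $y=\wi{1+xa}$, which satisfies $y\vu\wi{x}=\DA(1+xa,x)=1$ and $\wi{x}\vi y=\wi{x(1+xa)}\leq \wi{s}\leq z$. This gives $\rK^\ZarA_{\wi{x}}=\FZ(\rS\bal{x})$, and then Fact~\ref{factLocalises} immediately yields
\[
(\ZarA)\bal{\wi{x}}\;=\;\ZarA/(\FZ(\rS\bal{x})=1)\;\simeq\;\Zar(\gA_{\rS\bal{x}})\;=\;\Zar(\gA\bal{x}).
\]

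The only step requiring real care is the computation of $(0:\fa)$ via Fact~\ref{factTransporteurs} in Item~\textsl{1} (one must keep track of which $\IZA$ image is involved, and check that ``$0$'' in $\ZarA$ corresponds on the $\gA$-side to $\sqrt{0}$ rather than to $0$); the rest is a routine unwinding of definitions once the element--ideal correspondence is firmly in hand.
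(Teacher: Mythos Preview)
Your proof is correct and, for Item~\textsl{1}, essentially identical to the paper's: unfold $\rK_\ZarA^{\fa}=\dar\fa\vu(0:\fa)$, identify each piece via Facts~\ref{factSpecAzarA} and~\ref{factTransporteurs}, and pass to quotients with Fact~\ref{factQuoAT}.

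For Item~\textsl{2} you take a slightly different route. The paper argues more modularly: it invokes Fact~\ref{factComplement} to identify $(1_\ZarA\setminus\wi{x})=\FZ_\gA(1+x\gA)$, then Fact~\ref{factFZ} (preservation of finite sups of filters by $\FZ_\gA$) to get $\uar\wi{x}\vi\FZ_\gA(1+x\gA)=\FZ_\gA(\rS\bal{x})$, and concludes with Fact~\ref{factLocalises}. You instead bypass Facts~\ref{factFZ} and~\ref{factComplement} entirely (you list Fact~\ref{factRecouvF} but do not use it) and do a direct element-level verification of $\rK^\ZarA_{\wi{x}}=\FZ(\rS\bal{x})$, producing the witness $v=1-xc\in 1+x\gA$ by hand from a relation $\sum a_ib_i+xc=1$. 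Your argument is in effect an inline proof of Fact~\ref{factComplement} specialized to this situation; it is perfectly valid and perhaps more transparent, while the paper's version is shorter because the relevant dictionary entries have already been recorded.
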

\begin{proof}
For the boundary \id, we have by \dfn
  \[\rK_\ZarA^\fa=\rK_\ZarA(\DA(\fa))=\DA(\fa) \vu 
(\DA(0):\DA(\fa) ).
\]
By Facts \ref{factSpecAzarA} and \ref{factTransporteurs},
it is equal to  $\IZA(\fa + (\DA(0):\fa ))$ and also to
  \[\IZA(\fj + (\DA(0):\fj ))=\IZA(\rK_\gA^\fj).\] Then we pass to the quotient lattices and we use Fact~\ref{factQuoAT}.\\
For the boundary Krull filter, this works in the same way by using Facts  \ref{factFZ} and \ref{factComplement} and passing to the
quotient lattices  with  Fact~\ref{factLocalises}.
\end{proof}

As corollary of  \thref{propDK1} and Proposition \ref{propZar2}  we get
an analog of \thref{thDK1}, in a \cov version.
Recall that the Krull dimension of a ring equals $-1$ \ssi  
the ring is trivial (i.e.\ $1_\gA=0_\gA$).

\begin{theorem} \label{thDKA} For a \cori  $\gA$ and an 
$\ell\in\NN$  \propeq
\begin{enumerate}
\item The Krull dimension of $\gA$ is $\leq \ell$.
\item For any $x\in \gA$ the Krull dimension of $\gA\ul x$ is 
$\leq \ell-1$.
\item For any \itf $\fj$ of $\gA$ the Krull dimension of 
$\gA\ul\fj$ is $\leq \ell-1$.
\item For any $x\in \gA$ the Krull dimension of $\gA\bal x$ is 
$\leq \ell-1$.
\end{enumerate}
\end{theorem}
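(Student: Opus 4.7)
The plan is to reduce every equivalence in the statement to the already-established lattice-theoretic version \thref{propDK1}, applied to the Zariski lattice $\gT = \ZarA$. By \Dfn~\ref{defKdimA}, condition \textsl{(1)} reads $\Kdim(\ZarA) \leq \ell$, so we need to see that each of \textsl{(2)}, \textsl{(3)}, \textsl{(4)} corresponds, under the passage $\gA \mapsto \ZarA$, to one of the equivalent conditions in \thref{propDK1}.

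First I would observe that the set $S = \sotq{\wi{x}}{x \in \gA}$ is a generating subset of $\ZarA$, so \thref{propDK1} (items \textsl{1}, \textsl{2}, \textsl{3}) applied with this $S$ gives
\[
\Kdim(\ZarA) \leq \ell \;\Longleftrightarrow\; \Tt x\in\gA,\; \Kdim\big((\ZarA)\ul{\wi x}\big) \leq \ell - 1 \;\Longleftrightarrow\; \Tt x\in\gA,\; \Kdim\big((\ZarA)\bal{\wi x}\big) \leq \ell - 1.
\]
Proposition~\ref{propZar2} then identifies $(\ZarA)\ul{\wi x} \simeq \Zar(\gA\ul x)$ and $(\ZarA)\bal{\wi x} \simeq \Zar(\gA\bal x)$; invoking \Dfn~\ref{defKdimA} once more yields $\Kdim((\ZarA)\ul{\wi x}) = \Kdim(\gA\ul x)$ and $\Kdim((\ZarA)\bal{\wi x}) = \Kdim(\gA\bal x)$. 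This provides \textsl{(1)} $\Leftrightarrow$ \textsl{(2)} $\Leftrightarrow$ \textsl{(4)}.

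For \textsl{(1)} $\Leftrightarrow$ \textsl{(3)}, the same argument is repeated but using all elements of $\ZarA$ instead of only the generators $\wi x$: every element of $\ZarA$ has the shape $\DA(\fj) = \wi{j_1}\vu\cdots\vu\wi{j_n}$ for some \itf $\fj=\gen{j_1,\ldots,j_n}$, and Item~\textsl{1} of Proposition~\ref{propZar2} identifies the upper Krull boundary $(\ZarA)\ul{\DA(\fj)}$ with $\Zar(\gA\ul\fj)$. Combining this with the equivalence between \textsl{1} (over all $x \in \gT$) and \textsl{2} in \thref{propDK1} closes the circle.

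There is no genuine obstacle here, since \thref{propDK1} does all the lattice-theoretic work: the only care needed is bookkeeping to ensure the isomorphisms of Proposition~\ref{propZar2} are functorial enough to translate $\Kdim$ bounds between the ring and its Zariski lattice. In particular, the induction on $\ell$ that underlies \Dfn~\ref{defDK0} is entirely carried out at the lattice level and need not be reproven at the ring level.
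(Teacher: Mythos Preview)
Your proposal is correct and is exactly the paper's constructive argument: the theorem is stated there precisely as a corollary of \thref{propDK1} applied to $\gT=\ZarA$ together with Proposition~\ref{propZar2}, with the generating set $S=\sotq{\wi x}{x\in\gA}$ handling items \textsl{(2)} and \textsl{(4)} and the full lattice handling item \textsl{(3)}. The paper then adds, for pedagogical purposes, a separate direct classical proof of \textsl{(1)}$\Leftrightarrow$\textsl{(2)} and \textsl{(1)}$\Leftrightarrow$\textsl{(4)} using chains of prime ideals and prime filters, but this is explicitly supplementary and not the main argument.
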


This theorem gives us a good intuitive meaning of the Krull dimension.

With Fact \ref{factSpecAzarA} we get the same theorem in classical mathematics.

Given its importance, we will give simple direct proofs of the equivalences between Items \textsl{1},~\textsl{2} and \textsl{4} in classical mathematics.

\begin{Proof}{Direct proof in classical mathematics}
Let us first show the equivalence of Items \textsl{1} and \textsl{2}.
Recall that the prime ideals of $S^{-1}\gA$ are \ids $S^{-1}\fp$ where $\fp$ is a prime ideal of A which does not intersect S.
The equivalence then clearly follows from the following two statements.
\\
(a) Let $x\in\gA$; if $\fm$ is a maximal ideal of $\gA$ it always intersects $\rS\bal{x}$. Indeed  this is clear if $x\in\fm$, and if not, $x$ is invertible modulo $\fm$ which means that $1+x\gA$ intersects $\fm$.
\\
(b) If $\fm$ is a maximal ideal of $\gA$, and if $x\in\fm\setminus\fp$ where $\fp$ is a prime ideal contained in $\fm$, then $\fp\cap \rS\bal{x}=\emptyset$: indeed if $x(1+xy)\in\fp$ then, since $x\notin\fp$ we have $1+xy\in\fp\subset\fm$, which gives the contradiction $1\in\fm$ (since $x\in\fm$).\\
Thus, if $\fp_0\subsetneq \cdots \subsetneq \fp_\ell$ is a chain with  
$\fp_\ell$ maximal, it is shortened by at least its last term when localizing in $\rS\bal{x}$, and it is shortened only by its last term if $x\in\fp_\ell\setminus\fp_{\ell-1}$.
\\
The equivalence of Items \textsl{1} and \textsl{4} is proved in the ``opposite way'', by replacing prime ideals with prime filters.
We first notice that the prime filters of $\gA/\fJ$ are equal to $(S+\fJ)/\fJ$, where $S$ is a prime filter of $\gA$ which does not intersect $\fJ$.
It is then enough to prove the two ``opposite assertions'' of (a) and (b) which are the following:
\\
(a') Let $x\in\gA$; if $S$ is a maximal filter of $\gA$ it always intersects $\rK_\gA^x$. Indeed this is clear if $x\in S$,  and if not, since $S$ is maximal $Sx^\NN$ contains $0$, which means that there is an integer $n$ and an element $s$ of $S$ such that $(sx)^n=0$ and $s\in (\sqrt{0}:x)\subset \rK_\gA^x$.
\\
(b') If $S$ is a maximal filter of $\gA$, and if $x\in S\setminus 
S'$, where $S'\subset S$ is a prime filter, then $S'\cap \rK_\gA^x=\emptyset$.
Indeed if $ax+b\in S'$ with $(bx)^n=0$ then, since $x\notin S'$ we have $ax\notin S'$ and, since $S'$ is prime, $b\in S'\subset S$, but since $x\in S$, $(bx)^n=0\in S$ which is absurd.
\end{Proof}

Moreover the \flw corollary shows that \thref{thDKA} implies the  constructive \elr \carn 
of the dimension through algebraic identities, as described  
in~\citealt*{Lom02,CL2003}.

\begin{corollary}
\label{corthDKA} \Propeq
\begin{itemize}
\item  [$(1)$] The Krull dimension of $\gA$ is $\leq \ell$.
\item  [$(2)$] For all $x_0,\ldots ,x_\ell\in\gA$ there exist  
$b_0,\ldots,b_\ell\in \gA$ such that 
\begin{equation} \label{eqCG}
\left.
\begin{array}{rcl} 
\DA(b_0x_0)& =  &\DA(0)    \\ 
\DA(b_1x_1)& \leq  & \DA(b_0,x_0)  \\
\vdots\quad& \vdots  &\quad  \vdots \\
\DA(b_\ell x_\ell )& \leq  & \DA(b_{\ell -1},x_{\ell -1})  \\
\DA(1)& =  &  \DA(b_\ell,x_\ell )
\end{array}
\right\}
\end{equation}
\item  [$(3)$] For all $x_0,\ldots ,x_\ell\in\gA$ there exist 
$a_0,\ldots,a_\ell\in \gA$ and
$m_0,\ldots,m_\ell\in\NN$ such that
\[ x_0^{m_0}(x_1^{m_1}\cdots(x_\ell^{m_\ell} (1+a_\ell x_\ell) + 
\cdots+a_1x_1)
+ a_0x_0) =0\,.
\]
\end{itemize}
\end{corollary}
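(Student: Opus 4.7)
My plan is to prove the cycle $(3) \Rightarrow (2) \Rightarrow (1) \Rightarrow (3)$, combining two straightforward translations with an induction on $\ell$ based on the boundary characterization Theorem~\ref{thDKA}. The implication $(3) \Rightarrow (2)$ is a telescoping construction: I set $b_\ell := 1 + a_\ell x_\ell$ and inductively $b_{i-1} := x_i^{m_i} b_i + a_{i-1} x_{i-1}$ for $i = \ell, \ldots, 1$. Then $(3)$ reads $x_0^{m_0} b_0 = 0$, which gives $(b_0 x_0)^{m_0} = b_0^{m_0-1}(x_0^{m_0} b_0) = 0$, hence $\DA(b_0 x_0) = 0$; the recursion $x_i^{m_i} b_i = b_{i-1} - a_{i-1} x_{i-1}$ implies $(b_i x_i)^{m_i} = b_i^{m_i-1}(b_{i-1} - a_{i-1} x_{i-1}) \in (b_{i-1}, x_{i-1})$, so $\DA(b_i x_i) \le \DA(b_{i-1}, x_{i-1})$; and $1 = b_\ell - a_\ell x_\ell \in (b_\ell, x_\ell)$. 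For $(2) \Rightarrow (1)$, the chain of $\gA$-level relations translates verbatim into the chain of Item~4 of Theorem~\ref{propDK1} applied to $\gT = \ZarA$ with the generating set $\{\wi{x} : x \in \gA\}$ and $\alpha_i := \wi{b_i}$; since $\Kdim\,\gA = \Kdim\,\ZarA$ by Definition~\ref{defKdimA}, this yields~$(1)$.

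For $(1) \Rightarrow (3)$ I would proceed by induction on $\ell$, the base case $\ell = 0$ being the standard zero-dimensional characterization ($x^{m}(1+ax)=0$ for some $a,m$). For $\ell \ge 1$, fix $x_0 \in \gA$; by Theorem~\ref{thDKA} one has $\Kdim(\gA/\rK_\gA(x_0)) \le \ell - 1$, so the inductive hypothesis applied to the images of $x_1, \ldots, x_\ell$ supplies $a_i \in \gA$ and $m_i \ge 1$ ($1 \le i \le \ell$) such that the nested expression
\[
P := x_1^{m_1}\bigl(x_2^{m_2}\bigl(\cdots(x_\ell^{m_\ell}(1 + a_\ell x_\ell)) \cdots\bigr) + a_1 x_1\bigr)
\]
lies in $\rK_\gA(x_0) = x_0\gA + (\sqrt{0}:x_0)$. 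Writing $P = \alpha x_0 + c$ with $(c x_0)^N = 0$ and expanding the binomial, $P^N = c^N + x_0 R$ for some $R \in \gA$; multiplying by $x_0^N$ yields $x_0^N P^N = (c x_0)^N + x_0^{N+1} R = x_0^{N+1} R$, i.e., $x_0^N\bigl(P^N + (-R)\,x_0\bigr) = 0$. Setting $m_0 := N$ and $a_0 := -R$ produces the desired identity of shape $(3)$, \emph{provided} $P^N$ retains the prescribed nested form.

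The main obstacle is exactly this last closure property: verifying that raising $P$ to a positive integer power keeps it in the same nested family of expressions in $x_1, \ldots, x_\ell$. I would handle it by a secondary induction on $\ell$. For $\ell = 1$, $(x_1^{m_1}(1 + a_1 x_1))^N = x_1^{m_1 N}(1 + a_1 x_1)^N = x_1^{m_1 N}(1 + a_1' x_1)$, since $(1 + a_1 x_1)^N - 1 \in x_1 \gA$. For $\ell \ge 2$, writing $P = x_1^{m_1}(Q + a_1 x_1)$ with $Q$ nested in $x_2, \ldots, x_\ell$, I would expand $(Q + a_1 x_1)^N = Q^N + a_1' x_1$ where $a_1' := \sum_{k=1}^N \binom{N}{k} Q^{N-k} a_1^k x_1^{k-1}$, and invoke the secondary inductive hypothesis on $Q$ to conclude that $P^N = x_1^{m_1 N}(Q^N + a_1' x_1)$ has the required nested shape, with exponents replaced by $m_i N$ and coefficients adjusted accordingly.
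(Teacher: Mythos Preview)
Your cycle $(3)\Rightarrow(2)\Rightarrow(1)$ coincides with the paper's proof: the telescoping $b_\ell=1+a_\ell x_\ell$, $b_{i-1}=x_i^{m_i}b_i+a_{i-1}x_{i-1}$ is exactly what the paper writes for $(3)\Rightarrow(2)$, and your invocation of Theorem~\ref{propDK1}(4) on $\ZarA$ with generators $\{\wi x:x\in\gA\}$ is the paper's $(2)\Rightarrow(1)$.

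For $(1)\Rightarrow(3)$ your argument is correct but takes a different route. You induct via the \emph{upper} boundary $\gA\ul{x_0}=\gA/\rK_\gA(x_0)$, which places the new variable at the \emph{outermost} layer of the nested expression; this forces you to prove the closure property that $P^N$ keeps the nested shape, handled by your secondary induction. The paper instead inducts via the \emph{lower} boundary $\gA\bal{x_{\ell+1}}=S^{-1}\gA$ with $S=x_{\ell+1}^{\NN}(1+x_{\ell+1}\gA)$: by the inductive hypothesis, $\Kdim(S^{-1}\gA)\le\ell$ unwinds (after clearing denominators) to an identity $x_0^{m_0}(\cdots(x_\ell^{m_\ell}(s+a_\ell x_\ell)+\cdots)+a_0x_0)=0$ in $\gA$ for some $s\in S$, and substituting $s=x_{\ell+1}^{m_{\ell+1}}(1+a_{\ell+1}x_{\ell+1})$ directly yields the $(\ell{+}1)$-identity with the new variable at the \emph{innermost} layer. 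This avoids the power-closure lemma entirely. Your approach is self-contained and works, but the paper's localization trick is shorter precisely because the structure of $\rS\bal{x}$ already matches the innermost factor $x^{m}(1+ax)$.
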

\begin{proof}
Let us show the equivalence of (1) and (3). 
Let us use for example for (1) the characterization via localized rings  $\gA\bal x$.
The equivalence for dimension $0$ is clear. Suppose the thing established for dimension $\leq \ell$. We then see that $S^{-1}\gA$ has dimension
  $\leq \ell$ \ssi  for all
$x_0,\ldots ,x_\ell\in \gA$ there exist $a_0,\ldots,a_\ell\in \gA$,
$m_0,\ldots,m_\ell\in\NN$  and $s\in S$ such that
\[ x_0^{m_0}(x_1^{m_1}\cdots(x_\ell^{m_\ell} (s+a_\ell x_\ell) + 
\cdots+a_1x_1) + a_0x_0)=0.
\]
We have $(2) \Rightarrow (1)$  by considering the characterization (4) of the Krull dimension of a distributive lattice given in \thref{propDK1} 
and by applying it to the Zariski lattice $\ZarA$ with  
$S=\sotq{\DA(x)}{x\in\gA}$.
We could also verify by a direct calculation that $(2) \Rightarrow 
(3)$.
\end{proof}

\rem The system of inequalities (\ref{eqCG}) in Item (2) of the previous corollary establishes an interesting and symmetrical relation between the two sequences 
$(b_0,\ldots ,b_\ell)$ and $(x_0,\ldots ,x_\ell)$. When $\ell=0$, 
it means $\DA(b_0)\vi\DA(x_0)=0$ and $\DA(b_0)\vu\DA(x_0)=1$, 
i.e.\  the two elements \elts~$\DA(b_0)$ and $\DA(x_0)$ are complementary.
We therefore introduce the following terminology: when two sequences 
$(b_0,\ldots ,b_\ell)$ and $(x_0,\ldots ,x_\ell)$ verify the inequalities (\ref{eqCG}) we will say that they are \textsl{complementary}.\eoe

\medskip 
Let us also point out that it is easy to establish constructively that 
$\Kdim(\gK[X_1,\ldots,X_n])=n$ when~$\gK$  is a field, or even a zero-dimensional ring (cf.~\cite{CL2003}).  One can also deal constructively with the Krull dimension of geometric rings
(the \pf \Klgs).

\medskip
\rem
We also have the following results (already proved for distributive lattices): 
\begin{itemize}
\item if $\gB$ is a quotient or a localized \ri  of $\gA$, then  
$\Kdim\,\gB\leq\Kdim\,\gA$;
\item if $(\fa_i)_{1\leq i\leq m}$ is a finite family of \ids of $\gA$ 
and
$\fa=\bigcap_{i=1}^m\fa_i$, then
$\Kdim(\gA/\fa)=\sup_i\Kdim(\gA/\fa_i)$;
\item if $(S_i)_{1\leq i\leq m}$ is a finite family of \moco of $\gA$ 
then
$\Kdim(\gA)=\sup_i\Kdim(\gA_{S_i})$;
\item in \clama we have
$\Kdim(\gA)=\sup_\fm\Kdim(\gA_{\fm})$, where $\fm$ runs over all the
\idemas.
\eoe
\end{itemize}

\medskip
\rem
We can illustrate Corollary \ref{corthDKA} 
above by introducing \gui{the iterated Krull boundary \id}.
For $x_0,\ldots ,x_n\in\gA$ consider the successive upper boundaries
\[
(\gA\ul{x_0})\ul{x_1},\, ((\gA\ul{x_0})\ul{x_1})\ul{x_2}, 
\hbox{ etc.},
\] 
and let $\rK_\gA[x_0,\ldots ,x_\ell]$
be the kernel of the canonical projection $\gA\to
(\cdots(\gA\ul{x_0}){\cdots})\ul{x_\ell}$.  Then we have 
$y\in\rK_\gA[x_0,\ldots,x_\ell]$ \ssi  $\Ex a_0, \ldots,  a_\ell\in \gT$ and $m_0,\ldots,m_\ell\in\NN$  verifying
\[  x_0^{m_0}(x_1^{m_1}\cdots(x_\ell^{m_\ell} (y+a_\ell x_\ell) + 
\cdots+a_1x_1)
+ a_0x_0) =0.
\]
And the Krull dimension is $\leq \ell$ \ssi  for all 
$x_0,\ldots,x_\ell\in\gA$ we have $1\in\rK_\gA[x_0,\ldots ,x_\ell]$. 
\eoe

\subsection{Heitmann dimensions of a \cori}
\subsubsection*{Heitmann spectrum}

The spectral space that Heitmann defined to replace the j-spectrum, i.e.\ the closure for the constructible topology of the maximal spectrum in
$\Spec\,\gA$,
corresponds to the following definition.
\begin{definition}
\label{defJspecA}
We call \textsl{Heitmann spectrum} of a \cori $\gA$ the 
subspace $\Jspec(\ZarA)$ of $\Spec\,\gA$. We also denote it by
$\Jspec\,\gA$. We denote  $\jspec(\ZarA)$ by 
$\jspec\,\gA$, i.e.\ the j-spectrum of the ring in the usual meaning.
\end{definition}

In \clama  \thref{thDK3} gives the \flw fact.
\begin{factc}
\label{factHSpecA}
For any \cori $\gA$, the Heitmann spectrum of $\gA$ 
is identified with the \sps  $\Spec(\HeA)$ (in the meaning of \trdis).
\end{factc}

We then have the elementary constructive pointfree definition  of the dimension introduced by Heitmann.
 
\begin{definition}
\label{defJdimA}
The Heitmann $\rJ$-dimension of $\gA$, denoted by $\Jdim\,\gA$, is the Krull dimension of $\Heit(\gA)$, in other words it is the $\Jdim$ of $\ZarA$.
\end{definition}

In \clama $\Jdim\,\gA$ is equal to the dimension of the spectral space 
$\Jspec\,\gA$, defined abstractly \gui{with points}.

We will denote the dimension of $\jspec\,\gA$ by $\jdim\,\gA$.

\medskip \rem
Let us specify the meaning of $\Jdim\,\gA\leq \ell$ in the case of commutative rings. Since this is the Krull dimension of
$\Heit\,\gA$, and the elements of $\Heit\,\gA$  are identified with the Jacobson radicals of \itfs, we obtain the following \carn.\\ 
$\forall x_0,\ldots,x_\ell\in \gA\;$ 
$\Ex \fa_0,\ldots,  \fa_\ell,$ \itfs of $\gA$ such that 
\[\begin{array}{rcl} 
x_0\,\fa_0  &  \subseteq  &  \JA(0)    \\ 
x_1\,\fa_1 &  \subseteq  &  \JA(\gen{x_0}+\fa_0)   \\
\vdots \;  &  \vdots &   \qquad \vdots   \\
x_{\ell}\,\fa_{\ell} &  \subseteq  &  
\JA(\gen{x_{\ell-1}}+\fa_{\ell-1})   \\
\gen{1} &  =  &  \JA(\gen{x_{\ell}}+\fa_{\ell})   
\end{array}\]
We apparently cannot avoid resorting to \itfs and this means that we do not obtain a \gui{first-order} definition.
\\
Note that each membership $x\in\JA(y_1,\ldots ,y_m)$ expresses itself by $\Tt z\in\gA,$ $1+xz$ is invertible modulo
$\gen{y_1,\ldots ,y_m}$, that is to say again 

\medskip  \hspace*{4em}
$\Tt z\in\gA \;\Ex t,u_1,\ldots ,u_m\in\gA, 
\;\;1=(1+xz)t+u_1y_1+\cdots 
+u_my_m.$  
\eoe

\subsubsection*{Heitmann boundaries and Heitmann dimension}

\begin{definition}
\label{defHdimA}
The Heitmann dimension of a commutative ring is the Heitmann dimension of its Zariski lattice.
\end{definition}

\begin{definition}
\label{defHei2} Let $\gA$ be a \cori, $x\in\gA$ and $\fj$ 
a \itf.
The \textsl{Heitmann boundary of $\fj$ in $\gA$} is the quotient  ring
$\gA/\rH_\gA(\fj)$ with
          \[\rH_\gA(\fj):=\fj+(\JA(0):\fj)\]
which is also called the \textsl{Heitmann boundary ideal of $\fj$ in 
$\gA$}.
We will also denote  $\rH_\gA(y_1,\ldots ,y_n)$ for  
$\rH_\gA(\gen{y_1,\ldots
,y_n})$, $\rH_\gA^x$ for $\rH_\gA(x)$ and $\gA_\rH^x$ for 
$\gA/\rH_\gA^x$.
\end{definition}

Thus an arbitrary element of $\rH_\gA(y_1,\ldots ,y_n)$  is written  
$\sum_i
a_iy_i+b$ with all the $by_i$ in~$\JA(0)$.

The following proposition follows from the good properties of the one-to-one correspondence~$\IZA$ (see Facts \ref{factSpecAzarA}, \ref{factQuoAT}, \ref{factTransporteurs}, and \ref{factRadJac}).
\begin{proposition}
\label{propBordH-TA}
For a \itf  $\fj$ the Heitmann boundary of $\fj$ in the meaning of \coris
and the one in the meaning of  \trdis agree. More precisely,
with $j=\DA(\fj)$ and $\gT=\ZarA$, we have: 
\[\IZA(\rH_\gA(\fj))=\rH_\gT(j),\;\;
\emph{and}\;\; \Zar(\gA/\rH_\gA(\fj))\simeq
\gT/(\rH_\gT(j)=0)=\gT_\rH^j.\]
\end{proposition}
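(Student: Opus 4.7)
The plan is to establish the ideal equality $\IZA(\rH_\gA(\fj)) = \rH_\gT(j)$ first, and then deduce the isomorphism of quotient lattices as an immediate consequence of Fact~\ref{factQuoAT}. Indeed, once we have $\IZA(\rH_\gA(\fj)) = \rH_\gT(j)$, applying Fact~\ref{factQuoAT} gives
\[
\Zar(\gA/\rH_\gA(\fj)) \;\simeq\; \gT/(\IZA(\rH_\gA(\fj))=0) \;=\; \gT/(\rH_\gT(j)=0) \;=\; \gT_\rH^j,
\]
so all the work concentrates on the first equality.

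The strategy for the ideal equality is to unfold the two definitions and then apply the compatibility facts for $\IZA$ one by one. On the ring side, $\rH_\gA(\fj) = \fj + (\JA(0):\fj)$; on the lattice side, $\rH_\gT(j) = \dar j \,\vu\, (\JT(0):j)$. Since the correspondence $J \mapsto \IZA(J)$ turns sums of ideals into joins of lattice ideals (because $\IZA(\fa+\fb) = \IZA(\fa) \vu \IZA(\fb)$, as both sides are the ideal generated by $\wi{\fa}\cup\wi{\fb}$ in $\ZarA$, and $\IZA(J) = \IZA(\sqrt{J})$ means we may freely radicalize before applying $\IZA$), it suffices to verify separately that $\IZA(\fj) = \dar j$ (immediate from $j = \DA(\fj)$) and that $\IZA(\JA(0):\fj) = \JT(0):j$.

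For this last piece, I would first observe that $\JA(0)$ is a radical ideal, so $(\JA(0):\fj) = (\JA(0):\sqrt{\fj})$; this follows from the standard calculation: if $x\fj \subseteq \JA(0)$ and $y^n \in \fj$, then $(xy)^n = x^{n-1}\cdot x y^n \in \JA(0)$, hence $xy \in \sqrt{\JA(0)} = \JA(0)$. Once both members are conductors between radical ideals, Fact~\ref{factTransporteurs} yields
\[
\IZA(\JA(0):\fj) = \IZA(\JA(0):\sqrt{\fj}) = \IZA(\JA(0)) : \IZA(\sqrt{\fj}).
\]
Then Fact~\ref{factRadJac} identifies $\IZA(\JA(0))$ with $\rJ_\ZarA(\IZA(0)) = \JT(0)$, and $\IZA(\sqrt{\fj}) = \dar j$, so the right-hand side equals $\JT(0):\dar j$, which is just $\JT(0):j$ since the conductor of a principal ideal depends only on its generator.

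The only genuinely delicate step is the constructive treatment of the conductor, where Fact~\ref{factTransporteurs} is stated for radical ideals; hence the small detour through $\sqrt{\fj}$ is unavoidable. Apart from that, the proof is a bookkeeping exercise in chaining the dictionary lemmas (Facts~\ref{factSpecAzarA}, \ref{factQuoAT}, \ref{factTransporteurs}, \ref{factRadJac}) that have already been carefully set up earlier in the section.
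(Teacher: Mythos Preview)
Your proof is correct and follows exactly the approach the paper indicates: the paper's proof is a one-liner citing Facts~\ref{factSpecAzarA}, \ref{factQuoAT}, \ref{factTransporteurs} and \ref{factRadJac}, and your argument is a faithful unpacking of how these four facts combine to yield the ideal equality and then the quotient isomorphism. The small detour through $\sqrt{\fj}$ you flag as ``delicate'' is in fact already absorbed in the statement of Fact~\ref{factTransporteurs} (which asserts $\fa:\fb=\fa:\fB$ for $\fa=\DA(\fA)$ radical), so that step is even more routine than you suggest.
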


As corollary of Propositions \ref{propHdimgen} and 
\ref{propBordH-TA}
we get the \flw result.

\begin{proposition}
\label{lemDHA} Let  $\gA$ be a \cori  and  
$\ell$ a nonnegative integer.
\Propeq
\begin{enumerate}
\item The Heitmann dimension  of $\gA$ is $\leq \ell$.
\item For all $x\in\gA$,
$\Hdim(\gA/\rH_\gA(x))\leq \ell-1$.
\item For all \itfs $\fj$ of $\gA$, $\Hdim(\gA/\rH_\gA(\fj)) \leq 
\ell-1$.
\end{enumerate}
\end{proposition}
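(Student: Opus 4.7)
The proof is essentially a dictionary translation: transport the statement about $\gA$ to a statement about $\gT = \ZarA$ via the correspondence $\IZA$ and then invoke the two cited propositions.

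First, by Definition \ref{defHdimA}, $\Hdim\,\gA = \Hdim\,\gT$ where $\gT = \ZarA$. By Definition \ref{defHdim} (applied to $\gT$), we have $\Hdim\,\gT \leq \ell$ if and only if $\Hdim(\gT_\rH^{j}) \leq \ell-1$ for every $j \in \gT$. Now the set $S = \sotq{\wi{x}}{x \in \gA}$ generates $\gT = \ZarA$ (every element of the Zariski lattice is a finite join of such $\wi{x}$'s). So Proposition~\ref{propHdimgen} yields the equivalence
\[
\bigl(\forall j \in \gT\;\Hdim(\gT_\rH^{j}) \leq \ell-1\bigr) \;\Longleftrightarrow\; \bigl(\forall x \in \gA\;\Hdim(\gT_\rH^{\wi{x}}) \leq \ell-1\bigr).
\]

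Second, the identification $\Zar(\gA/\rH_\gA(\fj)) \simeq \gT_\rH^{j}$ with $j = \DA(\fj)$ provided by Proposition~\ref{propBordH-TA} gives, by Definition~\ref{defHdimA} applied to the quotient ring, the equality $\Hdim(\gA/\rH_\gA(\fj)) = \Hdim(\gT_\rH^{j})$. In particular $\Hdim(\gA/\rH_\gA(x)) = \Hdim(\gT_\rH^{\wi{x}})$ for any $x \in \gA$.

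Putting these together: (1) $\Leftrightarrow$ ``$\forall j\in\gT,\;\Hdim(\gT_\rH^j)\leq\ell-1$'' $\Leftrightarrow$ (2). For (1) $\Rightarrow$ (3), take an arbitrary finitely generated ideal $\fj$ and set $j=\DA(\fj) \in \gT$; then $\Hdim(\gA/\rH_\gA(\fj)) = \Hdim(\gT_\rH^{j}) \leq \ell-1$. The implication (3) $\Rightarrow$ (2) is immediate by specializing $\fj$ to a principal ideal. There is no real obstacle here; the only thing to check carefully is that $S = \{\wi{x} : x \in \gA\}$ is indeed a generator system of $\ZarA$ (which is just the definition of $\ZarA$) so that Proposition~\ref{propHdimgen} applies.
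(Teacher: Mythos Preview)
Your proof is correct and follows exactly the route the paper intends: the paper merely states that the result is a corollary of Propositions~\ref{propHdimgen} and~\ref{propBordH-TA}, and you have simply unpacked that claim by transporting the problem to $\gT=\ZarA$ via Definition~\ref{defHdimA}, applying Proposition~\ref{propHdimgen} with the generating set $S=\{\wi{x}:x\in\gA\}$, and using the identification $\Zar(\gA/\rH_\gA(\fj))\simeq\gT_\rH^{\DA(\fj)}$ from Proposition~\ref{propBordH-TA}.
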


\rem So the Heitmann dimension  of $\gA$ can be defined by induction in the \flw way.
\begin{itemize}
\item $\Hdim\,\gA=-1$ \ssi  $1_\gA=0_\gA$.
\item For $\ell\geq 0$, $\Hdim\,\gA\leq \ell$ \ssi  for all 
$x\in\gA$,
$\Hdim(\gA/\rH_\gA(x))\leq \ell-1$.
\end{itemize}
Let us describe more explicitly this \dfn.
We introduce the \textsl{iterated  Heitmann boundary \ids} 
$\rH[\gA;x_0,\ldots,x_k]$.
For $x_1,\ldots ,x_n\in\gA$ let the \flw Heitmann boundary \ris be  
\[
\gA_\rH[x_0]=\gA_\rH^{x_0},\,
\gA_\rH[x_0,x_1]=(\gA_\rH^{x_0})_\rH^{x_1},\,
\gA_\rH[x_0,x_1,x_2]=((\gA_\rH^{x_0})_\rH^{x_1})_\rH^{x_2}, 
\hbox{ etc}.  
\]
The kernel of the canonical projection 
$\gA\to \gA_\rH[x_0,\ldots ,x_k]$ is $\rH[\gA;x_0,\ldots,x_k]=\rH_\gA[x_0,\ldots ,x_k]$.  
To give a good description of these \ids we use the  notation 
\[\bidule{z,x,a,y,b}=1+(1+(z+ax)xy)b.\]
So we have:
\begin{itemize}
\item $z\in\rH_\gA[x_0]$ \ssi  \[\Ex a_0  \ \Tt y_0 \ \Ex
b_0,\;\bidule{z,x_0,a_0,y_0,b_0}=0\,;\]
\item  $z\in\rH_\gA[x_0,x_1]$ \ssi 
 \[\Ex a_1  \ \Tt y_1 \ \Ex b_1\ 
\Ex a_0  \
\Tt y_0 \ \Ex b_0, 
\;\bidule{\bidule{z,x_1,a_1,y_1,b_1},x_0,a_0,y_0,b_0}=0\,;\]
\item  $z\in\rH_\gA[x_0,x_1,x_2]$ \ssi 
\[\Ex a_2  \ \Tt y_2 \ \Ex b_2 \ \Ex a_1  \ \Tt y_1 \ \Ex b_1 \ \Ex 
a_0  \ \Tt
y_0 \ \Ex
b_0,\;\bidule{\bidule{\bidule{z,x_2,a_2,y_2,b_2},x_1,a_1,y_1,b_1},x_0,a_0,y_0,
b_0}=0\,.\]
\end{itemize}
And so on. And the Heitmann dimension  is $\leq \ell$ \ssi  
for all
$x_0,\ldots ,x_\ell\in\gA$ we have $1\in\rH_\gA[x_0,\ldots ,x_\ell]$.
\eoe

\begin{proposition}
\label{propHei2} Let $\fj=\gen{j_1,\ldots ,j_n}$ be a \itf and
$\fJ=\JA(\fj)=\JA(j_1)\vu\cdots \vu\JA(j_n)$.  Then
$\Heit(\gA/\rH_\gA(\fj))$ is identified  with  a quotient \ri
of $(\HeA)\ul{\fJ}$. The two \ris are equal when  $\HeA$ is a \agH.
In \clama this is the case when $\Jspec\,\gA$ is \noe.
\end{proposition}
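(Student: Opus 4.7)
The plan is to deduce this proposition from the already-established analog at the level of distributive lattices, namely Proposition~\ref{propBHeitHeyt}, via the dictionary between commutative rings and their Zariski lattices that has been built up in this section.

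First I would set $\gT = \ZarA$ and $j = \DA(\fj) = \wi{j_1} \vu \cdots \vu \wi{j_n}$, so that by Proposition~\ref{propBordH-TA} we have a canonical isomorphism
\[
\Zar(\gA/\rH_\gA(\fj)) \;\simeq\; \gT_\rH^j.
\]
Taking Heitmann lattices on both sides and using Definition~\ref{defHeitA} gives
\[
\Heit(\gA/\rH_\gA(\fj)) \;=\; \He\bigl(\Zar(\gA/\rH_\gA(\fj))\bigr) \;\simeq\; \He(\gT_\rH^j).
\]
Now apply Proposition~\ref{propBHeitHeyt}(1) with $x = j$: the lattice $\He(\gT_\rH^j)$ is a quotient of $(\He(\gT))\ul{\hat{j}} = (\HeA)\ul{\hat{j}}$. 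It only remains to identify $\hat{j}$, the image of $j = \DA(\fj)$ in $\HeA$, with $\fJ$. But by Corollary~\ref{propHeitA} the projection $\ZarA \to \HeA$ sends $\DA(\fj)$ to $\JA(\fj)$, and by the very same corollary $\JA(\fj) = \JA(j_1) \vu \cdots \vu \JA(j_n) = \fJ$. This establishes the first assertion.

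For the second assertion, the same chain of identifications reduces us to Proposition~\ref{propBHeitHeyt}(2): when $\He(\gT) = \HeA$ is an \agH, the quotient $\He(\gT_\rH^j) \to (\He(\gT))\ul{\hat{j}}$ is an isomorphism, giving the claimed equality. Finally, for the classical statement, Fact~\ref{factHSpecA} identifies $\Jspec\,\gA$ with $\Spec(\HeA)$ as spectral spaces, so if $\Jspec\,\gA$ is \noe then so is the lattice $\OQC(\Spec(\HeA)) \simeq \HeA$; any \noe distributive lattice is an \agH (every finitely generated ideal being principal, as recalled in the subsection on \noe \trdis), hence the hypothesis of the second assertion applies.

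The content is essentially bookkeeping once the correct dictionary is in place; the only point that requires care is the identification of $\hat{j}$ with $\fJ$ inside $\HeA$, which is not automatic but falls out immediately from Corollary~\ref{propHeitA}. I do not expect any genuine obstacle beyond verifying that the isomorphism of Proposition~\ref{propBordH-TA} is compatible with the passage to $\He(\cdot)$, which is formal because $\He$ is defined from the lattice structure alone.
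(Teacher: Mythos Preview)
Your proposal is correct and follows exactly the paper's approach: the paper's proof consists of the single sentence ``The proof has been given for an arbitrary \trdi replacing $\ZarA$ (Proposition~\ref{propBHeitHeyt}),'' and you have carefully unpacked the dictionary steps (Proposition~\ref{propBordH-TA}, Corollary~\ref{propHeitA}, Fact~\ref{factHSpecA}) that make this citation go through.
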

\begin{proof}
The proof has been given for an arbitrary \trdi replacing $\ZarA$
(Proposition~\ref{propBHeitHeyt}).
\end{proof}

\rem We have already proved (for \trdis) the \flw results:
\begin{itemize}
\item we have always  $\Hdim\,\gA\leq \Jdim\,\gA\leq 
\Kdim(\gA/\JA(0))$;
\item if $(\fa_i)_{1\leq i\leq m}$ is a finite family of \ids of $\gA$ 
and
$\fa=\bigcap_{i=1}^m\fa_i$, then
$\Hdim(\gA/\fa)=\sup_i\Hdim(\gA/\fa_i)$;
\item if $\HeA$ is a \agH\footnote{In particular in \clama if  $\HeA$ is 
\noe.}
we have $\Hdim\,\gA=\Jdim\,\gA$;
\item (in \clama) if $\Max\,\gA$ is \noe, then  
$\jspec\,\gA=\Jspec\,\gA$ and 
$\Hdim\,\gA=\Jdim\,\gA=\jdim\,\gA$;
\item $\Hdim\,\gA\leq 0\Leftrightarrow\Jdim\,\gA\leq 0 \Leftrightarrow\Kdim(\gA/\JA(0))\leq 0$.
\eoe
\end{itemize}

\medskip Note that the lattice  $\HeA$ is a \agH \ssi  we have the \flw \prt:
\[\Tt \fa,\fb\in\Heit\,\gA\;\Ex\fc\in\Heit\,\gA\;(\fc\fb\subseteq\fa\;\mathrm{and}\;
\Tt x\in\gA\;(x\fb\subseteq\fa\Rightarrow x\in\fc))\]
($\fa=\JA(a_1,\ldots ,a_n)$, $\fb=\JA(b_1,\ldots ,b_m)$, 
$\fc=\JA(c_1,\ldots
,c_\ell)$).

\addcontentsline{toc}{section}{References}
\markboth{References}{References}

\small

\bibliographystyle{plainnat}

\normalsize

\normalsize
\stopcontents[english]
\endgroup

\clearpage

\newpage
~
\thispagestyle{empty}


\clearpage
\newpage

\setcounter{page}{1}
\renewcommand\thepage{F\arabic{page}}\renewcommand\theHsection{F\arabic{section}}

\clearpage
\setcounter{section}{0}
\setcounter{equation}{0}

\selectlanguage{french}
\def\frenchproofname{\textsl{Démonstration}}
\FrenchFootnotes

\setlistindentFB
\let\list\listFB
\let\itemize\itemizeFB
\setlabelitemsFB
\let\enumerate\enumerateFB
\let\description\descriptionFB



\theoremstyle{plain}
\newtheorem{ftheorem}{Théorème}[subsection]
\newtheorem{fthdef}[ftheorem]{Théorème et définition}
\newtheorem{fpstf}[ftheorem]{Positivstellensatz formel}
\newtheorem{fpst}[ftheorem]{Positivstellensatz}
\newtheorem{flemma}[ftheorem]{Lemme}
\newtheorem{fcorollary}[ftheorem]{Corolaire}
\newtheorem{fconjecture}[ftheorem]{Conjecture}
\newtheorem{fproposition}[ftheorem]{Proposition}
\newtheorem{fpbu}[ftheorem]{Problème universel}
\newtheorem{fprpta}[ftheorem]{Propriétés attendues}
\newtheorem{fpropdef}[ftheorem]{Proposition et définition}
\newtheorem{ffact}[ftheorem]{Fait}
\newtheorem{fplcc}[ftheorem]{Principe local-global concret}

\newtheorem{ftheoremc}[ftheorem]{Th\'{e}or\`{e}me\etoz}
\newtheorem{flemmac}[ftheorem]{Lemme\etoz}
\newtheorem{fcorollaryc}[ftheorem]{Corolaire\etoz}
\newtheorem{fproprietec}[ftheorem]{Propri\'{e}t\'{e}\etoz}
\newtheorem{fpropositionc}[ftheorem]{Proposition\etoz}
\newtheorem{ffactc}[ftheorem]{Fait\etoz}
\newtheorem{fvalsatz}[ftheorem]{\vst}

\theoremstyle{definition}
\newtheorem{fconvention}[ftheorem]{Convention}
\newtheorem{fdefinition}[ftheorem]{Définition}
\newtheorem{fdfni}[ftheorem]{Définition informelle}
\newtheorem{fdefinitions}[ftheorem]{Définitions}
\newtheorem{fnotation}[ftheorem]{Notation}
\newtheorem{fproblem}[ftheorem]{Problème}
\newtheorem{fquestion}[ftheorem]{Question}
\newtheorem{fquestions}[ftheorem]{Questions}
\newtheorem{fcontext}[ftheorem]{Contexte}
\newtheorem{fdefinitionc}[ftheorem]{Définition\etoz}
\newtheorem{fdefinota}[ftheorem]{Définition et notation}

\theoremstyle{remark}
\newtheorem{fexample}[ftheorem]{Exemple}
\newtheorem{fexamples}[ftheorem]{Exemples}
\newtheorem{fnotes}[ftheorem]{Notes}
\newtheorem{fremark}[ftheorem]{Remarque}
\newtheorem{fremarks}[ftheorem]{Remarques}
\newtheorem{fcomment}[ftheorem]{Commentaire}



\newcommand \Vrai {\mathsf{Vrai}}
\newcommand \Faux {\mathsf{Faux}}

\newcommand  \rem {\noindent \textsl{Remarque.} }
\newcommand  \rems {\noindent \textsl{Remarques.}  }
\newcommand  \comm {\noindent \textsl{Commentaire.}  }
\newcommand  \exl  {\noindent \textsl{Exemple.}}

\newcommand\oge{\leavevmode\raise.3ex\hbox{$\scriptscriptstyle\langle\!\langle\,$}}
\newcommand\feg{\leavevmode\raise.3ex\hbox{$\scriptscriptstyle\,\rangle\!\rangle$}}
\newcommand\gui[1]{\oge{#1}\feg}

\renewcommand \grave{\mathaccent18}    

\newcommand \recu {récurrence\xspace} 
\newcommand \hdr {hypothèse de \recu}
\newcommand \cad {c'est-à-dire\xspace} 
\newcommand \Cad {C'est-à-dire\xspace} 
\newcommand \cade {c'est-à-dire encore\xspace} 
\newcommand \ssi {si, et seulement si, }
\newcommand \cnes {condition nécessaire et
suffisante\xspace} 
\newcommand \spdg {sans perte de généralité\xspace} 
\newcommand \Propeq {Les propriétés suivantes sont
équivalentes:}
\newcommand \propeq {les propriétés suivantes sont
équivalentes:}
\newcommand \disept {17$^{\mathrm{\grave{e}me}}$ problème de 
Hilbert\xspace}


\newcommand \Amo {$\gA$-module\xspace} 
\newcommand \Amos {$\gA$-modules\xspace} 

\newcommand \agB {\alg de Boole\xspace} 
\newcommand \agBs {\algs de Boole\xspace} 

\newcommand \agH {\alg de Heyting\xspace} 
\newcommand \agHs {\algs de Heyting\xspace}

\newcommand \alg {algèbre\xspace} 
\newcommand \algs {algèbres\xspace} 

\newcommand \auto {automorphisme\xspace} 
\newcommand \autos {automorphismes\xspace}

\newcommand \cac {corps algébriquement clos\xspace} 

\newcommand \carn {caractérisation\xspace} 

\newcommand \coli {combinaison linéaire\xspace} 
\newcommand \colis {combinaisons linéaires\xspace} 

\newcommand \com {comaximaux\xspace} 
\newcommand \comz {comaximaux}

\newcommand \ddk{dimension de Krull\xspace} 

\newcommand \dfn{définition\xspace} 
\newcommand \dfns{définitions\xspace} 

\newcommand \egmt {également\xspace} 

\newcommand \egt {égalité\xspace} 
\newcommand \egts {égalités\xspace} 

\newcommand \elr{élémentaire\xspace} 
\newcommand \elrs{élémentaires\xspace} 

\newcommand \elt{élément\xspace} 
\newcommand \elts{éléments\xspace} 

\def \endo {endomorphisme\xspace} 
\def \endos {endomorphismes\xspace} 

\newcommand \entrel {relation implicative\xspace} 
\newcommand \entrels {relations implicatives\xspace} 

\newcommand \eqvc {équivalence\xspace}

\newcommand \gtr{générateur\xspace} 
\newcommand \gtrs{générateurs\xspace}

\newcommand \homo {homomorphisme\xspace} 
\newcommand \homos {homomorphismes\xspace} 

\newcommand \id {idéal\xspace} 
\newcommand \ids {idéaux\xspace} 

\newcommand \idema {\id maximal\xspace} 
\newcommand \idemas {\ids maximaux\xspace} 

\newcommand \idf {idéal de Fitting\xspace} 
\newcommand \idfs {idéaux de Fitting\xspace} 

\newcommand \idep {\id premier\xspace} 
\newcommand \ideps {\ids premiers\xspace} 

\newcommand \idemi {\idep minimal\xspace} 
\newcommand \idemis {ideps minimaux\xspace} 

\newcommand \iso {isomorphisme\xspace} 
\newcommand \isos {isomorphismes\xspace} 

\newcommand \itf {\id \tf}
\newcommand \itfs {\ids \tf}

\newcommand \thref[1] {\tho~\ref{#1}}
\newcommand \paref[1] {page~\pageref{#1}}

\newcommand \mo {mono\"{\i}de\xspace} 
\newcommand \moco {\mos \com}
\newcommand \mos {mono\"{\i}des\xspace} 

\newcommand \ndz {non diviseur de zéro\xspace} 
\newcommand \ndzs {non diviseurs de zéro\xspace} 

\newcommand \nst {Nullstellensatz\xspace} 
\newcommand \nsts {Nullstellens\"atze\xspace} 

\newcommand \odz {ouvert de Zariski\xspace} 

\newcommand \oqc {ouvert \qc}
\newcommand \oqcs {ouverts \qcs}

\newcommand \pf {de présentation finie\xspace} 
\newcommand \pfz {de présentation finie}

\newcommand \pol {polynôme\xspace} 
\newcommand \pols {polynômes\xspace} 

\newcommand \polcar
{\pol caractéristique\xspace} 

\newcommand \qc {quasi-compact\xspace} 
\newcommand \qcs {quasi-compacts\xspace}

\newcommand \rdl {relation de dépendance linéaire\xspace} 
\newcommand \rdls {relations de dépendance linéaire\xspace} 

\newcommand \rdi {relation de dépendance intégrale\xspace} 
\newcommand \rdis {relations de dépendance intégrale\xspace} 

\newcommand \sad {structure algébrique dynamique\xspace} 
\newcommand \sads {structures algébriques dynamiques\xspace} 

\newcommand \sps {espace spectral\xspace} 
\newcommand \ssps {sous-\sps} 
\newcommand \spss {espace spectraux\xspace} 
\newcommand \sspss {sous-\spss} 

\newcommand \tf {de type fini\xspace} 
\newcommand \trdi {treillis distributif\xspace} 
\newcommand \trdis {treillis distributifs\xspace}

\newcommand \Tho {Théorème\xspace} 
\newcommand \tho {théorème\xspace} 
\newcommand \thos {théorèmes\xspace}

\newcommand \pst {Positivstellensatz\xspace} 


\newcommand \cof {constructif\xspace} 
\newcommand \cofs {constructifs\xspace} 

\newcommand \cov {constructive\xspace} 
\newcommand \covs {constructives\xspace} 

\newcommand \coma {\maths\covs}
\newcommand \clama {\maths classiques\xspace} 

\renewcommand \cot {constructivement\xspace} 

\newcommand \LLPO{{\bf LLPO}}

\newcommand \maths {mathématiques\xspace} 
\renewcommand \math {mathématique\xspace} 

\newcommand \prco {preuve \cov}
\newcommand \prcos {preuves \covs}

\newcommand \pte {principe du tiers exclu\xspace} 

\newcommand \tcg {théorème de complétude de G\"odel 
}
\newcommand \tcgz {théorème de complétude de 
G\"odel}
\newcommand \acgz {axiome de complétude de G\"odel}
\newcommand \Tcgi {Le \tcg implique le résultat
suivant.\xspace}

\newcounter{MF}
\newcommand\stMF{\stepcounter{MF}}

\newcommand{\lec}{\stMF\ifodd\value{MF}lecteur \else 
lectrice \fi}
\newcommand{\lecz}{\stMF\ifodd\value{MF}lecteur\else lectrice\fi}

\newcommand{\lecs}{\stMF\ifodd\value{MF}lecteurs \else 
lectrices \fi}
\newcommand{\lecsz}{\stMF\ifodd\value{MF}lecteurs\else 
lectrices\fi}

\newcommand{\alec}{\stMF\ifodd\value{MF}au lecteur \else%
à la lectrice \fi}
\newcommand{\alecz}{\stMF\ifodd\value{MF}au lecteur\else%
à la lectrice\fi}

\newcommand{\dlec}{\stMF\ifodd\value{MF}du lecteur \else%
de la lectrice \fi}
\newcommand{\dlecz}{\stMF\ifodd\value{MF}du lecteur\else%
de la lectrice\fi}

\newcommand{\llec}{\stMF\ifodd\value{MF}le lecteur \else la lectrice \fi}
\newcommand{\llecz}{\stMF\ifodd\value{MF}le lecteur\else la lectrice\fi}

\newcommand{\Llec}{\stMF\ifodd\value{MF}Le lecteur \else La lectrice \fi}

\newcommand{\lui}{\ifodd\value{MF}lui \else
elle \fi}
\newcommand{\luiz}{\ifodd\value{MF}lui\else
elle\fi}

\newcommand{\celui}{\ifodd\value{MF}celui \else
celle \fi}

\newcommand{\ceux}{\ifodd\value{MF}ceux \else
celles \fi}

\newcommand{\er}{\ifodd\value{MF}er \else
ère \fi}

\newcommand{\eux}{\ifodd\value{MF}eux \else
elles \fi}

\newcommand{\eUx}{\ifodd\value{MF}eux \else
euse \fi}

\newcommand{\leux}{\ifodd\value{MF}leux \else
leuse \fi}

\newcommand{\il}{\ifodd\value{MF}il \else
elle \fi}

\newcommand{\ien}{\ifodd\value{MF}ien \else
ienne \fi}

\newcommand{\e}{\ifodd\value{MF} \else e \fi}
\newcommand{\ez}{\ifodd\value{MF}\else e\fi}

\newcommand{\n}{\ifodd\value{MF}n \else nne \fi}
\newcommand{\nz}{\ifodd\value{MF}n\else nne\fi}

\makeatletter
\newcommand{\la}{\@ifstar{\ifodd\value{MF}le\else
la\fi}{\stMF\ifodd\value{MF}le \else la \fi}}
\makeatother

\thickmuskip = 7mu plus 2mu

\pagestyle{headings}
\patchcmd{\sectionmark}{\MakeUppercase}{}{}{}

\stMF
\startcontents[french]

\title{Dimension de Heitmann des treillis distributifs\\ et des anneaux commutatifs}
\author{Thierry Coquand, Henri Lombardi , Claude Quitté}

\date{
 \today \\[1em] Version corrigée des 4 premières sections de l'article\\ \citealt*{fCLQ2006}\\[.3em]
\normalsize  
}
\maketitle


\rdb
 \label{beginfrench}

\begin{abstract}
Cet article est une version corrigée des 4 premières sections de l'article
\citealt*{fCLQ2006}. 

Les sections 5 à 7 de l'article original sont traitées de manière un peu plus simple dans \citealt*{fACMC} (et \citealt*{fCACM} pour la version anglaise).

 Nous étudions la notion de dimension introduite par Heitmann dans
son article remarquable \citealt*{fHei84}, ainsi qu'une notion voisine, seulement implicite dans ses démonstrations.  Nous développons ceci d'abord dans le cadre général de la théorie des treillis distributifs et des espaces spectraux.   Nous appliquons ensuite cette problématique dans le
cadre de l'algèbre commutative.
\end{abstract}
\mni MSC 2000: 13C15, 03F65, 13A15, 13E05

\sni Mots clés :  Mathématiques constructives, \trdi, \agH, espace spectral, treillis de Zariski, spectre de Zariski, dimension de Krull, spectre maximal, treillis de Heitmann, spectre de Heitmann, dimensions de Heitmann.

\begin{center}
{\bf \large Avertissement} 
\end{center}

L'article original est paru aux \textsl{Publications mathématiques de Besançon. Algèbre et Théorie des Nombres.}
(2006), pages 57--100.

Nous corrigeons ici un certain nombre d'erreurs, répertoriées en post-scriptum page \pageref{postscriptum} à la fin du texte.
Nous donnons aussi des références bibliographiques supplémentaires.
 
Nous nous conformons à l'orthographe nouvelle recommandée (par exemple: à priori, corolaire, connaitre), et les \lecs et \lecs subissent l'alternance des sexes.

\small

\setcounter{tocdepth}{4}
\markboth{Table des matières}{Table des matières}

\printcontents[french]{}{1}{}
\normalsize

\markboth{Introduction}{Introduction}
\section*{Introduction} \label{sec Introduction}
\addcontentsline{toc}{section}{Introduction}

Nous étudions la notion de dimension introduite par Heitmann dans
son article \citealt*{fHei84}, ainsi qu'une notion voisine,
seulement implicite dans ses preuves.  Nous développons ceci d'abord
dans le cadre général de la théorie des \trdis et des espaces
spectraux.  Nous appliquons ensuite cette problématique dans le
cadre de l'\alg commutative.

Dans la dualité entre \trdis et espaces spectraux, le spectre de
Zariski d'un anneau commutatif correspond (comme l'a indiqué André Joyal dans
\citealt*{fJoy76}) au treillis des \ids qui sont radicaux d'\itfs.   Nous
montrons que l'espace spectral défini par Heitmann pour sa notion de
dimension correspond au treillis formé par les idéaux qui sont
radicaux de Jacobson d'\itfs.   Ceci nous permet d'obtenir une \dfn
\cov \elr de la dimension définie par Heitmann (que nous notons
$\Jdim$).  Nous introduisons une autre dimension, que nous appelons
dimension de Heitmann (et que nous notons $\Hdim$), qui est
\gui{meilleure} en ce sens que $\Hdim\leq \Jdim$ et qu'elle permet des
preuves par \recu naturelles.

Comme conséquences, on trouve dans \citealt*{fACMC} des versions \covs de certains \thos classiques
importants, dans leur version non noethérienne (souvent due
à Heitmann).

Les versions \covs de ce ces \thos s'avèrent en fin de compte plus
simples, et parfois plus générales, que les versions classiques
abstraites correspondantes.

En particulier il y a les versions non noethériennes des
\thos de Swan et de Serre (splitting off) obtenues pour la
première fois dans \citealt*{fCLQ2004} et dans \citealt*{fDuc2006}.

\smallskip
Naturellement, le principal avantage que nous voyons dans notre
traitement est son caractère tout à fait \elr.   En particulier
nous n'utilisons pas d'hypothèses \gui{non nécessaires} comme
l'axiome du choix et le principe du tiers exclu, inévitables pour
faire fonctionner les preuves classiques antérieures.

Enfin, le fait de s'être débarrassé de toute hypothèse
noethérienne est aussi non négligeable et permet de mieux voir
l'essence des choses.

\medskip En conclusion cet article peut être vu pour l'essentiel
comme une mise au point \cov de la théorie des espaces spectraux via
celle des \trdis,  avec une insistance particulière sur la dimension
de Heitmann, mal connue, qui a pourtant quelques applications marquantes en algèbre commutative.

Dans le texte qui suit les \thos, propositions et lemmes démontrés en \clama sont affectés d'une étoile. On signale de cette manière que la démonstration utilise des principes non \cofs. En général, une démonstration \cov est alors impossible car le résultat sous la forme indiquée implique un principe non \cof (presque toujours une utilisation du tiers exclu). Par exemple en \clama on peut toujours récupérer les points d'un espace spectral à partir du \trdi formé par ses \oqcs, mais ce n'est pas toujours possible d'un point de vue \cof.  

\medskip \rem Nous avons résolu de la manière suivante un
problème de terminologie qui se pose en rédigeant cet article.  Le
mot \gui{dualité} apparait à priori dans le contexte des \trdis
avec deux significations différentes.  Il y a d'une part la
dualité qui correspond au renversement de la relation d'ordre dans
un treillis.  D'autre part il y a la dualité entre treillis
distributifs et espaces spectraux, qui correspond à une
antiéquivalence de catégories.  Nous avons décidé de réserver
\gui{dualité} pour ce dernier usage.  Le terme \gui{treillis dual} a
donc été systématiquement remplacé par \gui{treillis
opposé}.  De même on a remplacé \gui{la notion duale} par
\gui{la notion renversée} ou par \gui{la notion opposée}, et
\gui{par dualité} par \gui{par renversement de l'ordre}.\eoe

\section{Treillis distributifs}
\label{fsecTRDI}

Les axiomes des \trdis peuvent être formulés avec des \egts
universelles concernant uniquement les deux lois $\vi$ et $\vu$ et les
deux constantes $0_\gT$ (l'\elt minimum du \trdi $\gT$) et $1_\gT$ (le
maximum).  La relation d'ordre est alors définie par $a\leq_\gT
b\;\Leftrightarrow\;a\vi b=a$.  On obtient ainsi une théorie
purement équationnelle, avec toutes les facilités afférentes. 
Par exemple on peut définir un \trdi par générateurs et
relations, la catégorie comporte des limites inductives (qu'on peut
définir par générateurs et relations) et des limites projectives
(qui ont pour ensembles sous-jacents les limites projectives
ensemblistes correspondantes).

Un ensemble totalement ordonné est un \trdi s'il possède un
maximum et un minimum.  On note ${\bf n}$ un ensemble totalement
ordonné à $n$ éléments, c'est un \trdi si $n\neq 0$.  Le
treillis $\Deux$ est le \trdi libre à 0 \gtr,  et $\Trois$ celui à
un \gtr. 

Pour tout \trdi $\gT$, si l'on remplace la relation d'ordre $x\leq_\gT
y$ par la relation symétrique $y\leq_\gT x$ on obtient le
\textsl{treillis opposé} $\gT\cir$ avec échange de $\vi$ et $\vu$
(on dit parfois \textsl{treillis dual}).

\subsection{Idéaux, filtres}

Si $\varphi :\gT\rightarrow \gT'$ est un morphisme de \trdis,  
$\varphi^{-1}(0)$
est appelé un \textsl{\id de $\gT$}. Un \id $\fII $ de $\gT$ est une 
partie de
$\gT$ soumise
aux  contraintes suivantes:
\begin{equation} \label{feqIdeal}
\left.
\begin{array}{rcl}
   & &  0 \in \fII    \\
x,y\in \fII & \Longrightarrow   &  x\vu y \in \fII    \\
x\in \fII ,\; z\in \gT& \Longrightarrow   &  x\vi z \in \fII    \\
\end{array}
\right\}
\end{equation}
(la dernière peut se réécrire $(x\in \fII ,\;y\leq x)\Rightarrow y\in 
\fII $).
Un \textsl{\id principal} est un \id engendré par un seul \elt $a$:
il est égal à
\begin{equation} \label{feqda}
\,\dar a=\sotq{x\in \gT}{x\leq a}.
\end{equation}
L'\id $\,\dar a$, muni des lois $\vi$ et $\vu$ de $\gT$, est un \trdi
dans lequel l'\elt maximum est~$a$.  L'injection canonique $\,\dar
a\rightarrow \gT$ \textsl{n'est pas} un morphisme de \trdis parce que
l'image de $a$ n'est pas égale à $1_\gT$.  Par contre
l'application surjective $\gT\rightarrow \,\dar a,\;x\mapsto x\vi a$
est un morphisme surjectif, qui munit $\dar a$ d'une
structure quotient.

\smallskip La notion opposée à celle d'\id est la notion de {\sl filtre}.  Le filtre principal engendré par $a$ est noté $\uar a$.

\smallskip L'\textsl{\id engendré} par une partie $J$ de $\gT$ est
$\cI_\gT(J)=\sotq{x\in\gT}{\Ex J_0\in \Pf(J),\,x\leq \Vu J_0}$. 
En conséquence \textsl{tout \itf est principal}.

Si $A$ et $B$ sont deux parties de $\gT$ on note
\begin{equation} \label{feqvuvi}
A\vu B=\sotq{a\vu b}{a\in A,\,b\in B}  \quad \mathrm{et}\quad  A\vi
B=\sotq{a\vi b}{a\in A,\,b\in B}.
\end{equation}

Alors l'\id engendré par deux \ids $\fa$ et $\fb$ est égal à
\begin{equation} \label{feqSupId}
\cI_\gT(\fa\cup \fb) = \fa\vu\fb =\sotq{z}{\exists 
x\in\fa,\,\exists
y\in\fb,\,z\leq x\vu y}\,.
\end{equation}

L'ensemble $\Idl(\gT)$ des \ids de $\gT$\footnote{En fait,
il faut introduire une restriction pour obtenir vraiment un ensemble, de façon à avoir un procédé bien défini de construction des \ids concernés.
Par exemple on peut considérer l'ensemble des idéaux obtenus à partir des idéaux principaux par itération certaines opérations prédéfinies, comme les réunions et intersections dénombrables.} forme lui même un \trdi pour
l'inclusion, avec pour borne inrérieure de $\fa$ et $\fb$ l'\id:
\begin{equation} \label{feqInfId}
\fa\cap \fb=\fa\vi\fb.
\end{equation}

Ainsi les opérations $\vu$ et $\vi$ définies en (\ref{feqvuvi})
correspondent au sup et au inf dans le treillis des \ids. 

On notera $\cF_\gT(S)$ le filtre de $\gT$ engendré par le sous
ensemble $S$.  
Quand on considère le treillis des filtres il faut
faire attention à ce que produit le renversement de la relation
d'ordre: $\ff\cap\ffg=\ff\vu\ffg$ est le inf de $\ff$ et $\ffg$,
tandis que leur sup est égal à $\cF_\gT(\ff\cup \ffg)=\ff\vi
\ffg$.

\medskip Le \textsl{treillis quotient de $\gT$ par l'\id $\fJ$}, noté
$\gT/(\fJ=0)$ est défini comme le \trdi engendré par les \elts de
$\gT$ avec pour relations: les relations vraies dans $\gT$ d'une part,
et les relations $x=0$ pour les $x\in \fJ$ d'autre part.  Il peut
aussi être défini par la relation de préordre
\[ a\preceq b\quad\Longleftrightarrow\quad a\leq_{\gT/(\fJ=0)}b 
\equidef \quad
\exists x\in \fJ \;\;a\leq  x\vu b
\]
Ceci donne
\[ a\equiv b\;\;\mod\;(\fJ=0)\quad \Longleftrightarrow  \quad \exists 
x\in \fJ
\;\;a\vu x=b\vu x
\]
et dans le cas du quotient par un \id principal $\,\dar a$ on obtient
$\gT/(a=0)\simeq\,\uar a$ avec le morphisme $y\mapsto y\vu a$ de $\gT$ 
vers
$\,\uar a$.

\subsubsection*{Transporteur, différence}
\addcontentsline{toc}{subsubsection}{Transporteur, différence}

Par analogie avec l'\alg commutative, si $\fb$  est un \id et  $A$ 
une partie de
$\gT$ on  notera
\begin{equation} \label{feqTrans}
\fb:A\eqdefi\sotq{x\in\gT}{\Tt a\in A\quad a\vi x\in  \fb}
\end{equation}
Si $\fa$ est l'\id engendré par $A$ on a $\fb:A=\fb:\fa$,
on l'appelle le \textsl{transporteur de $\fa$ dans $\fb$.}

On note aussi
$b:a$ l'\id
$ (\dar b):(\dar a)=\sotq{x\in\gT}{x\in\gT\;|\;x\vi a\leq  b}$.

La notion opposée est celle de \textsl{filtre différence de deux 
filtres}
\begin{equation} \label{feqDiff}
\ff\setminus\ff'\eqdefi\sotq{x\in\gT}{\Tt a\in \ff'\quad a\vu 
x\in  \ff}
\end{equation}
On note aussi
$b\setminus a$ le filtre
$ (\uar b)\setminus(\uar a)=\sotq{x\in\gT}{b\leq  x\vu a}$.

\subsubsection*{Radical de Jacobson}
\addcontentsline{toc}{subsubsection}{Radical de Jacobson}

Un \id $\fm$ d'un \trdi $\gT$ non trivial (i.e. distinct de $\Un$) est
dit \textsl{maximal} \hbox{si $\gT/(\fm=0)\,=\,\Deux$}, \cad si $1\notin\fm$ et
$\Tt x\in\gT\;(x\in\fm$ ou $\Ex y\in \fm \;x\vu y=1)$.

Il revient au même de dire qu'il s'agit d'un \id \gui{maximal
parmi les \ids stricts}.

En \clama on a le lemme suivant.
\begin{flemmac}
\label{flemHspec1}
Dans un \trdi $\gT\neq\Un$ l'intersection des \idemas est égale à 
l'\id
\[ \sotq{a\in\gT}{\forall x\in\gT \;(a\vu x = 1 \Rightarrow  
x=1)}.
\]
On l'appelle le \emph{radical de Jacobson de $\gT$}. On le note 
$\JT(0)$. \\
Plus généralement l'intersection des \idemas contenant un \id 
strict $\fJ$
est égale à l'\id
\begin{equation} \label{feqRJJ}
\JT(\fJ)\,=\, \sotq{a\in\gT}{\forall x\in\gT \;(a\vu x  = 1 
\Rightarrow \Ex
z\in \fJ \;\;z\vu x=1)}
\end{equation}
On l'appelle le \emph{radical de Jacobson de l'\id $\fJ$}. En 
particulier:
\begin{equation} \label{feqRJb}
\JT(\dar b)=\sotq{a\in\gT}{\forall x\in\gT \;(\,a\vu x = 1\; 
\Rightarrow
\;\;b\vu x=1\,)}
\end{equation}
\end{flemmac}
\begin{proof}
La deuxième affirmation résulte de la première en passant au
treillis quotient $\gT/(\fJ=0)$.  Voyons la première.  On montre que
$a$ est en dehors d'au moins un \idema \ssi $\Ex x\neq 1$ tel que
$a\vu x=1$.  Si c'est le cas, un \idema qui contient $x$ (il en existe
puisque $x\neq 1$) ne peut pas contenir $a$ car il contiendrait $a\vu
x$.  Inversement, si $\fm$ est un \idema ne contenant pas $a$, l'\id
engendré par $\fm$ et $a$ contient $1$.  Or cet \id est l'ensemble
des \elts majorés par au moins un $a\vu x$ où $x$ parcourt $\fm$.
\end{proof}

En \clama un \trdi est appelé \textsl{treillis de Jacobson} si tout
\idep est égal à son radical de Jacobson.  Comme tout \id est
l'intersection des \ideps qui le contiennent, cela implique que tout \id
est égal à son radical de Jacobson.

En \coma on  adopte les \dfns suivantes.

\medskip 
\begin{fdefinitions}[radical de Jacobson, treillis faiblement 
Jacobson]
\label{fdefJac} ~
\begin{enumerate}
\item Si $\fJ$ est un \id de $\gT$  son \textsl{radical de Jacobson} 
est défini
par l'\egt $(\ref{feqRJJ})$
(on ne fait pas l'hypothèse que  $\gT\neq\Un$).
On notera $\JT(a)$ pour  $\JT(\dar a)$.
\item Un \trdi est appelé un \textsl{treillis faiblement Jacobson} si 
tout \id
principal est égal à son radical de Jacobson, \cade si
\begin{equation} \label{feqTJac}
\Tt a,b\in\gT\;\;[\,(\forall x\in\gT\, (a\vu 
x=1 \;\Rightarrow\;b\vu 
x=1))\;\Rightarrow
\;a\leq b\,]
\end{equation}
\end{enumerate}
\end{fdefinitions}

On vérifie sans difficulté que $\JT(\fJ)$ est un \id et que
$1\in\JT(\fJ)\Leftrightarrow 1\in\fJ$.

\hum{Cela serait bien d'avoir une formulation \cov
  sans quantification sur \gui{l'ensemble} des \ids
  pour la notion de treillis
  de Jacobson. Encore que la notion la plus pertiente
  semble être celle de treillis faiblement Jacobson
  puisque le $\jspec$ doit être remplacé par le $\Jspec$}

\subsection{Quotients}

Un \textsl{\trdi quotient $\gT'$ de $\gT$} est donné par une relation 
binaire
$\preceq$ sur $\gT$ vérifiant les propriétés suivantes:
\begin{equation} \label{feqPreceq}
\left.
\begin{array}{rcl}
a\leq b&  \Longrightarrow  & a\preceq b   \\
a\preceq b,\,b\preceq c&  \Longrightarrow  & a\preceq c   \\
a\preceq b,\,a\preceq c&  \Longrightarrow  & a\preceq b\vi c   \\
b\preceq a,\,c\preceq a&  \Longrightarrow  & b\vu c\preceq a
\end{array}
\right\}
\end{equation}
L'\egt $a=_{\gT'}b$ (ou $a\approx b$) est définie par $a\approx b \iff a\preceq b \hbox{ et } b\preceq a$.

\begin{fproposition}[quotients d'un type particulier]
\label{fpropIdealFiltre} Soit $\gT$ un \trdi et
$(J,U)$ un couple de parties de $\gT$.
On considère le quotient $\gT'$ de $\gT$ défini par les
relations $x=0$ pour les $x\in J$ et $y=1$ pour les $y\in U$. Alors
  on a $a\leq_{\gT'}b$ \ssi
il existe une partie finie $J_0$ de $J$ et une partie finie $U_0$ de
$U$ telles que:
\begin{equation} \label{feqpropIdealFiltre}
a \vi \Vi U_0 \; \leq_\gT\; b \vu \Vu J_0
\end{equation}
Nous noterons $\gT/(J=0,U=1)$ ce treillis quotient $\gT'.$
\end{fproposition}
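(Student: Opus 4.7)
L'idée est de montrer que la relation $\preceq$ définie sur $\gT$ par la formule (\ref{feqpropIdealFiltre}) est exactement la relation $\leq_{\gT'}$. On procède en deux temps: d'abord on vérifie que $\preceq$ définit un quotient de $\gT$ qui envoie chaque $x\in J$ sur un élément $\preceq 0$ et chaque $y\in U$ sur un élément $\succeq 1$ (ce qui donne $a\preceq b\Rightarrow a\leq_{\gT'}b$ par minimalité du quotient); ensuite on montre que toute relation de préordre $\leq'$ faisant de $\gT$ un quotient dans lequel $J=\{0\}$ et $U=\{1\}$ satisfait $a\preceq b\Rightarrow a\leq' b$, ce qui fournit la réciproque $a\leq_{\gT'}b\Rightarrow a\preceq b$.

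Pour la première partie, on vérifie les quatre propriétés de (\ref{feqPreceq}). La réflexivité (et le fait que $a\leq b\Rightarrow a\preceq b$) est immédiate en prenant $J_0=U_0=\emptyset$. Les deux propriétés concernant les bornes supérieures et inférieures se voient aussi directement en prenant la réunion des témoins. Le point crucial est la transitivité: si $a\vi\Vi U_0\leq b\vu\Vu J_0$ et $b\vi\Vi U_1\leq c\vu\Vu J_1$, alors par distributivité
\[
a\vi\Vi(U_0\cup U_1)\leq (b\vu\Vu J_0)\vi \Vi U_1 = (b\vi\Vi U_1)\vu(\Vu J_0\vi\Vi U_1)\leq c\vu\Vu(J_0\cup J_1),
\]
ce qui donne $a\preceq c$. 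On constate ensuite que pour $x\in J$, en prenant $J_0=\{x\}$ et $U_0=\emptyset$ on a $x\preceq 0$, et pour $y\in U$, en prenant $J_0=\emptyset$ et $U_0=\{y\}$ on a $1\preceq y$.

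Pour la réciproque, soit $\leq'$ une relation de préordre satisfaisant (\ref{feqPreceq}) telle que $x\leq' 0$ pour $x\in J$ et $1\leq' y$ pour $y\in U$. Supposons $a\preceq b$, témoigné par $J_0,U_0$. Comme $1\leq' y$ pour chaque $y\in U_0$, on a $1\leq' \Vi U_0$; comme $x\leq' 0$ pour chaque $x\in J_0$, on a $\Vu J_0\leq' 0$. D'où
\[
a \,=\, a\vi 1 \,\leq'\, a\vi\Vi U_0 \,\leq\, b\vu\Vu J_0 \,\leq'\, b\vu 0 \,=\, b,
\]
ce qui conclut. La relation $\leq_{\gT'}$, qui par définition est la plus petite relation faisant de $\gT$ un quotient où $J$ s'envoie sur $\{0\}$ et $U$ sur $\{1\}$, coïncide donc avec $\preceq$. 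La seule subtilité technique réside dans la vérification par distributivité de la transitivité; le reste n'est qu'application directe des définitions.
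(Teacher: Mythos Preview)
Your argument is sound, and in fact the paper states this proposition without proof, so there is nothing to compare against. The verification that $\preceq$ satisfies~(\ref{feqPreceq}) is correct --- in particular the transitivity step via distributivity is exactly the point where the hypothesis that $\gT$ is distributive enters --- and your second part correctly shows that $\preceq$ is contained in every admissible preorder.

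One remark on exposition: you have the two parenthetical conclusions swapped. Showing that $\preceq$ is an admissible quotient preorder (your first part) yields $a\leq_{\gT'}b\Rightarrow a\preceq b$, since $\leq_{\gT'}$ is by definition the \emph{smallest} such preorder. Conversely, showing that $\preceq\,\subseteq\,\leq'$ for every admissible $\leq'$ (your second part), applied to $\leq'=\leq_{\gT'}$, yields $a\preceq b\Rightarrow a\leq_{\gT'}b$. The two parts together still give $\preceq\,=\,\leq_{\gT'}$, so the mathematics is unaffected; only the running commentary needs correction. Also, the step $a=a\vi 1\leq' a\vi\Vi U_0$ is not literally an instance of~(\ref{feqPreceq}); it follows because $a\leq 1\leq'\Vi U_0$ gives $a\leq'\Vi U_0$, whence $a\leq' a\vi\Vi U_0$ by the third clause of~(\ref{feqPreceq}). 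You may want to make that explicit.
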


\subsubsection*{Idéaux dans un quotient}
\addcontentsline{toc}{subsubsection}{Idéaux dans un quotient}

Le fait suivant résulte des \egts (\ref{feqIdeal}), (\ref{feqvuvi}),
(\ref{feqSupId}) et (\ref{feqInfId}).
\begin{ffact}
\label{ffactIdDansQuo}
Soit  $\pi\colon \gT\to\gL$ un treillis quotient. Alors $\fa\mt\pi(\fa)$ définit une fonction de $\Idl(\gT)$ vers~$\Idl(\gL)$ et $\fb\mt\pi^{-1}(\fb)$ définit une fonction de $\Idl(\gL)$ vers~$\Idl(\gT)$.
\begin{itemize}
\item La fonction $\fb\mt\pi^{-1}(\fb)$ est un 
morphisme  pour $\vu$ et $\vi$ (mais pas nécessairement pour~$\so0$).
\item La fonction $\fa\mt\pi(\fa)$ est un \homo surjectif de treillis.
\item Un \id de $\gT$ est de la forme $\pi^{-1}(\fb)$ \ssi il est 
saturé pour
la relation~$\approx$.
\item Résultats analogues pour les filtres.
\end{itemize}
\end{ffact}

Notez qu'en algèbre commutative, le morphisme de passage au 
quotient par un
idéal ne se comporte pas aussi bien pour les \ids 
puisqu'on peut très bien avoir 
$\fa+(\fb\cap\fc)\varsubsetneq
(\fa+\fb)\cap(\fa+\fc)$.

  Le lemme suivant donne quelques renseignements complémentaires 
pour les
quotients par un \id et par un filtre.

\begin{flemma}\label{flemIQT}
Soit $\fa$ un \id et $\ff$ un filtre de $\gT$.
\begin{enumerate}
\item Si $\gL=\gT/(\fa=0)$ alors la projection canonique 
$\pi\colon \gT\to\gL$ établit une bijection croissante entre les \ids  de $\gT$ contenant 
$\fa$ et les \ids de $\gL$. La bijection réciproque est fournie par 
$\fj\mapsto\pi^{-1}(\fj)$. En outre si $\fj$ est un \id de $\gL$, on obtient~\hbox{$\pi^{-1}(\rJ_\gL(\fj)) = \JT(\pi^{-1}(\fj))$}.
\item  Si $\gL=\gT/(\ff=1)$ alors la projection canonique 
$\pi\colon \gT\to\gL$
établit une bijection croissante entre les \ids $\fJ$ de $\gT$ 
vérifiant
\gui{$\Tt f \in \ff,\;\;\fJ:f=\fJ$} et les \ids de~$\gL$.
\end{enumerate}
\end{flemma}

\rem On notera que $\gT\mapsto\JT(0)$ n'est pas une opération 
fonctorielle. La deuxième affirmation du point \textsl{1} du lemme précédent, qui admet 
une preuve \cov directe,  s'explique facilement en \clama par le fait que, dans 
le cas très particulier du quotient par un \id,  les \idemas de $\gL$  contenant $\fj$
correspondent par $\pi^{-1}$ aux \idemas de~$\gT$ contenant~$\pi^{-1}(\fj)$.
\eoe

\subsubsection*{Recollement de treillis quotients}
\addcontentsline{toc}{subsubsection}{Recollement de treillis quotients}

En \alg commutative, si $\fa$ et $\fb$ sont deux \ids d'un anneau 
$\gA$
on a une \gui{suite exacte} de \Amos (avec $j$ et $p$ des \homos 
d'anneaux)
\[0\to\gA/(\fa\cap\fb)\vers{j}(\gA/\fa) \times 
(\gA/\fb)\vers{p}\gA/(\fa+\fb)\to 0
\]
qu'on peut lire en langage courant: le système de congruences  
$x\equiv a\;\mod\;\fa$, $x\equiv b\;\mod\;\fb$ admet une solution \ssi $a\equiv
b\;\mod\;\fa+\fb$ et dans ce cas la solution est unique modulo $\fa\cap\fb$.
Il est remarquable que ce \gui{\tho des restes chinois} se 
généralise à un système \textsl{quelconque} de congruences \ssi l'anneau est
\textsl{arithmétique} \cite[Théorème XII-1.6]{fACMC}, \cad si le treillis des \ids est distributif.
Le \tho des restes chinois \gui{contemporain} concerne le cas particulier d'une
famille d'\ids deux à deux comaximaux, et il fonctionne sans hypothèse sur l'anneau de base.

D'autres épimorphismes de la catégorie des anneaux commutatifs sont les
localisations. Et il y a un principe de recollement analogue au \tho des restes
chinois pour les localisations, extrêmement fécond (le principe 
local-global).

\smallskip De la même manière on peut récupérer un \trdi à partir
d'un nombre fini de ses quotients,
si l'information qu'ils contiennent est \gui{suffisante}. On peut 
voir ceci au choix comme une procédure de recollement (de passage du local au 
global), ou comme une version du \tho des restes chinois pour les \trdis.  Voyons 
les choses plus précisément.

\begin{fdefinition}
\label{fdefRecolTD}
Soit $\gT$ un \trdi,  $(\fa_i)_{i=1,\ldots n}$ (resp. 
$(\ff_i)_{i=1,\ldots n}$)
une famille finie d'\ids (resp. de filtres)  de $\gT$.  On dit que 
les \ids
$\fa_i$ \textsl{recouvrent $\gT$} si $\bigcap_i\fa_i=\so{0}$. De 
même on dit
que les filtres $\ff_i$ \textsl{recouvrent $\gT$} si 
$\bigcap_i\ff_i=\so{1}$.
\end{fdefinition}

Pour un \id $\fb$ nous écrivons $x\equiv y\;\mod\;\fb$ comme 
abréviation
pour  $x\equiv y\;\mod$ \hbox{$(\fb=0)$}.
\begin{ffact}
\label{ffactRecolTD}
Soit $\gT$ un \trdi,  $(\fa_i)_{i=1,\ldots, n}$ une famille finie 
d'\ids principaux \hbox{($\fa_i=\dar s_i$)}  de
$\gT$ et $\fa=\bigcap_i\fa_i$.
\begin{enumerate}
\item Si $(x_i)$ est une famille d'\elts de $\gT$ telle que pour 
chaque $i,j$ on
a $x_i\equiv x_j\;\mod\;\fa_i\vu\fa_j$, alors il existe un unique $x$ 
modulo
$\fa$ vérifiant:  $x\equiv x_i\;\mod\;\fa_i\;(i=1,\ldots ,n)$.
\item Notons $\gT_i=\gT/(\fa_i=0)$, 
$\gT_{ij}=\gT_{ji}=\gT/(\fa_i\vu\fa_j=0)$,
$\pi_i\colon \gT\to\gT_i$ et $\pi_{ij}:\gT_i\to\gT_{ij}$ les projections 
canoniques.
Si les $\fa_i$ recouvrent $\gT$, alors $(\gT,(\pi_i)_{i=1,\ldots, n})$  est 
la limite projective du diagramme 
\[((\gT_i)_{1\leq i\leq n},(\gT_{ij})_{1\leq  i<j\leq n};(\pi_{ij})_{1\leq i\neq j\leq n})
\] 
(voir la figure ci-après).
\item Soit maintenant $(\ff_i)_{i=1,\ldots, n}$ une famille finie de 
filtres principaux,
notons $\gT_i=\gT/(\ff_i=1)$, 
$\gT_{ij}=\gT_{ji}=\gT/(\ff_i\vi\ff_j=1)$,
$\pi_i\colon \gT\to\gT_i$ et $\pi_{ij}:\gT_i\to\gT_{ij}$ les projections 
canoniques.
Si les $\ff_i$ recouvrent $\gT$, alors $(\gT,(\pi_i)_{i=1,\ldots, n})$  est 
la limite
projective du diagramme \[((\gT_i)_{1\leq i\leq n},(\gT_{ij})_{1\leq 
i<j\leq
n};(\pi_{ij})_{1\leq i\neq j\leq n}).\]
\end{enumerate}
\end{ffact}
 
 {\hspace*{10em}{
\xymatrix @R=2em @C=7em{
          &  \gT \ar[rd]^{\pi _{k}}\ar[d]^{\pi _{j}}\ar[ld]_{\pi _{i}}\\
 \gT _i\ar[d]_{\pi _{ij}}\ar@/-0.75cm/[dr]^{\pi _{ik}} &
     \gT _j\ar@/-.8cm/[dl]_{\pi _{ji}}\ar@/-.8cm/[dr]^{\pi _{jk}} &
        \gT _k\ar@/-0.75cm/[dl]_{\pi _{ki}}\ar[d]^{\pi _{kj}} &
\\
 \gT _{ij}  & 
    \gT _{ik}   & 
      \gT _{jk}   
}
}}

\begin{proof}
\textsl{1}. Il suffit de le démontrer avec $\fa=0$, ce qui est le point \textsl{2}.

\smallskip \noindent \textsl{2}. Soit $(\gH,(\psi_i)_{i\in I})$ la limite projective du diagramme.
On a un unique morphisme \[\varphi\colon \gT\to \gH\] tel que $\varphi\circ \psi_i=\pi_i$
pour chaque~$i$. Et~$\varphi$ est injectif par hypothèse: $\varphi(x)=\varphi(y)$ implique $\varphi(x)\equiv\varphi(y) \mod\, (\fa_i=0)$ pour chaque $i$, et on a $\bigcap_i\fa_i=0$. 
On doit montrer qu'il est surjectif. \hbox{Soit $x=(x_i)_{i\in I}$} un \elt de $\gH$: \hbox{on a $x_i\in \gT_i$} pour chaque~$i$ et~\hbox{$\pi_{ij}(x_i)=\pi_{ji}(x_j)$} \hbox{pour $i\neq j$}. \hbox{Si $x_i=\pi_i(y_i)$}  on a donc dans $\gT$ la congruence 
\[
y_i\equiv y_j \mod \dar (s_i\vu s_j).
\]
 L'injectivité de $\varphi$ signifie que $\Vi_{i=1}^ns_i=0$. On a $\pi_i(y_i)=\pi_i(y_i\vu s_i)$ donc on peut supposer \hbox{que $y_i\geq s_i$}.
Les \egts \hbox{$\pi_{ij}(x_i)=\pi_{ji}(x_j)$} s'écrivent   
\[
y_i\equiv y_j \mod \dar (s_i\vu s_j).
\]
\cad $y_i\vu s_j=y_j\vu s_i$. Posons $y=\Vi_{i=1}^ny_i$.
\\
Alors, avec par exemple $j=1$, on obtient
\[ 
y\vu s_1=y_1\vu\Vi\nolimits_{i=2}^n(y_i\vu s_1)=y_1\vi\Vi\nolimits_{i=2}^n(y_1\vu s_i)=y_1
\]
(car $a\vi(a\vu b)=a$). Ainsi $\pi_j(y)=x_j$ pour chaque $j$.
Et $\varphi$ est bien surjective.
\end{proof}

Il y a aussi une procédure de recollement proprement dit.
Pour l'établir nous avons besoin du lemme suivant.

Rappelons que pour $s\in\gT$ le quotient $\gT/(s=0)$ est isomorphe au filtre principal $\uar s$ que l'on voit comme un \trdi dont l'\elt $0$ est $s$.
\begin{flemma}[dans un \trdi,  les quotients principaux sont \gui{scindés}] \label{flemquoprinctrdi} ~ 
\\
Soit $\pi\colon \gT  \to \gT'$ un morphisme de \trdis et $s\in \gT$.
\Propeq
\begin{enumerate}
\item $\pi$ est un morphisme de passage au   quotient de $\gT$ par l'idéal principal $\fa=\dar s$.
\item Il existe un morphisme $\varphi\colon\gT'\to\,\uar s$ tel que
$\pi\circ \varphi=\Id_{\gT'}$.
\end{enumerate}
Dans ce cas $\varphi$ est uniquement déterminé par $\pi$ et $s$.\\
Naturellement, l'énoncé renversé est valable pour un quotient par un filtre principal. 
\end{flemma}
%
\begin{proof} 
\textsl{1} $\Rightarrow$ \textsl{2.} Soit $y\in \gT'$. On a $y=\pi(x)$ pour un $x\in \gT $. 
\\
On veut définir  $\varphi\colon \gT'\to\uar s$ par l'\egt $\varphi(y)=x\vu s$. Tout d'abord c'est bien défini: si $\pi(x)=\pi(x')$, alors $x\vu s=x'\vu s$ d'après le rappel précédent. Ensuite il est immédiat que $\varphi$ est un morphisme de \trdis et que $\pi\circ \varphi=\Id_{\gT'}$.

\smallskip \noindent 
\textsl{2} $\Rightarrow$ \textsl{1.} L'\egt $\pi\circ \varphi=\Id_{\gT'}$ implique
que $\pi$ est surjectif et que $\varphi$ est un \iso de $\gT'$ sur $\uar s$
avec la restriction de $\pi$ pour \iso réciproque. 
Ceci montre que $\varphi$ est uniquement déterminé par $\pi$ et $s$. On doit montrer l'\eqvc 
\[
{\pi(x_1)=\pi(x_2)\; \Leftrightarrow \;x_1\vu s=x_2\vu s.}
\]
Comme $\varphi(0)=s$, on a $\pi(s)=0$, et $x_1\vu s=x_2\vu s$ implique $\pi(x_1)=\pi(x_2).$\\
Si $\pi(x_1)=\pi(x_2)$ alors  $\pi(x_1\vu s)=\pi(x_2\vu s)$, et puisque la restriction de $\pi$ à $\uar s$ est injective, cela implique
$x_1\vu s=x_2\vu s$. 
\end{proof}
\rem Nous avons utilisé en titre du lemme l'expression \gui{les quotients principaux sont scindés} par analogie avec les surjections scindées 
\hbox{entre \Amos}, 
vue l'\egt $\pi\circ \varphi=\Id_{\gT'}$, mais l'analogie est limitée. Ici la \gui{section} $\varphi$ de $\pi$ est unique
(une différence importante),
et ce n'est \gui{pas tout à fait} un morphisme de~$\gT'$ dans~$\gT $ (une autre différence importante).
\eoe

\begin{fproposition}[recollement de \trdis]
\label{fpropRecolTD} 
  Supposons donnés un ensemble fini totalement ordonné~$I$ et dans la catégorie des \trdis  un diagramme
\[
\big((\gT_i)_{i\in I},(\gT_{ij})_{i<j\in I},(\gT_{ijk})_{i<j<k\in I} ;
(\pi_{ij})_{i\neq j},(\pi_{ijk})_{i< j, j\neq k\neq i}\big)
\]
comme dans la figure ci-après, 
ainsi qu'une famille d'\elts 
\[
(s_{ij})_{i\neq j\in I}\in \prod\nolimits_{i\neq j\in I}\gT_{i}
\]
satisfaisant les conditions suivantes:
\begin{itemize}
\item le diagrammme est commutatif ($\pi_{ijk}\circ \pi_{ij}=\pi_{ikj}\circ \pi_{ik}$ pour tous $i$, $j$, $k$ distincts), 
\item pour $i\neq j$, $\pi_{ij}$ est un morphisme de passage au quotient par l'\id $\dar s_{ij}$,
\item pour $i$, $j$, $k$ distincts, $\pi_{ij}(s_{ik})=\pi_{ji}(s_{jk})$ et  $\pi_{ijk}$ est un morphisme de passage au quotient par \hbox{l'\id $\dar\pi_{ij}(s_{ik})$}.   
\end{itemize}

\smallskip {\hspace*{10em}
\xymatrix @R=2em @C=7em{
 \gT_i\ar[d]_{\pi _{ij}}\ar@/-0.75cm/[dr]^{\pi _{ik}} &
     \gT_j\ar@/-.8cm/[dl]_{\pi _{ji}}\ar@/-.8cm/[dr]^{\pi _{jk}} &
        \gT_k\ar@/-0.75cm/[dl]_{\pi _{ki}}\ar[d]^{\pi _{kj}} &
\\
 ~\gT_{ij}~ \ar[rd]_{\pi _{ijk}} & 
    ~\gT_{ik}~  \ar[d]^{\pi _{ikj}} & 
      ~\gT_{jk}~  \ar[ld]^{\pi _{jki}} 
\\
   &  ~\gT_{ijk}~ 
\\
}
}

\smallskip \noindent Alors si $\big(\gT\,;\,(\pi_i)_{i\in I}\big)$ est la limite projective du diagramme, les~\hbox{$\pi_i\colon \gT\to \gT_i$} forment un recouvrement par quotients principaux de $\gT$, et le diagramme est isomorphe à celui obtenu
dans le fait~\ref{ffactRecolTD}.
Plus précisément, il existe des $s_i\in\gT$ tels que chaque~$\pi_i$ est un morphisme de passage au quotient par l'\id $\dar s_i$ et $\pi_i(s_j)=s_{ij}$ pour tous $i\neq j$.

\noindent Le résultat analoque est valable pour les quotients par des filtres principaux.
\end{fproposition}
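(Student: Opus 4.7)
L'approche consiste à utiliser le lemme \ref{flemquoprinctrdi} sur les quotients principaux scindés pour convertir le diagramme de treillis quotients en un diagramme de sections compatibles. Je poserai par commodité $s_{ii}=0$, $\gT_{ii}=\gT_i$ et $\varphi_{ii}=\pi_{ii}=\Id_{\gT_i}$, de sorte que chaque $\pi_{ij}$ avec $i\neq j$ admette une section canonique $\varphi_{ij}:\gT_{ij}\to \uar s_{ij}\subseteq\gT_i$ (et de même $\varphi_{ijk}:\gT_{ijk}\to \gT_{ij}$ pour $\pi_{ijk}$). Les conditions de compatibilité entraineront que $\pi_{jk}(s_{ji})=\pi_{kj}(s_{ki})$ pour $i,j,k$ distincts, ce qui montrera que la famille $(s_{ji})_{j\in I}$ définit un \elt $s_i$ de la limite projective $\gT$, vérifiant $\pi_j(s_i)=s_{ji}$ et en particulier $\pi_i(s_i)=0$.

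Je définirai ensuite pour chaque $i\in I$ une application $\varphi_i:\gT_i\to\prod_k\gT_k$ par $\varphi_i(x)=(y_j)_{j\in I}$ avec $y_j=\varphi_{ji}\big(\pi_{ij}(x)\big)$. La vérification cruciale est que la famille $(y_j)$ appartient effectivement à la limite projective $\gT$, \cad que $\pi_{jk}(y_j)=\pi_{kj}(y_k)$ pour tous $j\neq k$. Par construction $y_j\geq s_{ji}$ et $y_k\geq s_{ki}$, donc $\pi_{jk}(y_j)\geq \pi_{jk}(s_{ji})$ et $\pi_{kj}(y_k)\geq \pi_{kj}(s_{ki})=\pi_{jk}(s_{ji})$. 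Comme $\pi_{jki}$ est un morphisme de passage au quotient par $\dar\pi_{jk}(s_{ji})$, il suffira de vérifier l'\egt après application de $\pi_{jki}$. En utilisant les relations de section $\pi_{ji}\circ\varphi_{ji}=\Id_{\gT_{ji}}$, on se ramènera à l'\egt $\pi_{jki}\big(\pi_{jk}(y_j)\big)=\pi_{ijk}\big(\pi_{ij}(x)\big)$ et à son symétrique, lesquelles se déduiront de la commutativité du diagramme appliquée deux fois.

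Une fois établi que $\varphi_i$ est un morphisme de \trdis à valeurs dans $\gT$, je constaterai que $\pi_i\circ\varphi_i=\Id_{\gT_i}$ et que l'image de $\varphi_i$ est exactement le filtre $\uar s_i$ de $\gT$, ce qui en fait un \iso de $\gT_i$ sur $\uar s_i$. Le lemme \ref{flemquoprinctrdi} garantira alors que $\pi_i$ est un morphisme de passage au quotient par l'\id principal $\dar s_i$. La relation $\pi_i(s_j)=s_{ij}$ résultera de la définition même des $s_j=(s_{kj})_{k\in I}$.

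La difficulté principale résidera dans la vérification de la compatibilité des coordonnées de $\varphi_i(x)$. Elle demande d'articuler trois ingrédients: (i) la minoration $y_j\geq s_{ji}$ qui autorise à tester l'\egt dans le quotient par $\dar \pi_{jk}(s_{ji})$, (ii) les relations de section $\pi_{ji}\circ\varphi_{ji}=\Id_{\gT_{ji}}$ qui permettent de ramener le calcul à $\pi_{ij}(x)$, et (iii) la commutativité triangulaire $\pi_{ijk}\circ \pi_{ij}=\pi_{ikj}\circ \pi_{ik}$ appliquée successivement sur deux faces du diagramme pour conclure.
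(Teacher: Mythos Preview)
Your proposal is correct and follows essentially the same approach as the paper: defining $s_i=(s_{ji})_j$ via the compatibility $\pi_{jk}(s_{ji})=\pi_{kj}(s_{ki})$, constructing the section $\varphi_i(x)=(\varphi_{ji}(\pi_{ij}(x)))_j$, and checking compatibility of its coordinates by reducing modulo $\dar\pi_{jk}(s_{ji})$ and invoking the commutativity of the diagram twice. The concluding appeal to the lemme~\ref{flemquoprinctrdi} to identify $\pi_i$ as the quotient by $\dar s_i$ is also exactly what the paper does.
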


\begin{proof}
Nous posons $s_{ii}=0$, $\gT_{ii}=\gT_i$, $\varphi_{ii}=\pi_{ii}=\Id_{\gT_i}$. Le lemme \ref{flemquoprinctrdi} nous donne des \gui{sections}  $\varphi_{ij}\colon \gT_{ij}\to \gT_i$ et~$\varphi_{ijk}\colon \gT_{ijk}\to \gT_{ij}$. 
\\
Les conditions imposées impliquent que les  
\ids~\hbox{$\dar\pi_{jk}(s_{ji})$} \hbox{et $\dar\pi_{kj}(s_{ki})$} sont égaux, i.e.~\hbox{$\pi_{jk}(s_{ji})=\pi_{kj}(s_{ki})$}. 
\\
Pour $i\in I$, on définit $s_i\in \prod_k\gT_k$ par 
${s_i=(s_{ji})_{j\in I}}$, de sorte que $\pi_j(s_i)=s_{ji}$. Les coordonnées de $s_i$ sont compatibles (i.e. $s\in \gT$) car
$\pi_{jk}(s_{ji})=\pi_{kj}(s_{ki})$.
\\
Nous définissons ensuite une application $\varphi_i\colon \gT_i\to \prod_k\gT_k$ par
\[
{\varphi_i(x)=y=(y_j)_{j\in I} \hbox{ avec } y_j=\varphi_{ji}(x_j)=\varphi_{ji}\big(\pi_{ij}(x )\big).}
\]
 Montrons que les coordonnées de $y$ sont compatibles (i.e. $y\in \gT$). En effet
\[{y_j=s_{ji}\vu y_j, \hbox{ so }
\pi_{jk}(y_j)= \pi_{jk}(s_{ji}\vu y_j)=\pi_{jk}(s_{ji})\vu\pi_{jk}(y_j),
}
\]
 de même $\pi_{kj}(y_k)=\pi_{kj}(s_{ki})\vu\pi_{kj}(y_k)$.
Et puisque $\pi_{jki}$ est un morphisme de passage au quotient par l'\id $\dar\pi_{jk}(s_{ji})=\dar\pi_{kj}(s_{ki})$, l'\egt 
\[
{\pi_{jk}(y_j)=\pi_{kj}(y_k)}
\]
 peut être testée en prenant les images par $\pi_{jki}$. \\
Or, puisque $\pi_{ji}(y_j)=\pi_{ji}\big(\varphi_{ji}(x_j\big)=x_j=\pi_{ij}(x)$, on obtient en utilisant  la commutativité du diagramme
\[\pi_{jki}\big(\pi_{jk}(y_j)\big)=\pi_{ijk}\big(\pi_{ji}(y_j)\big)=\pi_{ijk}\big(\pi_{ij}(x)\big).
\]
De même $\pi_{kji}\big(\pi_{kj}(y_k)\big)=\pi_{ikj}\big(\pi_{ik}(x)\big)$. Et nous concluons en utilisant une deuxième fois la commutativité du diagramme.\\
Une fois établi que $\varphi_i$ est bien une application $\gT_i\to \gT$, nous constatons facilement \hum{écrire les détails?} que $\pi_i\circ \varphi_i=\Id_{\gT_i}$, que l'image de
$\varphi_i$ est le filtre $\uar s_i$ de $\gT$ et que~$\varphi_i$ est un morphisme de
\trdis de $\gT_i$ sur le filtre $\uar s_i$. Donc, par le lemme \ref{flemquoprinctrdi}, $\pi_i$ est un morphisme de passage au quotient
par $\dar s_i$.
\end{proof}

\subsubsection*{Treillis de Heitmann}
\addcontentsline{toc}{subsubsection}{Treillis de Heitmann}

Un quotient intéressant, qui n'est ni un quotient par un \id ni un 
quotient
par un filtre, est le treillis de Heitmann.

\begin{flemma}
\label{flemHeT}
Sur un \trdi arbitraire $\gT$ la relation $\JT(a)\subseteq\JT(b)$
est une relation de préordre $a\preceq b$ qui définit un quotient 
de $\gT$.
On a aussi:
\begin{equation} \label{feqJaJb}
a\preceq b \quad \Longleftrightarrow\quad  a\in \JT(b)
\quad \Longleftrightarrow\quad \forall x\in\gT \; (a\vu x = 1 \Rightarrow 
b\vu x=1)
\end{equation}
\end{flemma}
\begin{proof}
Les équivalences $a\in \JT(b) \;\Leftrightarrow\;
\JT(a)\subseteq\JT(b)\;\Leftrightarrow\;\forall x\in\gT \;(a\vu x = 1
\Rightarrow b\vu x=1)$ résultent de ce qui a été dit page 
\pageref{feqRJb}
concernant le radical de Jacobson d'un \id (voir 
l'\egt~(\ref{feqRJb})).\\
Par ailleurs on vérifie sans difficulté les relations 
(\ref{feqPreceq})
nécessaires pour qu'un préordre définisse un quotient.
\end{proof}

\begin{fdefinition}
\label{fdefHeT}
On appelle \textsl{treillis de Heitmann de $\gT$} et on note $\He(\gT)$ 
le
treillis quotient de~$\gT$ obtenu en remplaçant sur $\gT$ la 
relation
d'ordre $\leq_\gT $  par la relation de préordre 
$\preceq_{\He(\gT)}$
définie comme suit

\vspace{-1em}
\begin{equation} \label{feqdefHeT}
\begin{array}{rcl}\qquad 
a\preceq_{\He(\gT)} b & \equidef  &   \JT(a)\subseteq\JT(b)  \quad \hbox{(cf. \dfn \ref{fdefJac})}
  \end{array}
  \end{equation}
Ce treillis quotient peut être identifié à l'ensemble des 
\ids $\JT(a)$,
avec la projection canonique
\[ \gT\longrightarrow \He(\gT),\quad a\longmapsto \JT(a)\]

\end{fdefinition}

Notez qu'avec l'identification précédente on a les \egts:
\begin{equation} \label{feqHeT2}
\JT(a\vi b)=\JT(a)\vi_{\He(\gT)}\JT(b),\quad
\JT(a\vu b)=\JT(a)\vu_{\He(\gT)}\JT(b)
\end{equation}

Dire que le treillis $\gT$ est faiblement Jacobson revient à dire 
que
$\gT=\He(\gT)$.

\smallskip Le lemme suivant est une précision (et une généralisation) de la
première \egt ci-dessus. Il nous sera utile dans la suite.

\begin{flemma}
\label{flemJacInter}
Si $\fa$ et $\fb$ sont deux \ids de $\gT$, on a
$\JT(\fa\cap\fb)=\JT(\fa)\cap\JT(\fb)$.
\end{flemma}
\begin{proof}
Il suffit de montrer que si $z\in\JT(\fa)\cap\JT(\fb)$ alors
$z\in\JT(\fa\cap\fb)$. Soit $t\in\gT$ tel que $z\vu t=1$, nous 
cherchons
$c\in\fa\cap\fb$ tel que $c\vu t=1$.
Or nous avons un  $a\in\fa$ tel que $a\vu t=1$ et  un  $b\in\fb$ tel 
que $b\vu
t=1$. Il suffit donc de prendre $c=a\vi b$.
\end{proof}

On notera que la preuve ne marcherait pas pour une intersection 
infinie d'\ids. 

\hum{Il est probable que les \ids de $\He(\gT)$ sont exactement les
$\pi(\JT(\fa))$. }

\begin{ffact}
\label{ffactHeHe} Soit $\gT$ un \trdi,  $\gT'=\gT/(\JT(0)=0)$, 
$x\in\gT$ et $\fa$
un idéal.
\begin{enumerate}
\item $x=_{\He(\gT)}1\;\Longleftrightarrow\; x=1$.
\item $x=_{\He(\gT)}0\;\Longleftrightarrow\; x\in\JT(0)$.
\item $\He(\He(\gT))\;=\;\He(\gT')\;=\;\He(\gT)$.
\item Si $\gL=\gT/(\fa=0)$,
$\He(\gL)$ s'identifie à $\He(\gT)/(\JT(\fa)=0)$.
\end{enumerate}
\end{ffact}

\rem On notera cependant que $\He$ ne définit pas un foncteur.
\eoe

\begin{proof}
Les points \textsl{1} et \textsl{2} sont immédiats.  \\
Le point \textsl{4} est laissé \alec.  Il implique
$\He(\gT')\;=\;\He(\gT)$.\\
Dans le point \textsl{3} les treillis $\He(\He(\gT))$ et $\He(\gT')$ sont identifiés à
des quotients de $\gT$.  Montrons l'égalité $\He(\He(\gT))=\He(\gT)$,
\cad que pour $a,b\in\gT$, $a\preceq_{\He(\He(\gT))}b\Rightarrow
a\preceq_{\He(\gT)}b$.  Par \dfn l'hypothèse signifie: $\Tt x \in
\gT\;(a\vu x=_{\He(\gT)}1\;\Rightarrow \;b\vu x=_{\He(\gT)}1)$.  Or
d'après le point~\textsl{1} cela veut dire $\Tt x \in \gT\;(a\vu
x=1\;\Rightarrow \;b\vu x=1)$, \cad $a\preceq_{\He(\gT)}b$.
\end{proof}

\subsection{Algèbres de Heyting, de Brouwer, de Boole}\label{fsubsecAgHagB}

\subsubsection*{Algèbres de Heyting}
\addcontentsline{toc}{subsubsection}{Algèbres de Heyting}

Un \trdi $\gT$ est appelé un {\sl treillis implicatif} (\citealt*{fCur63}) 
ou une {\sl \agH} (\citealt*{fJoh1986}) lorsqu'il existe
une opération binaire  $\im$ vérifiant pour tous $a,\,b,\,c$:
\begin{equation} \label{feqAgHey}
a\vi b \leq c \;\;\Longleftrightarrow \;\; a \leq  (b\im c).
\end{equation}
Ceci signifie que pour tous $b, c\in\gT$, l'\id transporteur $(c:b)$  est principal, 
son \gtr
étant noté $b\im c$.
Donc si elle existe, l'opération $\im$ est déterminée de 
manière unique par la structure du treillis.
On définit alors   $\neg x := x\im 0$.
La structure d'\agH peut être définie comme purement 
équationnelle en donnant de axiomes adéquats. Précisément un treillis~$\gT$ (non 
supposé
distributif) muni d'une loi $\im$ est une \agH \ssi les axiomes 
suivants sont
vérifiés (cf. \citealt*{fJoh1986}):
\[\begin{array}{rcl}
a\im a&=   &1    \\
a\vi(a\im b)&=   &a\vi b    \\
b\vi(a\im b)&=   & b   \\
a\im(b\vi c)&=   &(a\im b)\vi(a\im c)
\end{array}\]
Notons aussi les faits importants suivants:

\[\begin{array}{rcl}
(a\vu b)\im c &=& (a\im c)\vi(b\im c)    \\
\neg(a\vu b)&=   & \neg a\vi \lnot b   \\
 a&\leq    &\neg\neg a   \\
\neg a\vu b&\leq    & a\im b   \\
a\leq b&\Leftrightarrow& a\im b =1
\end{array}\]

  Tout \trdi fini est une \agH,  car tout \itf est principal.

\smallskip Un cas particulier important d'\agH est une \textsl{\agB}:
c'est un \trdi dans lequel tout \elt $x$ possède \textsl{un
complément}, \cad un \elt $y$ vérifiant $y\vi x=0$ et $y\vu x=1$
($y$ est noté $\lnot x$ et l'on a $a\im b=\lnot a\vu b$).

\smallskip Un \textsl{\homo d'\agHs} est un \homo $\varphi :\gT\to\gT'$
de \trdis qui vérifie $\varphi(a\im b)=\varphi(a)\im\varphi(b)$ pour
tous $a,b\in\gT$.

\smallskip Le fait suivant est immédiat.

\begin{ffact}
\label{ffactQuoAgH}
Soit $\pi\colon \gT\to\gT'$ un \homo de \trdis.  Supposons que $\gT$ et 
$\gT'$ sont
deux \agHs et notons $a\preceq b$ pour 
$\varphi(a)\leq_{\gT'}\varphi(b)$. Alors
$\pi$ est un \homo d'\agHs \ssi on a pour tous $a,a',b,b'\in\gT$:
\[
a\preceq a'\Rightarrow (a'\im b)\preceq(a\im b)  \qquad 
\mathrm{et}\qquad
b\preceq b'\Rightarrow (a\im b)\preceq(a\im b')
\]
\end{ffact}

On a aussi:

\begin{ffact}
\label{ffactQuoAgH2}
Si $\gT$ est une \agH tout quotient $\gT/(y=0)$ (\cad tout quotient 
par un \id
principal) est aussi une \agH. 
\end{ffact}
\begin{proof}
Soit $\pi\colon \gT\to\gT'=\gT/(y=0)$ la projection canonique. On a
$\pi(x)\vi\pi(a)\,\leq_{\gT'}\, \pi(b)\;\Leftrightarrow\; \pi(x \vi
a)\,\leq_{\gT'}\, \pi(b)\;\Leftrightarrow\; x\vi a \,\leq\, b\vu
y\;\Leftrightarrow\; x\,\leq\, a\im(b\vu y)$. Or $y\,\leq\, b\vu 
y\,\leq\,
a\im(b\vu y)$, donc $\pi(x)\vi\pi(a)\,\leq_{\gT'}\, 
\pi(b)\;\Leftrightarrow\;
x\,\leq\, (a\im(b\vu y))\vu y$, \cad $\pi(x)\leq_{\gT'}\pi(a\im(b\vu 
y))$, ce
qui montre que $\pi(a\im(b\vu y))$ vaut pour $\pi(a)\im\pi(b)$ dans 
$\gT'$.
\end{proof}

\hum{

1.  Cependant il ne semble pas que $\pi$ soit en général un \homo
d'\agHs. 

2.  Il serait bon d'avoir un exemple d'un treillis quotient d'une \agH
qui ne serait pas une \agH. 

3.  De manière générale, il serait bon d'avoir des exemples
variés d'\agH non noethériennes à notre disposition.

}

\medskip \rem 
La notion d'\agH est reminiscente de la notion d'anneau
cohérent en \alg commutative.  En effet un anneau cohérent peut
être caractérisé comme suit: l'intersection de deux \itfs est un
\itf et le transporteur d'un \itf dans un \itf est un \itf.   Si l'on
\gui{relit} ceci pour un \trdi en se rappelant que tout \itf est
principal on obtient une \agH. 
\eoe

\medskip \rem 
Tout \trdi $\gT$ engendre une \agH de façon
naturelle.  Autrement dit on peut rajouter formellement un \gtr pour
tout \id $(b:c)$.  Mais si on part d'un \trdi qui se trouve être une
\agH,  l'\agH qu'il engendre est strictement plus grande.  Prenons par
exemple le treillis $\Trois$ (fini, donc une \agH), qui est le \trdi libre à un \gtr.  
L'\agH qu'il engendre est donc l'\agH libre à un générateur.  Or
celle-ci est infinie (cf.  \cite[section 4.11]{fJoh1986}).  A contrario le treillis
booléen engendré par $\gT$ (cf.  \cite{fCC00}, \cite[Théorème XI-1.8]{fACMC}) reste égal à $\gT$
lorsque celui-ci est booléen.
\eoe

\subsubsection*{Treillis avec négation}
\addcontentsline{toc}{subsubsection}{Treillis avec négation}

Un \trdi \textsl{possède une négation} si pour tout $x$ l'idéal 
$(0:x)$ est
principal, engendré par un \elt que l'on note  $\lnot x$.
Les règles suivantes sont immédiates.

\[\begin{array}{rclcrcl}
x\vi y =0&\Leftrightarrow& y\leq \lnot x&,&a\leq b&  \Rightarrow  & \lnot b   \leq     \lnot a \\
a& \leq   & \lnot\lnot a  &,   & \lnot a  &=& \lnot\lnot\lnot a \\
\lnot(a\vu b)& =   &  \lnot a\vi\lnot b &  , & \lnot a\vu\lnot b 
&\leq &
\lnot(a\vi b)\\
\lnot(x\vu\lnot x)&=&0&,&\lnot\lnot(x\vu\lnot x)&=&1
\end{array}\]

Si pour tout $a$, $\lnot\lnot a=a$, le treillis est une \agB parce 
qu'alors
$x\vu\lnot x=1$.

\begin{ffact}
\label{ffactSpecMin}
Si $\gT$ possède une négation,   notons $\Fmin(\gT)=\ff$ le filtre
engendré par tous les $x\vu\lnot x$. Alors 
$\He(\gT\cir)=(\gT/(\Fmin(\gT)=1))\cir$, et ce
treillis est une \agB. 
\end{ffact}
\begin{proof}
Il est clair que $\lnot x$ est un complément de $x$ dans 
$\gT/(\ff=1)$; ce
treillis est donc une \agB.  En présence de la négation, la 
relation
$a\leq_{\He(\gT\cir)}b$ est équivalente à $\lnot a\leq \lnot b$
et ceci est facilement équivalent à $b\leq a\;\mod\;(\ff=1)$.
\end{proof}

\begin{ffact}
\label{ffactWJavecneg}
Si $\gT$ est un treillis avec négation, le treillis $\gT\cir$ est 
faiblement
Jacobson \ssi $\gT$ est une \agB. 
\end{ffact}
\begin{proof}
En présence de négation, les équations (\ref{feqJaJb}) et 
(\ref{feqdefHeT})
donnent pour $a\leq_{\He(\gT\cir)}b$ la condition équivalente 
$\lnot b\leq
\lnot a$.
Le treillis  $\gT\cir$ est donc faiblement Jacobson \ssi $\lnot b\leq \lnot a$ implique  $a\leq b$. 
En particulier on obtient $b=\lnot\lnot b$
en prenant $a=\lnot\lnot b$.
\end{proof}

\subsubsection*{Algèbres de Brouwer}
\addcontentsline{toc}{subsubsection}{Algèbres de Brouwer}

  Un \trdi dont le treillis opposé est une \agH est appelé une
  \textsl{\alg de Brouwer}.  C'est un \trdi dans lequel tous les filtres
 différence  $c\setminus b$ sont principaux (voir \pref{feqDiff}).  On note alors $c-b$ le \gtr de
  $c\setminus b$.

En passant au treillis opposé le fait suivant dit la même chose
que le fait~\ref{ffactSpecMin}.

\begin{ffact}
\label{ffactSpecMax}
On dit que \emph{le treillis $\gT$ possède un complément de Brouwer} lorsque pour tout $x$ 
le filtre
$(1\setminus x)$ est principal. Il est alors engendré par un unique \elt que l'on note  
$1- x$. \\
Dans ce cas,
notons $\Imax(\gT)$ l'\id engendré par tous les $x\vi(1-x)$. Alors 
$\He(\gT)=\gT/(\Imax(\gT)=0)$ et ce treillis est une \agB.
\end{ffact}

Nous laissons \alec le soin de traduire le fait 
\ref{ffactWJavecneg} lorsque l'on renverse la relation d'ordre.

\subsection{Treillis distributifs noethériens}

En \clama,  pour un \trdi $\gT$ \propeq
\begin{itemize}
\item [$(1)$] Tout \id de $\gT$ est principal.
\item [$(2)$] Toute suite croissante d'\elts de $\gT$ est stationnaire.
\item [$(3)$] Toute suite croissante d'\ids  de $\gT$ est stationnaire.
\end{itemize}

Un tel treillis est appelé \textsl{noethérien} (par analogie avec
l'\alg commutative, on pourrait aussi l'appeler \textsl{principal}).  C'est
clairement est une \agH en \clama.

Tout sous-treillis et tout treillis quotient d'un treillis
noethérien est noethérien.

En \coma la notion est plus délicate.  Aucun treillis non trivial ne
vérifie le point (2) (qui est à priori la formulation la plus faible
des trois).  On pourrait définir un \trdi noethérien comme un
treillis vérifiant une condition \gui{ACC \cov} du style: toute
suite croissante admet deux termes consécutifs égaux.  Cette
condition est équivalente à (2) en \clama.   Mais il y a à priori
plusieurs variantes intéressantes.

En pratique, on est en général intéressé par le
fait que certains \ids bien précis sont principaux, comme dans le
cas des \agHs.   Or le fait qu'un treillis est une \agH ne résulte
pas \cot de la condition ACC \cov (de la même manière, en \alg
commutative, la cohérence, qui est souvent plus importante que la
noethérianité, ne résulte d'aucune variante \cov connue de la
noethérianité).  Voir à ce sujet la proposition~\ref{fpropZarHeyt}.

\medskip \rem
Montrons en \clama que si $\gT$ et $\gT\cir$ sont noethériens 
alors~$\gT$ est fini.  Les \idemas sont des $\dar x$ où $x$ est un
prédécesseur immédiat de $1$.  Et le spectre maximal est fini,
parce que si $(\fm_n)=(\dar x_n)$ est une suite infinie d'\idemas,  la
suite $(\Vi_{i\leq n}x_i)$ est strictement décroissante.  On peut
ensuite appliquer le résultat à chacun des treillis quotients par
les \idemas.   On termine par le lemme de K\"onig.  Rendre cette
preuve \cov,  avec une \dfn \cov suffisamment forte de la
noethérianité est un défi intéressant.
\eoe

\section{Espaces spectraux}
\label{fsecESSP}

\subsection{Généralités}

\subsubsection*{En \clama} 
\addcontentsline{toc}{subsubsection}{En \clama}

Un {\sl \id premier} $\fp$ d'un treillis $\gT\neq \Un$ est un \id dont
le complé\-mentaire $\ff$ est un filtre (qui est alors un {\sl
filtre premier}).  On a alors $\gT/(\fp=0,\ff=1)\simeq\Deux$.  Il
revient au même de se donner un \idep de $\gT$ ou un morphisme de
\trdis $\gT\rightarrow \Deux$.

Dans cette section, nous noterons $\theta_\fp:\gT\to\Deux$ l'\homo
associé à l'\idep $\fp$.

On vérifie facilement que si $S$ est une partie génératrice du
\trdi $\gT$, un \idep~$\fp$ de $\gT$ est complètement
caractérisé par sa trace sur $S$ (cf.  \cite{fCC00}).

Un \textsl{\idema} (resp.  \textsl{premier minimal}) est un \id maximal
parmi les \ids stricts (resp.  minimal parmi les \ideps).  Il revient
au même de dire que $\fm$ est maximal ou que
$\gT/(\fm=0)\simeq\Deux$, les \idemas sont donc premiers.  Il revient
au même de dire que $\fp$ est un \idep minimal ou que son
complémentaire est un filtre maximal.

En \clama tout \id strict est contenu dans un \idema et (par
renversement) tout filtre strict est contenu dans un filtre maximal.

\smallskip Le \textsl{spectre d'un \trdi $\gT$} est l'ensemble $\Spec
\,\gT$ de ses \ideps,  muni de la topologie suivante: une base
d'ouverts est donnée par les 
\[
\DT(a)\eqdefi\sotq{\fp\in\Spec
\,\gT}{a\notin\fp},\quad a\in \gT.
\]
On vérifie que
\begin{equation} \label{feqDa}
\left.\begin{array}{rclcrcl}
  \DT(a\vi b)   & =  & \DT(a)\cap \DT(b) ,&\quad & \DT(0)  & =  & 
\emptyset  ,\\
  \DT(a\vu b)   & =  & \DT(a)\cup \DT(b) ,&&  \DT(1) & =  &  
\Spec\,\gT.
  \end{array}
\right\}
\end{equation}

Le complémentaire de $\DT(a)$ est un fermé qu'on note $\VT(a)$.

On étend la notation $\VT(a)$ comme suit: si $I\subseteq\gT$, on
pose $\VT(I)\eqdefi\bigcap_{x\in I}\VT(x)$.  Si $\cI_\gT(I)=\fII$, on
a $\VT(I)=\VT(\fII)$.  On dit parfois que $\VT(I)$ est \textsl{la
variété associée à $I$}.

\medskip\noindent
{\bf Définition.} 
Un espace topologique homéomorphe à un espace $\Spec(\gT)$
est appelé un \textsl{espace spectral}. Les espaces spectraux proviennent de \citealt*{fSto37}.

\medskip Johnstone les appelle des \textsl{espaces cohérents} (\cite{fJoh1986}). C'est Hochster qui les a baptisés dans \citealt*{fHoc1969}.

Avec la logique classique et l'axiome du choix, l'espace $\Spec \,\gT$
a \gui{suffisamment de points}: on peut retrouver le treillis $\gT$
à partir de son spectre.  Voici comment.

Tout d'abord on a le fameux \tho de Krull.


\medskip\noindent
{\bf Théorème$\etl$ de Krul}\label{fThKrull} (en \clama)\\ 
{\sl Supposons que $\fJ$ est un \id,  $\fF$ un filtre et 
$\fJ\cap\fF=\emptyset$.
Alors il existe un \idep $\fP$ tel \hbox{que $\fJ\subseteq\fP$} et
$\fP\cap\fF=\emptyset$.
  }

\medskip
On en déduit que:
\begin{itemize}
\item L'application $a\in\gT\,\mapsto\,\DT(a)\in\cP(\Spec\,\gT)$
est injective: elle identifie $\gT$ à un treillis d'ensembles 
(\textsl{\tho de représentation de Birkhoff}).
\item Si $\varphi : \gT\to\gT'$ est un \homo injectif l'application
$\varphi^\star:\Spec\,\gT'\to\Spec\,\gT$ obtenue par dualité est 
surjective.
\item Tout \id de $\gT$ est intersection des \ideps qui le 
contiennent.
\item L'application $\fII\mapsto \VT(\fII)$, des \ids de $\gT$ vers 
les fermés
de $\Spec\,\gT$ est un \iso d'ensembles ordonnés (pour l'inclusion 
et
l'inclusion renversée).
\end{itemize}

On montre aussi que les \oqcs de $\Spec \,\gT$ sont exactement les
$\DT(a)$.  D'après les \egts (\ref{feqDa}) les \oqcs de $\Spec \,\gT$
forment un \trdi de parties de~$\Spec \,\gT$, isomorphe à $\gT$.

\`A partir d'un espace spectral $X$ on peut considérer le \trdi
$\OQC(X)$ formé par ses \oqcs.   Puisque pour tout \trdi $\gT$,
$\OQC(\Spec(\gT))$ est canoniquement isomorphe à $\gT$, pour tout
espace spectral $X$, $\Spec(\OQC(X))$ est canoniquement 
homéo\-morphe~à~$X$.

\smallskip Tout \homo $\varphi :\gT\rightarrow \gT'$ de \trdis fournit
par dualité une application continue $\varphi^\star:\Spec
\,\gT'\rightarrow \Spec \,\gT$, qui est appelée une
\textsl{application spectrale}.  Pour qu'une application continue entre
espaces spectraux soit spectrale il faut et il suffit que l'image
réciproque de tout \oqc soit un \oqc. 

L'article fondateur  \citealt*{fSto37} démontre pour l'essentiel que la catégorie spectrale ainsi définie est antiéquivalente à
celle des \trdis \cite[{II-3.3}, coherent locales]{fJoh1986}.
Plus précisément, cet énoncé qui semble ici tautologique devient non trivial lorsque l'on donne une \dfn des espaces spectraux en termes purement d'espaces topologiques, comme dans la remarque qui suit.
Pour plus de détails sur cette antiéquivalence, on peut se reporter au théorème de Krull, à \citealt*[\hbox{section V-8}]{fBW74},
à \citealt*{fCL2001-2018} et à l'article de synthèse \citealt*{fLom2020}.

\medskip 
\rem
Une \dfn purement topologique des espaces spectraux 
est la sui\-vante~\cite{fSto37}.
\begin{itemize}
\item L'espace est de Kolmogorov (i.e., de type $\mathrm{T}_0$): 
étant donnés deux points il existe un voisinage de l'un des deux qui ne contient pas l'autre.
\item L'espace est \qc. 
\item L'intersection de deux \oqcs est un \oqc. 
\item Tout ouvert est réunion d'\oqcs. 
\item Pour tout fermé $F$ et pour tout ensemble $S$ d'\oqcs tels 
que 
\[\textstyle F\cap
\bigcap_{U\in S'} U\neq \emptyset\,\hbox{   pour toute partie finie  }\,S'
\,\hbox{  de  }\,S
\] 
on a aussi
$F\cap \bigcap_{U\in S} U\neq \emptyset$.
\end{itemize}
En présence des quatre premières propriétés la dernière 
peut se
reformuler comme suit (\cite{fHoc1969}). 
\begin{itemize}
\item Tout fermé irréductible\footnote{Un fermé qui n'est pas réunion de deux fermés strictement plus petits} admet un point
générique.\eoe

\end{itemize}


\subsubsection*{Points génériques, relation d'ordre}
\addcontentsline{toc}{subsubsection}{Points génériques, relation d'ordre}

On dit qu'un point $x \in X$ d'un espace spectral est le
\textsl{point générique du fermé $F$} \hbox{si $F=\ovs{x }$}.  Ce point
(quand il existe) est nécessairement unique car les espaces spectraux
sont des espaces de Kolmogorov.  Les fermés $\ovs{x }$
sont exactement tous les fermés irréductibles de $X$.  La relation
d'ordre $y\in\ovs{x}$ sera notée $x\leq_X y$.

Lorsque $X=\Spec\,\gT$ la relation $\fp\leq_X \fq$ est simplement la
relation d'inclusion usuelle entre \ideps du \trdi $\gT$.

Les points fermés de $\Spec\,\gT$ sont les \idemas de $\gT$.

\medskip 
On appelle \textsl{espace de Stone}\footnote{La terminologie ne semble pas entièrement fixée.  \cite{fBW74} appellent espace de Stone un espace topologique qui est à très peu près un espace spectral. Leur but est une catégorie d'espaces topologiques antiéquivalente à celle des \trdis \gui{non bornés}, i.e., sans $0$ et $1$.}  un espace spectral dont le treillis des \oqcs est une \agB\footnote{Il est homéomorphe à un espace $\Spec\,\gB$ pour une \agB \(\gB\)}.   Il est bien connu que les espaces de
Stone peuvent être caractérisés comme les espaces compacts
totalement discontinus.

\subsubsection*{En \coma}
\addcontentsline{toc}{subsubsection}{En \coma}

 D'un point de vue \cof,  $\gT$ est une version \gui{sans points} de
 $\Spec\,\gT$.  En d'autres termes, à défaut d'avoir accès aux
 points de $\Spec\,\gT$, on peut se contenter de l'ensemble de ses
 \oqcs,  qui sont directement visibles (sans recours à l'axiome du
 choix ni au principe du tiers exclu).  La version sans points est
 plus facile à appréhender.  Au contraire les points de
 $\Spec\,\gT$ ne sont pas en général des objets accessibles sans
 recours à des principes non constructifs.

En \coma on a à priori plusieurs possibilités pour définir le
spectre d'un \trdi (toutes équivalentes en \clama).
Le plus raisonnable semble de définir $\Spec\,\gT$ comme l'ensemble
des filtres premiers de $\gT$, \cad les filtres pour lesquels on a
\[x\vi y\in\fF\quad \Longrightarrow \quad x\in\fF \;\;\mathrm{ou}\;\;
y\in\fF \] avec un \gui{ou} explicite.  Mais de tels espaces
$\Spec\,\gT$ n'ont pas toujours suffisamment de points{\footnote{~On
peut par exemple définir un \trdi infini dénombrable explicite qui
ne possède pas d'\ideps récursifs.  Pour un tel \trdi,  il ne peut
pas y avoir de \prco que $\Spec\,\gT$ est non vide}} et on ne peut pas
affirmer \cot que les deux catégories sont antiéquivalentes, du
moins si l'on définit les morphismes entre espaces spectraux comme des
applications, car les applications nécessitent des points.

Une solution alternative satisfaisante (mais un peu troublante au
premier abord) est de considérer $\Spec\,\gT$ comme un \gui{espace
topologique sans points}, \cad un espace topologique défini
uniquement à travers sa base d'ouverts $\DT(a)$ (où $a$ parcourt
$\gT$).  Les morphismes sont alors définis de manière purement
formelle comme donnés par les morphismes des treillis
correspondants, en renversant le sens des flèches.  De ce point de
vue l'antiéquivalence de la catégorie spectrale et de la
catégorie des \trdis devient une pure tautologie définitionnelle.

En tout état de cause, bien que la catégorie spectrale reste utile
pour l'intuition, tout le travail se fait dans la catégorie des
\trdis.   L'avantage est naturellement que l'on obtient des \thos
\cofs. 

Dans cet article les spectres seront étudiés uniquement du point
de vue des \clama,  comme source d'inspiration importante pour de
bonnes notions concernant les \trdis.

\subsubsection*{Espaces spectraux noethériens}
\addcontentsline{toc}{subsubsection}{Espaces spectraux noethériens}
Un espace topologique $X$ est dit \textsl{noethérien} si toute suite
croissante d'ouverts est stationnaire.  Il revient au même de dire
que tout ouvert est \qc.  Pour un espace spectral, il est équivalent de dire que le treillis $\OQC(X)$ est noethérien.  Dans un espace spectral noethérien tout ouvert est un $\DT(a)$ et tout fermé un $\VT(b)$.

\hum{Je serais curieux de connaitre une formulation sans point de
la propriété suivante, plus faible que la noethérianité: toute
suite croissante pour $\leq _X$ est stationnaire.}

\subsubsection*{Deux autres topologies intéressantes sur 
$\Spec\,\gT$}
\addcontentsline{toc}{subsubsection}{Deux autres topologies intéressantes sur 
$\Spec\,\gT$}

\hum{Par rapport à l'article original, l'explication ci-dessous est allongée.}
En \clama on a une bijection canonique entre les ensembles
sous-jacents aux espaces $\Spec\,\gT$ et $\Spec\,\gT\cir$: à un
\idep de $\gT$ on associe le filtre premier complémentaire, qui est un \idep de $\gT\cir$. Cela permet d'identifier ces
deux ensembles, même si parfois l'effet n'est pas très heureux.  
Une fois les ensembles sous-jacents identifiés, la topologie n'est pas la même.  Les ouverts de base de $\Spec\,\gT\cir$ sont les $\DTo(a)=\VT(a)$.  
Modulo cette identification, pour $X=\Spec\,\gT$ et
$X'=\Spec\,\gT\cir$, la relation d'ordre $\leq_{X'}$ est la relation
opposée~à~$\leq_X$ (l'ordre est renversé), mais ce qui se
passe pour la topologie est plus compliqué.

\smallskip 
On doit également considérer la \textsl{topologie constructible}
(en anglais: patch topology) dont les ouverts de base sont les $\DT(a) \cap
\VT(b)$.  Cela donne un espace compact naturellement homémorphe à
$\Spec\,\gT^{\rm bool}$ où $\gT^{\rm bool}$ est le treillis
booléen engendré par $\gT$.  En \clama on obtient $\gT^{\rm bool}$
comme la sous-\agB de l'ensemble des parties de $\Spec\,\gT$
engendrée par les $\DT(a)$.  Ce treillis peut aussi être décrit
\cot comme suit (cf.  \cite{fCC00}).  On considère une copie disjointe
de $\gT$, que l'on note $\dot{\gT}$.  Alors $\gT^{\rm bool}$ est un \trdi
défini par \gtrs et relations.  Les \gtrs sont les \elts de
l'ensemble $T_1=\gT\cup\dot{\gT}$ et les relations sont obtenues comme
suit~: si $A,F,B,E$ sont quatre parties finies de $\gT$ on a
\[
  \Vi A \vi \Vi E\leq_\gT\Vu B\vu \Vu F
\quad \Longrightarrow \quad
\Vi A \vi \Vi \dot{F} \leq_{T_1} \Vu B \vu \Vu \dot{E}
\]
On montre que
  $\gT$ et $\dot{\gT}$ s'injectent naturellement dans $\gT^{\rm bool}$
et que l'implication ci-dessus est en fait une équivalence.
On obtient par dualité deux applications spectrales bijectives
$\Spec\,\gT^{\rm bool}\to\Spec\,\gT$ et $\Spec\,\gT^{\rm
bool}\to\Spec\,\gT\cir$.

\subsubsection*{Espaces spectraux finis}
\addcontentsline{toc}{subsubsection}{Espaces spectraux finis}
 
 En \clama les espaces duaux des \trdis \textsl{finis} sont les espaces spectraux
 finis, qui ne sont rien d'autre que les ensembles ordonnés finis,
 (car il suffit de connaitre l'adhérence des points pour
 connaitre la topologie) avec pour base d'ouverts les $\dar a$. 
 Les ouverts sont tous \qcs,  ce sont les parties initiales, et les
 fermés sont les parties finales.  Enfin, une application entre
 espaces spectraux finis est spectrale \ssi elle est croissante (pour
 les relations d'ordre associées).

La notion d'espace spectral apparait ainsi comme une
généralisation pertinente de la notion d'ensemble ordonné fini
au cas infini. Voir \citet*[Théorème XI-5.6, dualité entre ensembles ordonnés finis et \trdis finis]{fACMC}.

Dans le cas fini, si l'on identifie les ensembles sous-jacents à $\Spec\,\gT$ et
$\Spec\,\gT\cir$ les deux spectres sont presque les mêmes: c'est le
même ensemble ordonné au renversement près de la relation
d'ordre.  En outre les ouverts et les fermés sont simplement échangés.

\subsection{Treillis quotients et sous-espaces spectraux}\label{fsecSESP}

\subsubsection*{Caractérisation des sous-espaces spectraux}
\addcontentsline{toc}{subsubsection}{Caractérisation des sous-espaces spectraux}

En utilisant l'antiéquivalence des catégories, on  pourrait définir
directement la notion de \textsl{sous-espace spectral} comme la notion duale de la notion de treillis quotient.  Le \tho \ref{fpropSESP} explique cela en détail. 

Nous commençons par un lemme facile, qui caractérise les points
de $\Spec\,\gT$ qui \gui{sont des \elts de $\Spec\,\gT'$} lorsque
$\gT'$ est un quotient de $\gT$.

\begin{flemmac}
\label{flemSESP}
Soit $\gT'$ un treillis quotient de $\gT$ et $\pi\colon \gT\to\gT'$ la
projection canonique.  Notons $X=\Spec\,\gT'$, $Y=\Spec\,\gT$ et
$\pi^\star\colon X\to Y$ l'injection duale de $\pi$.  Rappelons que pour un
\idep $\fp$ de $\gT$ nous notons $\theta_\fp:\gT\to\Deux$ l'\homo
correspondant de noyau $\fp$.
\Propeq
\begin{itemize}
\item $\fp\in\pi^\star(\Spec\,\gT').$
\item $\theta_\fp$ se factorise par $\gT'.$
\item $\Tt a,b\in\gT\;((a\preceq b,\,b\in\fp)\Rightarrow a\in\fp).$
\end{itemize}
Cela peut se reformuler comme suit. Si le treillis quotient $\gT'$ 
est défini
par un système $R$ de relations $x_i=y_i$, \propeq
\begin{itemize}
\item $\fp\in\pi^\star(\Spec\,\gT').$
\item $\theta_\fp$ \gui{réalise un modèle de $R$}, \cad $\Tt i\;\;
\theta_\fp(x_i)=\theta_\fp(y_i).$
\item  $\Tt i\;\; (x_i\in\fp\;\Leftrightarrow\; y_i\in\fp).$
\end{itemize}
\end{flemmac}

Dans le \tho suivant  nous identifions $\Spec\,\gT'$ à une partie de
$\Spec\,\gT$ au moyen de l'injection $\pi^\star$.
Des résultats analogues énoncés dans un langage un peu différent se trouvent dans \citet*[section~3]{fEsc2001}\footnote{Escard{\'o} écrit son article dans le langage des locales. Il parle de patch topology plutôt que de topologie constructible. Si $Y=\Spec\,\gT$, il note $\Patch\, Y$ pour l'espace de Stone $\Spec\,\gT^{\rm bool}$.}.

\begin{ftheoremc}[\dfn et caractérisations des sous-espaces 
spectraux]
\label{fpropSESP} ~
\begin{enumerate}
\item Avec les notations du lemme \ref{flemSESP},  $X$ est un 
\textsl{sous-espace
topologique} de $Y$. En outre $\OQC(X)=\sotq{U\cap X}{U\in\OQC(Y)}$. 
On dit que
\textsl{$X$ est un sous-espace spectral de~$Y$.}
\item Pour qu'une partie $X$ d'un espace spectral $Y$ soit un sous-espace
spectral il faut et suffit que les conditions suivantes soient 
vérifiées: \\
-- La topologie induite par $Y$  fait de $X$ un espace spectral, et\\
--  $\OQC(X)=\sotq{U\cap X}{U\in\OQC(Y)}$.
\item Une partie $X$ d'un espace spectral $Y$ est un sous-espace 
spectral \ssi
elle est fermée pour la topologie constructible.
\item Si $Z$ est une partie arbitraire d'un espace spectral 
$Y=\Spec\,\gT$ son
adhérence pour la topologie constructible est égale à 
$X=\Spec\,\gT'$ où
$\gT'$ est le treillis quotient de~$\gT$ défini par la relation de 
préordre
$\preceq$ suivante:
\begin{equation} \label{feqSSES}
a\preceq b\quad \Longleftrightarrow\quad (\DT(a)\cap Z)\subseteq 
(\DT(b)\cap Z).
\end{equation}
En outre, $X$ est le plus petit sous-espace spectral de $Y$ contenant 
$Z$.
\end{enumerate}
\end{ftheoremc}
\begin{proof}
Le point \textsl{1} est facile, et définit la notion de sous-espace 
spectral. Le point \textsl{2} en résulte. Le point~\textsl{3} résulte  des points \textsl{2} et \textsl{4}. Montrons le 
point \textsl{4}. \\
Remarquons tout d'abord que la relation (\ref{feqSSES}) définit bien 
un treillis quotient $\gT'$ car les relations~(\ref{feqPreceq}) sont 
trivialement vérifiées si on tient compte des relations (\ref{feqDa}).\\
Montrons que $X=\Spec\,\gT'$ est le plus petit sous-espace spectral 
de $Y$ contenant $Z$.\\
Tout d'abord $Z\subseteq X$: soit $\fp\in Z$, nous voulons montrer 
que si
$b\in\fp$ et $a\preceq b$ alors $a\in\fp$. Si $\DT(a)\cap Z \subseteq 
\DT(b)$ et
$b\in\fp$ alors $\fp\notin \DT(b)$ donc  $\fp\notin \DT(a)\cap Z$, donc
$\fp\notin \DT(a)$, \cad $a\in\fp$.

\noindent Par ailleurs $X$ est minimal. En effet effet si 
$X_1=\Spec\,\gT_1$ est
un sous-espace spectral de $Y$ contenant~$Z$, on a
$a\leq_{\gT_1}b\;\Leftrightarrow\;(\DT(a)\cap X_1)\subseteq 
(\DT(b)\cap X_1)$ ce
qui implique $(\DT(a)\cap Z)\subseteq (\DT(b)\cap Z)$ et donc   
$a\leq_{\gT'}b$, 
d'où $X\subseteq X_1$.\\
  Il reste à montrer que $X$ est l'adhérence de $Z$ pour la 
topologie
constructible. Notons $\wi Z$ cette adhérence.
Nous voulons donc démontrer pour tout $\fp\in\Spec\,\gT$ 
l'équivalence des
deux propriétés suivantes:\\
(1) $\fp\in\wi Z$, \cad:  $\Tt a,b\in\gT ,\;(\fp\in \DT(a)\cap
\VT(b)\,\Rightarrow\, \DT(a)\cap \VT(b)\cap Z\neq \emptyset) $,\\
(2)  $\fp\in\Spec\,\gT'$.\\
Or (2) équivaut successivement à 

\vspace{-1.3em}
\[\begin{array}{lcr}
\Tt a,b\in\gT\;\;((a\preceq b,\,b\in\fp)\Rightarrow a\in\fp) &\quad    
& (3)
\\[1mm]
\Tt a,b\in\gT\;\;(a\preceq b,\,b\in\fp,\, a\notin\fp) \;\mathrm{sont\;
incompatibles} &\quad    & (4)  
\\[1mm]
\Tt a,b\in\gT\;\;\DT(a)\cap Z\subseteq  \DT(b) 
\;\,\mathrm{et}\,\;\fp\in
\DT(a)\cap \VT(b) \,\;\mathrm{sont\; incompatibles} &\quad    & (5) 
\\[1mm]
\Tt a,b\in\gT\;\;\DT(a)\cap \VT(b)\cap Z=\emptyset 
\;\,\mathrm{et}\,\;\fp\in
\DT(a)\cap \VT(b) \,\;\mathrm{sont\; incompatibles} &\quad    & (6)
\end{array}\]

\vspace{-.4em}
\noindent et (6)  est clairement équivalent à (1).
\end{proof}

\begin{fcorollaryc}
\label{fcorpropSESP}
Toute réunion finie et toute intersection de sous-espaces spectraux 
de
$X=\Spec\,\gT$ est un sous-espace spectral.
\begin{itemize}
\item Si $X_i=\Spec\,\gT_i$ pour un quotient $\pi_i\colon \gT\to\gT_i$ alors 
$\bigcap_iX_i$
correspond au quotient engendré par toutes les relations 
$\pi_i(x)=\pi_i(y)$.
\item  Si la famille est finie alors $\bigcup_iX_i$ correspond au quotient par la 
relation
$\&_i\big(\pi_i(x)=\pi_i(y)\big)$.
%
\end{itemize}

\end{fcorollaryc}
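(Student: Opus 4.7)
Le plan est d'exploiter les deux caractérisations complémentaires des sous-espaces spectraux fournies par le théorème \ref{fpropSESP} : topologique (point \textsl{3}, les \sspss de $X$ sont exactement les fermés pour la topologie constructible) et algébrique (point \textsl{4}, la relation de préordre associée à $Z\subseteq X$ est $a\preceq b\Leftrightarrow \DT(a)\cap Z\subseteq \DT(b)\cap Z$).

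Pour l'existence, je commencerais par observer qu'une intersection arbitraire de fermés d'un espace topologique est fermée, et qu'une réunion \textsl{finie} de fermés est fermée. Appliqué à la topologie constructible et combiné avec le point \textsl{3} du théorème \ref{fpropSESP}, ceci donne immédiatement que $\bigcap_i X_i$ (famille quelconque) et $\bigcup_i X_i$ (famille finie) sont des \sspss de $X$.

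Pour identifier le treillis quotient correspondant à $\bigcap_i X_i$, j'utiliserais le lemme \ref{flemSESP} : un point $\fp\in X$ appartient à $X_i$ \ssi $\theta_\fp$ identifie tous les couples $(x,y)$ tels que $\pi_i(x)=\pi_i(y)$. Donc $\fp\in\bigcap_i X_i$ \ssi $\theta_\fp$ identifie simultanément tous ces couples, pour tous les $i$. Par la propriété universelle du passage au quotient dans la catégorie des \trdis, cela équivaut à dire que $\theta_\fp$ se factorise par le treillis quotient de $\gT$ engendré par la réunion de toutes ces relations, d'où la description annoncée.

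Pour $\bigcup_i X_i$, j'utiliserais plutôt le point \textsl{4} du théorème \ref{fpropSESP} appliqué à $Z=\bigcup_i X_i$ : la relation de préordre sur $\gT$ définissant le treillis du sous-espace spectral est $a\preceq b\Leftrightarrow \DT(a)\cap Z\subseteq \DT(b)\cap Z$. Comme $Z$ est la réunion des $X_i$, cette inclusion se découpe uniformément en $\DT(a)\cap X_i\subseteq \DT(b)\cap X_i$ pour tout $i$, puis en $\pi_i(a)\leq_{\gT_i}\pi_i(b)$ pour tout $i$ en appliquant encore le point \textsl{4} à chaque $X_i$. La relation d'égalité induite sur $\gT$ est donc précisément $\&_i\bigl(\pi_i(x)=\pi_i(y)\bigr)$, ce qui est la description voulue. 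L'étape qui me semble la plus délicate est ce dernier découpage : il faut noter que la finitude de la famille n'intervient \emph{pas} pour identifier le préordre, mais seulement en amont pour garantir que $\bigcup_i X_i$ est effectivement constructiblement fermé, donc bien un sous-espace spectral.
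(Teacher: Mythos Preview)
Your proof is correct and is exactly the argument the paper implicitly has in mind: the corollary is stated without proof, as an immediate consequence of the lemma~\ref{flemSESP} and the theorem~\ref{fpropSESP}, and your derivation spells out precisely how points \textsl{3} and \textsl{4} of that theorem, together with the lemma, yield the result. Your closing observation about where finiteness is actually needed is also correct and worth keeping.
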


\begin{fpropositionc}[ouverts et fermés de base]
\label{fpropositionOFBSES}  Soit $\gT$ un \trdi et $X=\Spec\,\gT$.
\begin{enumerate}
\item $\DT(a)$ est un sous-espace spectral de $X$ canoniquement 
homéomorphe
à $
\Spec(\gT/(a=1))$.
\item  $\VT(b)$ est un sous-espace spectral de $X$ canoniquement 
homéomorphe
à $
\Spec(\gT/(b=0))$.
\end{enumerate}
\end{fpropositionc}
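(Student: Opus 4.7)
La stratégie est d'appliquer directement la caractérisation des sous-espaces spectraux donnée au point \textsl{4} du théorème \ref{fpropSESP}: à une partie $Z$ de $\Spec\,\gT$ correspond le treillis quotient $\gT'$ défini par le préordre $x \preceq y \Leftrightarrow (\DT(x) \cap Z) \subseteq (\DT(y) \cap Z)$, et $\Spec\,\gT'$ est l'adhérence constructible de $Z$. Pour chaque item il suffira donc de calculer explicitement ce préordre et de constater qu'il coïncide avec celui qui définit le quotient annoncé, via la proposition \ref{fpropIdealFiltre}.

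Pour le point \textsl{1}, je prends $Z = \DT(a)$. En utilisant l'identité $\DT(x) \cap \DT(a) = \DT(x \vi a)$ et le fait que, dans le treillis des \oqcs, $\DT(u) \subseteq \DT(v)$ équivaut à $u \leq v$ dans $\gT$, le préordre associé se récrit immédiatement
\[
x \preceq y \;\Longleftrightarrow\; \DT(x \vi a) \subseteq \DT(y \vi a) \;\Longleftrightarrow\; x \vi a \leq y \vi a.
\]
Or c'est précisément le préordre définissant $\gT/(a=1)$ (prendre $U = \{a\}$, $J = \emptyset$ dans la proposition \ref{fpropIdealFiltre}).

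Pour le point \textsl{2}, je prends $Z = \VT(b)$. L'étape charnière est l'équivalence ensembliste
\[
\DT(x) \cap \VT(b) \subseteq \DT(y) \cap \VT(b) \;\Longleftrightarrow\; \DT(x) \cup \DT(b) \subseteq \DT(y) \cup \DT(b),
\]
obtenue en passant au complémentaire de $\DT(b)$ dans $\Spec\,\gT$ et en exploitant que chaque point est soit dans $\DT(b)$, soit dans $\VT(b)$. Le membre de droite se récrit $\DT(x \vu b) \subseteq \DT(y \vu b)$, c'est-à-dire $x \vu b \leq y \vu b$, ce qui est précisément le préordre définissant $\gT/(b=0)$.

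Il n'y a pas à proprement parler de difficulté: tout le travail conceptuel a déjà été fait dans le théorème \ref{fpropSESP}, et la rédaction se ramène à manipuler les identités $\DT(\cdot \vi \cdot) = \DT(\cdot) \cap \DT(\cdot)$ et $\DT(\cdot \vu \cdot) = \DT(\cdot) \cup \DT(\cdot)$. Le seul point demandant une petite attention est l'équivalence ensembliste utilisée au point \textsl{2}, mais elle est tout à fait élémentaire.
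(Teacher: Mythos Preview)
Your proof is correct and essentially identical to the paper's own proof: both apply the preorder characterization from Theorem~\ref{fpropSESP}(4), compute the preorder explicitly using the identities $\DT(x)\cap\DT(a)=\DT(x\vi a)$ and $\DT(x)\cup\DT(b)=\DT(x\vu b)$, and identify it with the quotient preorder from Proposition~\ref{fpropIdealFiltre}. The paper's presentation is slightly more terse but follows exactly the same equivalences, including the set-theoretic equivalence you flag as the \emph{étape charnière} for item~\textsl{2}.
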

\begin{proof}
Soit $x\preceq y$ l'ordre partiel correspondant au sous-espace 
spectral
$\DT(a)$. On a donc:
\[x\preceq y\;\Leftrightarrow\;\DT(x) \cap \DT(a)\subseteq \DT(y) \cap
\DT(a)\;\Leftrightarrow\;\DT(x\vi a)\subseteq \DT(y\vi 
a)\;\Leftrightarrow\;x\vi
a\leq y\vi a
\]
et ceci est bien la relation de préordre correspondant au quotient
$\Spec(\gT/(a=1)).$\\
Soit maintenant $x\preceq' y$ l'ordre partiel correspondant au 
sous-espace spectral $\VT(b)$. On a:
\[\begin{array}{rcccl}
x\preceq' y& \Longleftrightarrow  &\DT(x) \cap \VT(b)\subseteq \DT(y) 
\cap
\VT(b)   & \Longleftrightarrow   &  \DT(x) \cup \DT(b)\subseteq 
\DT(y) \cup
\DT(b) \\[1mm]
& \Longleftrightarrow  &  \DT(x\vu b)\subseteq \DT(y\vu b) & 
\Longleftrightarrow
&   x\vu b\leq y\vu b
\end{array}\]
et ceci est bien la relation de préordre correspondant au quotient
$\Spec(\gT/(b=0)).$
\end{proof}
\subsubsection*{Fermés de $\Spec\,\gT$}
\addcontentsline{toc}{subsubsection}{Fermés de $\Spec\,\gT$}

Dans ce paragraphe $\gT$ est un \trdi fixé et $X=\Spec\,\gT$.
Si $Z\subseteq X$ on notera $\ov{Z}$ l'ahérence de $Z$ pour la 
topologie
usuelle de $X$.

\begin{fpropositionc}[sous-ensembles fermés de $\Spec\,\gT$]
\label{fpropositionFSES} ~
\begin{enumerate}
\item Un fermé arbitraire de $\Spec\,\gT$ est de la forme
$\VT(\fJ)=\bigcap_{x\in \fJ}\VT(x)$ où $\fJ$ est un \id arbitraire 
de $\gT$.
C'est un sous-espace spectral et il correspond au quotient 
$\gT/(\fJ=0)$.
\item
L'intersection d'une famille de fermés correspond au sup de la 
famille
d'idéaux. La réunion de deux fermés correspond à 
l'intersection des deux
idéaux.
\item\label{fenumtra}
Le treillis $\gT/((a:b)=0)$ est le quotient correspondant à 
$\ov{\VT(a)\cap
\DT(b)}$.
\item
  Donc $\gT$ est une \agH \ssi $X$ vérifie la propriété 
suivante: pour tous
\oqcs $U_1$ et $U_2$, l'adhérence de $U_1\setminus U_2$ est 
le complémentaire d'un 
\oqc. 
\item \`A l'adhérence de $\DT(x)$ correspond le quotient 
$\gT/((0:x)=0)$.
\item Donc à la frontière de $\DT(x)$ correspond le quotient
$\gT\ul x=\gT/(\rK_\gT^x=0)$, où
\begin{equation} \label{feqbordsup}
\rK_\gT^x\,=\,\dar x \,\vu\, (0:x).
\end{equation}
Le treillis $\gT\ul x$ sera appelé \emph{le bord supérieur (de 
Krull) de $x$
dans $\gT$}. On dira aussi que $\rK_\gT^x$ est \emph{l'\id bord de 
Krull de $x$
dans $\gT$.}\\
Lorsque $\gT$ est une \agH,  $\rK_\gT^x\,=\,\dar (x \,\vu\, \lnot x)$ 
et
$\gT\ul x\simeq \uar (x \,\vu\, \lnot x)$ avec l'\homo surjectif
$\pi\ul x:
\left|
\begin{array}{rcl}
\gT& \to  & \uar (x \,\vu\, \lnot x)  \\
y&  \mapsto  & y \,\vu\, x \,\vu\, \lnot x
\end{array}
  \right.
$.
\end{enumerate}
\end{fpropositionc}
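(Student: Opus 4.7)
Mon plan consiste à traiter les six points dans un ordre soigneusement choisi, en les déduisant les uns des autres et en s'appuyant sur les résultats déjà établis.

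Pour le point \textsl{1}, je remarquerais que tout fermé de $\Spec\,\gT$ est par définition de la topologie une intersection de fermés de base $\VT(x)$, et que l'ensemble $\fJ = \sotq{x\in\gT}{\VT(x)\supseteq F}$ est clairement un \id (il contient $0$, est stable par $\vu$ vu l'\egt $\VT(x\vu y) = \VT(x)\cap\VT(y)$, et est stable par minoration). Le fait que $\VT(\fJ)$ soit un \ssps correspondant au quotient $\gT/(\fJ=0)$ se déduit de la proposition~\ref{fpropositionOFBSES}(2) pour les $\VT(b)$ de base et s'étend par intersection via le corolaire~\ref{fcorpropSESP}, ou se vérifie directement en écrivant la relation de préordre~(\ref{feqSSES}) du théorème~\ref{fpropSESP}. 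Le point \textsl{2} est alors une simple vérification ensembliste: $\VT(\fJ_1)\cap\VT(\fJ_2) = \VT(\fJ_1\vu\fJ_2)$ et $\VT(\fJ_1)\cup\VT(\fJ_2) = \VT(\fJ_1\cap\fJ_2)$.

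Le cœur de la démonstration, et la seule partie véritablement délicate comme l'indiquent les auteurs, réside dans le point \textsl{3}. L'argument clé que j'emploierais utilise le théorème de Krull (ou plutôt le théorème de représentation qui en découle). Par définition $(a:b) = \sotq{x\in\gT}{x\vi b\leq a}$, donc $\VT((a:b)) = \bigcap_{x\vi b\leq a}\VT(x)$. Or, la relation $x\vi b\leq a$ équivaut, grâce à la représentation des \elts par leurs images dans $\Deux$, à l'inclusion $\VT(a)\subseteq \VT(x\vi b) = \VT(x)\cup \VT(b)$, soit encore à $\VT(a)\cap \DT(b)\subseteq \VT(x)$. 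Ainsi $\VT((a:b))$ est exactement l'intersection des fermés de base contenant $\VT(a)\cap \DT(b)$, ce qui par le point \textsl{1} coïncide avec l'adhérence $\ov{\VT(a)\cap \DT(b)}$.

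Les points \textsl{4}, \textsl{5} et \textsl{6} en seront des conséquences assez directes. Pour le point \textsl{5}, il suffit de prendre $a=0$ dans le point \textsl{3}. Pour le point \textsl{4}, $\gT$ est une \agH \ssi tout \id $(a:b)$ est principal, ce qui via le point \textsl{3} équivaut à ce que l'adhérence de tout $\VT(a)\cap \DT(b) = \DT(b)\setminus \DT(a)$ soit un fermé de base $\VT(y)$, c'est-à-dire le complémentaire d'un \oqc; le passage à des \oqcs arbitraires $U_1, U_2$ se fait en décomposant via les \egts (\ref{feqDa}). Enfin pour le point \textsl{6}, la frontière de l'ouvert $\DT(x)$ s'écrit $\ov{\DT(x)}\cap \VT(x)$ puisque le complémentaire $\VT(x)$ est déjà fermé; par le point \textsl{2}, cette intersection de deux fermés correspond au sup des \ids associés, soit $(0:x)\vu \dar x = \rK_\gT^x$, et dans le cas d'une \agH on utilise $(0:x) = \,\dar(\lnot x)$ pour obtenir la description explicite avec $\gT\ul x\simeq \,\uar(x\vu\lnot x)$.

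L'obstacle principal est bien le point \textsl{3}, qui nécessite de pouvoir \gui{tester} l'inégalité $x\vi b\leq a$ sur les points du spectre, ce qui repose de façon essentielle sur le théorème de Krull et donc sur la \clama; c'est d'ailleurs pour cela que tout l'énoncé est étoilé. Les autres points, une fois le point \textsl{3} acquis, se déduisent par manipulations formelles et en invoquant les résultats précédents.
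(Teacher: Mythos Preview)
Your proposal is correct and follows essentially the same approach as the paper: the authors single out point~\textsl{3} as \gui{le seul point délicat} and prove it exactly as you do, by translating $x\vi b\leq a$ into the inclusion $\VT(a)\cap\DT(b)\subseteq\VT(x)$ and using that every closed set is an intersection of basic closed sets. One small remark: in point~\textsl{4} you need not \gui{décomposer via les \egts~(\ref{feqDa})}, since arbitrary \oqcs of $\Spec\,\gT$ are already of the form $\DT(a)$.
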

\begin{proof}
Pour le seul point délicat (le point \textsl{\ref{fenumtra}}), on dit: puisque
$(a:b)=\sotq{x}{x\vi b\leq a}$, la variété associée
$\VT(a:b)$ est l'intersection des $\VT(x)$ tels que $\VT(a)\subseteq 
\VT(x)\cup
\VT(b)$, \cade tels que  $\VT(a)\cap \DT(b)\subseteq \VT(x)$. Or tout 
fermé de
$\Spec\, \gT$ est une intersection de fermés de base $\VT(x)$, donc 
on obtient
bien l'adhérence de $\VT(a)\cap \DT(b)$.
\end{proof}

\rems

\noindent 1) On notera qu'un ouvert arbitraire de $X$ n'est pas en 
général
un sous-espace spectral.

\noindent 2) La \dfn que nous avons donnée pour le treillis bord $\gT\ul x$
est clairement \cov. 
Notre traduction dans le point \textsl{6} du bord d'un \oqc en termes de treillis bord quotient est correcte en \clama. La démonstration nécessite les \clama 
car elle utilise la fait que l'\sps $\Spec\,\gT$ a suffisamment de points.   
\eoe

\medskip Le lemme suivant permet de mieux cerner l'\id 
bord de Krull de $x$ dans~$\gT$.

\begin{flemma}
\label{flemBKReg}
Pour tout $x\in\gT$ l'\id $\fj$ bord de Krull de $x$ dans $\gT$ est
\emph{régulier}, \cad \gui{son annulateur est réduit à $0$}. I.e.
$0:\fj=0$.
\end{flemma}
\begin{proof}
Soit $u\in(0:\fj)$. Puisque $\fj=\dar x\vu (0:x)$ on a $u\vi x=0$ et, 
pour tout
$z\in(0:x)$, $u\vi z=0$. En particulier $u\vi u=0$.
\end{proof}

Notez que dans le cas d'une \agH il ne s'agit de rien d'autre que de 
la loi
découverte par Brouwer: $\lnot(x \,\vu\, \lnot x)=0$.

\smallskip La proposition qui suit est la version duale, \cov,  
\gui{sans
points} du fait topologique suivant: si $A$ et $B$  sont fermés, la 
réunion
des bords de $A\cup B$  et de  $A\cap B$  est égale à celle des 
bords de $A$
et $B$.
\begin{fproposition}
\label{fpropBordKUnion}
Pour tous $x,y\in\gT$ on a
$\;\rK_\gT^{x}\cap \rK_\gT^{y}= \rK_\gT^{x\vu y}\cap 
\rK_\gT^{x\vi
y}.$
\end{fproposition}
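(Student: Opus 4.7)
The plan is to prove both inclusions by unwinding the definition $\rK_\gT^x = \dar x \vu (0:x)$, which is equivalent to
\[
z \in \rK_\gT^x \iff \exists u\in\gT \ (z \leq x\vu u \ \text{and}\ u\vi x = 0).
\]
The whole argument is then a routine piece of lattice manipulation that mirrors the topological fact that boundaries of $A\cup B$ and $A\cap B$ together cover the boundaries of $A$ and $B$. There is no essential obstacle; the only care is to pick the right witnesses.

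For the inclusion $\rK_\gT^{x}\cap\rK_\gT^{y}\subseteq \rK_\gT^{x\vu y}\cap\rK_\gT^{x\vi y}$, I would start from $z\leq x\vu u$, $u\vi x=0$ and $z\leq y\vu v$, $v\vi y=0$. For the $x\vi y$ side, take the witness $w=u\vu v$: then $z\leq (x\vu w)\vi(y\vu w) = (x\vi y)\vu w$, and distributivity gives $w\vi(x\vi y)=(u\vi x\vi y)\vu(v\vi x\vi y)=0$. For the $x\vu y$ side, take $w'=u\vi v$: by distributing $(x\vu u)\vi(y\vu v)$ one bounds $z$ above by $x\vu y\vu w'$, and $w'\vi(x\vu y)=(u\vi v\vi x)\vu(u\vi v\vi y)=0$.

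For the reverse inclusion, start with $z\leq x\vu y\vu u$ (with $u\vi(x\vu y)=0$, hence $u\vi x=0$ and $u\vi y=0$) and $z\leq (x\vi y)\vu v$ (with $v\vi x\vi y=0$). To put $z$ in $\rK_\gT^x$, I would propose the witness $u_1=(y\vu u)\vi v$: then $z\leq x\vu v$ and $z\leq x\vu(y\vu u)$ yield $z\leq x\vu u_1$, while $x\vi u_1 = x\vi(y\vu u)\vi v = (x\vi y\vi v)\vu(x\vi u\vi v) = 0$. The symmetric witness $v_1=(x\vu u)\vi v$ places $z$ in $\rK_\gT^y$.

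The main thing to watch is the choice of combined witnesses $u\vu v$, $u\vi v$, and the asymmetric $(y\vu u)\vi v$ in the converse direction; once these are on the table the verification is a short distributive computation. No non-constructive principle is needed, and the proof goes through verbatim in the opposite lattice, which is consistent with the fact that both boundaries $\gT\ul x$ and $\gT\bal x$ have already been shown to compute the Krull dimension symmetrically.
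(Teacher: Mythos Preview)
Your proof is correct and is essentially identical to the paper's own argument: the same witnesses $u\vu v$, $u\vi v$ for the forward inclusion and $(y\vu u)\vi v$ for the reverse inclusion, with the same distributive verifications. The paper only treats $z\in\rK_\gT^x$ explicitly in the converse and leaves the symmetric case implicit, exactly as you do.
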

\begin{proof}
Soit $z\in\rK_\gT^{x}\cap \rK_\gT^{y}$, autrement dit il existe $u$ 
et $v$ tels
que $z\leq x\vu u$ et $u\vi x=0$, $z\leq y\vu v$ et $v\vi y=0$.
Alors $z\leq (x\vu (u\vu v))\vi (y\vu (u\vu v)) = (x\vi y)\vu(u\vu 
v)$ avec
$(u\vu v)\vi(x\vi y)=(u\vi(x\vi y))\vu (v\vi(x\vi y))=0\vu 0=0$ donc
$z\in\rK_\gT^{x\vi y}$. \\
De même $z\leq (x\vu y)\vu(u\vi v)$ avec $(u\vi v)\vi(x\vu y)=0$ 
donc
$z\in\rK_\gT^{x\vu y}$.\\
Enfin supposons  $z\in\rK_\gT^{x\vu y}\cap \rK_\gT^{x\vi y}$, 
autrement dit il
existe $u$ et $v$ tels que $z\leq x\vu y\vu u$ et $u\vi (x\vu y)=0$,
$z\leq (x\vi y)\vu v$ et $v\vi x\vi y=0$.
Soit $u_1=(y\vu u)\vi v$. On a  $z\leq (x\vi y)\vu v\leq x\vu v$ et 
$z\leq x\vu
(y\vu u)$ donc $z\leq x\vu u_1$. Par ailleurs $x\vi u_1=x\vi (y\vu u) 
\vi v\leq
x\vi y\vi v =0$ et donc $z\in\rK_\gT^{x}$.
\end{proof}

\subsubsection*{Fermés de $\Spec\,\gT\cir$}
\addcontentsline{toc}{subsubsection}{Fermés de $\Spec\,\gT\cir$}

\hum{Dans l'article original, on avait partout $\DT(\cdot)$ à la place de $\VTo(\cdot)$, mais cette notation semble très peu naturelle pour $\DT(F)$.}

Notons qu'il est naturel de noter $\VTo(a)$ pour $\DT(a)$.
Introduisons alors la notation suivante, pour $F\subseteq \gT$:
$\VTo(F)=\bigcap_{a\in\fF} \VTo(a)$. Si $\fF$ est le filtre engendré 
par $F$, on
a $ \VTo(F)=\VTo(\fF)$.

La proposition suivante découle de la proposition 
\ref{fpropositionFSES} par
renversement de l'ordre (on n'a réécrit que les points \textsl{1} et \textsl{6}) 
modulo
l'identification des ensembles sous-jacents à $\Spec\,\gT$ et
$\Spec\,\gT\cir$.

  Rappelons que la notion opposée à l'idéal $a:b$ est le filtre 
$a\setminus
b\eqdefi \sotq{z}{z\vu b\geq a}$. 

\begin{fpropositionc}[sous-ensembles fermés de $\Spec\,\gT\cir$]
\label{fFdSES} ~
\begin{enumerate}
\item Un fermé arbitraire de $\Spec\,\gT\cir$ est de la forme
$\bigcap_{x\in\fF} \VTo(x)$ où $\fF$ est un filtre arbitraire 
de~$\gT$.
C'est le sous-espace spectral qui correspond au quotient 
$\gT/(\fF=1)$.

\item On définit le quotient
$\gT\bal x=\gT/(\rK^\gT_x=1)$, où $\rK^\gT_x$ est le filtre
\begin{equation} \label{feqbordinf}
\rK^\gT_x\,=\,\uar x \,\vi\, (1\setminus x)
\end{equation}
Le treillis $\gT\bal x$ sera appelé \emph{le bord inférieur (de 
Krull) de
$x$ dans $\gT$}.
On dira aussi que $\rK^\gT_x$ est \emph{le filtre bord de Krull de 
$x$}.\\
Lorsque $\gT$ est une \alg de Brouwer, $\rK^\gT_x\,=\,
\uar (x \,\vi\, (1- x))$ et
$\gT\bal x\simeq \dar (x \,\vi\, (1- x))$ avec l'\homo surjectif
$\pi\bal x:
\left|
\begin{array}{rcl}
\gT&\to&\dar (x \,\vi\, (1- x))\\
  y&\mapsto& y \,\vi\, x \,\vi\, (1- x)
\end{array}
  \right.
$.\\
Le quotient $\gT\bal x$ correspond à la 
frontière de $\VTo(x)$ pour la topologie de 
$\Spec\,\gT\cir$\footnote{Il s'agit de la notion \gui{opposée} à celle de frontière. L'intersection des adhérences de $\VTo(x)=\DT(x)$ et $\DTo(x)=\VT(x)$ est remplacée par la réunion de leurs intérieurs. Dans $\Spec\,\gT$, c'est donc le complémentaire de la frontière de~$\DT(x)$.}.
\end{enumerate}
\end{fpropositionc}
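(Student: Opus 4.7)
Le principe directeur est d'appliquer la proposition \ref{fpropositionFSES} au treillis opposé $\gT\cir$, puis de traduire les énoncés obtenus via l'identification des ensembles sous-jacents à $\Spec\,\gT$ et $\Spec\,\gT\cir$. Rappelons que les \ids de $\gT\cir$ sont exactement les filtres de $\gT$, que $\vi$ et $\vu$ sont échangés, et que la relation $a:b$ dans $\gT\cir$ devient la relation différence $a\setminus b$ dans $\gT$ (cf. la \dfn~(\ref{feqDiff})). De plus, pour un quotient, $\gT\cir/(\fF=0)$ s'identifie à $(\gT/(\fF=1))\cir$, et les ouverts de base $\VTo(x)$ de $\Spec\,\gT\cir$ ne sont autres que les fermés de base $\VT(x)$ de $\Spec\,\gT$ vus comme ouverts du spectre opposé.

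Pour le point \textsl{1}, j'appliquerai directement le point \textsl{1} de la proposition \ref{fpropositionFSES} au treillis $\gT\cir$: un fermé arbitraire de $\Spec\,\gT\cir$ est de la forme $\VT_{\gT\cir}(\fJ')=\bigcap_{x\in\fJ'}\VT_{\gT\cir}(x)$ où $\fJ'$ est un \id de $\gT\cir$, c'est-à-dire un filtre $\fF$ de $\gT$. Comme $\VT_{\gT\cir}(x)=\DT(x)=\VTo(x)$ par convention, on obtient bien la forme annoncée, et le sous-espace spectral correspondant au quotient $\gT\cir/(\fF=0)$ est celui qui correspond au quotient $\gT/(\fF=1)$.

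Pour le point \textsl{2}, je partirai du point \textsl{6} de la proposition \ref{fpropositionFSES} appliqué à $\gT\cir$: à la frontière de $\DT_{\gT\cir}(x)$ (pour la topologie de $\Spec\,\gT\cir$) correspond le quotient de $\gT\cir$ par l'\id $\,\dar_{\gT\cir} x\,\vu_{\gT\cir}\,(0_{\gT\cir}:_{\gT\cir}x)$. En retraduisant dans $\gT$, cet \id de $\gT\cir$ devient le filtre $\uar x \,\vi\, (1\setminus x)$ de $\gT$, qui est exactement $\rK^\gT_x$. Pour la formule explicite dans le cas d'une \alg de Brouwer, je noterai que le complément de Brouwer $1-x$ engendre $1\setminus x$, ce qui donne $\rK^\gT_x=\uar(x\vi(1-x))$, puis j'identifierai $\gT\bal x$ à $\dar(x\vi(1-x))$ via l'\homo de quotient par un filtre principal (en utilisant la version opposée du lemme \ref{flemquoprinctrdi}). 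La seule subtilité à soigner est la question terminologique évoquée en note de bas de page~: la frontière au sens de $\Spec\,\gT\cir$ correspond, dans $\Spec\,\gT$, au complémentaire de la frontière usuelle de $\DT(x)$, et il conviendra de vérifier que l'intersection des adhérences $\ov{\VTo(x)}\cap\ov{\DTo(x)}$ dans $\Spec\,\gT\cir$ s'exprime bien en termes du filtre $\uar x\,\vi\,(1\setminus x)$. L'essentiel du travail étant purement formel, la principale difficulté consiste à bien gérer les inversions $\vi\leftrightarrow\vu$, $0\leftrightarrow 1$, $\dar\leftrightarrow\uar$ et à traduire proprement le transporteur en filtre différence.
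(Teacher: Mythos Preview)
Your approach is correct and is exactly the one the paper indicates: just before the statement, the paper says explicitly that this proposition \og découle de la proposition \ref{fpropositionFSES} par renversement de l'ordre (on n'a réécrit que les points \textsl{1} et \textsl{6})\fg{} and gives no further proof. Your translation of ideals of $\gT\cir$ into filters of $\gT$, of the conductor into the difference filter, and of the Krull boundary ideal $\rK_{\gT\cir}^x$ into the Krull boundary filter $\rK^\gT_x$, is precisely the intended dualization.
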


\subsubsection*{Recollement d'espaces spectraux}
\addcontentsline{toc}{subsubsection}{Recollement d'espaces spectraux}

\hum{Ajout de la phrase suivante.}
Comme la théorie des \trdis est purement équationnelle, la catégorie possède des limites inductives et projectives arbitraires. Les limites projectives et les limites inductives filtrantes sont conservées par le foncteur d'oubli dans la catégorie des ensembles. Les propriétés duales sont donc satisfaites dans la catégorie antiéquivalente des espaces spectraux et morphismes spectraux. Dans certains cas
ces limites correspondent à celles obtenues dans la catégorie des espaces topologiques et applications continues.

Voici ce que donne par dualité la proposition~\ref{fpropRecolTD}
(on laisse \alec la traduction du fait~\ref{ffactRecolTD}).

\begin{fpropositionc}[recollement d'une famille finie d'espaces spectraux le long  d'\oqcs]
\label{fpropRecolSpec}~
\begin{enumerate}
\item Soit $(X_i)_{1\leq i\leq n}$ une famille finie d'espaces 
spectraux, et
pour chaque $i\neq  j$ un \oqc $X_{ij}$ de $X_i$ avec 
un \iso
$\varphi_{ij}\colon X_{ij}\to X_{ji}$. On suppose que $\varphi_{ij}= \varphi_{ji}^{-1}$ pour tous $i,j$ et que les relations de compatibilité 
naturelles \gui{trois par trois}
sont vérifiées: si $x=\varphi_{ji}(y)=\varphi_{ki}(z)$ alors $\varphi_{jk}(y)=z$. Alors la limite inductive du diagramme dans la 
catégorie des espaces spectraux est un espace $X$ pour lequel chacun des $X_i$ s'identifie à 
un \oqc via le morphisme $X_i\to X$. 
\item En remplaçant \gui{ouvert quasi-compact} par \gui{fermé de base} le résultat analogue est \egmt valable.
\end{enumerate}

\hum{Dans le point 1, $X_i$ s'identifie à un \oqc de $X$ via le morphisme $X_i\to X$. Dans l'article original il y avait seulement écrit \gui{$X_i$ est un sous-espace spectral}. Le commentaire qui suit est 
en outre plus étoffé que dans l'article original.}

\end{fpropositionc}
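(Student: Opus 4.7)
Ma stratégie est de ramener la proposition à la Proposition~\ref{fpropRecolTD} via l'antiéquivalence entre \trdis et espaces spectraux, combinée à la Proposition~\ref{fpropositionOFBSES} qui décrit comment fermés et \oqcs de base correspondent à des quotients principaux.

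D'abord, pour le point \textsl{2} (recollement le long de fermés de base), je poserais $\gT_i = \OQC(X_i)$. Chaque fermé de base $X_{ij}$ s'écrit $X_{ij}=\VT(s_{ij})$ pour un unique $s_{ij}\in\gT_i$, et par la Proposition~\ref{fpropositionOFBSES}\textsl{(2)}, l'inclusion $X_{ij}\hookrightarrow X_i$ correspond dualement au morphisme de passage au quotient $\pi_{ij}\colon\gT_i\twoheadrightarrow\gT_{ij}:=\gT_i/(s_{ij}=0)$ par l'\id principal $\dar s_{ij}$. L'\iso $\varphi_{ij}\colon X_{ij}\to X_{ji}$ dualise en un \iso de \trdis entre $\gT_{ji}$ et $\gT_{ij}$, permettant de les identifier. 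Les intersections triples, extraites des restrictions des $\varphi_{ij}$, fournissent alors les $\gT_{ijk}$ avec leurs projections $\pi_{ijk}$. La compatibilité trois-par-trois dans le cadre spectral se traduit précisément en les hypothèses requises par la Proposition~\ref{fpropRecolTD}: commutativité du diagramme, \egt $\pi_{ij}(s_{ik})=\pi_{ji}(s_{jk})$, et fait que $\pi_{ijk}$ est un morphisme de passage au quotient par l'\id principal $\dar\pi_{ij}(s_{ik})$.

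En appliquant la Proposition~\ref{fpropRecolTD} à ce diagramme, j'obtiens un \trdi $\gT$ limite projective, muni de morphismes $\pi_i\colon\gT\to\gT_i$ qui sont des morphismes de passage au quotient par des \ids principaux $\dar s_i$. Je pose alors $X:=\Spec(\gT)$. Par l'antiéquivalence des catégories, $X$ est la limite inductive du diagramme initial dans la catégorie des espaces spectraux, et les applications spectrales $\pi_i^\star\colon X_i\to X$ identifient chaque $X_i$ au fermé de base $\VT(s_i)\subseteq X$ par la Proposition~\ref{fpropositionOFBSES}\textsl{(2)}. Pour le point \textsl{1} (recollement le long d'\oqcs), l'argument est entièrement parallèle: on utilise la variante renversée de la Proposition~\ref{fpropRecolTD} pour les filtres principaux (mentionnée à la fin de cette proposition), et la Proposition~\ref{fpropositionOFBSES}\textsl{(1)} qui décrit les \oqcs comme spectres de quotients par filtres principaux.

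La principale difficulté résidera dans le contrôle précis des données de cocycle, \cad s'assurer que les conditions naturelles $\varphi_{ij}=\varphi_{ji}^{-1}$ et $\varphi_{jk}(y)=z$ dès que $x=\varphi_{ji}(y)=\varphi_{ki}(z)$ se traduisent exactement en les hypothèses de la Proposition~\ref{fpropRecolTD}. Une fois cette vérification effectuée soigneusement, le reste de la démonstration est purement formel via l'antiéquivalence. Pour le commentaire annexe sur la filtration en tant que limite inductive dans la catégorie des espaces topologiques, il suffira de noter que les morphismes $\pi_i^\star$ sont des \homeos locaux sur leurs images (qui sont des \oqcs dans le cas \textsl{1}), et que la topologie de recollement coïncide avec celle héritée du spectre.
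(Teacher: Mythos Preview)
Your approach is correct and is exactly the one the paper takes: the proposition is presented simply as the dual, via the antiequivalence between \trdis and espaces spectraux, of Proposition~\ref{fpropRecolTD} (with its filter variant for the \oqc case), and the paper does not give any more detail than that. The paper does add in its subsequent commentary that in \clama one would more naturally go the other way---prove the spectral gluing directly by elementary topology and deduce Proposition~\ref{fpropRecolTD} from it---but remarks that this shortcut would not yield a \prco of the lattice statement.
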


\comm Notez que $X$ est aussi la limite 
inductive du diagramme formé par les $X_i$, les inclusions $f_{ij}:X_{ij}\to X_i$ et les isomorphismes $\varphi_{ij}$,
dans la catégorie des ensembles et dans celle des espaces topologiques.
En langage plus imagé: l'espace topologique~$X$ s'obtient comme recollement des espaces $X_i$ le long des $X_{ij}$ en identifiant $x\in X_{ij}$ à $\varphi_{ij}(x)\in X_{ji}$.

En \clama on aurait plutôt tendance à déduire la proposition~\ref{fpropRecolTD}  de la proposition~\ref{fpropRecolSpec}
car cette dernière a une démonstration directe facile. Cependant ce raccourci élégant ne permet pas d'obtenir la \prco de la proposition~\ref{fpropRecolTD}. 

Dans le point \textsl{2} de la proposition~\ref{fpropRecolSpec}, si on prend pour $X_{ij}$ des fermés arbitraires, (qui sont des sous-espaces spectraux) au lieu de
 fermés de base, le recollement aura lieu en tant qu'espaces topologiques mais ne fournirait pas nécessairement un espace spectral.
 Dans le point \textsl{1} si on prend une infinité d'ouverts de base, le recollement aura lieu en tant qu'espaces topologiques mais ne fournirait pas nécessairement un espace spectral. \eoe

\subsection{Spectre maximal et spectre de Heitmann}

Dans l'article remarquable \cite[\textsl{Generating non-Noetherian modules efficiently}]{fHei84} Raymond Heitmann explique que la notion
usuelle de j-spectrum pour un anneau commutatif n'est pas la bonne 
dans le cas
non noethérien car elle ne correspond pas à un espace spectral au sens de
Stone. Il introduit la modification suivante de la \dfn usuelle: au lieu
de considérer l'ensemble des \ideps qui sont intersections 
d'\idemas il propose de considérer l'adhérence du spectre maximal dans le 
spectre premier, adhérence à prendre au sens de la topologie 
constructible (la patch topology).

\begin{fdefinitions}
\label{fdefHspec1}
Soit $\gT$ un \trdi. 
\begin{enumerate}
\item  On note $\Max\,\gT$ le sous-espace topologique de $\Spec\,\gT$ formé par les \idemas de~$\gT$. 
On l'appelle le \textsl{spectre maximal de~$\gT$}.
\item  On note $\jspec\,\gT$ le sous-espace topologique de 
$\Spec\,\gT$ formé
par les $\fp$ qui vérifient l'égalité \hbox{$\JT(\fp)=\fp$}, \cad les  \ideps $\fp$ 
qui sont
intersections d'\idemas (c'est le j-spectrum \gui{usuel}).
\item On appelle \textsl{$\rJ$-spectre de Heitmann de $\gT$} et on note
$\Jspec\,\gT$ l'adhérence du spectre maximal dans $\Spec\,\gT$, 
adhérence à prendre au sens de la topologie constructible. Cet ensemble est muni de la topologie induite par $\Spec\,\gT$.
\item  On note $\Min\,\gT$ le sous-espace topologique de $\Spec\,\gT$ 
formé par les \ideps minimaux de $\gT$. On l'appelle le \textsl{spectre  minimal de $\gT$}.
\end{enumerate}
\end{fdefinitions}

Notez que malgré leurs dénominations, les espaces topologiques $\Max\,\gT$,  $\jspec\,\gT$ 
et
$\Min\,\gT$ ne sont pas en général des espaces spectraux.

\begin{ftheoremc}
\label{fthDK3}
Soit $\gT$ un \trdi.  L'espace  $\Jspec\,\gT$ est un sous-espace 
spectral de
$\Spec\,\gT$ canoniquement homéomorphe à $\Spec\,\He(\gT)$. Plus
précisément, si $M=\Max\,\gT$,  on a pour
$a,b\in\gT$:
\begin{equation} \label{feqthDK3}
\DT(a)\cap M\,\subseteq\, \DT(b)\cap M \quad \Longleftrightarrow\quad
a\preceq_{\He(\gT)}b.
\end{equation}
\end{ftheoremc}
\begin{proof}
Le treillis $\He(\gT)$ a été défini page
\pageref{defHeT}.\\
La deuxième affirmation implique la première. En effet 
$\Jspec\,\gT$,
d'après le \tho \ref{fpropSESP}\,(4), est le spectre du treillis 
quotient
$\gT'$ correspondant à la relation de préordre $a\leq _{\gT'}b$ 
définie
par $\DT(a)\cap M\,\subseteq\, \DT(b)\cap M$.\\
Pour la deuxième affirmation on remarque que \propeq
\[\begin{array}{lcl}
\DT(a)\cap M\subseteq \DT(b)\cap M& \;  &  (1)  \\ [1mm]
\DT(a)\cap \VT(b)\cap M =\emptyset  &&  (2)  \\[1mm]
\Tt \fm\in M\; (b\notin\fm \mathrm{\;ou\;} a\in\fm)  &&  (3)  \\[1mm]
  \Tt \fm\in M\; (b\in\fm\Rightarrow  a\in\fm) &&  (4)
\end{array}\]
Et l'assertion (4) revient à dire que, vu dans le treillis quotient 
$\gT/(b=0)$, $a$
appartient au radical de Jacobson. Cela signifie $a\in \JT(b)$,
\cad $a\preceq_{\He(\gT)}b$.
\end{proof}

Quelques points de comparaison.

\begin{ffactc}
\label{ffactSpec=Jspec} ~
\begin{enumerate}
\item $\Spec\,\gT=\Jspec\,\gT$ \ssi $\gT=\He(\gT)$, \cad si $\gT$ est 
faiblement
Jacobson.
\item $\Max\,\gT=\Jspec\,\gT$ \ssi $\He(\gT)$ est une \agB. 
\item Si $\gT$ possède un complément de Brouwer, $\Max\,\gT$ est 
un fermé
de $\Spec\,\gT$, correspondant à l'\id $\Imax(\gT)$. C'est un 
espace de Stone,
il est égal à $\Jspec\,\gT$.
\item $\Min\,\gT=\Jspec\,\gT\cir$ \ssi $\He(\gT\cir)$ est une \agB. 
\item Si $\gT$ possède une négation, $\Min\,\gT$  est un fermé 
de
$\Spec\,\gT\cir$, correspondant au filtre $\Fmin(\gT)$. C'est un 
espace de
Stone, il est égal à $\Jspec\,\gT\cir$.
\end{enumerate}
\end{ffactc}
\begin{proof}
Le point \textsl{1} résulte du \tho \ref{fthDK3}. Pour le point \textsl{2}, (on est en 
\clama)
on remarque qu'un treillis distributif est une \agB \ssi ses \ideps 
sont tous
maximaux. Le point \textsl{3} résulte du point \textsl{2} et du fait 
\ref{ffactSpecMax}. Les
points \textsl{4} et \textsl{5} s'obtiennent à partir des points \textsl{2} et \textsl{3} en passant au 
treillis
opposé.
\end{proof}

La proposition suivante est signalée par Heitmann dans 
\cite{fHei84}. L'hypothèse dans le point \textsl{2} est que l'espace $M=\Max\,\gT$ est noethérien, \cad que tout ouvert est \qc.  Comme la toplogie de $M$ est induite par celle de $\Spec\,\gT$, les ouverts $\fD_\gT(a)\cap M$ forment une base de la topologie. Par ailleurs on a  $\fD_\gT(a_1 \vu\dots\vu a_n)=\fD_\gT(a_1)\cup\dots\cup\fD_\gT(a_n)$. Donc, lorsque~$M$ est noethérien, tout ouvert de~$M$
est de la forme $\fD_\gT(a)\cap M$ et tout fermé est un fermé de base $\fV_\gT(a)\cap M$.
\begin{fpropositionc}[comparaison of $\Jspec$ and $\jspec$]
\label{fpropJspecjspec} ~
\begin{enumerate}
\item On a toujours $\jspec\,\gT\subseteq \Jspec\,\gT$.
\item Si $M=\Max\,\gT$ est noethérien, a fortiori si $\Spec\,\gT$ est noethérien, on a 
$\jspec\,\gT=\Jspec\,\gT$.
\end{enumerate}
\end{fpropositionc}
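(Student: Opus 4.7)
La stratégie est de reformuler les deux appartenances \gui{$\fp\in\jspec\,\gT$} et \gui{$\fp\in\Jspec\,\gT$} sous forme d'énoncés quantifiés parallèles, puis de constater que le premier est manifestement plus fort que le second, l'implication réciproque résultant sous hypothèse noethérienne d'une réduction d'une intersection infinie à une intersection finie.

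D'abord je traduirais chacune des deux conditions. Pour \gui{$\fp\in\jspec\,\gT$}, l'égalité $\JT(\fp)=\fp$ signifie que pour tout $a\notin\fp$, il existe un \idema $\fm$ contenant~$\fp$ avec $a\notin\fm$. En réécrivant $\fp\subseteq\fm$ comme \gui{pour tout $b\in\gT$, $b\in\fp\Rightarrow b\in\fm$} (\cade $\fp\in\VT(b)\Rightarrow\fm\in\VT(b)$), on aboutit à la forme
\[
(*)\quad \Tt a\in\gT\; [\fp\in\DT(a)\Rightarrow \Ex \fm\in M,\,\Tt b\in\gT\,(\fp\in\VT(b)\Rightarrow \fm\in M\cap\DT(a)\cap\VT(b))].
\]
Pour \gui{$\fp\in\Jspec\,\gT$}, par définition comme adhérence de $M$ dans la topologie constructible, on obtient
\[
(**)\quad \Tt a,b\in\gT\; [\fp\in\DT(a)\cap\VT(b)\Rightarrow \Ex \fm\in M\cap\DT(a)\cap\VT(b)].
\]
L'observation clé est que dans $(*)$, le point $\fm$ ne dépend que de $a$ et sert uniformément pour tous les $b\in\fp$, tandis que dans $(**)$ il peut dépendre de la paire $(a,b)$. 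Donc $(*)\Rightarrow(**)$ trivialement, ce qui donne le point~\textsl{1}.

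Pour le point~\textsl{2}, il faut obtenir la réciproque $(**)\Rightarrow(*)$ sous l'hypothèse que $M=\Max\,\gT$ est noethérien. Je fixerais $a$ avec $\fp\in\DT(a)$ et considérerais la famille des fermés $(M\cap\VT(b))_{b\in\fp}$ de $M$. Comme $M$ est noethérien et que tout ouvert de $M$ est de la forme $\DT(c)\cap M$ (réunion finie d'ouverts de base), tout fermé de $M$ est également de la forme $\VT(c)\cap M$. L'étape cruciale consiste alors à montrer que l'intersection $F=\bigcap_{b\in\fp}(M\cap\VT(b))$ est égale à un unique $M\cap\VT(b_0)$: par la propriété de descente des fermés d'un espace noethérien, cette intersection filtrante est égale à une sous-intersection finie $\bigcap_{i=1}^k (M\cap\VT(b_i))=M\cap\VT(b_1\vu\cdots\vu b_k)$, et l'on pose $b_0=b_1\vu\cdots\vu b_k$. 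Alors $b_0\in\fp$, donc $\fp\in\DT(a)\cap\VT(b_0)$, et $(**)$ fournit un $\fm\in M\cap\DT(a)\cap\VT(b_0)=M\cap\DT(a)\cap F$. Ce $\fm$ ne dépend que de $a$ (via $b_0$) et appartient à tous les $M\cap\DT(a)\cap\VT(b)$ pour $b\in\fp$, ce qui établit $(*)$.

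Le point le plus délicat me semble être la réduction de l'intersection infinie à une intersection finie, qui repose de manière essentielle sur la caractérisation des fermés de $M$ sous l'hypothèse de noethérianité, et non sur celle de $\Spec\,\gT$. C'est en fait cette caractérisation \gui{tout ouvert de $M$ est un $\DT(a)\cap M$} qui rend l'argument effectif, car elle garantit que les intersections pertinentes se passent à l'intérieur de la famille indexée par $\gT$ elle-même.
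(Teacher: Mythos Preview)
Your proof is correct and follows essentially the same approach as the paper: you reformulate both conditions as the quantified statements $(*)$ and $(**)$, observe that $(*)\Rightarrow(**)$ because the witness $\fm$ in $(*)$ depends only on $a$, and for the converse use the noetherianity of $M$ to collapse the intersection $\bigcap_{b\in\fp}(M\cap\VT(b))$ to a single $M\cap\VT(b_0)$ with $b_0\in\fp$. Your explicit justification that $b_0=b_1\vu\cdots\vu b_k\in\fp$ (so that $(**)$ applies) is a welcome detail the paper leaves implicit.
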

\begin{proof}
On considère un \elt $\fp$ fixé de $\Spec\,\gT$.
Tout d'abord les propriétés suivantes sont successivement 
équivalentes
\[\begin{array}{rcl}
\fp\in\jspec\,\gT&   &     \\[1mm]
\Tt a\in \gT\quad [\;\fp\in \DT(a)&  \Rightarrow  &
\Ex \fm\in (M\,\cap \DT(a)),\, \fp\subseteq \fm \;]  \\[1mm]
\Tt a\in \gT\quad [\;\fp\in \DT(a)&  \Rightarrow  &
\Ex \fm\in (M\,\cap  \DT(a)),\, \Tt b\in\gT\,(\fm\in \DT(b)
\Rightarrow \fp\in \DT(b))\;] \\[1mm]
\Tt a\in \gT\quad [\;\fp\in \DT(a)&  \Rightarrow  &
\Ex \fm\in (M\,\cap \DT(a)),\, \Tt b\in\gT\,(\fp\in \VT(b)
\Rightarrow \fm\in \VT(b))\;]\\[1mm]
\Tt a\in \gT\quad [\;\fp\in \DT(a)&  \Rightarrow  &
\Ex \fm\in M,\, \Tt b\in\gT\,(\fp\in \VT(b)
\Rightarrow \fm\in  M\cap \DT(a) \cap \VT(b) )\;]\qquad (*)
\end{array}\]
De même sont équivalentes les propriétés
\[\begin{array}{rcl}
\fp\in\Jspec\,\gT&   &     \\[1mm]
\Tt a,b\in \gT\quad [\;\fp\in (\DT(a)\cap \VT(b))&  \Rightarrow  &
\Ex \fm\in M\cap \DT(a) \cap \VT(b)\;]
\qquad\qquad  (**)
\end{array}\]
\textsl{1}. Ainsi on voit que $(*)$ est plus fort que $(**)$, puisque dans $(**)$,
$\fm$ peut dépendre de $a$ et $b$ tandis que dans $(*)$ il ne doit 
dépendre
que de $a$.\\
\textsl{2}. Si $M=\Max\,\gT$ est noethérien le fermé 
$\bigcap\limits_{\VT(b)\ni\fp}
(M\cap \VT(b))$ est égal
à un fermé de base $M\cap \VT(b_0)$ et  $(**)$ avec ce $b_0$ donne $(*)$.
\end{proof}

\comm
Comme le fait remarquer Heitmann,  les \thos connus utilisant le 
$\jspec$ le
font toujours sous l'hypothèse \gui{$\Max$ noethérien}. Il est 
donc
probable que  $\Jspec$ soit la seule notion vraiment intéressante.
Notez que $\jspec\,\gT$ n'est un sous-espace spectral de $\Spec\,\gT$ 
que lorsqu'il est égal à $\Jspec\,\gT$.\eoe

\hum{

\textsl{1}. La notion opposée du $\Jspec$ n'est pas sans intérêt en 
\alg
commutative:
par renversement, le radical de Jacobson est remplacé par le filtre
complémentaire de la réunion des \ideps minimaux.

\textsl{2}. Pour le point \textsl{2}, l'hypothèse fonctionnerait-elle avec $\Jspec$ noethérien ou $\Max$  noethérien~? Quel est le rapport entre les trois conditions de noethérianité ($\jspec\,\gT$, $\Jspec\,\gT$ et $\Max\,\gT$)~? En fait Heitmann affirme au début de l'article comme une évidence que l'on a $\Jspec=\jspec$ quand le spectre maximal est noethérien.
Il discute un peu plus cette question à la fin de l'article.

\textsl{3}. Il serait intéressant de formuler l'hypothèse \gui{$\Max\,\gT$ est noethérien} sans utiliser les points de $\Spec\,\gT$, \cad comme une propriété du \trdi $\gT$ formulée \cot.   
}

\junk {Ajout non retenu dans la version finale: cette section 2.4 

\subsection{Quelques points de l'anti\eqvc de catégories}
\label{fsubsecAntiEquiv}

Signalons sans démonstration les résultats généraux suivants concernant les morphismes dans la catégorie des \trdis et dans celle des espaces spectraux (voir le théorème de Krull page~\pageref{fThKrull}, \cite[Theorem~IV-2.6]{fBW74} et \cite{fLom2020}). 

Le contexte est le suivant: soit $f:\gT\to\gT'$ un morphisme de treillis distributifs et $\Spec(f)$, noté $f^\star$, le morphisme dual, de $X=\Spec(\gT')$ vers $Y=\Spec(\gT)$, dans la catégorie des espaces spectraux.

Rappelons quelques \dfns usuelles en \clama. 
\begin{itemize}
\item Le morphisme $f$ est dit \textsl{lying over} (en français, il possède la propriété de relèvement) lorsque~$f^\star$ est surjectif: tout \idep de $\gT$ est image réciproque d'un \idep de~$\gT'$.
\item  Le morphisme $f$ est dit \textsl{going up} (en français, il possède la propriété de montée pour les cha\^{\i}nes d'\ideps) lorsque l'on a: \textsl{si $\fq\in\Spec(\gT')$, $f^\star(\fq)=\fp$, et $\fp\subseteq\fp_2$ dans $\Spec(\gT)$, il existe  $\fq_2\in\Spec(\gT')$ tel que
$\fq\subseteq\fq_2$ et $f^\star(\fq_2)=\fp_2$}.
\item  De même $f$ est dit \textsl{going down} (en français, il possède la propriété de descente pour les cha\^{\i}nes d'\ideps) lorsque l'on a: \textsl{si $\fq\in\Spec(\gT')$, $f^\star(\fq)=\fp$, et $\fp\supseteq\fp_2$ dans $\Spec(\gT)$, il existe  $\fq_2\in\Spec(\gT')$ tel que $\fq\supseteq\fq_2$ et  $f^\star(\fq_2)=\fp_2$}.
\item  On dit que le morphisme $f$ \textsl{possède la propriété d'incomparabilité} lorsque ses  \gui{fibres} sont formées d'\ideps deux à deux incomparables: si $\fq_1\subseteq \fq_2\in X$ et $f^\star(\fq_1)=f^\star(\fq_2)$ dans~$Y$ alors $\fq_1= \fq_2$.
\item  L'espace spectral $\Spec(\gT)$ est dit \textsl{normal} si tout point est majoré par un unique point fermé (tout \idep de $\gT$ est contenu dans un unique \idema).
%
\end{itemize}

\begin{ftheorem} \label{th-dico-trdi-spec-mor}
 On a les équivalences suivantes.   
\begin{enumerate}
\item $f$ est injectif $\Leftrightarrow$ $f$ est un monomorphisme $\Leftrightarrow$ $f^\star$ est un épimorphisme $\Leftrightarrow$ $f^\star$ est surjectif (lying over).
\item $f$ est un épimorphisme $\Leftrightarrow$ $f^\star$ est un monomorphisme $\Leftrightarrow$ $f^\star$ est injectif.
\item $f$ est surjectif\footnote{Autrement dit, puisque c'est une structure équationnelle, $f$  est un morphisme de passage au quotient.} $\Leftrightarrow$ $f^\star$ est un isomorphisme sur son image, qui est un
sous-espace spectral de $Y$ (voir la section \ref{fsecSESP}, en particulier le lemme \ref{flemSESP}, le théorème~\ref{fpropSESP} et la proposition~\ref{fpropositionFSES}).
\item $f$ est going up $\Leftrightarrow$  pour
tous $a,c\in\gT$ et $y\in\gT'$ on a
\[
f(a)\leq f(c)\vu y \;\Rightarrow\;\exists x\in\gT\; (a\leq c \vu x \hbox{ et } f(x)\leq y).
\] 
\item $f$ est going down $\Leftrightarrow$  pour
tous $a,c\in\gT$ et $y\in\gT'$ on a
\[
f(a)\geq f(c)\vi y \;\Rightarrow\;\exists x\in\gT\; (a\geq c \vi x \hbox{ et } f(x)\geq y).
\]
\item $f$ possède la propriété d'incomparabilité $\Leftrightarrow$ $f$ est zéro-dimensionnel\footnote{Voir ci-dessous \dots.}.   
\end{enumerate}

\end{ftheorem}

\smallskip Il peut cependant y avoir des épimorphismes de \trdis non surjectifs (voir \cite[\hbox{section V-8}]{fBW74}).
Cela correspond à la possibilité d'un morphisme bijectif entre espaces spectraux qui ne soit pas un isomorphisme. Par exemple le morphisme spectral bijectif  $\Spec\,\gT^{\rm
bool}\to\Spec\,\gT$ n'est pas (en général) un \iso et le morphisme de treillis $\gT\to\gT^{\rm bool}$ est un \gui{épimono} qui n'est pas (en général) surjectif.
}

\section[Dimensions de Krull et de Heitmann: \trdis]{Dimensions de 
Krull et de
Heitmann pour un \trdi}
\label{fsecHtrdi}

  Nous arrivons dans cette section au coeur de l'article.
Nous reprenons le point de vue des \coma. 
Les seules preuves non \covs sont celles qui font le lien entre une 
notion
classique et sa reformulation \cov. 
En \clama la \textsl{dimension de Krull} d'un \trdi est définie comme 
en \alg
commutative: c'est la borne supérieure des longueurs des 
chaines
strictement croissantes d'idéaux premiers.

\subsection{Dimension et bords de Krull}

Nous rappelons maintenant une version constructive \elr de la 
dimension de Krull (\citet*{fCL2003,fCLR05}) en nous appuyant sur l'intuition suivante: une  variété est de dimension $\leq k$ \ssi le bord de toute sous-variété est de dimension $\leq k-1$. Des approches \covs sensiblement équivalentes sont dans \cite{fEsp78,fEsp82,fEsp83}.

Le \tho suivant en \clama nous donne une bonne signification 
intuitive de la
dimension de Krull d'un \trdi. 

\begin{ftheoremc}
\label{fthDK1} Soit un \trdi $\gT$ engendré par une partie $S$ et 
$\ell$
un entier positif ou nul.
Les conditions suivantes sont équivalentes.
\begin{enumerate}
\item  Le treillis $\gT$ est de dimension $\leq \ell.$
\item  Pour tout $x\in S$ le
bord  $\gT\ul{x}$ est de dimension $\leq \ell-1$.
\item  Pour tout $x\in S$ le
bord  $\gT\bal{x}$ est de dimension $\leq \ell-1$.
\item 
Pour tous $x_0,\ldots,x_\ell\in S$
il existe $a_0$,\ldots,  $a_\ell\in \gT$ vérifiant:
\begin{equation}
     a_0 \vi x_0  \leq  0\,,\;\;\;
     a_1 \vi x_1 \leq   a_0 \vu x_0\,,\;\;\; \dots\;\;,\;\;\;
     a_{\ell} \vi x_{\ell} \leq     a_{\ell-1} \vu x_{\ell-1}\,,\;\;\;
     1  \leq  a_{\ell} \vu x_{\ell}.
\end{equation}
\end{enumerate}
Lorsque $\gT$ est une \agH les conditions précédentes sont 
aussi
équivalentes à
\begin{enumerate}\setcounter{enumi}{4}
\item   Pour toute
suite  $x_0,\dots,x_{\ell}$  dans  $S$  on a l'\egt
\begin{equation}
     1 = x_{\ell}\vu (x_{\ell}\im( \cdots (x_1 \vu (x_1 \im (x_0\vu
\neg x_0)))\cdots))
\end{equation}
\end{enumerate}
Lorsque $\gT$ est une \alg de Brouwer les conditions 
précédentes sont aussi équivalentes à
\begin{enumerate}\setcounter{enumi}{5}
\item   Pour toute
suite  $x_0,\dots,x_{\ell}$  dans  $S$  on a l'\egt
\begin{equation}
     0 = x_{0}\vi (x_{0}-(x_1 \vi (x_1 - ( \cdots (x_\ell\vi
(1- x_\ell))))\cdots))
\end{equation}
\end{enumerate}
\end{ftheoremc}

En particulier un treillis est de dimension $\leq 0$
\ssi  c'est une \agB. 

L'équivalence entre les points \textsl{1}, \textsl{4} et \textsl{5} a été établie, sans utiliser la notion de bord, dans  \cite{fCL2003}, en poursuivant la problématique
d'André Joyal \cite{fJoy71,fJoy76} et de Luis Espa\~nol \cite{fEsp78,fEsp82,fEsp83,fEsp86,fEsp88}.
Citons aussi l'article récent \cite{fEsp2010} sur le sujet.

On notera aussi que la théorie des \agHs de dimension $\leq k$ est une théorie équationnelle.

\begin{proof}[Démonstration
du \tho \ref{fthDK1}]
On commence par noter que le quotient $\gT\ul{x}=\gT/\rK_\gT^x$  peut 
aussi
être vu comme l'ensemble ordonné obtenu à partir de la 
relation
de préordre $\leq ^x$ définie sur~$\gT$ comme suit:
\begin{equation} \label{feqBordSup}
a\leq ^x b \qquad \Longleftrightarrow\qquad \exists y\in 
\gT\;\;(\,x\vi
y=0\;\;\& \;\;a\leq  x\vu y \vu b\,)
\end{equation}
\textsl{1} $\Leftrightarrow$ \textsl{2}:
Nous montrons tout d'abord que tout filtre maximal $\ff $ de $\gT$ 
devient
trivial dans~$\gT\ul{x}$, \cad qu'il contient $0$.
Autrement dit on doit trouver $a$ dans $\ff $ tel \hbox{que $a\leq ^x0$}.
Si $x\in \ff $ alors on prend $a=x$ et $y=0$ dans (\ref{feqBordSup}).
Si $x\notin \ff $ il existe $z\in \ff $ tel \hbox{que $x\vi z= 0$} (puisque 
le
filtre engendré par $\ff $ et $x$ est trivial dans $\gT$) et on 
prend~\hbox{$a=y=z$} dans (\ref{feqBordSup}).
Ceci montre que la dimension de  $\gT\ul{x}$ chute de au moins une
unité par rapport à celle de~$\gT$ (supposée finie).\\
Ensuite nous montrons que si on a deux filtres premiers $\ff '\subset
\ff $, $\ff $ maximal et $x\in \ff \setminus \ff '$ alors~$\ff '$ ne 
devient pas
trivial dans  $\gT\ul{x}$ (ceci montre que la dimension de  
$\gT\ul{x}$
chute de seulement une unité si $x$ est bien choisi). En effet,
dans le cas contraire, on aurait \hbox{un $z\in \ff '$} tel \hbox{que $z\vi x=0$},
mais comme $z$ et $x\in \ff $ cela ferait $0\in \ff $, ce qui est
absurde.\\
Enfin nous remarquons que si $\ff '\subset \ff $ sont des filtres 
premiers
distincts et si $S$ engendre $\gT$ on peut trouver $x\in S$ tel que
$x\in \ff \setminus \ff '$.

\noindent \textsl{1} $\Leftrightarrow$ \textsl{3}: conséquence de \textsl{1} $\Leftrightarrow$ \textsl{2} par renversement de l'ordre.

\noindent \textsl{2} $\Leftrightarrow$ \textsl{4}:  par récurrence sur $\ell$, vue 
la
\dfn du bord.

\noindent \textsl{2} $\Leftrightarrow$ \textsl{5}:  par récurrence sur $\ell$, vue 
la
\dfn du bord
dans le cas d'une \agH. 

\noindent \textsl{3} $\Leftrightarrow$ \textsl{6}:  par récurrence sur $\ell$, vue la \dfn du bord dans le cas d'une \alg de Brouwer.
\end{proof}

Le \tho \ref{fthDK2} qui suit caractérise, en \clama et  en termes \cofs, la \ddk de $\Spec\, \gT$, \cad la longueur maximale des chaines de fermés irréductibles.
Par ailleurs on a montré (point \textsl{6} du \thref{fpropositionFSES}) que si~$X$ est le spectre d'un \trdi $\gT$ 
et $x\in\gT$, alors la  frontière  de l'\oqc~$\DT(x)$ de $X$ est 
canoniquement isomorphe à $\Spec(\gT\ul{x})$.
On obtient donc comme corolaire des \thos \ref{fpropositionFSES} et  \ref{fthDK1} une \carn (en \clama et en termes \cofs) de la dimension des espaces spectraux. Rappelons que l'unique espace spectral
de dimension~$-1$ est le l'espace vide, qui correspond au \trdi trivial $\Un$.

\begin{ftheoremc}
\label{fthDK2}
Soit $k$ un entier $\geq 0$. Un espace spectral $X$ est de dimension 
$\leq  k$ \ssi tout \oqc de $X$ a une frontière (un bord) de dimension~\hbox{$\leq k-1$}.
\end{ftheoremc}

Concernant la dimension de Krull, on choisit en \coma la \dfn 
suivante:

\begin{fdefinition}[définition \cov de la dimension de Krull]
\label{fdefDK0} ~\\
La dimension de Krull (notée $\Kdim$) des \trdis est définie 
comme suit.
\begin{enumerate}
\item $\Kdim(\gT)=-1$ \ssi $1=_\gT0$ (i.e. le treillis est réduit 
à un
point).
\item Pour $\ell\geq 0$ on définit $\Kdim(\gT)\leq \ell$ par les 
conditions
équivalentes suivantes:
\begin{enumerate}
\item \label{fl1}$\forall x\in \gT,\; \Kdim(\gT\ul x)\leq \ell-1$
\item \label{fl2}$\forall x\in \gT,\;\Kdim(\gT\bal{x})\leq \ell-1$
\item \label{fl3}$\forall x_0,\ldots,x_\ell\in \gT$
$\Ex a_0,\ldots,  a_\ell\in \gT$ vérifiant:
\[    a_0 \vi x_0  \leq  0\,,\;\;\;
     a_1 \vi x_1 \leq   a_0 \vu x_0\,,\dots\,,\;\;\;
     a_{\ell} \vi x_{\ell} \leq     a_{\ell-1} \vu x_{\ell-1}\,,\;\;\;
     1  \leq  a_{\ell} \vu x_{\ell}.
\]
\end{enumerate}
\end{enumerate}
\end{fdefinition}

Notez qu'il y a en fait trois \dfns possibles ci-dessus pour 
$\Kdim(\gT)\leq\ell$. Les \dfns basées sur \textsl{\ref{fl1}} et \textsl{\ref{fl2}} sont inductives, tandis que la \dfn basée sur \textsl{\ref{fl3}} est globale. 
L'équivalence des \dfns basées sur
\textsl{\ref{fl1}} et \textsl{\ref{fl3}} est immédiate par induction (même chose pour \textsl{\ref{fl2}} et \textsl{\ref{fl3}}).

Par exemple, pour $\ell=2$ les inégalités dans le point \textsl{2c} correspondent au dessin suivant dans~$\gT$.
\[\SCO{x_0}{x_1}{x_2}{a_0}{a_1}{a_2}\]

Le fait que la \dfn fonctionne aussi bien avec le bord supérieur 
qu'avec le bord inférieur donne la constatation suivante.
\begin{ffact}
\label{fcorTTO}
Un \trdi et le treillis opposé ont même dimension.
\end{ffact}

En \clama on peut s'en rendre compte directement en considérant les
chaines d'\ideps qui ont pour complémentaires des chaines 
de filtres premiers (et vice versa): si on identifie les ensembles sous-jacents à $X=\Spec\,\gT$ et  $X'=\Spec\,\gT\cir$ les relations d'ordre $\leq_X$ et $\leq_{X'}$ sont opposées.

\medskip \rem On peut illustrer le point \textsl{2c} dans la \dfn \ref{fdefDK0}.
Nous introduisons \gui{l'idéal bord de Krull itéré}.
Pour $x_1,\ldots ,x_n\in\gT$ nous notons 
\[
\gT_\rK[x_0]=\gT\ul{x_0},\,\gT_\rK[x_0,x_1]=(\gT\ul{x_0})\ul{x_1}
,\,\gT_\rK[x_0,x_1,x_2]=((\gT\ul{x_0})\ul{x_1})\ul{x_2}, 
\hbox{ etc}.\,\,
\]  
les treillis bords quotients successifs, et $\rK[\gT;x_0,\ldots
,x_k]=\rK_\gT[x_0,\ldots ,x_k]$ désigne le noyau de la projection 
canonique
$\gT\to \gT_\rK[x_0,\ldots ,x_k]$.  Alors on a 
$y\in\rK_\gT[x_0,\ldots ,x_\ell]$
\ssi il existe $ a_0,
\ldots  a_\ell\in \gT$ vérifiant:
\[    a_0 \vi x_0  \leq  0\,,\;\;\;
     a_1 \vi x_1 \leq   a_0 \vu x_0\,,\dots\,,\;\;\;
     a_{\ell} \vi x_{\ell} \leq     a_{\ell-1} \vu x_{\ell-1}\,,\;\;\;
     y  \leq  a_{\ell} \vu x_{\ell}.
\]
Si $\gT$ est une \agH on a:
\[
\rK_\gT[x_0,\ldots ,x_\ell]=\dar(x_{\ell}\vu (x_{\ell}\im( \cdots (x_1 
\vu (x_1 \im (x_0\vu
\neg x_0)))\cdots)))
\]
La dimension de Krull du treillis est $\leq \ell$ \ssi 
$1\in\rK_\gT[x_0,\ldots ,x_\ell]$
pour tous $x_0,\ldots ,x_\ell$. \eoe

\medskip En \coma la dimension de Krull de $\gT$ n'est pas à priori 
un \elt bien
défini de $\NN\cup\so{-1}\cup\so{\infty}$. En \clama cet \elt est défini 
comme la borne
inférieure des entiers $\ell$ tels que  $\Kdim(\gT)\leq \ell$.
On utilise en \coma les \textsl{notations} suivantes{\footnote{~En fait 
si on
regarde $\Kdim(\gT)$ comme l'ensemble des $\ell$ pour lesquels  
$\Kdim(\gT)\leq
\ell$, on raisonne avec des parties finales (éventuellement vides) 
de
$\NN\cup\so{-1}$, la relation d'ordre est alors l'inclusion 
renversée, la
borne supérieure l'intersection et la borne inférieure la 
réunion.}}, pour
se rapprocher du langage classique:

\begin{fnotation}
\label{fnotaKdiminf}
Soient $\gT$, $\gL$, $\gT_i$ des \trdis. 
\begin{itemize}
\item  $\Kdim\,\gL\leq \Kdim\,\gT$ signifie: $\Tt\ell\geq -1\; 
(\Kdim\,\gT\leq
\ell\;\Rightarrow \Kdim\,\gL\leq \ell)$.
\item  $\Kdim\,\gL= \Kdim\,\gT$ signifie:  $\Kdim\,\gL\leq  
\Kdim\,\gT$ et
$\Kdim\,\gL\geq  \Kdim\,\gT$.
\item  $\Kdim\,\gT\leq  \sup_i\Kdim\,\gT_i$ signifie: $\Tt\ell\geq 
-1\; (\&_i
(\Kdim\,\gT_i\leq \ell)\;\Rightarrow\Kdim\,\gT\leq \ell) $.
\item  $\Kdim\,\gT=  \sup_i\Kdim\,\gT_i$ signifie: $\Tt\ell\geq -1\; 
(\&_i
(\Kdim\,\gT_i\leq \ell)\;\Leftrightarrow\Kdim\,\gT\leq \ell) $.
\end{itemize}
\end{fnotation}

 Notons $\Bd(V,X)$ le bord de $V$ dans $X$ ($X$ est un 
espace
topologique et  $V$  est une partie de~$X$). 
Alors si $Y$ est un sous-espace de
$X$ on a $\Bd(V\cap Y,Y)\subseteq \Bd(V,X)\cap Y$, avec égalité 
si $Y$ est
un ouvert. La proposition suivante donne une version duale, \cov,  
sans points,
de cette affirmation.
\begin{fproposition}[bord de Krull d'un treillis quotient]
\label{fpropTquoBord} ~\\
Soit $\gL$ un treillis quotient d'un \trdi $\gT$. Par abus, nous 
notons $x$ l'image de $x\in\gT$ dans $\gL$. 
Alors  $\gL\ul x$ est un quotient de  $\gT\ul x$ et  $\gL\bal x$ est un quotient de  $\gT\bal x$. 
En outre si $\gL$ est le quotient de $\gT$ par un filtre $\ff$, $\gL\ul x$ est le quotient de  $\gT\ul x$  par le filtre image de $\ff$ dans $\gT\ul x$.
\end{fproposition}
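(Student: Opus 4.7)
Mon plan est de donner une preuve directe en exploitant la description explicite du préordre (\ref{feqBordSup}) pour $\gT\ul x$, à savoir $a\leq^x b$ ssi $\exists y\in\gT$, $x\vi y=0_\gT$ et $a\leq_\gT x\vu y\vu b$. Pour la première assertion, je considère la projection canonique $\pi:\gT\to\gL$ et les projections de passage au bord $\pi\ul x_\gT:\gT\to\gT\ul x$ et $\pi\ul x_\gL:\gL\to\gL\ul x$. Il s'agit de voir que la composée $\pi\ul x_\gL\circ\pi:\gT\to\gL\ul x$ se factorise à travers $\pi\ul x_\gT$. D'après la propriété universelle du quotient, il suffit de vérifier que la relation de préordre $\leq^x$ définissant $\gT\ul x$ est plus fine que celle définissant $\gL\ul x$, pensée sur $\gT$. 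Or si $a\leq^x b$ dans $\gT$, témoigné par un $y\in\gT$ avec $x\vi y=0_\gT$ et $a\leq_\gT x\vu y\vu b$, alors puisque $\gL$ est un quotient de $\gT$ ces deux inégalités restent valides dans $\gL$, ce qui donne précisément $a\leq^x b$ dans $\gL$.

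La deuxième assertion se déduit de la première par renversement de l'ordre (passage à $\gT\cir$ et $\gL\cir$), en utilisant la description duale via $\rK^\gT_x=\uar x\vi(1\setminus x)$.

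L'étape délicate sera la troisième assertion, qui demande l'\egt (et non une simple factorisation). Un sens est immédiat par la première assertion: puisque $\pi(\ff)=\{1\}$ dans $\gL$, on a $\pi\ul x_\gL(\pi(\ff))=\{1\}$, donc $\theta:\gT\ul x\to\gL\ul x$ annule l'image $\pi\ul x_\gT(\ff)$ dans $\gT\ul x$, et $\theta$ se factorise par $(\gT\ul x)/(\pi\ul x_\gT(\ff)=1)$. Pour la réciproque, je pars de $a\leq_{\gL\ul x} b$ (vus comme \elts de $\gT$). Cela fournit $z\in\gT$ tel que $a\leq_\gL x\vu z\vu b$ et $x\vi z\leq_\gL 0$. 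Par la description du quotient par un filtre (proposition~\ref{fpropIdealFiltre}), ceci fournit $f_1,f_2\in\ff$ avec $a\vi f_1\leq_\gT x\vu z\vu b$ et $x\vi z\vi f_2\leq_\gT 0$. Je pose alors $f=f_1\vi f_2\in\ff$ et $z'=z\vi f_2$, et un calcul direct donne $x\vi z'\leq_\gT 0$ ainsi que $a\vi f\leq_\gT x\vu z'\vu b$, ce qui s'écrit $a\vi f\leq^x b$ dans $\gT$, \cad $a\leq b$ dans $(\gT\ul x)/(\pi\ul x_\gT(\ff)=1)$. L'obstacle technique principal est justement de produire l'\elt $f$ et $z'$ compatibles: l'astuce est de remplacer $z$ par $z\vi f_2$ pour que l'annulation $x\vi z'=0$ ait lieu dans $\gT$ et non seulement modulo $\ff$, tout en conservant la majoration grâce à $f_1\vi f_2$.
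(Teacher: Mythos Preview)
Your proof is correct and follows essentially the same approach as the paper's: the same use of the explicit preorder description~(\ref{feqBordSup}) for the first assertion, and the same trick of setting $f=f_1\vi f_2$ and $z'=z\vi f_2$ for the third. Your treatment of the second assertion (by passage to the opposite lattice) makes explicit a point the paper leaves implicit.
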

\begin{proof}
Soient $a,b,x\in\gT$, si $a\leq_{\gT\ul x}b$ il existe $z\in\gT$ tel 
que
$x\vi z\leq_{\gT} 0$ et $a\leq _{\gT}x\vu z\vu b$. Puisque  $\gL$ est 
un
quotient de $\gT$, on a fortiori $x\vi z\leq_{\gL}0$ et 
$a\leq_{\gL}x\vu z\vu b$
et donc
  $a\leq_{\gL\ul x}b$.
Voyons le deuxième point. Notons  $\pi\colon \gT\to\gL$, $\pi\ul 
x:\gT\to\gT\ul x$
et $\theta\colon \gT\ul x\to\gL\ul x$ les projections. Il est clair que 
$\theta(\pi\ul
x(\ff))=\so{1}$  de sorte l'on a une factorisation de $\theta$ via 
$\gT\ul
x/(\pi\ul x(\ff)=1)$. Inversement soient $a,b\in\gT$
tels que $a\leq _{\gL\ul x} b$. Nous voulons montrer que $a\leq 
_{\gT\ul x
/(\pi\ul x(\ff)=1)} b$. Par hypothèse il existe $z\in\gT$ tel que
$a \leq_{\gL}x\vu z\vu b$ et $x\vi z\leq_{\gL}0$. Cela signifie qu'il 
existe
$f_1$ et $f_2\in\ff$  tels que $a\vi f_1\leq_{\gT} b\vu  x\vu z$ et 
$x\vi z\vi
f_2\leq_{\gT} 0$.
En prenant $f=f_1\vi f_2$ et $z'=z\vi f_2$ on obtient $a\vi 
f\leq_{\gT} b\vu
x\vu z'$ et $x\vi z'\leq_{\gT}0$, \cad  $a\vi f\leq_{\gT\ul x}  b$.
\end{proof}

Le corolaire suivant donne une version duale, \cov,  sans points, du 
fait
suivant: la dimension d'un sous-espace spectral est toujours 
inférieure ou
égale à celle de l'espace entier.
\begin{fcorollary}
\label{fcorpropTquoBord}
Si $\gL$ est un treillis quotient de $\gT$ on a $\Kdim\,\gL\leq 
\Kdim\,\gT$.
\end{fcorollary}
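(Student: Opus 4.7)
Le plan est de procéder par récurrence sur $\ell\geq -1$ en utilisant la définition constructive \ref{fdefDK0} de la dimension de Krull, à savoir que $\Kdim(\gT)\leq \ell$ si, et seulement si, pour tout $x\in\gT$, $\Kdim(\gT\ul{x})\leq \ell-1$. Plus précisément, je montrerais par récurrence sur $\ell$ l'implication: \emph{pour tous \trdis $\gT,\gL$ avec $\gL$ quotient de $\gT$, si $\Kdim(\gT)\leq \ell$ alors $\Kdim(\gL)\leq \ell$}, ce qui est bien la signification donnée dans la notation \ref{fnotaKdiminf}.

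Pour l'initialisation ($\ell=-1$): si $\Kdim(\gT)=-1$, alors $1_\gT=0_\gT$, donc dans le quotient on a aussi $1_\gL=0_\gL$ (la projection canonique étant un morphisme de \trdis qui envoie $0$ sur $0$ et $1$ sur $1$), d'où $\Kdim(\gL)\leq -1$.

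Pour l'hérédité: soit $\ell\geq 0$, supposons le résultat acquis pour $\ell-1$ et supposons $\Kdim(\gT)\leq \ell$. Pour montrer $\Kdim(\gL)\leq \ell$, il faut vérifier que pour tout $y\in\gL$, $\Kdim(\gL\ul{y})\leq \ell-1$. Comme $\gL$ est un treillis quotient de $\gT$, son ensemble sous-jacent est formé d'images d'éléments de $\gT$ (la projection canonique étant surjective), donc $y$ s'écrit $\pi(x)$ pour un certain $x\in\gT$. La proposition \ref{fpropTquoBord} qui précède affirme que $\gL\ul{y}$ est un treillis quotient de $\gT\ul{x}$. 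Or par hypothèse $\Kdim(\gT\ul{x})\leq \ell-1$, et l'hypothèse de récurrence appliquée au quotient $\gT\ul{x}\twoheadrightarrow \gL\ul{y}$ fournit $\Kdim(\gL\ul{y})\leq \ell-1$, d'où le résultat.

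Il n'y a pas vraiment d'obstacle majeur: tout le travail non trivial a été accompli dans la proposition \ref{fpropTquoBord} (la stabilité du bord par passage au quotient), et le corolaire en découle par une récurrence immédiate. Le seul point d'attention est la bonne gestion de la notation $\Kdim\,\gL\leq \Kdim\,\gT$ définie à la notation \ref{fnotaKdiminf} par quantification universelle sur les entiers $\ell$, qui assure que l'énoncé a un sens constructif clair et se prête naturellement à la preuve par récurrence.
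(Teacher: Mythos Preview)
Your proof is correct and is precisely the argument the paper has in mind: the corollary is stated without proof right after Proposition~\ref{fpropTquoBord}, as an immediate consequence of it via the inductive definition~\ref{fdefDK0} of $\Kdim$. Your write-up simply spells out that induction explicitly, which the paper leaves implicit.
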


Dans la proposition suivante le point \textsl{2} est une version duale, \cov,  
sans
points, du fait topologique suivant: la notion de bord est locale. 
C'est surtout
le point \textsl{1} qui nous sera utile dans la suite. 

Par ailleurs, en renversant la relation d'ordre on 
aurait un
énoncé analogue pour l'autre bord.

\begin{fproposition}[caractère local du bord de Krull]
\label{fpropLocBord} ~
\begin{enumerate}
\item Soit $(\fa_i)_{1\leq i\leq m}$ une famille finie d'\ids de 
$\gT$, avec
$\bigcap_{i=1}^m\fa_i=\so{0}$.
Pour $x\in\gT$ notons encore $x$ son image dans $\gT_i=\gT/(\fa_i=0)$.
Le bord ${\gT\!_i}\ul x$ peut être vu comme le quotient de  $\gT 
\ul x$
par un \id $\fb_i$ et on a: $\bigcap_{i=1}^m\fb_i=\so{0}$.
\item Soit $(\ff_i)_{1\leq i\leq m}$ une famille finie de filtres de 
$\gT$, avec
$\bigcap_{i=1}^m\ff_i=\so{1}$.
Pour $x\in\gT$ notons encore $x$ son image dans $\gT_i=\gT/(\ff_i=1)$.
Le bord ${\gT\!_i}\ul x$ peut être vu comme le quotient de  $\gT 
\ul x$
par un filtre $\ffg_i$ et on a: $\bigcap_{i=1}^m\ffg_i=\so{1}$.
\end{enumerate}
\end{fproposition}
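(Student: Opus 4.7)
Je traiterais les deux points séparément, en remarquant que le point \textsl{2} doit se déduire assez directement de résultats déjà établis tandis que le point \textsl{1} requiert une manipulation plus soigneuse.

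Pour le point \textsl{1}, je commencerais par considérer la composée des projections canoniques $\pi_i:\gT\to\gT_i$ et $\pi'_i:\gT_i\to\gT_i\ul x$. Cette composée envoie $\rK_\gT^x$ sur $\so 0$ puisque tout $y\in\rK_\gT^x$ s'écrit $y\leq x\vu u$ avec $u\vi x=0$ dans $\gT$, inégalités qui descendent dans $\gT_i$. Le morphisme composé se factorise donc à travers $\gT\ul x$, ce qui identifie $\gT_i\ul x$ à un quotient de $\gT\ul x$ par l'idéal $\fb_i$, image dans $\gT\ul x$ de $\pi_i^{-1}(\rK_{\gT_i}^x)$. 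L'assertion d'intersection constitue le point technique principal. Je prendrais $y\in\gT$ dont l'image est nulle dans chaque $\gT_i\ul x$, autrement dit $\pi_i(y)\in\rK_{\gT_i}^x$ pour tout $i$. En dépliant cette condition on obtient, pour chaque $i$, un $b_i\in\gT$ et un $a_i\in\fa_i$ tels que, modulo $\fa_i$, on ait $y\leq x\vu b_i$ et $x\vi b_i=0$. La manipulation clé consiste à absorber les congruences modulaires en posant $c_i=a_i\vu b_i$: on obtient alors directement dans $\gT$ les relations $y\leq x\vu c_i$ et $x\vi c_i\in\fa_i$. En posant enfin $c=c_1\vi\cdots\vi c_m$, on vérifie que $y\leq x\vu c$ et $x\vi c=\Vi_i(x\vi c_i)\in\bigcap_i\fa_i=\so 0$, d'où $y\in\rK_\gT^x$. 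Ceci montre que tout élément de $\bigcap_i\fb_i\subseteq\gT\ul x$ est nul.

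Pour le point \textsl{2}, j'invoquerais directement la proposition \ref{fpropTquoBord}, qui précise justement que, lorsque $\gL=\gT/(\ff=1)$, le bord $\gL\ul x$ est le quotient de $\gT\ul x$ par le filtre image de $\ff$. En l'appliquant à chaque $\gT_i=\gT/(\ff_i=1)$, on obtient un filtre $\ffg_i$ de $\gT\ul x$ tel que $\gT_i\ul x\simeq(\gT\ul x)/(\ffg_i=1)$. L'assertion sur l'intersection résulte alors du fait~\ref{ffactIdDansQuo}, qui garantit que l'intersection d'une famille finie de filtres est préservée par passage au quotient: puisque $\bigcap_i\ff_i=\so 1$ dans $\gT$, on obtient également $\bigcap_i\ffg_i=\so 1$ dans $\gT\ul x$.

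L'obstacle principal sera la gymnastique algébrique dans le point \textsl{1}, plus précisément l'étape où l'on remplace $b_i$ par $c_i=a_i\vu b_i$ afin de transformer des inégalités modulo $\fa_i$ en inégalités effectives dans $\gT$, tout en préservant la condition $x\vi c_i\in\fa_i$ qui permet d'exploiter l'hypothèse $\bigcap_i\fa_i=\so 0$ à bon escient.
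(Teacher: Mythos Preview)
Your proposal is correct and follows essentially the same approach as the paper's proof: for point \textsl{1} you factor the composite $\gT\to\gT_i\to\gT_i\ul x$ through $\gT\ul x$, then run exactly the same computation with $c_i=a_i\vu b_i$ and $c=\Vi_i c_i$; for point \textsl{2} you invoke Proposition~\ref{fpropTquoBord} and Fact~\ref{ffactIdDansQuo} just as the paper does.
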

\begin{proof}
Voyons le point \textsl{1}. Considérons la projection $\pi_i\colon \gT\to\gT_i$  
puis la
projection $\pi'_i\colon \gT_i\to{\gT\!_i} \ul x$.
La composée $\gT\to{\gT\!_i} \ul x$ montre que ${\gT\!_i}\ul x\simeq
\gT/(\pi_i^{-1}(\rK_{\gT_i}^x)=0)$, et l'\id
$\pi_i^{-1}(\rK_{\gT_i}^x)$ contient
$\rK_\gT^x$. Ceci prouve la première affirmation.
Soit maintenant $y\in\gT$ tel que pour chaque $i$, 
$\pi_i(y)\in\rK_{\gT_i}^x$.
Cela revient à dire qu'il existe $b_i$ tel que
$\pi_i(y)\leq \pi_i(x)\vu\pi_i(b_i)$ et 
$\pi_i(b_i)\vi\pi_i(x)=\pi_i(0)$,
\cad pour un certain $a_i\in\fa_i$: $y\leq x\vu a_i\vu b_i$ et $x\vi 
b_i\in
\fa_i$. En prenant $c_i=a_i\vu b_i$ cela fait $y\leq x\vu c_i$ et 
$x\vi
c_i\in\fa_i$. Enfin avec $c=c_1\vi\cdots \vi c_m$ on obtient $y\leq 
x\vu c$ avec
$c\vi x=0$. Donc $y\in\rK_\gT^x$, et cela montre qu'un $z\in\gT \ul x$
qui est dans tous les $\fb_i$ est nul (car il est la classe 
d'un~$y$).\\
Pour le point \textsl{2} c'est une conséquence immédiate de la proposition
\ref{fpropTquoBord} et du fait \ref{ffactIdDansQuo}, qui affirme que 
tout passage
au quotient est un homomorphisme pour les treillis des filtres (en 
particulier
si une intersection finie de filtres est égale à $1$ cela reste 
vrai après
passage au quotient).
\end{proof}

\begin{fcorollary}[caractère local de la dimension de Krull]
\label{fcorpropLocBord} ~
\begin{enumerate}
\item Soit $(\fa_i)_{1\leq i\leq m}$ une famille finie d'\ids de 
$\gT$ et
$\fa=\bigcap_{i=1}^m\fa_i$. \\ Alors
$\Kdim(\gT/(\fa=0))=\sup_i\Kdim(\gT/(\fa_i=0))$.
\item Soit $(\ff_i)_{1\leq i\leq m}$ une famille finie de filtres de 
$\gT$ et
$\ff=\bigcap_{i=1}^m\ff_i$. \\ Alors
$\Kdim(\gT/(\ff=1))=\sup_i\Kdim(\gT/(\ff_i=1))$.
\end{enumerate}
\end{fcorollary}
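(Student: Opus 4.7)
Le plan est de se ramener au point \textsl{1} seul, le point \textsl{2} s'en déduisant par renversement de la relation d'ordre (fait \ref{fcorTTO}), ou plus directement en substituant la variante filtre de la proposition \ref{fpropLocBord}\,\textsl{2} à la variante idéal utilisée ci-dessous. Pour le point \textsl{1}, je commencerais par remplacer $\gT$ par $\gT/(\fa=0)$ afin de pouvoir supposer $\fa=\so{0}$; l'inégalité $\sup_i\Kdim(\gT/(\fa_i=0))\leq \Kdim\,\gT$ résulte alors immédiatement du corolaire \ref{fcorpropTquoBord}, puisque chaque $\gT/(\fa_i=0)$ est un treillis quotient de $\gT$.

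Pour l'inégalité inverse, je procéderais par récurrence sur la borne $\ell$. Le cas $\ell=-1$ est trivial: si chaque $\gT/(\fa_i=0)\simeq\Un$, alors $1\in\fa_i$ pour chaque $i$, donc $1\in\bigcap_i\fa_i=\so{0}$, et $\gT=\Un$. Pour l'étape inductive, supposant $\Kdim(\gT/(\fa_i=0))\leq \ell$ pour chaque $i$, il suffit par la caractérisation \textsl{2a} de \ref{fdefDK0} de vérifier que $\Kdim(\gT\ul x)\leq \ell-1$ pour tout $x\in \gT$. C'est ici que la proposition \ref{fpropLocBord}\,\textsl{1} entre en jeu: elle fournit, pour un tel $x$, une famille $(\fb_i)$ d'\ids de $\gT\ul x$ d'intersection $\so{0}$, telle que chaque $(\gT/(\fa_i=0))\ul x$ soit un quotient de $(\gT\ul x)/(\fb_i=0)$. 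L'\hdr, appliquée au treillis $\gT\ul x$ muni de la famille $(\fb_i)$, donne alors $\Kdim(\gT\ul x)\leq \ell-1$, ce qui conclut.

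Le point qui demande le plus de soin n'est pas l'étape inductive elle-même, qui devient quasi-mécanique une fois acquises les propositions précédentes, mais bien l'énoncé sous-jacent de la proposition \ref{fpropLocBord}\,\textsl{1} — à savoir que les images $\fb_i$ s'intersectent encore en $\so{0}$ dans $\gT\ul x$; c'est ce maillon qui permet de faire tourner la récurrence. Une voie alternative consisterait à utiliser directement la caractérisation globale \textsl{2c} de \ref{fdefDK0}: étant donnés $x_0,\ldots,x_\ell\in \gT$, on disposerait, dans chaque $\gT/(\fa_i=0)$, de témoins $a_{0,i},\ldots,a_{\ell,i}$ vérifiant les inégalités requises; il faudrait alors les recoller en témoins globaux dans $\gT$ en itérant des bornes inférieures adaptées et en exploitant $\bigcap_i\fa_i=\so{0}$. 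Cette approche directe me semble plus calculatoire et moins conceptuelle que la voie inductive, et c'est pourquoi je privilégierais cette dernière.
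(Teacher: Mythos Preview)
Votre démonstration est correcte et suit exactement la stratégie du papier: réduction à $\fa=\so{0}$, puis récurrence sur $\ell$ fondée sur la proposition~\ref{fpropLocBord}, le papier mentionnant lui aussi la voie alternative via la caractérisation \textsl{2c}. Une petite coquille de formulation: la proposition~\ref{fpropLocBord}\,\textsl{1} donne $(\gT/(\fa_i=0))\ul x \simeq (\gT\ul x)/(\fb_i=0)$ (isomorphisme, et non \og $(\gT/(\fa_i=0))\ul x$ quotient de $(\gT\ul x)/(\fb_i=0)$\fg{}); c'est bien cette identification qu'il vous faut pour que l'\hdr livre $\Kdim((\gT\ul x)/(\fb_i=0))\leq \ell-1$.
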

\begin{proof}
Il suffit de prouver le point \textsl{1}. En remplaçant $\gT$ par 
$\gT/(\fa=0)$ on se
ramène au cas \hbox{où $\fa=0$}. Le résultat est clair pour $\Kdim=-1$. 
Et la preuve par \recu fonctionne grâce à la
proposition~\ref{fpropLocBord}.\\
On peut aussi donner une preuve directe basée sur la 
caractérisation 2~(c)
dans la \dfn~\ref{fdefDK0}.
\end{proof}

Notez qu'en \clama la caractère local de la dimension de Krull est
en général énoncé sous la forme
\[
\Kdim(\gT)=\sup\sotq{\Kdim(\gT/(\ff=1))}{\ff\hbox{ filtre premier 
minimal}}.
\]  
Il s'agit d'une conséquence directe de la \dfn de la
dimension en \clama.   On peut ensuite en déduire le corolaire
\ref{fcorpropLocBord} mais la preuve qu'on obtient ainsi n'est pas
\cov. 

\smallskip On obtient maintenant \cot le  \thref{fthDK1} dans lequel on remplace la \dfn  classique abstraite de la dimension en \clama par notre \dfn \cov.

\begin{ftheorem}
\label{fpropDK1}
Soit un \trdi $\gT$ engendré par une partie $S$ et $\ell$
un entier positif ou nul.
Les conditions suivantes sont équivalentes.
\begin{enumerate}
\item  Le treillis $\gT$ est de dimension $\leq \ell$
\item  Pour tout $x\in S$ le
bord  $\gT\ul{x}$ est de dimension $\leq \ell-1$.
\item  Pour tout $x\in S$ le
bord  $\gT\bal{x}$ est de dimension $\leq \ell-1$.
\item 
Pour tous $x_0,\ldots,x_\ell\in S$
il existe $a_0$,\ldots,  $a_\ell\in \gT$ vérifiant:
\begin{equation}
     a_0\vi x_0  \leq  0\,,\;\;\;
     a_1\vi x_1 \leq   a_0\vu x_0\,,\dots\,,\;\;\;
     a_{\ell}\vi x_{\ell} \leq     a_{\ell-1}\vu x_{\ell-1}\,,\;\;\;
     1  \leq  a_{\ell}\vu x_{\ell}.
\end{equation}
\end{enumerate}
Lorsque $\gT$ est une \agH les conditions précédentes 
sont aussi
équivalentes~à
\begin{enumerate}\setcounter{enumi}{4}
\item   Pour toute
suite  $x_0,\dots,x_{\ell}$  dans  $S$  on a l'\egt
\begin{equation}
     1 = x_{\ell}\vu (x_{\ell}\im( \cdots (x_1 \vu (x_1 \im (x_0\vu
\neg x_0)))\cdots))
\end{equation}
\end{enumerate}
Lorsque $\gT$ une \alg de Brouwer les conditions 
précédentes sont
aussi
équivalentes à
\begin{enumerate}\setcounter{enumi}{5}
\item   Pour toute
suite  $x_0,\dots,x_{\ell}$  dans  $S$  on a l'\egt
\begin{equation}
     0 = x_{0}\vi (x_{0}-(x_1 \vi (x_1 - ( \cdots (x_\ell\vi
(1- x_\ell))))\cdots))
\end{equation}
\end{enumerate}
\end{ftheorem}
\begin{proof}
\noindent \textsl{2} $\Leftrightarrow$ \textsl{4}  par récurrence sur $\ell$, vue 
la
\dfn du bord.

\noindent \textsl{3} $\Leftrightarrow$ \textsl{4}:  par récurrence sur $\ell$, vue 
la
\dfn du bord.

\noindent \textsl{2} $\Leftrightarrow$ \textsl{5}:  par récurrence sur $\ell$, vue 
la
\dfn du bord
dans le cas d'une \agH. 

\noindent \textsl{3} $\Leftrightarrow$ \textsl{6}:  par récurrence sur $\ell$, vue 
la
\dfn du bord
dans le cas d'une \alg de Brouwer.

\noindent Il reste donc à voir que si \textsl{2} est vrai pour  $x\in S$, 
alors  \textsl{2}
est vrai pour tout $x\in\gT$.
Cela résulte de la proposition \ref{fpropBordKUnion}, et des 
corolaires
\ref{fcorpropTquoBord} et  \ref{fcorpropLocBord}: par exemple pour tous 
$x,y\in
\gT$, $\gT\ul{x\vu y}$ est un quotient
de $\gT/(\rK_\gT^x\cap\rK_\gT^y)$ donc $\Kdim(\gT\ul{x\vu y})\leq \sup
(\Kdim\,\gT\ul x, \Kdim\,\gT\ul y)$.
\end{proof}

\subsection{Dimensions et bord de Heitmann}

\subsubsection*{J-dimension de Heitmann pour un \trdi}
\addcontentsline{toc}{subsubsection}{$\rJ$-dimension de Heitmann}

Nous donnons maintenant la \dfn \cov de la $\Jdim$
de Heitmann, que nous appelons \textsl{$\rJ$-dimension de Heitmann} du 
treillis
$\gT$ (ou de l'espace spectral $\Spec\,\gT$).

\begin{fdefinition}
\label{fdefHdimTr}
Soit $\gT$ un \trdi. 
La \textsl{$\rJ$-dimension de Heitmann de $\gT$}, notée $\Jdim\,\gT$, est la 
dimension de Krull du treillis de Heitmann $\He(\gT)$ (cf. \dfn\ref{defHeT}).
\end{fdefinition}

En fait d'un point de vue \cof on se contente de définir, pour tout
entier $\ell\geq -1$, la phrase   \gui{$\Jdim\,\gT\leq \ell$} par
\gui{$\Kdim\,\He(\gT)\leq \ell$}.

Du point de vue classique on peut donner la \dfn directement, à la 
Heitmann,
pour un espace spectral $X$ comme suit: si $M_X$ est l'ensemble des 
points
fermés et $J_X$ l'adhérence de $M_X$ pour la topologie 
constructible, alors
$\Jdim\,X=\Kdim\,J_X$.

\begin{ffact}
\label{ffactKJdim} Soit $\gT$ un \trdi,  $\gT'=\gT/(\JT(0)=0)$ et 
$\fa$ un
idéal de $\gT$.
\begin{enumerate}
\item $\Jdim(\He(\gT))=\Jdim\,\gT'=\Jdim\,\gT\leq 
\Kdim(\gT')\leq \Kdim\,\gT$.
\item Soit $\gL=\gT/(\fa=0)$.
Alors $\Jdim\,\gL\leq\Jdim\,\gT$.
\end{enumerate}
\end{ffact}
\begin{proof}
Le point \textsl{1} est conséquence du point \textsl{3} dans le fait~\ref{ffactHeHe} 
et le point \textsl{2} conséquence du point~\textsl{4}.
\end{proof}

\rem L'opération $\gT\mapsto \JT(0)$ n'est pas fonctorielle. Même 
chose pour
$\gT\mapsto \He\,\gT$. En particulier l'item \textsl{2} dans le fait \ref{ffactKJdim} ne fonctionne plus à priori pour
un quotient plus général, par exemple pour un quotient par un 
filtre.
Contrairement à la $\Kdim$, la $\Jdim$ peut augmenter par passage 
à un quotient (on a des exemples simples en \alg commutative).

\medskip
\exl
Voici un exemple d\^{u} à Heitmann d'un espace spectral pour lequel
$\Jdim(\gT)<\Kdim(\gT/(\JT(0)=0))$. 
\begin{figure}[ht]   
\begin{center}
\includegraphics{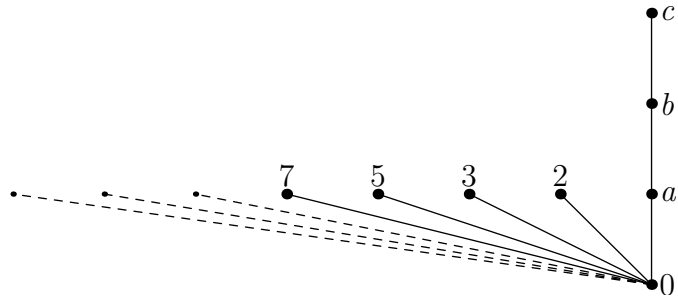}
\end{center}
\caption[]
{\label{ffigSpec} Exemple de Heitmann }  
\end{figure}  
On considère $X=\Spec\,\ZZ$ et $Y=\bf n$
($n\geq 3$) on recolle ces deux espaces spectraux en identifiant les 
deux \elts
minimaux (le singleton correspondant est bien dans 
les deux cas un sous-espace
spectral). On obtient un espace $Z=\Spec\,\gT$ avec $M_Z=\Max\,\gT$ 
fermé donc égal à $J_Z=\Jspec\,\gT$ et zéro dimensionnel, tandis que 
l'unique élément minimal est le seul minorant de $M_Z$, donc $\JT(0)=0$ et
$\Kdim(\gT/(\JT(0)=0))=\Kdim(\gT)=n-2$.
\eoe

\medskip
\rem
Donnons la \dfn de la $\Jdim$ complètement mise à plat.

\begin{itemize}
\item $\Jdim\,\gT\leq \ell$ signifie:
$\forall x_0,\ldots,x_\ell\in \gT\;$
$\Ex a_0,
\ldots,  a_\ell\in \gT$ vérifiant:
\[    a_0\vi x_0  \leq_{\He(\gT)} 0,\;
     a_1\vi x_1 \leq_{\He(\gT)}  a_0\vu  x_0,\dots,\;
     a_{\ell}\vi x_{\ell} \leq_{\He(\gT)}    a_{\ell-1}\vu   
x_{\ell-1},\;
     1  \leq_{\He(\gT)} a_{\ell}\vu   x_{\ell}.
\]
\cade
$\forall x_0,\ldots,x_\ell\in \gT\;$
$\Ex a_0,\ldots,  a_\ell\in \gT\;$ $\Tt y\in\gT$
\[\begin{array}{rcl}
(a_0 \vi x_0) \vu y = 1 &  \Rightarrow  &  y=1    \\
(a_1 \vi x_1)   \vu y=1&  \Rightarrow &  a_0 \vu x_0 \vu y=1   \\
\vdots \qquad  &  \vdots &   \qquad \vdots   \\
(a_{\ell}\vi x_{\ell})\vu y=1&  \Rightarrow &  a_{\ell-1}\vu 
x_{\ell-1} \vu y=1
\\
   &   &  a_{\ell}\vu x_{\ell}=1
  \end{array}\]
\item En particulier $\Jdim\,\gT\leq 0$ signifie:\\
  $\forall x_0\in \gT$ $\Ex a_0\in \gT$ $\Tt y\in\gT, 
  (\,(a_0 \vi x_0) \vu y = 1
\Rightarrow    y=1)\; $ et
$\; a_0 \vu x_0=1\,)$.
\item Et $\Jdim\,\gT\leq 1$ signifie:
  $\forall x_0,x_1\in \gT\;$ $\Ex a_0,a_1\in \gT\;$ $\Tt y\in\gT$\,,\\
~~~~~~  $((a_0 \vi x_0) \vu y = 1   \Rightarrow    y=1)\,$ et
$((a_1 \vi x_1) \vu y = 1   \Rightarrow a_0 \vu x_0 \vu   y=1)\,$ et
$ \,a_1 \vu x_1=1$.\eoe
\end{itemize}

\subsubsection*{La dimension de Heitmann pour un \trdi}
\addcontentsline{toc}{subsubsection}{Dimension de Heitmann}

Bien que \citet*{fHei84} définisse et utilise  la 
dimension $\Jdim\,X$  où $X$ est le spectre d'un anneau commutatif, ses 
preuves sont en fait implicitement basées sur une notion voisine, mais non 
équivalente, que nous appellerons la \textsl{dimension de Heitmann} et que nous noterons 
$\Hdim$.

\smallskip Nous présentons cette notion directement au niveau des 
\trdis 
où les choses s'expliquent plus simplement.

La dimension $\Hdim\,\gT$ est toujours inférieure ou égale à
$\Jdim\,\gT$, ce qui fait que les \thos établis pour la $\Hdim$ 
seront a
fortiori vrais avec la $\Jdim$ et avec la $\Kdim$.

\begin{fdefinition}
\label{fdefBHeit}
Soit $\gT$ un \trdi et $x\in\gT$.
On appelle \textsl{bord de Heitmann de $x$ dans~$\gT$} le treillis quotient
$\gT_\rH^x\eqdefi\gT/(\rH_\gT^x=0)$, où
\begin{equation} \label{feqbordHeit}
\rH_\gT^x\,=\,\dar x \,\vu\, (\JT(0):x)
\end{equation}
On dira aussi que $\rH_\gT^x$ est \textsl{l'\id bord de Heitmann de $x$ 
dans
$\gT$.}
\end{fdefinition}

\begin{flemma}[bord de Krull et bord de Heitmann]
\label{flemTBKBH} ~\\
Soit $\gT$ un \trdi,  $\gT'=\gT/(\JT(0)=0)$, $\pi\colon \gT\to\gT'$ la 
projection
canonique,
$x\in\gT$ et $\ov{x}=\pi(x)$. Alors on a:
\begin{enumerate}
\item $\rH^{\ov{x}}_{\gT'} =\rK^{\ov{x}}_{\gT'}.$
\item  $\pi^{-1}(\rK^{\ov{x}}_{\gT'}) =\rH^x_\gT$
et ${{\gT'}_\rH^{\ov{x}}}\simeq {{\gT'}\ul{\ov{x}}}\simeq\gT_\rH^x$.
\end{enumerate}
\end{flemma}
\begin{proof}
Clair d'après les \dfns. 
\end{proof}

\begin{flemma}
\label{flemTquoBordH}
Soit $\gL=\gT/(\fa=0)$ un treillis quotient d'un \trdi $\gT$ par un 
\id.  Par
abus, nous notons $x$ l'image de $x\in\gT$ dans $\gL$. Alors 
$\gL_\rH^x$ est un
quotient de  $\gT_\rH^x.$
\end{flemma}
\begin{proof}
Notons $\pi\colon \gT\to\gL$ la projection canonique. On veut montrer que si
$z\in\rH_\gT^x$ alors $\pi(z)\in\rH_\gL^{\pi(x)}$. On suppose donc 
$z\leq_\gT
x\vu u$ avec $x \vi u  \in\JT(0)$. Comme 
$\pi(\JT(0))\subseteq\rJ_\gL(0)$, cela
donne $\pi(z)\leq_\gL \pi(x)\vu \pi(u)$ avec $\pi(x) \vi \pi(u) 
\in\rJ_\gL(0)$,
ce qui implique  $\pi(z)\in\rH_\gL^{\pi(x)}$.
\end{proof}

\rem Le lemme précédent serait faux pour un quotient plus 
général, par
exemple pour un quotient par un filtre. Il reste vrai chaque fois que
$\pi(\JT(0))\subseteq \rJ_\gL(0)$.\eoe

\begin{fproposition}[comparaison de deux bords à la 
Heitmann]
\label{fpropBHeitHeyt}~\\
On considère pour $x\in\gT$ son image $\hat{x}\in\He(\gT)$.
\begin{enumerate}
\item On compare les deux quotients de $\gT$ que sont 
$\He(\gT_\rH^x)$ et
$(\He(\gT))\ul{\hat{x}}$: le premier est un quotient du second.
\item Lorsque $\He(\gT)$ est une \agH,  l'inclusion est une \egt. 
\end{enumerate}
\end{fproposition}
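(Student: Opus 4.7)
L'approche naturelle consiste à décrire les deux quotients $\He(\gT_\rH^x)$ et $(\He(\gT))\ul{\hat{x}}$ comme des quotients de $\gT$ en explicitant leurs relations de préordre respectives, puis à les comparer directement. Le point 1 revient alors à montrer qu'une formule logique en implique une autre, et le point 2 à montrer que sous l'hypothèse $\He(\gT)$-\agH un témoin existentiel peut être choisi canoniquement, ce qui force l'équivalence.

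Pour le point 1, je déroulerais d'abord la définition du bord de Krull dans $\He(\gT)$ via \pref{feqbordsup} combinée à la caractérisation \pref{feqJaJb} du préordre de $\He(\gT)$: on obtient que $a \preceq_{(\He(\gT))\ul{\hat{x}}} b$ équivaut à
\[
(*)\quad \exists y \in \gT\;\; \big(\, x \vi y \in \JT(0) \;\text{ et }\; \forall z\in\gT\;(a \vu z = 1 \Rightarrow x \vu y \vu b \vu z = 1)\,\big).
\]
De même, la définition \ref{fdefHeT} donne que $a \preceq_{\He(\gT_\rH^x)} b$ équivaut à
\[
(**)\quad \forall u \in \gT\;\; \big(\, 1 =_{\gT_\rH^x} a \vu u \;\Rightarrow\; 1 =_{\gT_\rH^x} b \vu u\,\big).
\]
J'établirais ensuite $(*) \Rightarrow (**)$: étant donné un témoin $y$ pour $(*)$ et un $u$ tel que $1 =_{\gT_\rH^x} a \vu u$, ce dernier s'écrit $1 \leq_\gT a \vu u \vu x \vu y'$ pour un $y'$ vérifiant $y' \vi x \in \JT(0)$; appliquer $(*)$ avec $z = u \vu x \vu y'$ donne $x \vu b \vu u \vu (y \vu y') = 1$, et fusionner les témoins en $y'' = y \vu y'$ (qui reste dans l'idéal $(\JT(0):x)$ par stabilité de $\JT(0)$ par~$\vu$) livre exactement $1 =_{\gT_\rH^x} b \vu u$.

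Pour le point 2, j'utiliserais l'hypothèse \agH sur $\He(\gT)$ pour disposer d'un relèvement $\widetilde{x}\in \gT$ de $\hat{x} \im 0 = \neg\hat{x}$, grâce aux faits~\ref{ffactHeHe} qui garantissent que $\pi\inv(1)=\so{1}$ et $\pi\inv(0)=\JT(0)$. La propriété universelle du complément de Heyting dans $\He(\gT)$ entraine alors que, modulo $\JT(0)$, tout $y$ vérifiant $x\vi y \in \JT(0)$ est majoré par $\widetilde{x}$; on peut donc substituer $\widetilde{x}$ à $y$ dans $(*)$ et à $y'$ dans la traduction de $1 =_{\gT_\rH^x} a \vu u$, ce qui ramène $(*)$ et $(**)$ à la même formule $\forall z,\;a\vu x\vu \widetilde{x}\vu z=1 \Rightarrow b\vu x\vu \widetilde{x}\vu z = 1$, et établit $(**) \Rightarrow (*)$.

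L'obstacle principal me semble être la double traduction au point 1 : d'une part relier proprement \gui{$\hat x\vi\hat y=0$ dans $\He(\gT)$} à \gui{$x\vi y\in\JT(0)$ dans $\gT$}, d'autre part compresser les deux témoins existentiels $y$ et $y'$ en un seul $y''=y\vu y'$, ce qui nécessite de bien voir que $(\JT(0):x)$ est stable par $\vu$. Au point 2, la subtilité est de justifier que remplacer $y$ par $\widetilde{x}$ ne perd aucune généralité : cela demande d'expliciter que si $x\vi y \in \JT(0)$ alors $\hat y \leq \neg\hat{x}$ dans $\He(\gT)$, puis de relever cette inégalité à $\gT$ modulo $\JT(0)$, laquelle est absorbée dans la formule quantifiée sur $z$.
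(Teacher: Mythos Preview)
Your proposal is correct and follows essentially the same approach as the paper: you unfold the two preorders into formulas $(*)$ and $(**)$ identical to the paper's, prove $(*)\Rightarrow(**)$ via the same substitution $z=u\vu x\vu y'$ and fusion $y''=y\vu y'$, and for point~2 use a lift $\widetilde{x}$ of $\neg\hat x$ to collapse both formulas to the common form $\forall z,\;a\vu x\vu\widetilde{x}\vu z=1\Rightarrow b\vu x\vu\widetilde{x}\vu z=1$. The subtleties you flag (stability of $\JT(0)$ under $\vu$, and why $\widetilde{x}$ absorbs any witness $y$) are exactly those the paper handles implicitly.
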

\begin{proof}
Le treillis $(\He(\gT))\ul{\hat{x}}$  est un quotient de $\gT$  dont 
la relation
de préordre $a \preceq  b$
peut etre décrite de la manière suivante:
\[\Ex y\;\;\;(x \vi  y \in \JT (0) \;\;\mathrm{et} \;\;\Tt z\;[\;a 
\vu  z = 1
\;\Rightarrow\; x \vu  y \vu  b  \vu  z = 1\;]).\eqno (*)
\]
Considérons
  le préordre qui définit $\He(\gT _\rH^x)$:
\[\Tt u\;\;\;(1 \leq_{\gT _\rH^x}  a \vu  u \;\;\Rightarrow\;\;
  1 \leq_{\gT _\rH^x}  b \vu  u).\eqno (**)\] 
Prouvons que la relation de préordre $\preceq$ entraine la relation 
de
préordre $(**)$.\\
On a un $y$ vérifiant $(*)$.
On considère un $u$ tel que $1 \leq  a \vu  u$ dans $\gT _\rH^x$ et 
on cherche
à montrer que
$1 \leq  b \vu  u$  dans $\gT _\rH^x$.\\
La relation $1 \leq  a \vu  u$ dans $\gT _\rH^x$ s'ecrit $1 \leq  a 
\vu  u \vu
x \vu  y'$
pour un $y'$ tel que $y' \vi  x \in \JT (0)$.\\
On pose $z = u \vu  x \vu  y'$, on applique $(*) $ et on obtient
   $x \vu  b \vu  u \vu  (y \vu  y') = 1.$ \\
Mais $(y \vu  y') \vi  x  \in \JT (0)$  donc en posant $y'' = y \vu  
y'$
on a
$1 \leq  b \vu  u \vu  x \vu  y''$    avec $x \vi  y'' \in \JT (0)$
\cad  $1 \leq  b \vu  u$  dans $\gT _\rH^x$.\\
Voyons le point \textsl{2}. Nous notons $\pi\colon \gT\to\He(\gT)$ la projection 
canonique.\\
Rappelons (fait~\ref{ffactHeHe}) que $\pi^{-1}(0)=\JT(0)$ et
$\pi^{-1}(1)=\so{1}$. Nous supposons que $\He(\gT)$ est une \agH.  
Notons
$\wi{x}$ un \elt de $\gT$ tel que $\pi(\wi{x})\,=\,\pi(x)\im 0\,$ dans
$\He(\gT)$.
Alors on peut réécrire $(*)$ sous la forme
\[
\Tt z\;[\;a \vu  z = 1  \;\Rightarrow\; x \vu  \wi x \vu  b  \vu  z =
1\;]\eqno (*')
\]
De même  $1 \leq_{\gT _\rH^x}  a \vu  u$, qui signifie
\[\Ex y'\;(x\vi y'\in \JT (0) \;\mathrm{et}\; 1 \leq  a \vu  u \vu  x 
\vu
y'),\]
se réécrit $1 \leq  a \vu  u \vu  x \vu  \wi x$. En 
conséquence $(**)$ se
réécrit
\[
\Tt u\;[\;a \vu  u \vu  x \vu  \wi x= 1  \;\Rightarrow\; b \vu  u 
\vu  x \vu
\wi x\eqno (**')
\]
et il est clair que $(*')$ et $(**')$ sont équivalents.
\end{proof}

\begin{fdefinition}
\label{fdefHdim}
La \textsl{dimension de Heitmann d'un \trdi $\gT$}, notée 
$\Hdim\,\gT$, est
définie de manière inductive comme suit:
\begin{itemize}
\item $\Hdim\,\gT=-1$ \ssi $\gT=\Un$.
\item Pour $\ell\geq 0$, $\Hdim\,\gT\leq \ell$ \ssi pour tout 
$x\in\gT$,
$\Hdim(\gT_\rH^x)\leq \ell-1$.
\end{itemize}
\end{fdefinition}

Du lemme \ref{flemTquoBordH} on déduit par \recu sur $\ell$,
avec la même convention de notation qu'en \ref{notaKdiminf} le 
lemme
suivant.

\begin{flemma}
\label{flemDimHquo}
Si $\gL$ est le quotient de $\gT$ par un \id,  alors $\Hdim\,\gL\leq
\Hdim\,\gT$.
\end{flemma}

\begin{fproposition}[comparaison des dimensions $\Jdim$ et $\Hdim$]
\label{fpropJdimHdim}~
%
\begin{enumerate}
\item On a toujours $\Hdim\,\gT\leq  \Jdim\,\gT$.
\item Lorsque $\He(\gT)$ est une \agH,  on a \egt: $\Hdim\,\gT=  
\Jdim\,\gT$.
\end{enumerate}

\noindent En \clama l'\egt a donc lieu lorsque $\He(\gT)$ est noethérien.
\end{fproposition}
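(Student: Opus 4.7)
Mon plan est de procéder dans les deux cas par récurrence sur la dimension, en s'appuyant sur la comparaison entre $\He(\gT_\rH^x)$ et $(\He(\gT))\ul{\hat{x}}$ établie dans la Proposition~\ref{fpropBHeitHeyt}. L'observation clé est que, par définition, $\Jdim\,\gT=\Kdim(\He(\gT))$ et $\Hdim\,\gT\leq \ell$ se ramène à $\Hdim(\gT_\rH^x)\leq \ell-1$ pour tout $x\in\gT$, ce qui ouvre la voie à une récurrence sur $\ell$ parfaitement parallèle à la récurrence qui définit la dimension de Krull via les bords.

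Pour le point 1, je pose $\ell=\Jdim\,\gT=\Kdim(\He(\gT))$ et je raisonne par récurrence. Le cas $\ell=-1$ donne $\He(\gT)=\Un$, donc $\gT=\Un$ d'après le fait~\ref{ffactHeHe}, et ainsi $\Hdim\,\gT=-1$. Pour l'étape inductive, je fixe $x\in\gT$ et j'utilise le point 1 de la Proposition~\ref{fpropBHeitHeyt}: le treillis $\He(\gT_\rH^x)$ est un quotient de $(\He(\gT))\ul{\hat{x}}$. Comme la dimension de Krull de ce dernier est $\leq \ell-1$ (caractérisation par les bords donnée par le Théorème~\ref{fpropDK1}) et que $\Kdim$ décroit par quotient (Corollaire~\ref{fcorpropTquoBord}), on obtient $\Jdim(\gT_\rH^x)=\Kdim(\He(\gT_\rH^x))\leq \ell-1$. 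L'hypothèse de récurrence appliquée à $\gT_\rH^x$ donne alors $\Hdim(\gT_\rH^x)\leq \Jdim(\gT_\rH^x)\leq \ell-1$, d'où $\Hdim\,\gT\leq \ell$ par définition~\ref{fdefHdim}.

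Pour le point 2, je recommence la même récurrence en supposant de plus que $\He(\gT)$ est une \agH, mais j'utilise cette fois le point 2 de la Proposition~\ref{fpropBHeitHeyt}, qui fournit l'égalité $\He(\gT_\rH^x)=(\He(\gT))\ul{\hat{x}}$, et donc $\Jdim(\gT_\rH^x)=\Kdim((\He(\gT))\ul{\hat{x}})$. Pour faire jouer pleinement la récurrence, il me faut cependant un invariant supplémentaire: l'hypothèse \gui{$\He(\bullet)$ est une \agH} doit se propager. Or $(\He(\gT))\ul{\hat{x}}=\He(\gT)/\dar(\hat{x}\vu\neg\hat{x})$ d'après le point 6 de la Proposition~\ref{fpropositionFSES} (applicable puisque $\He(\gT)$ est supposé être une \agH); il s'agit donc d'un quotient par un idéal principal d'une \agH, lequel est encore une \agH par le fait~\ref{ffactQuoAgH2}. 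L'hypothèse de récurrence s'applique ainsi à $\gT_\rH^x$ et donne $\Hdim(\gT_\rH^x)=\Jdim(\gT_\rH^x)\leq \ell-1$, d'où l'égalité voulue au niveau $\ell$.

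Le seul point non entièrement routinier est la vérification que la propriété de Heyting descend au bord de Heitmann, mais elle se ramène exactement à la conjonction du point 2 de la Proposition~\ref{fpropBHeitHeyt} et du fait~\ref{ffactQuoAgH2}. Quant à la remarque finale, elle est immédiate en \clama: si $\He(\gT)$ est noethérien, tout idéal y est principal, donc en particulier tout conducteur $b:c$, ce qui fait de $\He(\gT)$ une \agH, et l'on conclut par le point~2.
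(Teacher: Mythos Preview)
Your plan is correct and follows essentially the same approach as the paper: both items are proved by induction on the dimension, using respectively points~1 and~2 of Proposition~\ref{fpropBHeitHeyt}, and for item~2 the key invariant (that $\He(\gT_\rH^x)$ is again a Heyting algebra) is obtained exactly as you indicate, via the fact that $(\He(\gT))\ul{\hat{x}}$ is a quotient of the Heyting algebra $\He(\gT)$ by a principal ideal, combined with fact~\ref{ffactQuoAgH2}. Your write-up is in fact more detailed than the paper's (you spell out the base case via fact~\ref{ffactHeHe} and invoke corollary~\ref{fcorpropTquoBord} explicitly), but the structure is identical.
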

\begin{proof}
Le point \textsl{1} se démontre par \recu sur $\Jdim\,\gT$ à partir du 
point \textsl{1} de
la proposition~\ref{fpropBHeitHeyt}.

\noindent Pour le point \textsl{2}, on suppose que  $\He(\gT)$ est une \agH et 
l'on fait
une \recu en utilisant le point \textsl{2} de la proposition 
\ref{fpropBHeitHeyt}.
Pour que la \recu fonctionne il faut montrer 
que~\hbox{$(\He(\gT))\ul{\hat{x}}=\He(\rH_\gT^x)$} est également une \agH. 
Or cela résulte de ce que~\hbox{$(\He(\gT))\ul{\hat{x}}$} est un quotient 
de~$\He(\gT)$ par un \id principal (puisque $\He(\gT)$ est une \agH) et 
du
fait~\ref{ffactQuoAgH2}.
\end{proof}

\begin{fproposition}
\label{fpropHdim0} Notons $\gT'$ pour le quotient $\gT/(\JT(0)=0)$.
\begin{enumerate}
\item On a toujours $\Hdim\,\gT=\Hdim(\gT')$.
\item On a les équivalences $\Jdim\,\gT\leq 0\;\Longleftrightarrow\; \Hdim\,\gT\leq 
0\;\Longleftrightarrow\;
\Kdim(\gT')\leq 0$, (autrement dit $\gT'$ est une \agB). C'est le cas 
lorsque  $\He(\gT)$
est fini.
\end{enumerate}
\end{fproposition}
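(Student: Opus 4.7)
Le plan est de s'appuyer principalement sur le lemme~\ref{flemTBKBH}, qui relie le bord de Heitmann dans~$\gT$ au bord de Krull dans~$\gT'$.

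Pour le point~1, je procéderais par \recu sur $\ell\geq -1$. Le cas de base se ramène à l'\eqvc $1_\gT=0_\gT \Leftrightarrow 1_{\gT'}=0_{\gT'}$, \cad $1\in\JT(0)$~; c'est immédiat puisque $\JT(0)$ est un \id de $\gT$, et donc $1\in\JT(0)$ force $\gT$ à être trivial. Pour le pas inductif, le point~2 du lemme~\ref{flemTBKBH} fournit l'isomorphisme $\gT_\rH^x \simeq (\gT')_\rH^{\bar x}$ pour tout $x\in\gT$~; comme la projection canonique $\pi : \gT\to\gT'$ est surjective, quantifier sur $x\in\gT$ revient à quantifier sur $\bar x\in\gT'$. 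Ainsi $\Hdim\,\gT\leq \ell$ se traduit en \gui{$\Hdim(\gT_\rH^x)\leq \ell-1$ pour tout $x\in\gT$}, puis en \gui{$\Hdim((\gT')_\rH^{\bar x})\leq \ell-1$ pour tout $\bar x\in\gT'$}, \cad $\Hdim(\gT')\leq \ell$.

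Pour le point~2, le fait~\ref{ffactKJdim}(1) combiné à la proposition~\ref{fpropJdimHdim}(1) fournit la chaine $\Hdim\,\gT \leq \Jdim\,\gT \leq \Kdim(\gT')$, d'où les implications $\Kdim(\gT')\leq 0 \Rightarrow \Jdim\,\gT\leq 0 \Rightarrow \Hdim\,\gT\leq 0$ gratuitement. L'implication restante $\Hdim\,\gT\leq 0 \Rightarrow \Kdim(\gT')\leq 0$ combine le point~1 avec le point~1 du lemme~\ref{flemTBKBH}~: par le point~1, $\Hdim\,\gT\leq 0 \Leftrightarrow \Hdim(\gT')\leq 0$~; et dans $\gT'$, où $\rJ_{\gT'}(0)=0$, le bord de Heitmann coïncide avec le bord de Krull, donc $\Hdim(\gT')\leq 0$ signifie que tout $(\gT')\ul{\bar x}$ est trivial, \cad $\Kdim(\gT')\leq 0$. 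L'identification avec le caractère booléen est une conséquence immédiate du \thref{fpropDK1}.

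Pour la dernière affirmation, lorsque $\He(\gT)$ est fini je montrerais que $\Kdim(\He(\gT))\leq 0$, de sorte que $\Jdim\,\gT\leq 0$ et que les équivalences précédentes concluent. Le point clef est que $\He(\gT)$ est lui-même faiblement Jacobson par le fait~\ref{ffactHeHe}(3) appliqué à $\He(\gT)$, donc tout \idep $\fp$ de $\He(\gT)$ est intersection des \idemas le contenant. La finitude assure que cette intersection est finie, et dans un \trdi un \idep qui est intersection finie d'\idemas est déjà l'un d'eux~: si $\fp = \fm_1\cap\fm_2$ avec $\fm_1\neq\fm_2$ maximaux, en prenant $a\in\fm_1\setminus\fm_2$ et $b\in\fm_2\setminus\fm_1$ on aurait $a\vi b\in\fp$ sans que $a$ ni $b$ soit dans $\fp$, contredisant la primalité. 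Ainsi tout \idep de $\He(\gT)$ est maximal, d'où $\Kdim(\He(\gT))\leq 0$. Je m'attends à ce que ce dernier pas combinatoire soit l'obstacle principal.
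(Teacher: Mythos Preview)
For the first point and the chain of equivalences in the second, your argument is exactly the paper's: it too invokes Lemma~\ref{flemTBKBH} (point~2 for item~1, and point~1 together with the already-established inequalities $\Hdim\,\gT\leq\Jdim\,\gT\leq\Kdim(\gT')$ for item~2). Your write-up merely unwinds these citations more explicitly.

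The finite case is where you genuinely diverge. The paper argues via spectral spaces: when $\gT$ is finite, $\Jspec\,\gT$ is the discrete set of maximal ideals, hence zero-dimensional; Fact~\ref{ffactHeHe} then lets one pass from ``$\gT$ finite'' to ``$\He(\gT)$ finite''. Your route is more intrinsic: you exploit that $\He(\gT)$ is weakly Jacobson (Fact~\ref{ffactHeHe}(3)), so in the finite case every prime ideal --- being principal --- coincides with a finite intersection of maximal ideals, and primality then forces it to equal one of them. This is a clean elementary argument that avoids the spectral picture. One small gap: you only wrote the contradiction for two maximal ideals. For $\fp=\fm_1\cap\cdots\cap\fm_k$ with each $\fm_i\supsetneq\fp$, pick $a_i\in\fm_i\setminus\fp$; then $a_1\vi\cdots\vi a_k\leq a_i\in\fm_i$ for every~$i$, hence lies in $\fp$, and primality yields some $a_j\in\fp$, a contradiction. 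Also, your justification of the base case (``$\JT(0)$ est un \id donc $1\in\JT(0)$ force $\gT$ trivial'') is loose: being an ideal is not the reason. The correct justification is the observation after D\'efinition~\ref{fdefJac} that $1\in\JT(\fJ)\Leftrightarrow 1\in\fJ$, applied with $\fJ=\{0\}$.
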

\begin{proof}
Le premier point résulte du lemme \ref{flemTBKBH}, point \textsl{2}.\\
Pour le deuxième point on sait déjà que $\Hdim\,\gT\leq 
\Jdim\,\gT\leq
\Kdim\,\gT'$. Le lemme \ref{flemTBKBH}, point \textsl{1}, prouve que 
$\Hdim\,\gT\leq 0$
implique  $\Kdim\,\gT'\leq 0$.\\
Lorsque $\gT$ est fini l'espace $\Jspec\,\gT$ est simplement 
l'ensemble des
\idemas avec pour topologie toutes les parties (puisque les points 
sont
fermés).
Par ailleurs le fait \ref{ffactHeHe}.\textsl{1} permet de conclure aussi lorsque
$\He(\gT)$ est fini.
\end{proof}

\hum{l'ancienne preuve était bizarrement nettement plus 
compliquée}

\rem
En \clama  $\He(\gT)$ est fini
  \ssi l'ensemble $M$ des
  \idemas est fini (le cas semi-local en \alg
  commutative). \eoe

\medskip La proposition suivante est l'analogue de la proposition
\ref{fpropBordKUnion}, en remplaçant le bord de Krull par le bord 
de
Heitmann.

\begin{fproposition}
\label{fpropBordHUnion}
Pour tous $x,y\in\gT$  on a  $\rH_\gT^{x}\cap \rH_\gT^{y}= \rH_\gT^{x\vu y}\cap 
\rH_\gT^{x\vi y}$.
\end{fproposition}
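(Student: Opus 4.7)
The plan is to reduce this identity for Heitmann boundaries to the already established identity for Krull boundaries (Proposition \ref{fpropBordKUnion}) by passing to the quotient $\gT' = \gT/(\JT(0)=0)$. The key ingredient is Lemma \ref{flemTBKBH}, which asserts that if $\pi : \gT \to \gT'$ is the canonical projection and $\ov{x} = \pi(x)$, then $\rH_\gT^x = \pi^{-1}(\rK_{\gT'}^{\ov{x}})$. In other words, every Heitmann boundary \id in $\gT$ pulls back from a Krull boundary \id in $\gT'$.

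First I would record the four pullback identities
\[
\rH_\gT^x = \pi^{-1}(\rK_{\gT'}^{\ov{x}}),\quad \rH_\gT^y = \pi^{-1}(\rK_{\gT'}^{\ov{y}}),\quad \rH_\gT^{x\vu y} = \pi^{-1}(\rK_{\gT'}^{\ov{x}\vu\ov{y}}),\quad \rH_\gT^{x\vi y} = \pi^{-1}(\rK_{\gT'}^{\ov{x}\vi\ov{y}}),
\]
using that $\pi$ is a lattice morphism so $\ov{x\vu y} = \ov{x}\vu\ov{y}$ and $\ov{x\vi y} = \ov{x}\vi\ov{y}$. Next, since inverse image by $\pi^{-1}$ commutes with intersections (Fact \ref{ffactIdDansQuo}), I would rewrite both sides of the desired \egt as
\[
\rH_\gT^x \cap \rH_\gT^y = \pi^{-1}\bigl(\rK_{\gT'}^{\ov{x}} \cap \rK_{\gT'}^{\ov{y}}\bigr) \quad \text{and} \quad \rH_\gT^{x\vu y} \cap \rH_\gT^{x\vi y} = \pi^{-1}\bigl(\rK_{\gT'}^{\ov{x}\vu\ov{y}} \cap \rK_{\gT'}^{\ov{x}\vi\ov{y}}\bigr).
\]
Finally I would invoke Proposition \ref{fpropBordKUnion} applied inside $\gT'$ with the elements $\ov{x}, \ov{y}$, which gives the equality of the two arguments of $\pi^{-1}$, hence the desired \egt.

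There is no real obstacle: the whole argument rests on the identification in Lemma \ref{flemTBKBH} and the good behaviour of $\pi^{-1}$ with respect to finite intersections. The only point requiring a moment of care is verifying that $\pi^{-1}$ of an \egt of \ids yields an \egt of \ids (straightforward, since $\pi^{-1}$ is injective on the lattice of \ids of $\gT'$, their images being exactly the \ids of $\gT$ containing $\JT(0)$, and all four \ids involved contain $\JT(0)$ because $\dar x \vu (\JT(0):x)$ manifestly contains $(\JT(0):1) = \JT(0)$).
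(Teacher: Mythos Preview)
Your proof is correct and is exactly the argument the paper has in mind: its proof reads simply ``R\'esulte de la proposition \ref{fpropBordKUnion} et du lemme \ref{flemTBKBH}'', and you have spelled out precisely how these two ingredients combine via the pullback $\pi^{-1}$. Your final paragraph about injectivity of $\pi^{-1}$ is superfluous---$\pi^{-1}$ applied to equal sets yields equal sets for any map---but it does no harm.
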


\begin{proof}
Résulte de la proposition \ref{fpropBordKUnion} et du lemme 
\ref{flemTBKBH}.
\end{proof}

La proposition suivante est l'analogue du point \textsl{1} de la
proposition~\ref{fpropLocBord}.
\begin{fproposition}
\label{fpropLocBordH}
Soit $(\fa_i)_{1\leq i\leq m}$ une famille finie d'\ids de $\gT$, avec
$\bigcap_{i=1}^m\JT(\fa_i)\subseteq \JT(0)$ (c'est le cas en 
particulier si
$\bigcap_{i=1}^m\fa_i=\{0\}$).
Pour $x\in\gT$ notons encore $x$ son image dans 
le quotient~\hbox{$\gT_{\!i}=\gT/(\fa_i=0)$}.
Alors le bord ${\gT_{\!i}}_\rH^x$ peut être vu comme le quotient 
de 
$\gT_\rH^x$
par un \id~$\fb_i$ et l'on a: $\bigcap_{i=1}^m\fb_i=\so{0}$.
\end{fproposition}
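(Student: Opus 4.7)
The plan is to reduce the Heitmann case to the Krull case of Proposition~\ref{fpropLocBord} by passing to the quotient $\gT'=\gT/(\JT(0)=0)$, exploiting Lemma~\ref{flemTBKBH}, which identifies Heitmann boundaries in $\gT$ with Krull boundaries in $\gT'$.

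First, I would set $\gT'=\gT/(\JT(0)=0)$ with projection $\pi:\gT\to\gT'$, and for each $i$, set $\gT_{\!i}=\gT/(\fa_i=0)$. Lemma~\ref{flemIQT} gives $\pi_i^{-1}(\rJ_{\gT_{\!i}}(0))=\JT(\fa_i)$, so the lattice $\gT'_{\!i}:=\gT_{\!i}/(\rJ_{\gT_{\!i}}(0)=0)$ is naturally identified with $\gT/(\JT(\fa_i)=0)$, which is in turn a quotient of $\gT'$ by the ideal $\ov{\fA}_i:=\pi(\JT(\fa_i))$ (note $\JT(\fa_i)\supseteq\JT(0)$, so $\pi^{-1}(\ov{\fA}_i)=\JT(\fa_i)$). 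Applying Lemma~\ref{flemTBKBH} both to $\gT$ and to each $\gT_{\!i}$ (with the element $x$, resp.\ its image), one obtains canonical \isos
\[
\gT_\rH^x\;\simeq\;{\gT'}\ul{\ov{x}},\qquad {\gT_{\!i}}_\rH^x\;\simeq\;{\gT'_{\!i}}\ul{\ov{\ov{x}}}.
\]

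Next, I would verify that the hypothesis transfers: since $\pi^{-1}$ commutes with arbitrary intersections and $\pi^{-1}(\ov{\fA}_i)=\JT(\fa_i)$, the hypothesis $\bigcap_i\JT(\fa_i)\subseteq\JT(0)=\pi^{-1}(\{0\})$ yields $\bigcap_i\ov{\fA}_i=\{0\}$ in $\gT'$. (The parenthetical remark in the statement is then justified by Lemma~\ref{flemJacInter}, which gives $\bigcap_i\JT(\fa_i)=\JT(\bigcap_i\fa_i)=\JT(0)$ as soon as $\bigcap_i\fa_i=\{0\}$.) Now Proposition~\ref{fpropLocBord}(1) applies to $\gT'$ with the family $(\ov{\fA}_i)$: it produces ideals $\fb'_i$ of ${\gT'}\ul{\ov{x}}$ such that ${\gT'_{\!i}}\ul{\ov{\ov{x}}}$ is the quotient of ${\gT'}\ul{\ov{x}}$ by $\fb'_i$, and $\bigcap_i\fb'_i=\{0\}$.

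Finally, transporting $\fb'_i$ through the \iso ${\gT'}\ul{\ov{x}}\simeq\gT_\rH^x$ yields the desired ideal $\fb_i$ of $\gT_\rH^x$, with ${\gT_{\!i}}_\rH^x=\gT_\rH^x/(\fb_i=0)$ and $\bigcap_i\fb_i=\{0\}$. The only step requiring real care is the equality $\pi^{-1}(\ov{\fA}_i)=\JT(\fa_i)$ and the compatibility of the two \isos of Lemma~\ref{flemTBKBH} with the quotient maps $\gT\to\gT_{\!i}$ and $\gT'\to\gT'_{\!i}$; this is the main (mild) obstacle, since everything else is a direct invocation of the already-established Krull-boundary local character.
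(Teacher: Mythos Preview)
Your proposal is correct and follows exactly the paper's approach: reduce to Proposition~\ref{fpropLocBord} via Lemma~\ref{flemTBKBH} by passing to $\gT'=\gT/(\JT(0)=0)$ and using the images of the $\JT(\fa_i)$ there. The paper's proof is a one-line reference to these two results, and your write-up simply unpacks that reference with the needed verifications.
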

\begin{proof}
Résulte du lemme \ref{flemTBKBH} et de la proposition 
\ref{fpropLocBord},
appliquée au treillis
\hbox{$\gT'=\gT/(\JT(0)=0)$} et aux idéaux images des $\JT(\fa_i)$ dans 
$\gT'$.
\end{proof}

Le corolaire suivant est l'analogue du point \textsl{1} du 
corolaire~\ref{fcorpropLocBord} (notations comme en~\ref{notaKdiminf}).

\begin{fcorollary}
\label{fcorpropLocBordH}
Soit $(\fa_i)_{1\leq i\leq m}$ une famille finie d'\ids de $\gT$ et
$\fa=\bigcap_{i=1}^m\fa_i$. \\
Alors
$\Hdim(\gT/(\fa=0))=\sup_i\Hdim(\gT/(\fa_i=0))$.
\end{fcorollary}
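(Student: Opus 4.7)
Le plan est de transposer mot pour mot la démonstration du corolaire \ref{fcorpropLocBord} (cas de la dimension de Krull), en remplaçant la proposition \ref{fpropLocBord} par son analogue pour le bord de Heitmann, la proposition \ref{fpropLocBordH}, et en utilisant le lemme \ref{flemDimHquo} à la place du corolaire \ref{fcorpropTquoBord}.

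L'inégalité $\sup_i\Hdim(\gT/(\fa_i=0)) \leq \Hdim(\gT/(\fa=0))$ est immédiate: pour chaque $i$, l'idéal $\fa_i$ contient $\fa$, donc $\gT/(\fa_i=0)$ est canoniquement un quotient de $\gT/(\fa=0)$ (par l'idéal image de $\fa_i$), et le lemme \ref{flemDimHquo} fournit la monotonie du $\Hdim$ par passage au quotient par un idéal.

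Pour la réciproque, je commencerais par remplacer $\gT$ par $\gT/(\fa=0)$, ce qui ramène à supposer $\fa=\bigcap_i\fa_i=\so{0}$. Il s'agit alors de montrer par récurrence sur $\ell$ que si $\Hdim(\gT/(\fa_i=0))\leq \ell$ pour chaque $i$, alors $\Hdim(\gT)\leq \ell$. Le cas de base $\ell=-1$ signifie que $1\in\fa_i$ pour chaque~$i$, donc $1\in\bigcap_i\fa_i=\so{0}$ et $\gT=\Un$. Pour l'étape de récurrence, fixons $x\in\gT$: l'hypothèse $\bigcap_i\fa_i=\so{0}$ entraine par itération du lemme \ref{flemJacInter} l'inclusion $\bigcap_i\JT(\fa_i)=\JT(\bigcap_i\fa_i)=\JT(0)$, de sorte que la proposition \ref{fpropLocBordH} s'applique et fournit une famille $(\fb_i)$ d'idéaux de $\gT_\rH^x$, d'intersection nulle, telle que $\gT_\rH^x/(\fb_i=0)\simeq (\gT/(\fa_i=0))_\rH^x$. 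Comme $\Hdim(\gT/(\fa_i=0))\leq \ell$ implique $\Hdim((\gT/(\fa_i=0))_\rH^x)\leq \ell-1$, on peut appliquer l'hypothèse de récurrence au treillis $\gT_\rH^x$ muni du recouvrement par les $\fb_i$, ce qui donne $\Hdim(\gT_\rH^x)\leq \ell-1$, et donc $\Hdim(\gT)\leq \ell$ par la définition \ref{fdefHdim}.

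Le seul point susceptible de résister à un traitement purement formel par analogie avec la preuve du corolaire \ref{fcorpropLocBord} est la vérification de l'hypothèse \gui{$\bigcap_i\JT(\fa_i)\subseteq\JT(0)$} exigée par la proposition \ref{fpropLocBordH}; mais elle est automatique dès que $\bigcap_i\fa_i=\so{0}$, grâce à l'extension directe du lemme \ref{flemJacInter} à une famille finie d'idéaux.
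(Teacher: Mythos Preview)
Your proof is correct and follows exactly the same approach as the paper: reduce to $\fa=0$ by passing to $\gT/(\fa=0)$, then induct on $\ell$ using Proposition~\ref{fpropLocBordH} for the inductive step. The paper's own proof is only three lines and leaves the details you spell out (the easy direction via Lemma~\ref{flemDimHquo}, the base case, and the verification of the hypothesis of Proposition~\ref{fpropLocBordH} via Lemma~\ref{flemJacInter}) to the reader.
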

\begin{proof}
  En remplaçant $\gT$ par $\gT/(\fa=0)$ on se ramène au cas 
où $\fa=0$.
La chose est claire pour $\gT=\Un$. Et la preuve par \recu fonctionne
grâce à la proposition~\ref{fpropLocBordH}.
\end{proof}

\begin{fproposition}
\label{fpropHdimgen}
Soit $S$ un système \gtr du \trdi $\gT$ et $\ell\geq 0$.
\Propeq
\begin{enumerate}
\item Pour tout $x\in\gT$,
$\Hdim(\gT_\rH^x)\leq \ell-1$.
\item Pour tout $x\in S$,
$\Hdim(\gT_\rH^x)\leq \ell-1$.
\end{enumerate}
\end{fproposition}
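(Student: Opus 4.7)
L'implication \textsl{1} $\Rightarrow$ \textsl{2} est triviale, puisque $S \subseteq \gT$. Pour la réciproque, le plan est de procéder par induction structurelle: comme $S$ engendre $\gT$, tout \elt de $\gT$ s'obtient à partir des \elts de $S$ (et des constantes $0$ et $1$) par application itérée des opérations $\vi$ et $\vu$. Les cas de base $0$ et $1$ sont traités immédiatement, car $\rH_\gT^0=\rH_\gT^1=\gT$, de sorte que les quotients $\gT_\rH^0$ et $\gT_\rH^1$ sont réduits à un point, donc de dimension $-1\leq \ell-1$.

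L'étape inductive, qui est le cœur de l'argument, consiste à montrer que la propriété \gui{$\Hdim(\gT_\rH^z)\leq \ell-1$} se propage des \elts $x,y$ aux \elts $x\vu y$ et $x\vi y$. Pour cela, j'invoquerais l'identité fondamentale de la proposition~\ref{fpropBordHUnion}, selon laquelle $\rH_\gT^x\cap \rH_\gT^y=\rH_\gT^{x\vu y}\cap \rH_\gT^{x\vi y}$. Cette \egt fournit immédiatement les inclusions $\rH_\gT^x\cap \rH_\gT^y\subseteq \rH_\gT^{x\vu y}$ et $\rH_\gT^x\cap \rH_\gT^y\subseteq \rH_\gT^{x\vi y}$. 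Il en résulte que $\gT_\rH^{x\vu y}$ et $\gT_\rH^{x\vi y}$ sont tous deux des quotients, par un \id,  du treillis $\gT/(\rH_\gT^x\cap \rH_\gT^y=0)$. Le lemme~\ref{flemDimHquo} donne alors l'inégalité $\Hdim(\gT_\rH^{x\vu y})\leq \Hdim(\gT/(\rH_\gT^x\cap \rH_\gT^y=0))$, et de même pour $x\vi y$, tandis que le corolaire~\ref{fcorpropLocBordH} évalue ce majorant sous la forme $\sup\bigl(\Hdim(\gT_\rH^x),\Hdim(\gT_\rH^y)\bigr)$, quantité qui vaut au plus $\ell-1$ par hypothèse d'induction.

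L'obstacle principal, s'il faut en désigner un, est l'identité combinatoire de la proposition~\ref{fpropBordHUnion}, déjà établie dans le texte comme conséquence de l'identité analogue pour les bords de Krull (proposition~\ref{fpropBordKUnion}) via le lemme~\ref{flemTBKBH} qui permet d'identifier le bord de Heitmann de $x$ au bord de Krull de $\ov{x}$ modulo $\JT(0)$. Une fois cette identité acquise, le reste de la preuve est une application directe des propriétés de monotonie (lemme~\ref{flemDimHquo}) et de caractère local (corolaire~\ref{fcorpropLocBordH}) de la dimension de Heitmann.
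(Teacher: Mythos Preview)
Your proof is correct and follows exactly the same approach as the paper: structural induction on the elements generated from $S$, using the identity $\rH_\gT^x\cap\rH_\gT^y=\rH_\gT^{x\vu y}\cap\rH_\gT^{x\vi y}$ of proposition~\ref{fpropBordHUnion} together with the monotonicity of $\Hdim$ under quotients by ideals and its local character (corolaire~\ref{fcorpropLocBordH}). The paper's sketch cites the lemma~\ref{flemTquoBordH} rather than its consequence~\ref{flemDimHquo}, but this is immaterial; note also that the inclusion you derive, $\rH_\gT^x\cap\rH_\gT^y\subseteq\rH_\gT^{x\vu y}$, is the one actually needed (the paper's text states it the other way round, which appears to be a slip).
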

\begin{proof}
Cela résulte de la proposition \ref{fpropBordHUnion}, du lemme
\ref{flemTquoBordH} et du corolaire \ref{fcorpropLocBordH}: par 
exemple pour \hbox{tous
$x,y\in \gT$}, puisque $\rH_\gT^{x\vu 
y}\subseteq\rH_\gT^x\cap\rH_\gT^y$, le
treillis $\gT_\rH^{x\vu y}$ est un quotient
de $\gT/(\rH_\gT^x\cap\rH_\gT^y)$ par un idéal, donc 
$\Hdim(\gT_\rH^{x\vu
y})\leq \sup (\Hdim\,\gT_\rH^x, \Hdim\,\gT_\rH^y)$.
\end{proof}

  \rem
Explicitons encore un peu plus la dimension de Heitmann.
Pour ceci nous introduisons \gui{l'idéal bord de Heitmann 
itéré}.
Pour $x_0$, \ldots, $x_n\in\gT$ nous notons 
\[
\gT_\rH[x_0]=\gT_\rH^{x_0},\,
\gT_\rH[x_0,x_1]=(\gT_\rH^{x_0})_\rH^{x_1},\,
\gT_\rH[x_0,x_1,x_2]=((\gT_\rH^{x_0})_\rH^{x_1})_\rH^{x_2},\,\ldots
\] 
les treillis bords quotients successifs. Et $\rH[\gT;x_0,\ldots
,x_k]=\rH_\gT[x_1,\ldots ,x_k]$ désigne le noyau de la projection 
canonique
$\gT\to \gT_\rH[x_0,\ldots ,x_k]$.

\noindent Dire que $\Hdim\,\gT\leq \ell$ revient à dire que pour 
tous
$x_0,\ldots ,x_\ell\in\gT$ on a $1\in\rH_\gT[x_0,\ldots ,x_\ell]$.
Il nous faut donc expliciter les \ids $\rH_\gT[x_0,\ldots 
,x_\ell]$.
Pour ceci nous devons expliciter $\pi^{-1}(\rH[\gT/(\fa=0);\pi(x)])$
(que nous noterons $\rH[\gT,\fa;x]$) lorsqu'on a une projection
$\pi\colon \gT\to\gT/(\fa=0)$.

\noindent Par \dfn on a $y\in\rH[\gT,\fa;x]$ \ssi $y\leq x\vu 
z\;\mod\;\fa$ pour
un $z$ qui vérifie $\pi(z\vi x)\in \rJ_{\gT/(\fa=0)}(0)$.
Cette dernière condition signifie: $\Tt u\in\gT,\; (\pi((z\vi x)\vu
u=\pi(1)\;\Rightarrow \;\pi(u)=\pi(1)$.
Et ceci s'écrit encore
\[\Tt u\in\gT,\; ((\Ex a\in\fa\;(z\vi x)\vu u\vu a=1)\;\Rightarrow 
\;(\Ex
b\in\fa\; u\vu b=1))\]
Par ailleurs, $y\leq x\vu z\;\mod\;\fa$ signifie $\Ex a'\in\fa\;y\leq x\vu z\vu
a'$ et la condition   $\pi(z\vi x)\in \rJ_{\gT/(\fa=0)}(0)$ n'est pas changée
si on remplace $z$ par $z\vu a'$. En bref nous obtenons la condition suivante
\hbox{pour  $y\in\rH[\gT,\fa;x]$}:
\[ \Ex z\in\gT\; [\,y\leq x\vu z\;\mathrm{et}\;
\Tt u\in\gT,\; ((\Ex a\in\fa\;(z\vi x)\vu u\vu a=1)\;\Rightarrow 
\;(\Ex
b\in\fa\; u\vu b=1))\,]\]
On voit donc apparaitre une formule d'une complexité logique 
redoutable.
Surtout si on songe que $\Ex a\in\fa$ et $\Ex b\in\fa$ devront 
être explicités avec $\fa=\rH_\gT[x_1,\ldots ,x_k]$ si on veut obtenir une expression pour $y\in\rH_\gT[x_1,\ldots ,x_k,x]$.
Contrairement à l'expression pour $\Jdim\,\gT\leq \ell$ qui ne 
comportait que deux alternances de quantificateurs quel que soit l'entier $\ell$, on voit pour $\Hdim\leq \ell$ des expressions de plus en plus imbriquées au fur et à mesure que $\ell$ augmente. En fait il se trouve que pour les anneaux commutatifs, c'est la $\Hdim$ qui fait fonctionner les preuves par \recu pour les \gui{grands} \thos classiques d'\alg commutative, et c'est la vraie raison pour laquelle nous avons été amenés à introduire cette dimension. Comme elle vérifie
$\Hdim\,\gT\leq \Jdim\,\gT\leq \Kdim(\gT/(\JT(0)=0))$
on a quand même des moyens raisonnables pour la majorer.
Mais on manque d'exemples avec une majoration meilleure que celle par
$\Kdim(\gT/(\JT(0)=0))$.\eoe

\hum{~

1. On peut se poser la question de comparer $\Hdim\,\gT$ et  
$\Hdim\,\gT\cir$.

\smallskip
2. On pourrait peut être parler des treillis locaux\,? 
Constructivement on peut
distinguer entre $\Tt x, y\in\gT\;(x\vu y=1)\Rightarrow 
(x=1\;\mathrm{ou}\;y=1)$
(treillis local) et
$\gT/(\JT(0)=0)\simeq\Deux$ (treillis local résiduellement discret)}
\section[Dimensions de Krull et Heitmann: anneaux 
commutatifs]{Dimensions de
Krull et Heitmann pour les anneaux commutatifs}
\label{fsecBPA}
Dans cette section, $\gA$ désigne toujours un anneau commutatif. Nous disons qu'un \id $\fa$ de~$\gA$ est \textsl{radical} si $\fa=\sqrt[\gA]{\fa}.$

\subsection{Le treillis de Zariski}

Nous rappelons ici l'approche \cov de  \cite{fJoy76} pour le spectre
d'un anneau commutatif.

Si $J\subseteq \gA$, nous notons $\cI_\gA(J)$ ou $\gen{J}_\gA$ (ou
$\gen{J}$ si le contexte est clair) l'\id engendré par~$J$; nous
notons $\DA(J)$ (ou $\rD(J)$ si le contexte est clair) le
nilradical de l'\id $\gen{J}$:
\begin{equation} \label{feqZar}
\begin{array}{rclcl}
\DA(J)&  = & \sqrt[\gA]{\gen{J}} &=&\sotq{x\in\gA}{\Ex m\in\NN\;\; 
x^m\in\gen{J}}
\end{array}
\end{equation}
Lorsque $J=\so{x_1,\ldots ,x_n}$ nous notons
$\DA(x_1,\ldots ,x_n)$ pour $\DA(J)$.
Si le contexte est clair, nous abrégeons $\DA(x)$ en $\wi{x}$.

Par \dfn le {\sl treillis de Zariski} de $\gA$, noté $\ZarA$, a pour
\elts les radicaux d'\itfs: ce sont donc les $\DA(x_1,\ldots ,x_n)$,
\cad les $\DA(\fa)$ pour les \itfs~$\fa$.  La relation d'ordre est
l'inclusion, le inf et le sup sont donnés par
\[
\DA(\fa_1)\vi\DA(\fa_2)=\DA(\fa_1\fa_2)\quad \mathrm{et} \quad
\DA(\fa_1)\vu\DA(\fa_2)=\DA(\fa_1+\fa_2).
\]
Le treillis de Zariski de $\gA$
est un \trdi,  et $\DA(x_1,\ldots ,x_n)=
\wi{x_1}\vu\cdots \vu\wi{x_n}$. Les  $\wi{x}$  forment un système \gtr de $\ZarA$, stable par $\vi$.

Si $J\subseteq \gA$  nous notons $\wi{J}=\sotq{\wi{x}}{x\in J}
\subseteq\ZarA$.

Soient $U$ et  $J$  deux familles finies dans $\gA$, on a les équivalences
\[ \Vi\wi{U}\leq_{\ZarA} \Vu\wi{J}
\quad\Longleftrightarrow \quad
\prod\nolimits_{u\in U} u  \in \sqrt{\gen{J}}
\quad\Longleftrightarrow \quad
\cM(U)\cap \gen{J}\neq \emptyset
\]
où $\cM(U)$ est le \mo multiplicatif engendré par $U$.

Cela suffit à décrire le treillis $\ZarA$. Plus précisément 
on a
(cf. \cite{fCC00,fCL2003}):
\begin{fproposition}
[\dfn à la Joyal du spectre d'un anneau commutatif]\label{fpropZar}~
\\
 Le treillis $\ZarA$  est (à \iso unique près) le treillis engendré par des symboles $\DA(a)$ soumis aux relations suivantes
\[\begin{array}{cccc}
\DA(0_\gA) =0   \;,\; \DA(1_\gA)= 1 \;,\;
   \DA(x+y) \leq \DA(x)\vu\DA(y) \;,\; \DA(xy) = \DA(x)\vi \DA(y)
\end{array}\]
\end{fproposition}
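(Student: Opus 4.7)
Le plan se déroule en trois étapes. D'abord, il faut vérifier que $\ZarA$ satisfait lui-même les quatre relations annoncées. Cela découle directement des définitions de $\DA(\cdot)$ et des opérations $\vi,\vu$ sur $\ZarA$ rappelées page~\pageref{feqZar}: les deux premières \egts sont tautologiques, $\DA(x+y) \leq \DA(x)\vu\DA(y)$ vient de $x+y \in \gen{x,y}$, et $\DA(xy) = \DA(x)\vi\DA(y)$ est la formule du inf.

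Ensuite, pour établir la propriété universelle, soit $\gT'$ un \trdi quelconque et $\psi:\gA \to \gT'$ une application satisfaisant les quatre relations (autrement dit, $\psi(a)$ joue le rôle de $\DA(a)$). Il faut construire un unique morphisme de \trdis $\hat\psi:\ZarA \to \gT'$ tel que $\hat\psi(\wi{a})=\psi(a)$. L'unicité est imposée par la forme normale: puisque $\wi{xy}=\wi{x}\vi\wi{y}$, tout inf fini de $\wi{x_i}$ est lui-même un $\wi{a}$, et donc tout \elt de $\ZarA$ s'écrit $\DA(x_1,\ldots,x_n)=\wi{x_1}\vu\cdots\vu\wi{x_n}$. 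On est donc obligé de poser $\hat\psi(\DA(x_1,\ldots,x_n))=\psi(x_1)\vu\cdots\vu\psi(x_n)$.

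Le point central est la \textsl{bonne définition} de $\hat\psi$: si $\DA(x_1,\ldots,x_n)=\DA(y_1,\ldots,y_m)$ dans $\ZarA$, il faut démontrer que $\psi(x_1)\vu\cdots\vu\psi(x_n)=\psi(y_1)\vu\cdots\vu\psi(y_m)$ dans $\gT'$. Cela se ramène au lemme-clé suivant: si $x\in \sqrt{\gen{y_1,\ldots,y_m}}$, alors $\psi(x)\leq \psi(y_1)\vu\cdots\vu\psi(y_m)$. La preuve utilise directement les relations: par hypothèse il existe $k\in\NN$ et $a_1,\ldots,a_m\in\gA$ avec $x^k=\sum_i a_iy_i$; la relation $\psi(xy)=\psi(x)\vi\psi(y)$ itérée donne $\psi(x^k)=\psi(x)$ par idempotence de $\vi$; et la relation $\psi(u+v)\leq \psi(u)\vu\psi(v)$ itérée fournit $\psi\!\left(\sum_i a_iy_i\right)\leq \Vu_i \psi(a_iy_i)=\Vu_i \big(\psi(a_i)\vi\psi(y_i)\big)\leq \Vu_i \psi(y_i)$, d'où la conclusion.

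Le principal écueil n'est pas tant technique que conceptuel: il s'agit de reconnaitre que les quatre relations suffisent à encoder le contenu algébrique de la racine d'un \itf, via la \gui{formule-témoin} $x^k=\sum a_iy_i$. On obtient alors sans difficulté que $\hat\psi$ est un morphisme de \trdis, puisqu'il est défini sur un système \gtr compatible aux opérations $\vi$ et $\vu$. Une démonstration alternative plus structurelle est indiquée par les références \cite{fCC00,fCL2003}: elle consiste à observer que la relation implicative engendrée par les quatre relations (sur l'ensemble $\gA$) est précisément $U\vdash V \Longleftrightarrow \prod_{u\in U} u \in \sqrt{\gen{V}}$, puis à appliquer la correspondance générale entre relations implicatives et \trdis présentés par générateurs et relations.
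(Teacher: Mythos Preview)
Your proof is correct and complete. Note, however, that the paper does not actually prove this proposition: it is stated with a bare reference to \cite{fCC00,fCL2003} (see the sentence ``Plus précisément on a (cf.~\cite{fCC00,fCL2003})'' just before the statement), with no proof environment following. So there is no ``paper's own proof'' to compare against in the strict sense.

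Your direct verification of the universal property is the standard elementary argument and is carried out cleanly; the key lemma reducing well-definedness to the witness $x^k=\sum a_iy_i$ is exactly the right observation, and your use of the four relations to extract $\psi(x)\leq\Vu_i\psi(y_i)$ from it is correct. The alternative you sketch at the end --- recognizing the entailment relation $\Vi\wi U\leq\Vu\wi J \Leftrightarrow \prod_{u\in U}u\in\sqrt{\gen J}$ displayed just before the proposition as the one generated by the four relations, and invoking the general machinery of \cite{fCC00} --- is precisely the approach taken in the cited references, so your two routes together cover both the hands-on and the structural viewpoints.
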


L'opération $\Zar$ est un foncteur de la catégorie des anneaux 
commutatifs
vers celle des \trdis. 
Notez que via ce foncteur la projection $\gA\to\gA/\DA(0)$ donne un 
\iso
$\ZarA\simeq \Zar(\gA/\DA(0))$. On a $\ZarA=\Un$ \ssi $1_\gA=0_\gA$.

\smallskip Un \tho important de  \cite{fHoc1969} affirme que tout 
espace spectral est homéomorphe au spectre d'un anneau commutatif.
Voici une version sans point du \tho de Hochster: 

\smallskip \noindent {\bf \Tho.} \textsl{Tout \trdi est isomorphe au treillis de Zariski d'un anneau commutatif.} 

\smallskip \noindent 
Pour une preuve non \cov  voir \cite{fBan96}.

\subsection{Idéaux, filtres et quotients de $\ZarA$}

Rappelons qu'en \clama le \textsl{spectre de Zariski} $\Spec\,\gA$ d'un anneau
commutatif est un espace topologique dont les points sont les 
\ideps de l'anneau et dont la topologie est définie par la base 
d'ouverts formée par les $\fD_\gA(a)=\sotq{\fp\in\Spec\,\gA}{a\notin\fp}$.
On note aussi $\fD_\gA(x_1,\ldots ,x_n)$ pour 
$\fD_\gA(x_1)\cup\cdots\cup \fD_\gA(x_n)$.

\subsubsection*{Idéaux de $\gA$ et de $\ZarA$}

En \clama,  tout \id radical est l'intersection des \ideps qui  le 
contiennent.

Introduisons la notation (lorsque $J\subseteq\gA$)
\[
\IZA(J)=\cI_{\ZarA}(\wi{J})
\]
pour l'\id de $\ZarA$ engendré par $\wi{J}$.  En particulier
\[
\IZA(\so{x_1,\ldots ,x_n})=\dar\DA(x_1,\ldots ,x_n)=
\dar(\wi{x_1}\vu\cdots \vu\wi{x_n})\,.
\]

On a $\IZA(J)=\IZA(\sqrt{\gen{J}})$, et on établit facilement le
fait fondamental suivant.

\begin{ffact}
\label{ffactSpecAzarA} ~  
\begin{itemize}
\item L'application $\fa\mapsto \IZA(\fa)$ définit un \iso du
treillis des \ids radicaux de $\gA$ vers le treillis des \ids de
$\ZarA$.  
\item Par restriction les \ideps (resp.  les \idemas) de
l'anneau $\gA$ et ceux du \trdi $\ZarA$ sont également en
correspondance naturelle bijective.  
\item Pour tout anneau commutatif
$\gA$, $\Spec\,\gA$ (au sens des anneaux commutatifs) s'identifie à
$\Spec(\ZarA)$ (au sens des \trdis).
\end{itemize}
\end{ffact}

\rem En \clama on a un \iso entre le treillis $\ZarA$ et le treillis des
\oqcs de $\Spec\,\gA$.  On peut alors identifier 
\begin{itemize}
\item $\DA(x_1,\ldots ,x_n)$, qui est un \elt de $\ZarA$,
\item  $\fD_{\!\ZarA}(\DA(x_1,\ldots ,x_n))$, qui est un \oqc de $\Spec(\ZarA)$,
\item    et  $\fD_\gA(x_1,\ldots ,x_n)$, qui est un \oqc de $\Spec\,\gA$.
\end{itemize}

\noindent Du point de vue \cof,  on considère $\Spec\,\gA$ comme un \gui{espace topologique sans point}, \cad un espace défini uniquement à
travers une base d'ouverts. 
Des identifications ci-dessus il ne reste alors que celle donnée entre $\ZarA$ et le \trdi défini formellement à la Joyal dans la proposition 
\ref{fpropZar}.\eoe

\medskip  On a aussi facilement:
\begin{ffact}[quotients]
\label{ffactQuoAT} ~\\
Si $J\subseteq\gA$, alors $\Zar(\aqo{\gA}{J}) \simeq \Zar(\gA/\DA(J))
\simeq \Zar(\gA)/(\IZA(J)=0)$.
\end{ffact}

\begin{ffact}[transporteurs]
\label{ffactTransporteurs} ~\\
Soient $\fA$ et $\fB$ des \ids de $\gA$, $\fa=\DA(\fA)$ et
$\fb=\DA(\fB)$.  
Alors $\fa:\fb=\fa:\fB$ est un \id radical de $\gA$
et dans $\ZarA$ on a $\IZA(\fa):\IZA(\fb)=\IZA(\fa:\fb)$.
\end{ffact}

\begin{ffact}[recouvrement par des \ids]
\label{ffactRecouvI} ~\\
Soit $\fa_i$ une famille finie d'\ids de $\gA$.  Les $\IZA(\fa_i)$
recouvrent $\ZarA$ (\cad leur intersection est réduite à $0$) \ssi
$\bigcap_i\fa_i\subseteq \DA(0)$.
\end{ffact}

En \clama le treillis $\Zar\,\gA$ est noethérien (ce qui 
revient à
dire que~$\Spec\,\gA$ est noethérien) \ssi tout \id radical est
\gui{radicalement de type fini}, \cad est un \elt de~$\Zar\,\gA$.

Par ailleurs,  $\Zar\,\gA$ est une \agH \ssi
\hbox{$\Tt \fa,\fb\in\Zar\,\gA\;(\fa:\fb)\in\Zar\,\gA$}.

Le résultat suivant est important en \coma. 

\begin{fproposition}
\label{fpropZarHeyt} (cf. \cite{fCL2003})
Si $\gA$ est un anneau noethérien cohérent $\Zar\,\gA$ est une 
\agH. 
Si en outre $\gA$ est fortement discret, la relation d'ordre dans  
$\Zar\,\gA$
est décidable. On dit alors que le treillis est \emph{discret}.
\end{fproposition}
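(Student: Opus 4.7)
Le plan consiste à ramener le calcul du transporteur dans $\ZarA$ au calcul correspondant dans $\gA$ via le fait~\ref{ffactTransporteurs}, qui fournit l'identification $\IZA(\fa):\IZA(\fb)=\IZA(\fa:\fb)$. Pour la première assertion, étant donnés $\DA(\fa),\DA(\fb)\in\ZarA$ avec $\fa$ et $\fb=\gen{b_1,\ldots,b_n}$ \itfs de $\gA$, il suffit d'exhiber un \gtr de l'\id principal $\IZA(\fa):\IZA(\fb)$, à savoir $\DA(\fa:\fb)$, ce qui demande que $\fa:\fb$ soit \tf. Or $\fa:\fb=\bigcap_{i=1}^n(\fa:b_i)$, et la cohérence de $\gA$ assure que chaque $\fa:b_i$ est \tf,  puis que l'intersection finie le reste. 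Ainsi $\ZarA$ est une \agH avec $\DA(\fb)\im\DA(\fa)=\DA(\fa:\fb)$.

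Pour la seconde assertion, la décidabilité de $\DA(\fa)\leq\DA(\fb)$ se réduit, en écrivant $\fa=\gen{a_1,\ldots,a_m}$, à celle de $a_i\in\sqrt{\fb}$ pour chaque \gtr $a_i$ de $\fa$. Pour $a\in\gA$ et $\fb$ \itf donnés, je considérerais la suite croissante d'\itfs $(\fb:a^k)_{k\geq 1}$, dont chaque terme se calcule par cohérence. La \fdi rend décidables les \egts successives $\fb:a^k=\fb:a^{k+1}$ en testant l'appartenance à l'\id précédent des \gtrs du suivant. La noethérianité constructive permet alors d'atteindre effectivement un rang $k$ où cette \egt se produit, et un argument standard montre par \recu que la suite est ensuite constante à partir de ce rang: si $\fb:a^k=\fb:a^{k+1}$ et $ua^{k+2}\in\fb$, alors $ua\in\fb:a^{k+1}=\fb:a^k$, donc $ua^{k+1}\in\fb$, \cad $u\in\fb:a^{k+1}=\fb:a^k$. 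Ainsi $a\in\sqrt{\fb}$ équivaut à $a^k\in\fb$, qui est décidable par \fdi.

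Le point délicat sera la mise en œuvre effective de la noethérianité constructive: la remarque précédant la proposition souligne qu'il existe plusieurs variantes inéquivalentes de cette notion et qu'une noethérianité seule ne suffit pas à rendre le treillis une \agH. Il faudra donc utiliser une formulation de la noethérianité suffisamment forte pour fournir algorithmiquement un rang de stabilisation, tout en restant compatible avec la cohérence (pour calculer les $\fa:b_i$ et leurs intersections) et la \fdi (pour tester les \egts entre \itfs). C'est précisément la conjonction de ces trois hypothèses qui rend tous les tests algorithmiques requis effectifs, et qui éclaire ainsi le rôle de chacune.
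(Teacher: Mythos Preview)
The paper does not give its own proof here; it simply refers to \cite{fCL2003}. So I am comparing your argument with the standard one from that reference.

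Your argument for the first assertion has a genuine gap. You invoke Fact~\ref{ffactTransporteurs} to claim that the conductor $\IZA(\fa):\IZA(\fb)$ in $\ZarA$ is generated by $\DA(\fa:\fb)$, hence that $\DA(\fb)\im\DA(\fa)=\DA(\fa:\fb)$. But Fact~\ref{ffactTransporteurs} is stated for \emph{radical} ideals: it gives $\IZA(\fa):\IZA(\fb)=\IZA\bigl(\DA(\fa):\fb\bigr)$, not $\IZA(\fa:\fb)$. In general $\DA(\fa:\fb)\subsetneq \DA(\fa):\fb$. For instance with $\gA=k[t]$, $\fa=(t^2)$, $\fb=(t)$, one has $\fa:\fb=(t)$ so $\DA(\fa:\fb)=\DA(t)$, whereas $\DA(\fa):\fb=(t):(t)=\gA$, so the true implication is $1_{\ZarA}$, not $\DA(t)$. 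Your argument uses only coherence and never invokes the Noetherian hypothesis, which is a warning sign: coherence alone does not make $\ZarA$ a Heyting algebra.

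The fix is precisely the stabilisation argument you deploy in the \emph{second} part. For a single $b$, the chain $(\fa:b^k)_k$ is increasing, each term is \tf by coherence, and Noetherianity gives a rank $N$ where it stabilises; one then checks $\DA(\fa):b=\DA(\fa:b^N)$. For $\fb=\gen{b_1,\ldots,b_n}$ one intersects, using again coherence for the finite intersection of \itfs. So the Noetherian hypothesis is essential already for the first assertion, and your chain argument belongs there. Your treatment of the second assertion (decidability of the order under strong discreteness) is along the right lines.
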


\subsubsection*{Filtres de $\gA$ et  de $\ZarA$}

Un \textsl{filtre} dans un anneau commutatif est un \mo $\fF$ qui 
vérifie
$xy\in\fF\Rightarrow x\in\fF$. Un \textsl{filtre premier} est un filtre 
qui
vérifie $x+y\in\fF\Rightarrow x\in\fF\;\mathrm{ou}\;y\in\fF$ (c'est 
le
complémentaire d'un \idep).

Pour $x\in \gA$ le filtre $\uar \wi x$ de $\ZarA$ est noté 
$\FZ_\gA(x)$.
Plus généralement pour  $S\subseteq\gA$ on note
$\FZ_\gA(S)$ le filtre de $\ZarA$:
\[
\FZ_\gA(S)=
\bigcup\nolimits_{x\in\cM(S)}\uar \wi x.
\]
On a aussi $\FZ_\gA(S) =\FZ_\gA(\fF)=\bigcup\nolimits_{x\in\fF}\!\uar \wi x$, où $\fF$ est le filtre de  $\gA$ engendré par $S$.

Les faits suivants sont faciles.

\begin{ffact}
\label{ffactFZ}
L'application $\ff\mapsto \FZ_\gA(\ff)$ établit une correspondance
\emph{injective} croissante des filtres de $\gA$ vers les filtres de 
$\ZarA$, et
un  sup fini (le sup de $\ff_1$ et $\ff_2$  est engendré par les 
$f_1f_2$ où
$f_i\in\ff_i$) donne pour image le sup fini des filtres images.
Cette correspondance $\FZ_\gA$  se restreint en une bijection entre 
les filtres
premiers de $\gA$ et ceux de $\ZarA$.
\end{ffact}

Notez cependant que le filtre principal de $\ZarA$ engendré par
$\wi{a_1}\vu\cdots \vu \wi{a_n}$ (\cad l'intersection des filtres 
$\uar\wi
{a_i}$), ne correspond en général à aucun filtre de $\gA$.

\begin{ffact}[localisés]
\label{ffactLocalises} ~ \\
Soit $S$ un \mo de $\gA$, $\fF$ le filtre engendré par $S$, et
$\ff=\FZ_\gA(S)=\FZ_\gA(\fF)$. \\
Alors $S^{-1}\gA=\gA_S=\gA_\fF$ et $\Zar(\gA_S)\simeq\Zar(\gA)/(\ff=1)$.
\end{ffact}

\begin{ffact}[filtre complémentaire]
\label{ffactComplement} ~\\
Soit $x\in\gA$, le filtre $1_\ZarA\setminus \FZ_\gA(x)$
est égal à $\FZ_\gA(1+x\gA)$.
\end{ffact}

\begin{ffact}[recouvrement par des filtres]
\label{ffactRecouvF} ~\\
Soit $(S_i)_{1\leq i\leq n}$ une famille finie de \mos de $\gA$. Les 
filtres
$\FZ_\gA(S_i)$ recouvrent $\ZarA$ (\cad leur intersection est 
réduite à
$\so{1}$) \ssi les \mos $S_i$ sont \com \cad que pour tous $x_i\in 
S_i$
on a $\gen{x_1,\ldots ,x_n}=\gen{1}$. \\
Plus généralement on a $\,\FZ_\gA(S_1)\,\cap \cdots
\cap\,\FZ_\gA(S_n)\subseteq \FZ_\gA(S)$ \ssi pour  tous $x_i\in S_i$
il existe $x\in S$ tel que $x\in\gen{x_1,\ldots ,x_n}$.
\end{ffact}

\subsection{Le treillis de Heitmann}

Dans un anneau commutatif, le \textsl{radical de Jacobson d'un \id 
$\fJ$} est (du point de vue des \clama) l'intersection des \idemas qui contiennent $\fJ$. On
le note $\JA(\fJ)=\rJ(\gA,\fJ)$, ou encore $\rJ(\fJ)$ si le contexte est 
clair.
En \coma on utilise la \dfn suivante, classiquement équivalente:
\begin{equation} \label{feqRadJac}
\JA(\fJ)\eqdefi\sotq{x\in\gA}{\Tt y\in\gA,\;\; 1+xy
\hbox{ est inversible modulo } \fJ}.
\end{equation}
On notera $\JA(x_1,\ldots ,x_n)=\rJ(\gA,x_1,\ldots ,x_n)$ pour
$\JA(\gen{x_1,\ldots ,x_n})$.  L'\id  $\JA(0)$ est appelé le
\textsl{radical de Jacobson de l'anneau $\gA$}.

\begin{fdefinition}
\label{fdefHeitA}
On appelle \textsl{treillis de Heitmann} d'un anneau commutatif $\gA$ 
le treillis
$\He(\ZarA)$, on le note $\HeA$.
\end{fdefinition}

En \clama,  vu le fait \ref{ffactSpecAzarA} et vue la \dfn du radical 
de Jacobson
via les intersections d'\ids maximaux, le fait suivant,
qui conduit à une interprétation simple du treillis $\HeA$, est 
évident.
Nous sommes néanmoins intéressés par une \prco directe.

\begin{ffact}[radical de Jacobson]
\label{ffactRadJac} ~\\
La correspondance bijective $\IZA$ préserve le passage au radical 
de Jacobson.
Autrement dit si $\fJ$ est un \id de $\gA$ et $\fj=\IZA(\fJ)$, alors
$\rJ_\ZarA(\fj)=\IZA(\JA(\fJ))$.
\end{ffact}
\begin{proof} Soit un $x$ arbitraire dans $\gA$.
Nous devons montrer que $\wi 
x\in\rJ_\ZarA(\fj)$ \ssi $\wi
x\in\IZA(\JA(\fJ))$. Puisque $\JA(\fJ)$ est un \id radical, on 
doit
démontrer l'équivalence:
\[  \wi x\in\rJ_\ZarA(\fj)\;\Leftrightarrow\; 
x\in\JA(\fJ). 
\]
Par \dfn $\wi x\in\rJ_\ZarA(\fj)$ signifie
\[\Tt y\in\ZarA\quad (\wi x\vu y=1_\ZarA\;\Rightarrow\;\Ex 
z\in\fj\;\; z\vu
y=1_\ZarA),
\]
\cade puisque tout $y\in\ZarA$ est de la forme $\DA(y_1,\ldots ,y_k)$,
\[\Tt y_1,\ldots ,y_k\in\gA\quad (\gen{x,y_1,\ldots ,y_k}=1_\gA\;
\Rightarrow\;\Ex z\in\fj\;\; z\vu \DA(y_1,\ldots ,y_k)=1_\ZarA),\]
ceci est immédiatement équivalent à
\[\Tt y_1,\ldots ,y_k\in\gA\quad (\gen{x,y_1,\ldots ,y_k}=1_\gA\;
\Rightarrow\;\Ex u\in\fJ \;\;\gen{u,y_1,\ldots ,y_k}=1_\gA),
\]
puis à
\[\Tt y\in\gA\;(\gen{x,y}=1_\gA\;
\Rightarrow\;\Ex u\in\fJ \;\;\gen{u,y}=1_\gA),
\]
ou encore à: tout  $y\in\gA$ de la forme $1+xa$ est inversible 
modulo $\fJ$.
\Cad $x\in\JA(\fJ)$.
\end{proof}

\begin{fcorollary} \label{fpropHeitA}
Soient $\fj_1$ et $\fj_2$ deux \itfs de $\gA$. Les \elts $\DA(\fj_1)$ 
et $\DA(\fj_2)$ de $\ZarA$ sont égaux dans le quotient $\HeA$ \ssi
$\JA(\fj_1)=\JA(\fj_2)$. 
En conséquence $\HeA$ s'identifie à l'ensemble des $\JA(x_1,\ldots ,x_n)$, avec
$\JA(\fj_1)\vi\JA(\fj_2)=\JA(\fj_1\fj_2)$ et
$\JA(\fj_1)\vu\JA(\fj_2)=\JA(\fj_1+\fj_2)$.
\end{fcorollary}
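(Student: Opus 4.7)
L'idée directrice est que ce corolaire est essentiellement une relecture du fait~\ref{ffactRadJac} combiné avec la description du treillis de Heitmann donnée à la \dfn~\ref{fdefHeT} et l'identification des \ids de $\ZarA$ avec les \ids radicaux de $\gA$ (fait~\ref{ffactSpecAzarA}).

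D'abord, je rappellerais que $\HeA=\He(\ZarA)$ s'identifie, d'après la \dfn~\ref{fdefHeT}, à l'ensemble des \ids $\rJ_{\ZarA}(\fa)$ pour $\fa$ parcourant $\ZarA$, via la projection canonique $\fa\longmapsto \rJ_{\ZarA}(\fa)$. Pour un \itf $\fj=\gen{x_1,\ldots ,x_n}$, on a $\DA(\fj)=\wi{x_1}\vu\cdots \vu \wi{x_n}$ et $\dar \DA(\fj)=\IZA(\fj)$. D'après le fait~\ref{ffactRadJac}, on obtient alors
\[
\rJ_{\ZarA}\big(\DA(\fj)\big)\,=\,\rJ_{\ZarA}\big(\IZA(\fj)\big)\,=\,\IZA\big(\JA(\fj)\big).
\]
Cette égalité est la clef de tout. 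Elle permet immédiatement la première équivalence: $\DA(\fj_1)=_{\HeA}\DA(\fj_2)$ signifie par \dfn $\rJ_{\ZarA}(\DA(\fj_1))=\rJ_{\ZarA}(\DA(\fj_2))$, ce qui équivaut à $\IZA(\JA(\fj_1))=\IZA(\JA(\fj_2))$, donc à $\JA(\fj_1)=\JA(\fj_2)$ par l'injectivité de $\IZA$ sur les \ids radicaux (fait~\ref{ffactSpecAzarA}).

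Je conclurais ensuite l'identification ensembliste: puisque $\ZarA$ est engendré par les $\DA(\fj)$ pour $\fj$ \itf, la projection canonique $\ZarA\to\HeA$ envoie $\HeA$ en bijection sur $\sotq{\JA(\fj)}{\fj \,\text{\itf}}$, modulo l'identification via $\IZA$ (qui envoie $\JA(\fj)\in$ \{ids radicaux de $\gA$\} sur $\IZA(\JA(\fj))\in$ \{ids de $\ZarA$\}). Comme tout \itf s'écrit $\gen{x_1,\ldots,x_n}$, cet ensemble est bien celui des $\JA(x_1,\ldots,x_n)$.

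Pour les lois, j'utiliserais les \egts (\ref{feqHeT2}) établies après la \dfn du treillis de Heitmann: $\rJ_{\ZarA}(a\vi b)=\rJ_{\ZarA}(a)\vi_{\HeA}\rJ_{\ZarA}(b)$ et idem pour $\vu$. Combinées aux formules $\DA(\fj_1)\vi\DA(\fj_2)=\DA(\fj_1\fj_2)$ et $\DA(\fj_1)\vu\DA(\fj_2)=\DA(\fj_1+\fj_2)$ dans $\ZarA$, ainsi qu'à l'égalité précédente, elles donnent
\[
\JA(\fj_1)\vi_{\HeA}\JA(\fj_2)=\JA(\fj_1\fj_2),\qquad \JA(\fj_1)\vu_{\HeA}\JA(\fj_2)=\JA(\fj_1+\fj_2).
\]
Aucune étape n'est véritablement délicate dès lors que le fait~\ref{ffactRadJac} est acquis: tout repose sur ce transfert des opérations via les bijections naturelles, et la rédaction peut être essentiellement un enchainement d'équivalences.
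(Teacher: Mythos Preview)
Your proposal is correct and follows exactly the intended route: the paper states this as a corollary with no explicit proof, and your unpacking via fait~\ref{ffactRadJac}, la \dfn~\ref{fdefHeT}, le fait~\ref{ffactSpecAzarA} and les \egts~(\ref{feqHeT2}) is precisely the argument implied. The only minor point is that $\rJ_{\ZarA}(\DA(\fj))$ should be read via the convention $\JT(a)=\JT(\dar a)$ of \dfn~\ref{fdefJac}, which you do implicitly when writing $\rJ_{\ZarA}(\DA(\fj))=\rJ_{\ZarA}(\IZA(\fj))$.
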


\rems ~\\
1. Vu les bonnes propriétés de la correspondance $\IZA$ on
a, avec $\gT=\ZarA$, $\Zar(\gA/\JA(0))\simeq\gT/(\JT(0)=0)$. Par 
contre il ne
semble pas qu'il y ait une $\gA$-algèbre $\gB$ naturellement 
attachée à~$\gA$ pour laquelle on ait $\Zar\,\gB\simeq\He(\ZarA)$.\\
2. Notez que, en général $\JA(x_1,\ldots ,x_n)$ est un \id 
radical mais pas
le radical d'un \itf.  \\
3. On voit aussi facilement que
$\JA(\fj_1)\vi\JA(\fj_2)=\JA(\fj_1)\cap\JA(\fj_2)=\JA(\fj_1\cap\fj_2)$ 
(cela
résulte d'ailleurs du lemme \ref{flemJacInter}). Il peut sembler 
surprenant que
$\JA(\fj_1)\cap\JA(\fj_2)=\JA(\fj_1\fj_2)$ (c'est à priori moins 
clair que pour
les $\DA$). Voici le calcul élémentaire qui (re)démontre ce 
fait.
On a $x\in\JA(\fj_1)$ \ssi $\Tt y\;(1+xy)$ est inversible modulo 
$\fj_1,$ et
$x\in\JA(\fj_2)$ \ssi $\Tt y\;(1+xy)$ est inversible modulo $\fj_2$.
Mais si $a=1+xy$ est inversible modulo $\fj_1$ et $\fj_2$, il est 
inversible
modulo leur produit: en effet $1+aa_1\in\fj_1$ et  $1+aa_2\in\fj_2$  
impliquent
que $(1+aa_1)(1+aa_2)$, qui se réécrit $1+aa'$, est dans 
$\fj_1\fj_2$.

\subsection{Dimension et  bords de Krull}

En \coma on donne la \dfn suivante.

\begin{fdefinition}
\label{fdefKdimA}
La dimension de Krull d'un anneau commutatif
est la dimension de Krull de son treillis de Zariski.
\end{fdefinition}

Vus le fait \ref{ffactSpecAzarA} et le \tho \ref{fthDK1}, il s'agit 
d'une \dfn
équivalente à la \dfn usuelle en \clama. 

\begin{fdefinition}
\label{fdefZar2} Soit  $\gA$  un anneau commutatif, $x\in\gA$ et $\fj$ 
un \itf. 
\begin{itemize}
\item [$(1)$] Le \textsl{bord supérieur de Krull de $\fj$ dans $\gA$} 
est
l'anneau quotient
$\gA\ul{\fj}:=\gA/\rK_\gA(\fj)$ où
\begin{equation} \label{feqBKAC}
\rK_\gA(\fj):=\fj+(\DA(0):\fj)
\end{equation}
  On note aussi $\gA\ul{x}=\gA\ul{x\gA}$ et on l'appelle le \textsl{bord
supérieur de $x$ dans $\gA$}. On dira aussi que $\rK_\gA(\fj)$ est 
\textsl{l'\id
bord de Krull de $\fj$.}  On notera aussi  $\rK_\gA(y_1,\ldots ,y_n)$ 
pour
$\rK_\gA(\gen{y_1,\ldots ,y_n})$ et $\rK_\gA^x$ pour $\rK_\gA(x)$.
\item [$(2)$] Le \textsl{bord inférieur de Krull de $x$ dans $\gA$} 
est l'anneau
localisé
$\gA\bal{x}:=\gA_{\rS\bal{x}}$ où $\rS\bal{x}=x^\NN(1+x\gA)$.
On dira aussi que le \mo $x^\NN(1+x\gA)$ est le \textsl{\mo bord de 
Krull de~$x$.}
\end{itemize}
\end{fdefinition}

Ainsi un \elt arbitraire de  $\rK_\gA(y_1,\ldots ,y_n)$ s'écrit 
$\sum_i
a_iy_i+b$ avec tous les $by_i$ nilpotents.

\begin{fproposition}
\label{fpropZar2} Soit   $x\in \gA$ et  $\fj=\gen{j_1,\ldots ,j_n}$ un 
\itf. 
Considérons $\wi x\in\ZarA$ et $\fa=\DA(\fj)=\wi{j_1}\vu\cdots
\vu\wi{j_n}\in\ZarA$. Alors
\begin{enumerate}
\item L'idéal bord de Krull de $\fa=\DA(\fj)$ dans $\ZarA$,
$\rK_\ZarA^\fa$, est égal à
$\IZA(\rK_\gA(\fj))$. En conséquence  $(\ZarA)\ul{\fa}$ 
s'identifie
naturellement avec $\Zar(\gA\ul{\fj})$.
\item Le filtre  bord de Krull de $\wi x$ dans $\ZarA$, 
$\rK^\ZarA_{\tilde x}$, est
égal à
$\FZ(\rS\bal{x})$. En conséquence   $(\ZarA)\bal{\tilde x}$ s'identifie
naturellement avec
$\Zar(\gA\bal{x})$.
\end{enumerate}
\end{fproposition}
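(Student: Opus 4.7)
La stratégie consiste à décomposer le bord de Krull en ses deux constituants --- \id principal et transporteur pour le point \textsl{1}, filtre principal et filtre différence pour le point \textsl{2} --- puis à appliquer les correspondances entre objets de $\gA$ et de $\ZarA$ déjà établies: $\IZA$ pour les \ids radicaux (fait~\ref{ffactSpecAzarA}) et $\FZ_\gA$ pour les filtres (fait~\ref{ffactFZ}). Les identifications finales des treillis quotients découleront des faits~\ref{ffactQuoAT} et~\ref{ffactLocalises}.

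Pour le point \textsl{1}, en développant la \dfn~\pref{feqbordsup} du bord supérieur et en notant que $\dar\DA(\fj) = \IZA(\fj)$ comme \id de $\ZarA$, on obtient
\[\rK_\ZarA^\fa \;=\; \IZA(\fj) \,\vu\, \big(0_\ZarA : \IZA(\fj)\big).\]
Le fait~\ref{ffactTransporteurs} appliqué à $\fA=(0)$ et $\fB=\fj$ (pour lesquels $\IZA(\DA(0)) = \{0\}$) fournit $\big(0_\ZarA : \IZA(\fj)\big) = \IZA(\DA(0):\fj)$. La compatibilité de $\IZA$ avec la somme d'\ids (qui correspond au sup dans $\ZarA$) et la \dfn~\pref{feqBKAC} donnent alors
\[\rK_\ZarA^\fa \;=\; \IZA\big(\fj + (\DA(0):\fj)\big) \;=\; \IZA(\rK_\gA(\fj)).\]
L'identification $(\ZarA)\ul{\fa} \simeq \Zar(\gA\ul{\fj})$ en résulte en passant au treillis quotient et en invoquant le fait~\ref{ffactQuoAT}.

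Pour le point \textsl{2}, la démarche est renversée par dualité. La \dfn~\pref{feqbordinf} donne d'abord
\[\rK^\ZarA_{\wi x} \;=\; \uar \wi x \;\vi\; (1_\ZarA \setminus \wi x) \;=\; \FZ_\gA(x) \;\vi\; \FZ_\gA(1+x\gA),\]
la seconde \egt provenant du fait~\ref{ffactComplement}. Ce sup de filtres correspond, d'après le fait~\ref{ffactFZ}, au \mo produit $x^\NN(1+x\gA) = \rS\bal{x}$; ainsi $\rK^\ZarA_{\wi x} = \FZ_\gA(\rS\bal{x})$. L'identification $(\ZarA)\bal{\wi x} \simeq \Zar(\gA\bal{x})$ résulte alors du fait~\ref{ffactLocalises}.

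Le seul piège, bénin, tient à la convention propre aux filtres rappelée après l'équation~\pref{feqInfId}: dans le treillis des filtres, $\vi$ désigne le sup (filtre engendré par l'union) et non l'intersection ensembliste. Les deux points sont formellement duaux par renversement de l'ordre, mais comme les \ris $\gA\ul{\fj}$ (quotient) et $\gA\bal{x}$ (localisé) jouent des rôles asymétriques, il est plus propre de traiter les deux parties séparément plutôt que d'invoquer un argument unifié de dualité.
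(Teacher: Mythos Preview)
Your proof is correct and follows essentially the same route as the paper's: decompose each boundary into its two components, translate each via the correspondences of faits~\ref{ffactSpecAzarA}, \ref{ffactTransporteurs}, \ref{ffactFZ}, \ref{ffactComplement}, and conclude for the quotients by faits~\ref{ffactQuoAT} and~\ref{ffactLocalises}. Your added remark on the $\vi$ convention in the lattice of filters is a helpful clarification that the paper leaves implicit.
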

\begin{proof}
Pour l'idéal bord, on a par \dfn
  \[\rK_\ZarA^\fa=\rK_\ZarA(\DA(\fa))=\DA(\fa) \vu 
(\DA(0):\DA(\fa) ).
\]
D'après les faits \ref{ffactSpecAzarA} et \ref{ffactTransporteurs},
il est égal à $\IZA(\fa + (\DA(0):\fa ))$ et aussi à
  $\IZA(\fj + (\DA(0):\fj ))=\IZA(\rK_\gA^\fj)$. Enfin pour les 
passages au
quotient on applique le fait~\ref{ffactQuoAT}.\\
Pour le filtre bord de Krull, cela fonctionne de la même 
manière en
utilisant les faits \ref{ffactFZ} et \ref{ffactComplement} puis en 
passant au
treillis quotient avec le fait~\ref{ffactLocalises}.
\end{proof}

\hum{pour le bord inférieur avec un \itf à la place d'un \elt il 
semble qu'on
obtient non pas un anneau, mais un schéma de Grothendieck obtenu en 
recollant
un nombre fini d'anneaux le long de localisations convenables}

Comme corolaire du \thref{fpropDK1} et de la proposition \ref{fpropZar2}  on 
obtient
l'analogue suivant du \thref{fthDK1}, dans une version 
entièrement \cov. 
Rappelons que la dimension de Krull d'un anneau \hbox{égale $-1$} \ssi 
l'anneau est
trivial (i.e., $1_\gA=0_\gA$).

\begin{ftheorem}
\label{fthDKA} Pour un anneau commutatif  $\gA$ et un entier $\ell\geq 
0$ \propeq
\begin{enumerate}
\item La dimension de Krull de $\gA$ est $\leq \ell$.
\item Pour tout $x\in \gA$ la dimension de Krull de $\gA\ul x$ est 
$\leq \ell-
1$.
\item Pour tout \itf $\fj$ de $\gA$ la dimension de Krull de 
$\gA\ul\fj$ est
$\leq \ell-1$.
\item Pour tout $x\in \gA$ la dimension de Krull de $\gA\bal x$ est 
$\leq \ell-
1$.
\end{enumerate}
\end{ftheorem}

Ce \tho nous donne une bonne signification intuitive de la dimension 
de Krull.

Avec le fait \ref{ffactSpecAzarA} on obtient le même \tho en 
\clama. 

Vu son importance, nous allons donner des preuves directes simples des équivalences entre les points \textsl{1}, \textsl{2} et \textsl{4} en \clama. 

\begin{proof}[Démonstration directe en mathématiques classiques] 
Montrons d'abord l'équivalence des points \textsl{1} et \textsl{2}.
Rappelons que les \ideps de $S^{-1}\gA$ sont de la forme $S^{-1}\fp$ 
où $\fp$
est un \idep
de $\gA$ qui ne coupe pas $S$.
L'équivalence résulte alors clairement des deux affirmations 
suivantes.\\
(a) Soit $x\in\gA$, si $\fm$ est un idéal maximal de $\gA$ il coupe 
toujours
$\rS\bal{x}$. En effet si $x\in\fm$ c'est clair et sinon, $x$ est 
inversible
modulo $\fm$ ce qui signifie que $1+x\gA$ coupe $\fm$.\\
(b) Si $\fm$ est un idéal maximal de $\gA,$  et si 
$x\in\fm\setminus\fp$ où
$\fp$ est un \idep contenu dans $\fm$,
alors $\fp\cap \rS\bal{x}=\emptyset$: en effet si $x(1+xy)\in\fp$ 
alors,
puisque $x\notin\fp$ on a $1+xy\in\fp\subset\fm$, ce qui donne
la contradiction $1\in\fm$ (puisque $x\in\fm$).\\
Ainsi, si $\fp_0\subsetneq \cdots \subsetneq \fp_\ell$
est une chaine avec
$\fp_\ell$ maximal, elle est raccourcie d'au moins son dernier terme 
lorsqu'on
localise en $\rS\bal{x}$, et elle n'est raccourcie
que de son dernier terme si
$x\in\fp_\ell\setminus\fp_{\ell-1}$.\\
L'équivalence des points \textsl{1} et \textsl{4} se démontre de manière 
\gui{opposée},
en remplaçant les idéaux premiers par les filtres premiers.
On remarque d'abord que les filtres premiers de $\gA/\fJ$ sont de la 
forme
$(S+\fJ)/\fJ$, où $S$ est un filtre premier de $\gA$ qui ne coupe 
pas $\fJ$.
Il suffit alors de démontrer les deux affirmations \gui{opposées} 
de (a) et
(b) qui sont les suivantes:\\
(a') Soit $x\in\gA$, si $S$ est un filtre maximal de $\gA$ il coupe 
toujours
$\rK_\gA^x$. En effet si $x\in S$ c'est clair et sinon,
puisque $S$ est maximal $Sx^\NN$ contient $0$, ce qui signifie qu'il
y a un entier $n$ et un \elt $s$ de $S$ tels que $sx^n=0$.
Alors $(sx)^n=0$ et $s\in (\sqrt{0}:x)\subset \rK_\gA^x$.\\
(b') Si $S$ est un filtre maximal de $\gA,$  et si $x\in S\setminus 
S'$ où
$S'\subset S$ est un filtre premier,
alors $S'\cap \rK_\gA^x=\emptyset$. En effet si $ax+b\in S'$ avec 
$(bx)^n=0$
alors, puisque $x\notin S'$ on a $ax\notin S'$ et, vu que $S'$ est 
premier,
$b\in S'\subset S$, mais comme $x\in S$, $(bx)^n=0\in S$ ce qui est 
absurde.
\end{proof}

En outre le \tho \ref{fthDKA} implique la caractérisation \cov \elr 
de cette
dimension  en terme d'identités algébriques, décrite 
dans~\cite{fLom02,fCL2003}, comme suit.

\begin{fcorollary}
\label{fcorthDKA} \Propeq
\begin{itemize}
\item  [$(1)$] La dimension de Krull de $\gA$ est $\leq \ell$
\item  [$(2)$] Pour tous $x_0,\ldots ,x_\ell\in\gA$ il existe 
$b_0,\ldots,b_\ell\in \gA$ tels que 
\begin{equation} \label{feqCG}
\left.
\begin{array}{rcl} 
\DA(b_0x_0)& =  &\DA(0)    \\ 
\DA(b_1x_1)& \leq  & \DA(b_0,x_0)  \\
\vdots\quad& \vdots  &\quad  \vdots \\
\DA(b_\ell x_\ell )& \leq  & \DA(b_{\ell -1},x_{\ell -1})  \\
\DA(1)& =  &  \DA(b_\ell,x_\ell )
\end{array}
\right\}
\end{equation}
\item  [$(3)$] Pour tous $x_0,\ldots ,x_\ell\in\gA$ il existe 
$a_0,\ldots,a_\ell\in \gA$ et
$m_0,\ldots,m_\ell\in\NN$ tels que
\[ x_0^{m_0}(x_1^{m_1}\cdots(x_\ell^{m_\ell} (1+a_\ell x_\ell) + 
\cdots+a_1x_1)
+ a_0x_0) =0
\]
\end{itemize}
\end{fcorollary}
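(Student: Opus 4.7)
Mon plan est de démontrer (1)$\Leftrightarrow$(2) via la traduction directe entre le treillis de Zariski et l’anneau, puis (1)$\Leftrightarrow$(3) par récurrence sur $\ell$ en utilisant la caractérisation de la dimension de Krull via les bords inférieurs du \thref{fthDKA}. Un argument alternatif, par calcul direct, donnera également (2)$\Rightarrow$(3).

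Pour (1)$\Leftrightarrow$(2), j’applique le \thref{fpropDK1} (point 4) au treillis $\gT=\ZarA$, avec pour système générateur $S=\sotq{\DA(x)}{x\in\gA}$ (stable par $\vi$). Les inégalités du point 4 de ce théorème, écrites pour les $\DA(x_i)$ et avec $a_i=\DA(b_i)$, fournissent exactement les relations du système~(\ref{feqCG}), en se rappelant que dans $\ZarA$ on a $\DA(b_i)\vi\DA(x_i)=\DA(b_ix_i)$, $\DA(b_i)\vu\DA(x_i)=\DA(b_i,x_i)$, $\DA(1)=1_{\ZarA}$ et $\DA(0)=0_{\ZarA}$. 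La définition~\ref{fdefKdimA} identifie alors $\Kdim\,\gA\leq \ell$ avec la condition obtenue.

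Pour (1)$\Leftrightarrow$(3), je procède par récurrence sur $\ell$ en utilisant le point~4 du \thref{fthDKA}, qui énonce que $\Kdim\,\gA\leq \ell\Leftrightarrow \forall x_0\in\gA,\;\Kdim(\gA\bal{x_0})\leq\ell-1$. Le cas $\ell=0$ (qui est aussi celui qui amorce la récurrence) s'obtient en écrivant que $\gA$ est de dimension $\leq 0$ \ssi pour tout $x_0$ il existe $a_0$ et $m_0$ tels que $x_0^{m_0}(1+a_0x_0)=0$, ce qui est exactement la condition (3) pour $\ell=0$. Pour le pas inductif, j’utilise que les \elts de $\gA\bal{x_0}=\gA_{S\bal{x_0}}$ sont des fractions à dénominateur dans $x_0^\NN(1+x_0\gA)$, et que dire qu’une identité de la forme (3) est vraie dans le localisé revient à dire qu’elle est vraie dans $\gA$ après multiplication par un \elt de $S\bal{x_0}$, c'est-à-dire par un $x_0^{m_0}(1+a_0x_0)$. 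En appliquant l’hypothèse de récurrence à $\gA\bal{x_0}$ avec $x_1,\ldots,x_\ell$ puis en dégageant le dénominateur, j’obtiens exactement la formule (3) imbriquée pour~$\gA$.

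La difficulté principale sera la gestion soigneuse des exposants $m_i$ et du dénominateur dans la récurrence: il faudra vérifier que lorsqu’on multiplie la relation obtenue dans le localisé par l’\elt de $S\bal{x_0}$ pour la faire remonter dans $\gA$, l’\elt $1+a_0x_0$ se place bien au centre de l’expression et non ailleurs, de sorte que la structure emboitée $x_0^{m_0}(\cdots+a_0x_0)=0$ soit préservée. Pour conclure, je signalerai que l’implication (2)$\Rightarrow$(3) admet aussi une démonstration directe, en développant de proche en proche les inégalités $\DA(b_ix_i)\leq \DA(b_{i-1},x_{i-1})$ (qui signifient l’existence d’un entier~$m_i$ et d'un témoin algébrique) et en substituant dans la dernière égalité $\DA(1)=\DA(b_\ell,x_\ell)$, ce qui fournit, après renommage des $b_i$ en $a_i$, la formule cherchée.
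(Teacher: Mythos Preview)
Your route for $(1)\Leftrightarrow(3)$ (induction on $\ell$ via the lower boundary $\gA\bal{x}$) is exactly the paper's, and your remark that $(2)\Rightarrow(1)$ follows from \thref{fpropDK1} applied to $\ZarA$ with $S=\{\DA(x):x\in\gA\}$ also matches the paper. Two points deserve attention, one minor and one a genuine gap.

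\textsl{Indexing in the induction.} When you localize at $S\bal{x_0}$ and clear the fractions coming from the $a_i\in\gA\bal{x_0}$, the common denominator $t\in S\bal{x_0}$ replaces the innermost constant $1$, and the extra $s\in S\bal{x_0}$ killing the numerator distributes inward as well. One lands on
\[
x_1^{m_1}\big(\cdots(x_\ell^{m_\ell}(t'+\alpha_\ell x_\ell)+\cdots)+\alpha_1 x_1\big)=0,\qquad t'=x_0^{m_0}(1+a_0x_0)\in S\bal{x_0},
\]
so $x_0$ ends up \emph{innermost}, not in the outer position ``$x_0^{m_0}(\cdots+a_0x_0)=0$'' you announce. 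This is harmless because~(3) is universally quantified over all tuples, so one may relabel; the paper simply localizes at the other end (at $x_{\ell+1}$), which puts the new variable directly in the innermost slot and avoids this relabelling.

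\textsl{The real gap: $(1)\Rightarrow(2)$.} Theorem~\ref{fpropDK1} only produces witnesses $a_i\in\ZarA$, i.e.\ $a_i=\DA(\fj_i)$ for some \itfs $\fj_i$, not single elements $\DA(b_i)$. Your sentence ``avec $a_i=\DA(b_i)$'' assumes exactly what needs to be proved. The paper closes the cycle differently: after establishing $(1)\Leftrightarrow(3)$, it shows $(3)\Rightarrow(2)$ by the explicit recursion
\[
b_\ell=1+a_\ell x_\ell,\qquad b_{k-1}=x_k^{m_k}b_k+a_{k-1}x_{k-1}\quad(k=\ell,\dots,1),
\]
which one checks satisfies~(\ref{feqCG}). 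Alternatively, you can repair your direct route: from $a_i=\DA(c_{i,1},\dots,c_{i,n_i})$, use the membership $(b_{i+1}x_{i+1})^{N}\in\gen{c_{i,1},\dots,c_{i,n_i},x_i}$ to set $b_i=\sum_j\mu_jc_{i,j}$; then $\DA(b_{i+1}x_{i+1})\leq\DA(b_i,x_i)$ and $\DA(b_ix_i)\leq a_i\vi\DA(x_i)\leq a_{i-1}\vu\DA(x_{i-1})$, so the chain~(\ref{feqCG}) descends. Either way, this step is missing from your plan as written.
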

\begin{proof}
Montrons l'equivalence de (1) et (3). 
Utilisons par exemple pour (1) la caractérisation via les localisés  $\gA\bal x$.
L'équivalence pour la dimension $0$ est claire. Supposons la chose 
établie
pour la dimension $\leq \ell$. On voit alors que $S^{-1}\gA$ est de 
dimension
  $\leq \ell$ \ssi pour tous
$x_0,\ldots ,x_\ell\in \gA$ il existe $a_0,\ldots,a_\ell\in \gA$,
$m_0,\ldots,m_\ell\in\NN$  et $s\in S$ tels que
\[ x_0^{m_0}(x_1^{m_1}\cdots(x_\ell^{m_\ell} (s+a_\ell x_\ell) + 
\cdots+a_1x_1) +
a_0x_0)=0.\]
Il reste donc à remplacer $s$ par un \elt arbitraire de la forme
$x_{\ell+1}^{m_{\ell+1}} (1+a_{\ell+1} x_{\ell+1})$. \\
On a  $(3) \Rightarrow (2)$ en prenant:
$b_\ell=1+a_\ell x_\ell$, et 
$b_{k -1} = x_k^{m_k} b_k+ a_{k -1}x_{k -1}$, pour $k=\ell,$
\ldots, $1$. \\
On a  $(2) \Rightarrow (1)$ en considérant la caractérisation (4) 
de la dimension de Krull
d'un \trdi donnée dans le \thref{fpropDK1} 
et en l'appliquant au treillis de Zariski $\ZarA$ avec 
$S=\sotq{\DA(x)}{x\in\gA}$.
On pourrait aussi vérifier par un calcul direct que $(2) \Rightarrow 
(3)$.
\end{proof}

\rem Le système d'inégalités (\ref{feqCG}) dans le point (2) du 
corolaire précédent établit une relation intéressante et
symétrique entre les deux suites 
$(b_0,\ldots ,b_\ell)$ et $(x_0,\ldots ,x_\ell)$. Lorsque $\ell=0$, 
cela signifie $\DA(b_0)\vi\DA(x_0)=0$ et $\DA(b_0)\vu\DA(x_0)=1$, 
\cad 
que les deux \elts~$\DA(b_0)$ et $\DA(x_0)$ sont compléments l'un de l'autre
dans $\ZarA$.
Dans $\Spec\,\gA$ cela signifie que les ouverts de base 
correspondants sont complémentaires. Nous introduisons donc  
la terminologie suivante: lorsque deux suites 
$(b_0,\ldots ,b_\ell)$ et $(x_0,\ldots ,x_\ell)$ vérifient les 
inégalités (\ref{feqCG}) nous dirons qu'elles sont 
\textsl{complémentaires}.\eoe

\medskip 
Signalons aussi qu'il est facile d'établir \cot que
$\Kdim(\gK[X_1,\ldots,X_n])=n$ lorsque~$\gK$ est un corps, ou même
un anneau zéro dimensionnel (cf.~\cite{fCL2003}).  On peut aussi traiter
de façon \cov la dimension de Krull des anneaux géométriques
(les $\gK$-algèbres de présentation finie).

\medskip
\rem
On a aussi (déjà démontré pour les \trdis) les résultats 
suivants:
\begin{itemize}
\item

si $\gB$ est un quotient ou un localisé de $\gA$,  
$\Kdim\,\gB\leq
\Kdim\,\gA$,

\item si $(\fa_i)_{1\leq i\leq m}$ une famille finie d'\ids de $\gA$ 
et
$\fa=\bigcap_{i=1}^m\fa_i$, alors
$\Kdim(\gA/\fa)=\sup_i\Kdim(\gA/\fa_i)$.

\item si $(S_i)_{1\leq i\leq m}$ une famille finie de \moco de $\gA$ 
alors
$\Kdim(\gA)=\sup_i\Kdim(\gA_{S_i})$.

\item en \clama on a
$\Kdim(\gA)=\sup_\fm\Kdim(\gA_{\fm})$, où $\fm$ parcourt tous les 
\idemas.
\eoe 
\end{itemize}

\medskip
\rem
On peut illustrer le corolaire \ref{fcorthDKA} ci-dessus
en introduisant \gui{l'idéal bord de Krull itéré}.
Pour $x_1,\ldots ,x_n\in\gA$ considérons
\[(\gA\ul{x_0})\ul{x_1},\, ((\gA\ul{x_0})\ul{x_1})\ul{x_2},\, 
etc\ldots\,,
\] 
  les anneaux bords supérieurs successifs, et notons $\rK_\gA[x_0,\ldots ,x_\ell]$
le noyau de la projection canonique 
$\gA\to
(\cdots(\gA\ul{x_0}){\cdots})\ul{x_\ell}$.  
Alors on a 
$y\in\rK_\gA[x_0,\ldots,x_\ell]$ \ssi $\Ex a_0,\ldots,  a_\ell\in \gT$ 
et $m_0,\ldots,m_\ell\in\NN$  vérifiant:
\[  x_0^{m_0}(x_1^{m_1}\cdots(x_\ell^{m_\ell} (y+a_\ell x_\ell) + 
\cdots+a_1x_1)
+ a_0x_0) =0.
\]
Et la dimension de Krull est $\leq \ell$ \ssi pour tous $x_0,\ldots
,x_\ell\in\gA$ on a $1\in\rK_\gA[x_0,\ldots ,x_\ell]$.\eoe

\subsection{Dimensions de Heitmann}
\subsubsection*{Le spectre de Heitmann}

L'espace spectral que Heitmann a défini pour remplacer le 
j-spectrum,
\cad l'adhérence pour la topologie constructible du spectre maximal 
dans
$\Spec\,\gA$,
  correspond à la \dfn suivante.
\begin{fdefinition}
\label{fdefJspecA}
On appelle \textsl{spectre de Heitmann} d'un anneau commutatif $\gA$ le 
sous-espace $\Jspec(\ZarA)$ de $\Spec\,\gA$. On le note aussi 
$\Jspec\,\gA$. On note
$\jspec\,\gA$ pour $\jspec(\ZarA)$, \cad le j-spectrum de l'anneau au 
sens
usuel.
\end{fdefinition}

En \clama le \tho \ref{fthDK3} donne:
\begin{ffact}
\label{ffactHSpecA}
Pour tout anneau commutatif $\gA$, le spectre de Heitmann de $\gA$ 
s'identifie
à l'espace spectral  $\Spec(\HeA)$ (au sens des \trdis).
\end{ffact}

On a alors la \dfn \cov \elr sans points de la dimension introduite 
par
Heitmann.
\begin{fdefinition}
\label{fdefJdimA}
La $\rJ$-dimension de Heitmann de $\gA$, notée $\Jdim\,\gA$, est la 
dimension
de Krull de $\Heit(\gA)$, autrement dit c'est la $\Jdim$ de $\ZarA$.
\end{fdefinition}

En \clama $\Jdim\,\gA$ est égal à la dimension de l'espace 
spectral
$\Jspec\,\gA$, définie de manière abstraite \gui{avec points}.

On peut aussi noter $\jdim\,\gA$ pour la dimension de $\jspec\,\gA$, 
qui n'est
pas un espace spectral (et nous ne proposons pas de \dfn \cov sans 
point pour
cette dimension).

\medskip \rem
Précisons la signification de $\Jdim\,\gA\leq \ell$ dans le cas des 
anneaux commutatifs. Comme il s'agit de la dimension de Krull de 
$\Heit\,\gA$ et que les \elts de $\Heit\,\gA$  s'identifient aux 
radicaux de Jacobson d'\itfs on obtient la caractérisation 
suivante:\\ 
$\forall x_0,\ldots,x_\ell\in \gA\;$ 
$\Ex \fa_0,\ldots,  \fa_\ell,$ \itfs de $\gA$ tels que: 
\[\begin{array}{rcl} 
x_0\,\fa_0  &  \subseteq  &  \JA(0)    \\ 
x_1\,\fa_1 &  \subseteq  &  \JA(\gen{x_0}+\fa_0)   \\
\vdots \;  &  \vdots &   \qquad \vdots   \\
x_{\ell}\,\fa_{\ell} &  \subseteq  &  
\JA(\gen{x_{\ell-1}}+\fa_{\ell-1})   \\
\gen{1} &  =  &  \JA(\gen{x_{\ell}}+\fa_{\ell})   
\end{array}\]
On ne peut apparemment pas éviter le recours aux \itfs et cela fait 
que l'on n'obtient pas une \dfn \gui{au premier ordre}.\\
Notez que chaque appartenance $x\in\JA(y_1,\ldots ,y_m)$ s'exprime 
elle-même par: $\Tt z\in\gA,$ $1+xz$ est inversible modulo
$\gen{y_1,\ldots ,y_m}$, \cade 

\medskip  \hspace*{4em}
$\Tt z\in\gA \;\Ex t,u_1,\ldots ,u_m\in\gA, 
\;\;1=(1+xz)t+u_1y_1+\cdots 
+u_my_m.$  
\eoe

\subsubsection*{Dimension  et  bord de Heitmann}

\begin{fdefinition}
\label{fdefHdimA}
La dimension de Heitmann d'un anneau commutatif
est la dimension de Heitmann de son treillis de Zariski.
\end{fdefinition}

\begin{fdefinition}
\label{fdefHei2} Soit  $\gA$  un anneau commutatif, $x\in\gA$ et $\fj$ 
un \itf. 
Le \textsl{bord de Heitmann de $\fj$ dans $\gA$} est l'anneau quotient
$\gA/\rH_\gA(\fj)$ avec
          \[\rH_\gA(\fj):=\fj+(\JA(0):\fj)\]
qui est aussi appelé \textsl{l'\id bord de Heitmann de $\fj$ dans 
$\gA$}.
On notera aussi  $\rH_\gA(y_1,\ldots ,y_n)$ pour  
$\rH_\gA(\gen{y_1,\ldots
,y_n})$, $\rH_\gA^x$ pour $\rH_\gA(x)$ et $\gA_\rH^x$ pour 
$\gA/\rH_\gA^x$.
\end{fdefinition}

Ainsi un \elt arbitraire de  $\rH_\gA(y_1,\ldots ,y_n)$ s'écrit 
$\sum_i
a_iy_i+b$ avec tous les $by_i$ dans $\JA(0)$.

La proposition suivante résulte des bonnes propriétés de la 
correspondance
bijective $\IZA$ (voir les faits \ref{ffactSpecAzarA}, \ref{ffactQuoAT}, 
\ref{ffactTransporteurs}
et \ref{ffactRadJac}).
\begin{fproposition}
\label{fpropBordH-TA}
Pour un \itf  $\fj$ le bord de Heitmann de $\fj$ au sens des anneaux 
commutatifs
et celui au sens  des \trdis se correspondent. Plus précisément,
avec $j=\DA(\fj)$ et $\gT=\ZarA$, on a: 
\[\IZA(\rH_\gA(\fj))=\rH_\gT(j),\;\;
\emph{et}\;\; \Zar(\gA/\rH_\gA(\fj))\simeq
\gT/(\rH_\gT(j)=0)=\gT_\rH^j.\]
\end{fproposition}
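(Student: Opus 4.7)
Mon plan est de déduire l'égalité $\IZA(\rH_\gA(\fj))=\rH_\gT(j)$ directement des quatre propriétés de la correspondance $\IZA$ rappelées dans l'énoncé, et la deuxième affirmation n'en sera qu'un corolaire immédiat via le fait~\ref{ffactQuoAT}.

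Je commencerais par écrire $\rH_\gA(\fj)=\fj+(\JA(0):\fj)$, en observant que la somme de deux \ids radicaux correspond par $\IZA$ au sup dans le treillis des \ids de $\ZarA$ (d'après le fait~\ref{ffactSpecAzarA}, puisque $\IZA$ est un isomorphisme de treillis des \ids radicaux vers le treillis des \ids de $\ZarA$). Plus précisément, $\IZA(\fj+\fa)=\IZA(\fj)\vu \IZA(\fa)$ pour tout \id $\fa$. On a donc
\[
\IZA(\rH_\gA(\fj))\;=\;\IZA(\fj)\,\vu\,\IZA(\JA(0):\fj).
\]
Comme $\fj$ est un \itf, $\IZA(\fj)=\dar\DA(\fj)=\dar j$. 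Il reste à identifier $\IZA(\JA(0):\fj)$.

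Pour cela, j'utiliserais successivement le fait~\ref{ffactTransporteurs} (préservation des transporteurs)~:
\[
\IZA(\JA(0):\fj)\;=\;\IZA(\JA(0)):\IZA(\fj)\;=\;\IZA(\JA(0)):\dar j,
\]
puis le fait~\ref{ffactRadJac} (préservation du radical de Jacobson) appliqué à $\fJ=(0)$~:
\[
\IZA(\JA(0))\;=\;\rJ_{\ZarA}\bigl(\IZA(0)\bigr)\;=\;\rJ_{\ZarA}(0)\;=\;\JT(0).
\]
En combinant, $\IZA(\JA(0):\fj)=\JT(0):\dar j=(\JT(0):j)$ (ce dernier calcul dans le treillis $\gT$). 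On obtient ainsi
\[
\IZA(\rH_\gA(\fj))\;=\;\dar j\,\vu\,(\JT(0):j)\;=\;\rH_\gT(j),
\]
ce qui est la première égalité annoncée.

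Pour la seconde affirmation, j'appliquerais simplement le fait~\ref{ffactQuoAT} à $J=\rH_\gA(\fj)$~: puisque $\rH_\gA(\fj)$ est un \id radical (il contient $\JA(0)$ et est somme de deux \ids radicaux, mais plus directement car $\IZA$ identifie les \ids radicaux aux \ids de $\ZarA$), on a
\[
\Zar(\gA/\rH_\gA(\fj))\;\simeq\;\Zar(\gA)/(\IZA(\rH_\gA(\fj))=0)\;=\;\gT/(\rH_\gT(j)=0)\;=\;\gT_\rH^j,
\]
en utilisant l'égalité déjà établie. Il n'y a pas d'obstacle sérieux dans cette démonstration~: tout le travail a été accompli dans l'établissement des faits~\ref{ffactSpecAzarA}--\ref{ffactRadJac}, et en particulier dans le fait~\ref{ffactRadJac} qui est le seul point non immédiat (il a été prouvé plus haut). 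Le seul point auquel il faut faire attention est de bien appliquer $\IZA$ au transporteur, en se souvenant que $\JA(0):\fj$ désigne ici un idéal de $\gA$ au sens usuel, dont l'image par $\IZA$ est le transporteur correspondant au sens du treillis.
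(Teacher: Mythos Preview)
Your proof is correct and follows exactly the approach indicated by the paper, which merely states that the result follows from the good properties of the correspondence $\IZA$ (faits~\ref{ffactSpecAzarA}, \ref{ffactQuoAT}, \ref{ffactTransporteurs} et~\ref{ffactRadJac}); you have simply unpacked these references in the natural order. The only quibble is the parenthetical remark that $\rH_\gA(\fj)$ is radical: this is neither obvious (a sum of radical ideals need not be radical, and $\fj$ itself need not be radical) nor needed, since the fait~\ref{ffactQuoAT} applies to an arbitrary $J\subseteq\gA$.
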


Comme corolaire des propositions \ref{fpropHdimgen} et 
\ref{fpropBordH-TA}
on obtient le résultat suivant.

\begin{fproposition}
\label{flemDHA} Pour un anneau commutatif  $\gA$ et un entier 
$\ell\geq 0$
\propeq
\begin{enumerate}
\item La dimension de Heitmann de $\gA$ est $\leq \ell$.
\item Pour tout $x\in\gA$,
$\Hdim(\gA/\rH_\gA(x))\leq \ell-1$.
\item Pour tout \itf $\fj$ de $\gA$, $\Hdim(\gA/\rH_\gA(\fj)) \leq 
\ell-1$.
\end{enumerate}
\end{fproposition}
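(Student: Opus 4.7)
La démonstration consiste à traduire entre $\gA$ et son treillis de Zariski les résultats déjà établis au niveau des \trdis. Par la définition \ref{fdefHdimA}, on a $\Hdim\,\gA = \Hdim(\Zar\,\gA)$, et la définition inductive \ref{fdefHdim} de $\Hdim$ donne immédiatement: l'assertion (1) équivaut à \gui{pour tout $y \in \Zar\,\gA$, $\Hdim((\Zar\,\gA)_\rH^y) \leq \ell - 1$}.

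Tout élément $y$ de $\Zar\,\gA$ s'écrit $y = \DA(\fj)$ pour un certain \itf $\fj$ de $\gA$. La proposition \ref{fpropBordH-TA} fournit alors un isomorphisme $\Zar(\gA/\rH_\gA(\fj)) \simeq (\Zar\,\gA)_\rH^{\DA(\fj)}$, d'où l'\egt $\Hdim(\gA/\rH_\gA(\fj)) = \Hdim((\Zar\,\gA)_\rH^{\DA(\fj)})$. Ceci établit directement l'équivalence entre (1) et (3).

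Pour obtenir l'équivalence entre (2) et (3), j'appliquerais la proposition \ref{fpropHdimgen} au système générateur $S = \{\wi x : x \in \gA\}$ de $\Zar\,\gA$: elle permet de se restreindre des $y \in \Zar\,\gA$ arbitraires aux seuls générateurs $\wi x$, et le cas particulier $\fj = \gen{x}$ de la proposition \ref{fpropBordH-TA} traduit alors la condition $\Hdim((\Zar\,\gA)_\rH^{\wi x}) \leq \ell - 1$ en $\Hdim(\gA/\rH_\gA(x)) \leq \ell - 1$, ce qui est précisément (2).

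Aucun obstacle technique sérieux n'est à prévoir, la proposition étant véritablement un corolaire formel: tout le travail inductif, notamment le caractère local du bord de Heitmann fourni par les propositions \ref{fpropBordHUnion} et \ref{fcorpropLocBordH}, a déjà été effectué en amont au niveau des \trdis, et la compatibilité entre les bords de Heitmann dans $\gA$ et dans $\Zar\,\gA$ est assurée par la proposition \ref{fpropBordH-TA} via les bonnes propriétés de la correspondance $\IZA$.
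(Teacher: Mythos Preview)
Your proposal is correct and follows exactly the paper's approach: the paper states this proposition as a direct corollary of propositions \ref{fpropHdimgen} and \ref{fpropBordH-TA}, and you have simply spelled out how these two ingredients combine (via the definition \ref{fdefHdimA} and the fact that every element of $\Zar\,\gA$ is a $\DA(\fj)$). There is nothing to add.
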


\rem La dimension de Heitmann de $\gA$ peut donc être définie 
de manière
inductive comme suit:
\begin{itemize}
\item $\Hdim\,\gA=-1$ \ssi $1_\gA=0_\gA$.
\item Pour $\ell\geq 0$, $\Hdim\,\gA\leq \ell$ \ssi pour tout 
$x\in\gA$,
$\Hdim(\gA/\rH_\gA(x))\leq \ell-1$.
\end{itemize}
Pour illustrer cette \dfn avec précision, 
nous introduisons \gui{l'idéal bord de Heitmann itéré}.
Pour $x_0,\ldots ,x_n\in\gA$ nous notons 
\[
\gA_\rH[x_0]=\gA_\rH^{x_0},\,
\gA_\rH[x_0,x_1]=(\gA_\rH^{x_0})_\rH^{x_1},\,
\gA_\rH[x_0,x_1,x_2]=((\gA_\rH^{x_0})_\rH^{x_1})_\rH^{x_2},\, 
\hbox{ etc}\ldots\,\,  
\]
les
anneaux bords de Heitmann successifs, et $\rH[\gA;x_0,\ldots
,x_k]=\rH_\gA[x_0,\ldots ,x_k]$ désigne le noyau de la projection 
canonique
$\gA\to \gA_\rH[x_0,\ldots ,x_k]$.  Pour décrire ces \ids nous 
avons besoin de
la notation \[\bidule{z,x,a,y,b}=1+(1+(z+ax)xy)b.\]
Alors on a:
\begin{itemize}
\item $z\in\rH_\gA[x_0]$ \ssi \[\Ex a_0  \ \Tt y_0 \ \Ex
b_0,\;\bidule{z,x_0,a_0,y_0,b_0}=0\]
\item  $z\in\rH_\gA[x_0,x_1]$ \ssi \[\Ex a_1  \ \Tt y_1 \ \Ex b_1\ 
\Ex a_0  \ \Tt y_0 \ \Ex b_0,  \;\bidule{\bidule{z,x_1,a_1,y_1,b_1},x_0,a_0,y_0,b_0}=0\]
\item  $z\in\rH_\gA[x_0,x_1,x_2]$ \ssi
\[\Ex a_2  \ \Tt y_2 \ \Ex b_2 \ \Ex a_1  \ \Tt y_1 \ \Ex b_1 \ \Ex 
a_0  \ \Tt y_0 \ \Ex b_0,\;\bidule{\bidule{\bidule{z,x_2,a_2,y_2,b_2},x_1,a_1,y_1,b_1},x_0,a_0,y_0,
b_0}=0\]
\end{itemize}
Et ainsi de suite. Et la dimension de Heitmann est $\leq \ell$ \ssi 
pour tous
$x_0,\ldots ,x_\ell\in\gA$ on a $1\in\rH_\gA[x_0,\ldots ,x_\ell]$.\eoe

\begin{fproposition}
\label{fpropHei2} Soit $\fj=\gen{j_1,\ldots ,j_n}$ un \itf.   Notons
$\fJ=\JA(\fj)=\JA(j_1)\vu\cdots \vu\JA(j_n)$.  Alors
$\Heit(\gA/\rH_\gA(\fj))$ s'identifie naturellement avec un quotient
de $(\HeA)\ul{\fJ}$.  Il y a \egt lorsque $\HeA$ est une \agH. 
En \clama c'est le cas lorsque $\Jspec\,\gA$ est noethérien.
\end{fproposition}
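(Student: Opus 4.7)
Le plan est de ramener cette proposition au niveau des \trdis en utilisant le dictionnaire établi précédemment entre notions sur les anneaux commutatifs et notions sur leurs treillis de Zariski. Posons $\gT = \ZarA$ et $j = \DA(\fj) \in \gT$. D'après la proposition~\ref{fpropBordH-TA}, on a $\Zar(\gA/\rH_\gA(\fj)) \simeq \gT_\rH^j$, et en appliquant le foncteur $\He$ on obtient
\[
\Heit(\gA/\rH_\gA(\fj)) \;=\; \He\big(\Zar(\gA/\rH_\gA(\fj))\big) \;\simeq\; \He(\gT_\rH^j).
\]
Par ailleurs, en vertu du corolaire~\ref{fpropHeitA} qui identifie $\HeA = \He(\gT)$ à l'ensemble des radicaux de Jacobson d'\itfs, l'image $\hat{j}$ de $j = \DA(\fj)$ dans $\He(\gT)$ correspond précisément à $\fJ = \JA(\fj) = \JA(j_1)\vu\cdots\vu\JA(j_n)$. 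Ainsi $(\HeA)\ul{\fJ}$ et $(\He(\gT))\ul{\hat{j}}$ désignent le même treillis quotient.

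Il suffit alors d'invoquer la proposition~\ref{fpropBHeitHeyt} appliquée au \trdi $\gT = \ZarA$ et à l'\elt $j \in \gT$~: elle affirme que $\He(\gT_\rH^j)$ est un quotient de $(\He(\gT))\ul{\hat{j}}$, avec \egt lorsque $\He(\gT)$ est une \agH. Combiné avec les identifications ci-dessus, ceci fournit immédiatement les deux premières affirmations de l'énoncé.

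Pour la dernière assertion, on raisonne en \clama. Par le fait~\ref{ffactHSpecA}, on a un homéomorphisme $\Jspec\,\gA \simeq \Spec(\HeA)$, donc $\Jspec\,\gA$ est noethérien \ssi le \trdi $\HeA$ l'est (ses \oqcs sont alors les \elts de $\HeA$, et la condition de chaine ascendante sur les ouverts équivaut à celle sur les \elts). Or tout \trdi noethérien est une \agH~: en effet chaque \id y est principal, et en particulier tout transporteur $(c:b)$ est principal, ce qui est précisément la \dfn d'une \agH.

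L'essentiel du travail conceptuel ayant déjà été effectué dans la proposition~\ref{fpropBHeitHeyt}, la seule subtilité ici consiste à vérifier la compatibilité des traductions, assurée par les bonnes propriétés de la correspondance bijective $\IZA$ vis-à-vis des quotients (fait~\ref{ffactQuoAT}), des transporteurs (fait~\ref{ffactTransporteurs}) et du radical de Jacobson (fait~\ref{ffactRadJac}). Aucun obstacle sérieux n'est donc à prévoir.
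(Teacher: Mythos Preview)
Your proof is correct and follows exactly the same approach as the paper, which simply states that the result has already been proved for an arbitrary \trdi in place of $\ZarA$ (proposition~\ref{fpropBHeitHeyt}). Your version spells out the dictionary entries (proposition~\ref{fpropBordH-TA}, corolaire~\ref{fpropHeitA}, fait~\ref{ffactHSpecA}) needed to translate back to the ring-theoretic statement, and also makes explicit the classical argument that noetherian implies \agH, which the paper leaves implicit from earlier remarks.
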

\begin{proof}
Déjà démontré pour un \trdi arbitraire à la place de $\ZarA$
(proposition~\ref{fpropBHeitHeyt}).
\end{proof}

\rem On a aussi (déjà démontré pour les \trdis) les 
résultats
suivants:
\begin{itemize}
\item on a toujours  $\Hdim\,\gA\leq \Jdim\,\gA\leq 
\Kdim(\gA/\JA(0))$,
\item si $(\fa_i)_{1\leq i\leq m}$ est une famille finie d'\ids de $\gA$ 
et
$\fa=\bigcap_{i=1}^m\fa_i$, alors
$\Hdim(\gA/\fa)=\sup_i\Hdim(\gA/\fa_i)$.
\item si $\HeA$ est une \agH\footnote{En particulier en \clama si  $\HeA$ est noethérien.}
on a $\Hdim\,\gA=\Jdim\,\gA$,
\item si $\Max\,\gA$ est noethérien, alors  
$\jspec\,\gA=\Jspec\,\gA$  et
$\Hdim\,\gA=\Jdim\,\gA=\jdim\,\gA$.
\item $\Hdim\,\gA\leq 0\;\Leftrightarrow\;\Jdim\,\gA\leq
0\;\Leftrightarrow\;\Kdim(\gA/\JA(0))\leq 0$. \eoe
\end{itemize}

\medskip Notez que le treillis  $\HeA$ est une \agH \ssi est
vérifiée la propriété suivante:
\[\Tt \fa,\fb\in\Heit\,\gA\;\Ex\fc\in\Heit\,\gA\;(\fc\fb\subseteq\fa\;\mathrm{et}\;
\Tt x\in\gA\;(x\fb\subseteq\fa\Rightarrow x\in\fc))\]
($\fa=\JA(a_1,\ldots ,a_n)$, $\fb=\JA(b_1,\ldots ,b_m)$, 
$\fc=\JA(c_1,\ldots
,c_\ell)$).

\addcontentsline{toc}{section}{Références}

\markboth{Références}{Références}

\small
\bibliographystyle{plainnat-fr}

\normalsize
\newpage
\rdb
\markboth{Post-Scriptum}{Post-Scriptum}

\subsection*{Post-Scriptum: Errata dans l'article original.}
\addcontentsline{toc}{section}{Post-Scriptum}\label{postscriptum}

\medskip $\bullet$ Le fait \ref{ffactRecolTD} et la proposition 
\ref{fpropRecolTD} ne sont pas corrects dans la formulation générale qui était proposée. Un commentaire à ce sujet est fait après la proposition \ref{fpropRecolSpec}. On donne maintenant des énoncés corrects (moins forts) avec leurs démonstrations. Un lemme est ajouté, ce qui décale les numéros qui suivent d'une unité.

\medskip $\bullet$ 
Dans la section \ref{fsubsecAgHagB} \textsl{Algèbres de Heyting, de Brouwer, de Boole,}
il faut lire $a\leq \neg\neg a$ et non pas l'inégalité contraire.

\medskip $\bullet$ 
Dans la section \ref{fsecESSP} on a remplacé $\rD_\gT(\cdot)$ et $\rV_\gT(\cdot)$ par 
$\DT(\cdot)$ et $\VT(\cdot)$ pour les ouverts et fermés de $\Spec\,\gT$. Concernant $\Spec\,\gT\cir$ on a introduit les notations $\DTo(\cdot)$ et $\VTo(\cdot)$ pour que le propos soit plus clair (proposition \ref{fFdSES}).

\medskip $\bullet$ 
Dans le  point \textsl{4} de la proposition \ref{fpropositionFSES}, on a rectifié comme suit:

\smallskip \noindent  pour tous
ouverts quasi-compacts $U_1$ et $U_2$, l'adhérence de $U_1\setminus U_2$ est le complémentaire d'un 
ouvert quasi-compact.

\medskip $\bullet$ 
La proposition 3.13 de l'article original a été ramenée en \ref{fpropJspecjspec}, où elle a mieux sa place.
Cette proposition et sa démonstration sont clarifiées. Dans le point \textsl{2} l'hypothèse \gui{$\jspec\,\gT$ noethérien} a été remplacée par \gui{$\Max\,\gT$ noethérien}. 

\medskip $\bullet$ 
Juste avant la proposition \ref{fpropZar}
l'équivalence 
\[
\widetilde{U}\leq_{\mathrm{Zar}\mathbf{A}} \widetilde{J}
\quad\Longleftrightarrow \quad
\prod\nolimits_{u\in U} u  \in \sqrt{\langle{J}\rangle}
\quad\Longleftrightarrow \quad
{\cal M}(U)\cap \langle{J}\rangle \neq \emptyset
\]

\noindent a été remplacée par la suivante, dans laquelle
le premier terme est changé
\[\bigwedge\widetilde{U}\leq_{\mathrm{Zar}\mathbf{A}}\bigvee \widetilde{J}
\quad\Longleftrightarrow \quad
\prod\nolimits_{u\in U} u  \in \sqrt{\langle{J}\rangle}
\quad\Longleftrightarrow \quad
{\cal M}(U)\cap \langle{J}\rangle \neq \emptyset
\]

\medskip Par ailleurs, signalons que nous avons en plusieurs endroits ajouté des références nouvelles et des commentaires variés.


\end{document}